\setlist[enumerate,1]{label={(\arabic*)}}
\theoremstyle{plain}
\newtheorem{theo}{Théorème}[section]
\theoremstyle{plain}
\newtheorem*{the0}{Théorème}
\theoremstyle{plain}
\newtheorem*{thep}{Théorème principal}
\theoremstyle{definition}
\newtheorem{defi}[theo]{Définition}
\theoremstyle{definition}
\newtheorem*{def0}{Définition}
\theoremstyle{plain}
\newtheorem{defiprop}[theo]{Définition-Propriété}
\theoremstyle{plain}
\newtheorem{prop}[theo]{Propriété}
\theoremstyle{plain}
\newtheorem{propn}[theo]{Proposition}
\theoremstyle{plain}
\newtheorem*{propn0}{Proposition}
\theoremstyle{plain}
\newtheorem{notaprop}[theo]{Notation-Propriété}
\theoremstyle{plain}
\newtheorem{lem}[theo]{Lemme}
\theoremstyle{plain}
\theoremstyle{plain}
\newtheorem{conj}{Conjecture}
\theoremstyle{definition}
\newtheorem{nota}[theo]{Notation}
\theoremstyle{plain}
\newtheorem{coro}[theo]{Corollaire}
\theoremstyle{definition}
\newtheorem{exem}[theo]{Exemple}
\theoremstyle{definition}
\theoremstyle{remark}
\newtheorem{rema}[theo]{Remarque}
\theoremstyle{remark}
\theoremstyle{plain}
\newtheorem{theI}{Théorème}
\theoremstyle{plain}
\newtheorem{theE}{Theorem}
\theoremstyle{plain}
\newtheorem*{the0E}{Theorem}
\theoremstyle{plain}
\newtheorem*{thepE}{Main theorem}
\theoremstyle{plain}
\theoremstyle{definition}
\theoremstyle{definition}
\theoremstyle{plain}
\theoremstyle{plain}
\theoremstyle{plain}
\theoremstyle{plain}
\newtheorem*{propn0E}{Proposition}
\theoremstyle{plain}
\theoremstyle{plain}
\theoremstyle{plain}
\theoremstyle{plain}
\theoremstyle{definition}
\theoremstyle{plain}
\theoremstyle{definition}
\theoremstyle{definition}
\theoremstyle{remark}
\theoremstyle{remark}
\numberwithin{equation}{chapter}
\DeclareMathOperator{\id}{id}
\DeclareMathOperator{\inv}{inv}
\DeclareMathOperator{\GK}{GKdim}
\DeclareMathOperator{\Spec}{Specm}
\DeclareMathOperator{\M}{M}
\DeclareMathOperator{\gl}{\mathfrak{gl}}
\DeclareMathOperator{\spl}{\mathfrak{sl}}
\DeclareMathOperator{\so}{\mathfrak{so}}
\DeclareMathOperator{\syp}{\mathfrak{sp}}
\DeclareMathOperator{\GL}{GL}
\DeclareMathOperator{\Bij}{Bij}
\DeclareMathOperator{\eav}{{}^e \! \!}
\DeclareMathOperator{\trace}{tr}
\DeclareMathOperator{\Card}{Card}
\DeclareMathOperator{\vect}{Vect}
\DeclareMathOperator{\Der}{Der}
\DeclareMathOperator{\codim}{codim}
\DeclareMathOperator{\degtr}{degtr}
\DeclareMathOperator{\bideg}{bideg}
\DeclareMathOperator{\ind}{ind}
\DeclareMathOperator{\Sym}{S}
\DeclareMathOperator{\Y}{Y}
\DeclareMathOperator{\FY}{C}
\DeclareMathOperator{\Sy}{Sy}
\DeclareMathOperator{\A}{\mathcal{A}}
\DeclareMathOperator{\B}{\mathcal{B}}
\DeclareMathOperator{\gota}{\mathfrak{a}}
\DeclareMathOperator{\gotb}{\mathfrak{b}}
\DeclareMathOperator{\g}{\mathfrak{g}}
\DeclareMathOperator{\h}{\mathfrak{h}}
\DeclareMathOperator{\gotk}{\mathfrak{k}}
\DeclareMathOperator{\gotl}{\mathfrak{l}}
\DeclareMathOperator{\gotn}{\mathfrak{n}}
\DeclareMathOperator{\gotp}{\mathfrak{p}}
\DeclareMathOperator{\gotq}{\mathfrak{q}}
\DeclareMathOperator{\gotr}{\mathfrak{r}}
\DeclareMathOperator{\gots}{\mathfrak{s}}
\DeclareMathOperator{\gotz}{\mathfrak{z}}
\DeclareMathOperator{\C}{\mathbb{K}}
\DeclareMathOperator{\N}{\mathbb{N}}
\DeclareMathOperator{\rat}{\mathbb{Z}}
\DeclareMathOperator{\Q}{\mathbb{Q}}
\DeclareMathOperator{\ad}{ad}
\DeclareMathOperator{\End}{End}
\DeclareMathOperator{\Frac}{Frac}
\DeclareMathOperator{\mor}{Hom}
\DeclareMathOperator{\rg}{rg}
\DeclareMathOperator{\rk}{rk}
\DeclareMathOperator{\pr}{pr}
\DeclareMathOperator{\res}{res}
\DeclareMathOperator{\V}{\mathcal{V}}
\DeclareMathOperator{\supp}{supp}
\newcommand{\imax}{i_{\text{max}}}
\newcommand{\nonvide}{\neq \emptyset}
\newcommand{\longtwoheadrightarrow}{}
\DeclareRobustCommand{\longtwoheadrightarrow}{\relbar\joinrel\twoheadrightarrow}
\title{Semi-invariants symétriques de contractions paraboliques}
\author{Kenny Phommady}
\begin{document}
\selectlanguage{french}
\includepdf{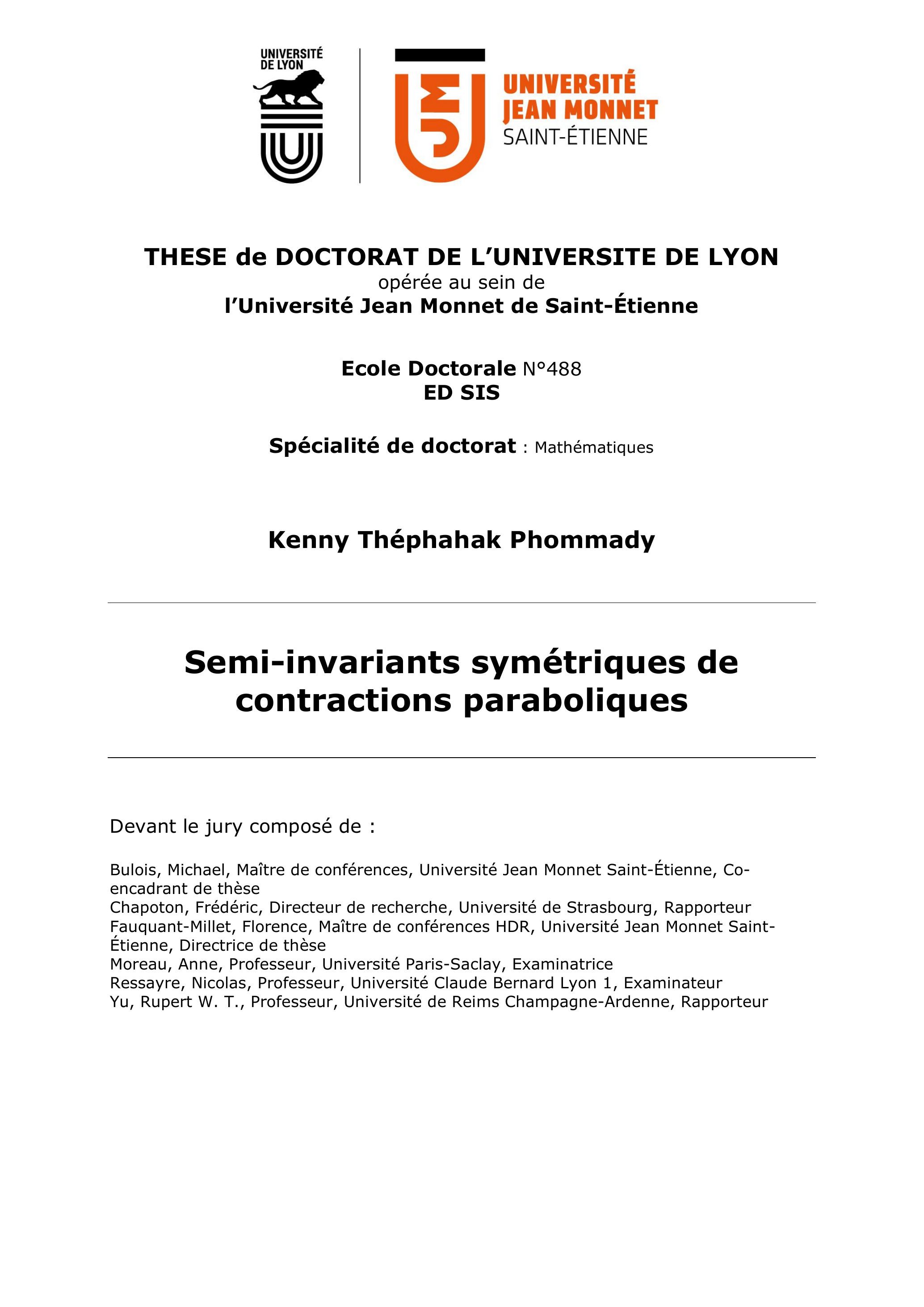}
\addcontentsline{toc}{chapter}{Remerciements}
\chapter*{Remerciements}

Je tiens tout d'abord à remercier Florence Fauquant-Millet et Michael Bulois pour avoir encadré ma thèse durant ces trois (et un peu plus) années. Leurs interactions tant professionnelles que mathématiques ont été très souvent complémentaires et m'ont beaucoup apporté, et j'ai vraiment beaucoup apprécié travailler sous leur direction, humainement et mathématiquement.\par
Je tiens à remercier Nicolas Ressayre pour s'être intéressé très tôt à ma thèse et pour avoir accepté de faire partie du jury de la thèse.\par
Je tiens également à remercier Frédéric Chapoton et Rupert Yu pour  avoir donné de leur temps et accepté de rapporter cette thèse. Je remercie également Anne Moreau pour avoir accepté de faire partie du jury de ma thèse.\par
Je veux tout particulièrement remercier Philippe Gille qui a été d'une aide inestimable dans mon choix de thèse à la fin de mon cursus de master.\par
Je remercie tous les membres de l'Institut Camille Jordan à l'Université Jean Monnet à Saint-Étienne pour m'avoir accueilli dans une ambiance professionnelle incomparable, que je n'oublierai pas de sitôt. Je ne pourrai citer tout le monde, mais je tenais tout de même à remercier en particulier Stéphane Gaussent et Driss Essouabri pour leurs conseils professionnels toujours très justes, ainsi que toutes celles et ceux qui ont contribué à l'ambiance conviviale au bureau des doctorants : Clément, Nohra, Auguste, Farshid, Christopher, Benjamin, Romain, Rita et Simon.\par
Je remercie tous les membres de l'Institut Camille Jordan à l'Université Claude Bernard Lyon 1 pour leur accueil chaleureux à l'occasion notamment des groupes de travail et des séminaires lorsqu'ils sont organisés à Lyon, et je veux également remercier les organisateurs des dits groupes de travail et séminaires, lesquels m'ont permis de découvrir des domaines des mathématiques connexes à cette thèse et les domaines de recherche de certains de mes collègues.\par
Je souhaite remercier ma famille pour leur soutien, ainsi que mes amis, en particulier Carine et Ruben, qui m'ont permis de sortir mon esprit de la thèse à certains moments, et trouver de nouvelles idées pour la thèse à d'autres. Je remercie Etoiles et sa communauté qui m'ont permis de tenir dans les temps difficiles.
\tableofcontents
\newpage

\chapter{Introduction}
(English version of the introduction from page \pageref{english})\par
Le but de cette thèse est d'étudier une classe importante d'algèbres de Lie non réductives, que sont les contractions paraboliques (des déformations d'algèbres de Lie réductives). Plus précisément, nous étudions l'algèbre des semi-invariants symétriques associée à de telles contractions paraboliques. Notre étude se base sur des résultats importants de Panyushev et Yakimova, concernant la polynomialité de l'algèbre des invariants symétriques (en types $A$ et $C$) de ces contractions paraboliques.
\section*{Algèbres d'invariants et de semi-invariants}
Commençons par définir les algèbres d'invariants et de semi-invariants. Soit $\C$ un corps de caractéristique $0$ algébriquement clos, $(\gotk, [\, , \, ])$ une algèbre de Lie de dimension finie sur $\C$ et $\gotk^*$ son espace vectoriel dual. On définit l'indice $\ind \gotk$ comme la plus petite dimension du noyau de $\phi_f$ pour $f \in \gotk^*$, où $\phi_f$ est la forme bilinéaire définie par $\phi_f : (x,y) \rightarrow f([x,y])$. La représentation adjointe de $\gotk$ sur lui-même est la représentation d'algèbre de Lie naturelle définie à partir du crochet de Lie $\ad(g)(h) = g \cdot h = [g,h]$ pour tous $g, h \in \gotk$. On étend cette représentation en une unique représentation, que l'on note toujours $\ad$, de $\gotk$ sur l'algèbre symétrique $\Sym(\gotk)$ telle que $\ad(g)$ est une dérivation sur $\Sym(\gotk)$ pour tout $g \in \gotk$.\par
On s'intéresse aux algèbres des invariants $\Y(\gotk)$ et des semi-invariants $\Sy(\gotk)$, qui sont des sous-algèbres de $\Sym(\gotk)$. L'algèbre des invariants $\Y(\gotk)$ est l'algèbre formée de l'ensemble des invariants (dits encore invariants symétriques), c'est-à-dire tous les éléments de $\Sym(\gotk)$ annulés par tous les $\ad(g)$, $g \in \gotk$. L'algèbre des semi-invariants $\Sy(\gotk)$ est l'algèbre \emph{engendrée par} tous les semi-invariants (dits encore semi-invariants symétriques), c'est-à-dire les éléments de $\Sym(\gotk)$ qui sont vecteurs propres pour tous les $\ad(g)$, $g \in \gotk$. Si $s$ est un semi-invariant (non nul), il existe alors $\lambda \in \gotk^*$ tel que $\ad(g)(s)=\lambda(g)s$ pour tout $g \in \gotk$, et on appelle $\lambda$ le poids de $s$. On peut munir $\Sym(\gotk)$ d'un crochet de Poisson $\{ \, , \}$ qui étend le crochet de Lie sur $\gotk$, de sorte que l'algèbre associative $\Sym(\gotk)$ devient une \emph{algèbre de Poisson}. Les algèbres $\Y(\gotk)$ et $\Sy(\gotk)$ correspondent alors au centre et au semi-centre de cette algèbre de Poisson. Leurs dimensions de Gelfand-Kirillov $\GK \Y(\gotk)$ et $\GK \Sy(\gotk)$ sont les degrés de transcendance des corps des fractions associés. Par un théorème de Rosenlicht, on a $\GK \Y(\gotk) \leq \ind \gotk$ \cite{ros63}, avec l'égalité si $\Y(\gotk)=\Sy(\gotk)$ (voir \cite[Chap. I, Sec. B, 5.12]{fau14}). En théorie des invariants associée à des algèbres de Lie, une des questions cruciales relative aux algèbres $\Y(\gotk)$ et $\Sy(\gotk)$ est la polynomialité, c'est-à-dire déterminer si oui ou non ces algèbres sont isomorphes à des algèbres de polynômes.\par
Géométriquement, plaçons-nous dans le cas où $\gotk$ est l'algèbre de Lie d'un groupe algébrique connexe $K$. L'algèbre $\Y(\gotk)$ s'identifie alors à l'algèbre $\C[\gotk^*]^K$ des fonctions polynomiales sur $\gotk^*$ invariantes par l'action coadjointe de $K$. Si $\Y(\gotk)$ est de type fini, la variété associée à cette algèbre est alors $ \Spec \Y(\gotk) = \gotk^* \sslash K$. La polynomialité de l'algèbre $\Y(\gotk)$ implique alors que $\gotk^* \sslash K$ est un espace affine. De manière analogue aux algèbres, on peut également définir le corps des invariants $\FY(\gotk)$, qui est un sous-corps de $\Frac(\Sym(\gotk))$, le corps des fractions de $\Sym(\gotk)$. Si $\FY(\gotk)$ est une extension transcendante pure de $\C$, alors géométriquement, cela revient à affirmer que $\gotk^* \sslash K$ est une variété rationnelle. Si $\FY(\gotk)=\Frac(\Y(\gotk))$, alors la polynomialité de $\Y(\gotk)$ entraine clairement la rationalité de $\FY(\gotk)$. En revanche, la réciproque n'est pas vérifiée : Dixmier a exhibé un contre-exemple dans lequel $\Y(\gotk)$ n'est pas polynomiale et n'est même pas de type fini mais $\FY(\gotk)=\Frac(\Y(\gotk))$ est bien une extension transcendante pure de $\C$ \cite[4.4.8 et 4.9.20]{Dix}.\par
Le premier résultat important de polynomialité d'algèbre d'invariants est celui de Chevalley \cite{Dix}. Ce résultat affirme que si $\g$ est une algèbre de Lie réductive, c'est-à-dire le produit d'une algèbre de Lie semi-simple par une algèbre de Lie abélienne, alors $\Y(\g)$ est polynomiale. Dans le cas où $\g$ est semi-simple, la preuve repose sur un isomorphisme entre l'algèbre des invariants $\Y(\g) \simeq \Y(\g^*)$ et l'algèbre $\Sym(\h^*)^W$, où $\h$ est une sous-algèbre de Cartan de $\g$ et $W$ est le groupe de Weyl du système de racines associé à $(\g,\h)$. En particulier, $W$ peut être vu comme un sous-groupe fini de $\GL(\h^*)$ engendré par des réflexions. Chevalley identifie alors $\Sym(\h^*)^W$ à une algèbre de polynômes par un résultat d'algèbre commutative (voir \cite[théorème 31.1.6]{TYu} pour une exposition). Un résultat de Kostant \cite{kos63} donne une autre interprétation à ce résultat. Il affirme qu'il existe un sous-espace affine $\mathcal{V}$ de $\g^*$ tel que le morphisme $\Y(\g) \simeq \C[\g^*]^G \rightarrow \C[\mathcal{V}]$ de restriction à $\mathcal{V}$ est un isomorphisme de $\Y(\g)$ sur l'algèbre $\C[\mathcal{V}]$ des fonctions polynomiales sur $\mathcal{V}$. De tels isomorphismes sont appelés "sections de Kostant-Weierstrass"\footnote{On rencontre souvent les désignations de "tranches de Kostant" ou de "sections de Weierstrass". On peut aussi désigner par ces termes le sous-espace affine $\mathcal{V}$.}.\par 
Si $\gotk=\g^e$ est le centralisateur d'un élément nilpotent $e$ pour une algèbre de Lie $\g$ simple, Premet, Panyushev et Yakimova ont montré la polynomialité de $\Y(\g^e)$ lorsque $\g$ est de type $A$ ou $C$ \cite{ppy07}. Leur démonstration repose sur le théorème suivant de Panyushev \cite[théorème 1.2]{pan07}.
\begin{theI}
Soit $\gotk$ une algèbre de Lie. Supposons que $\gotk$ vérifie la propriété de codimension $2$, et que $\GK \Y(\gotk) = \ind \gotk$. Soit $l= \ind \gotk$, et soient $f_1, \ldots, f_l$ des éléments homogènes algébriquement indépendants de $\Y(\gotk)$. Alors
\begin{enumerate}
\item $\sum_{i=1}^l \deg f_i \geq (\dim \gotk + \ind \gotk )/2$,
\item si $\sum_{i=1}^l \deg f_i = (\dim \gotk + \ind \gotk )/2$, alors $\Y(\gotk)=\C[f_1, \ldots, f_l]$.
\end{enumerate}
\label{pan13}
\end{theI}
On définit la \emph{propriété de codimension $2$} comme suit :
\begin{def0}
Soit $\gotk$ une algèbre de Lie. Pour tout $f \in \gotk^*$, on dit que $f$ est \emph{régulier} dans $\gotk^*$ si le noyau de la forme bilinéaire\footnote{introduite plus tôt} $\phi_f$ est de dimension minimale. Sinon, on dit que $f$ est \emph{singulier} dans $\gotk^*$.\par
On dit alors que $\gotk$ \emph{vérifie la propriété de codimension $2$} si l'ensemble $\gotk^*_{\text{sing}}$ des éléments singuliers dans $\gotk^*$ est de codimension au moins $2$ dans $\gotk^*$.
\end{def0}
Ainsi la question de la polynomialité de $\Y(\gotk)$ dans le cas $\gotk=\g^e$ a été longuement étudiée, et c'est dans ce cadre que plusieurs cas de non-polynomialité ont été trouvés. Lorsque $e$ est un vecteur de plus haut poids pour $\g$ simple de type $E_8$, alors on a $\Y(\g^e) \simeq \Sy(\gotp)$ pour une certaine sous-algèbre parabolique $\gotp$ de $\g$ et Yakimova a montré que cette algèbre n'était pas polynomiale \cite{yak07}. Charbonnel et Moreau ont prolongé l'étude en donnant des conditions nécessaires et suffisantes à la polynomialité de $\Y(\g^e)$. Si $\Y(\g^e)$ est polynomiale, ils montrent également que $\Sym(\g^e)$ est un $\Y(\g^e)$-module libre. Ils utilisent alors ces conditions pour étudier la polynomialité de $\Y(\g^e)$ en fonction de l'élément $e \in \g$ dans les cas où $\g$ est simple de type $B$ ou $D$. Par exemple, ils trouvent un cas où $\Y(\g^e)$ n'est pas polynomiale, lorsque $\g$ est de type $D_7$ \cite[\S 7.3]{chm14}.\par
Les résultats de polynomialité d'algèbres de semi-invariants $\Sy(\gotk)$ sont plus parcellaires. Leur étude peut passer par l'étude de la \emph{troncation canonique} de $\gotk$. Pour une algèbre de Lie $\gotk$ quelconque, on appelle troncation canonique de $\gotk$, que l'on note $\gotk_\Lambda$, la plus grande sous-algèbre de Lie de $\gotk$ sur laquelle s'annulent tous les poids de semi-invariants. Fauquant-Millet et Joseph\footnote{reprenant Borho, Gabriel et Rentschler dans le cas nilpotent} ont montré dans un cadre assez général\footnote{en particulier si $\gotk$ est une algèbre de Lie ad-algébrique} que l'on a $\Sy(\gotk)=\Y(\gotk_\Lambda)=\Sy(\gotk_\Lambda)$ \cite[Appendice B.2]{fmj08}. Ainsi l'étude de polynomialité d'une algèbre de semi-invariants revient à étudier la polynomialité d'une algèbre d'invariants. Par exemple, si $\g$ est une algèbre de Lie réductive, on a $\g_\Lambda=\g$, de sorte que $\Sy(\g)=\Y(\g)$, ainsi $\Sy(\g)$ est polynomiale. Joseph \cite{jos77} a montré que dans le cas d'une sous-algèbre de Borel $\gotb$ d'une algèbre de Lie simple $\g$, l'algèbre des semi-invariants $\Sy(\gotb)$ est polynomiale engendrée par $\rg \g$ générateurs ($\rg \g$ est le rang de $\g$) calculés explicitement. Dans le cas où $\gotp$ est une sous-algèbre biparabolique en type $A$ ou $C$, Joseph et Fauquant-Millet \cite{FMJ05} ont montré la polynomialité de $\Sy(\gotp)$ en encadrant le caractère formel de $\Sy(\gotp)$ par deux bornes dont ils montrent l'égalité notamment en types $A$ et $C$. En types $A$ et $C$, ils parviennent à calculer les poids et les degrés de générateurs homogènes.
En type $A$, Joseph a également exhibé des sections de Kostant-Weierstrass pour $\Y(\gotp_\Lambda)=\Sy(\gotp)$ \cite{jos08}.\par
\section*{Contractions paraboliques}
On s'intéresse dans cette thèse à une classe particulière d'algèbres de Lie dites \emph{contractions paraboliques}, qui sont des déformations d'algèbres de Lie réductives. Ces algèbres ont été introduites par E. Feigin, dans le cas d'une contraction par une sous-algèbre de Borel \cite{Fei12}, puis par Panyushev et Yakimova pour une sous-algèbre parabolique quelconque \cite{py13}. Soit $\g$ une algèbre de Lie réductive et $\gotp$ une sous-algèbre parabolique de $\g$. On note $\gotn$ le radical nilpotent de $\gotp$ et $\gotn^-$ le radical nilpotent de la sous-algèbre parabolique opposée de $\gotp$ de sorte que, si $\gotl$ est un facteur de Levi de $\gotp$ on a $\g=\gotn \oplus \gotl \oplus \gotn^-$. On a alors $\gotp=\gotn \oplus \gotl$ d'où $\g = \gotp \oplus \gotn^-$. On définit $\gotq = \gotp \ltimes (\gotn^-)^{\text{a}}$ la contraction parabolique de $\g$ par $\gotp$. Ici, $(\gotn^-)^\text{a}$ est l'espace vectoriel $\gotn^-$ vu comme une sous-algèbre de Lie abélienne\footnote{Pour des raisons pratiques, on omettra dans la suite cette notation et on notera simplement cette algèbre de Lie $\gotn^-$.} de $\gotq$ ainsi qu'un $\gotp$-module par l'identification $\gotn^- \simeq \g/\gotp$. On peut voir cette algèbre de Lie comme une contraction d'Inönu-Wigner\footnote{pour une définition générale des contractions d'Inönu-Wigner, voir \cite{iw53} ou \cite[\S 2.3]{dr85}} de $\g$. Pour tout $t \neq 0$, soit $c_t : \g \rightarrow \g$ l'automorphisme de $\C$-espace vectoriel défini par 
$$\forall p \in \gotp, \forall n \in \gotn^-, \, c_t(p+n)=p+tn$$
On définit alors $\g_{(t)}$ l'algèbre de Lie égale à $\g$ comme espace vectoriel, et de crochet de Lie défini par
$$\forall q,q' \in \g_{(t)}, [q,q']_{\g_{(t)}} = c_t^{-1}([c_t(q),c_t(q')]_{\g}) $$
La limite, lorsque $t$ tend vers $0$, des algèbres de Lie\footnote{Autrement dit, la limite, lorsque $t$ tend vers $0$, du crochet de Lie sur $\g_{(t)}$ est le crochet de Lie sur $\gotq$.} $\g_{(t)}$ donne la contraction parabolique $\gotq$.\par
À partir de ces contractions paraboliques, Feigin définit des dégénérescences de variétés de drapeaux et étudie combinatoirement en détail de telles variétés. Ces dégénérescences permettent par exemple d'obtenir un lien combinatoire avec la suite des nombres de Genocchi \cite{fem2012}.\par
D'autres algèbres de Lie étudiées peuvent s'apparenter dans certains cas particuliers à des contractions paraboliques. Les algèbres de Lie de la forme $\gota \ltimes \gota^*$ avec $\gota$ une algèbre de Lie, sont des cas particuliers de \emph{doubles de Drinfeld} (voir \cite{dri88}, \cite{es01} ou \cite[\S 3.2]{jos95}). Lorsque $\gota=\gotb$ est une sous-algèbre de Borel, l'algèbre de Lie $\gotb \ltimes \gotb^*$ quotientée par son centre\footnote{ici le centre de $\gotb \ltimes \gotb^*$ est isomorphe à la sous-algèbre de Cartan associée à $\gotb^*$} est isomorphe à une contraction parabolique par une sous-algèbre de Borel. Postérieurement aux résultats présentés dans ce présent rapport, Bar-Natan et Van der Veen en type $A$ \cite{bnv20}, puis Bulois et Ressayre pour une sous-algèbre de Borel de type quelconque \cite{br20} étudient les automorphismes d'une telle algèbre. Pour cela, Bulois et Ressayre réinterprètent ces algèbres à l'aide des algèbres de Kac-Moody.\par
En type $A$, une contraction parabolique par une sous-algèbre de Borel peut être également vue comme l'algèbre de Lie associée à un \emph{schéma cyclique de Brauer}, introduit par Knutson et Zinn-Justin \cite[1.1 et 2.1]{kzj07}. Le schéma cyclique de Brauer est défini comme l'ensemble $E$ des matrices $M$ de taille $n \times n$ telles que $M \bullet M = 0$, où $\bullet$ est un produit associatif défini par déformation du produit usuel sur les matrices. La structure d'algèbre de Lie sur $E$ associée au produit associatif $\bullet$ en fait alors une contraction parabolique par une sous-algèbre de Borel en type $A$.\par
Panyushev et Yakimova ont également étudié d'autres types de contractions, notamment les $\rat_2$-contractions (voir par exemple \cite{yak14-2} et \cite{pan07}), qui sont du type $\g_0 \ltimes \g_1$ où $\g=\g_0 \oplus \g_1$ est une $\rat_2$-graduation d'une algèbre de Lie réductive $\g$. De manière plus générale, Panyushev et Yakimova ont pour but de classifier toutes les algèbres de Lie de la forme $\g \ltimes V$ avec $\g$ une algèbre de Lie simple et $V$ une représentation de dimension finie de $\g$ telles que l'algèbre des invariants associée est polynomiale (voir par exemple \cite{py19}).\par
\section*{Algèbres d'invariants et de semi-invariants associées à des contractions paraboliques}
Cette thèse se place dans la continuité des travaux de Panyushev et Yakimova concernant l'étude de $\Y(\gotq)$ \cite{py13} lorsque $\gotq$ est une contraction parabolique et en reprend les résultats essentiels. D'après Panyushev et Yakimova (\cite[\mbox{}1]{py13}) pour une contraction parabolique $\gotq$, il existe un groupe algébrique connexe $Q$ tel que $\gotq$ en soit l'algèbre de Lie. Dans \cite{py13}, ils étudient la polynomialité des algèbres d'invariants $\C[\gotq]^Q$ et $\C[\gotq^*]^Q$. Par un raisonnement de géométrie algébrique\footnote{essentiellement grâce à une variante du lemme d'Igusa}, Panyushev et Yakimova montrent aisément que $\C[\gotq]^Q$ est une algèbre polynomiale. Pour toute contraction parabolique $\gotq$ par une sous-algèbre de Borel, ils montrent également \cite{py12} que $\C[\gotq^*]^Q \simeq \Y(\gotq)$ est polynomiale (cela utilise des propriétés spécifiques au radical nilpotent d'une sous-algèbre de Borel). En revanche, l'étude de l'algèbre $\Y(\gotq) \simeq \C[\gotq^*]^Q$ de manière générale est plus complexe. On pose $\gotq$ une contraction parabolique d'une algèbre de Lie semi-simple $\g$ par une sous-algèbre parabolique $\gotp$ et $G, P$ des groupes algébriques connexes tels que $\g, \gotp$ sont les algèbres de Lie respectivement de $G, P$. À partir de générateurs homogènes $\mathcal{F}_1, \ldots, \mathcal{F}_n$ algébriquement indépendants de l'algèbre $\Y(\g)$, Panyushev et Yakimova construisent des éléments $\mathcal{F}_1^\bullet, \ldots, \mathcal{F}_n^\bullet$ de $\Y(\gotq)$ obtenus par dégénérescence des $\mathcal{F}_1, \ldots, \mathcal{F}_n$ via les $c_t$. Ces $\mathcal{F}_1^\bullet, \ldots, \mathcal{F}_n^\bullet$ engendrent librement $\Y(\gotq)$ dans le cas d'une contraction par une sous-algèbre de Borel ; ils sont également les candidats potentiels à librement engendrer $\Y(\gotq)$ dans le cas général. Pour montrer ce dernier point dans le cas d'une sous-algèbre parabolique quelconque, ils reprennent l'idée de sections de Kostant-Weierstrass en l'adaptant aux cas de contractions paraboliques et construisent une injection\footnote{qui est une application de restriction} $\psi$ de $\Y(\gotq)$ vers une certaine algèbre $\Sym(\g^e)^{P^e}$, où $e$ est un élément nilpotent de $\g$. Les invariants $\mathcal{F}_1^\bullet, \ldots \mathcal{F}_n^\bullet \in \Y(\gotq) \subset \C[\gotq^*]$ peuvent alors être vus dans cette dernière algèbre, et on note $\eav \mathcal{F}_m=\psi(\mathcal{F}_m^\bullet)$. Les $\eav \mathcal{F}_m$ appartiennent en fait à $\Sym(\g^e)^{G^e}=\Y(\g^e)$. Dans les cas des types $A$ et $C$, Panyushev et Yakimova montrent que $\Sym(\g^e)^{P^e}=\Y(\g^e)$ et que les $\eav \mathcal{F}_m$ forment une famille algébriquement indépendante qui engendre $\Y(\g^e)$ (via l'étude avec Premet \cite{ppy07} mentionnée précédemment). De plus, dans ces cas, l'application de restriction $\psi$ précédente, qui était injective, devient un isomorphisme d'algèbres de $\Y(\gotq)$ sur $\Y(\g^e)$. Avec ces derniers résultats, ils parviennent à conclure que $\Y(\gotq)$ est polynomiale et engendrée par $\mathcal{F}_1^\bullet, \ldots \mathcal{F}_n^\bullet$.\par
Yakimova s'est également intéressée à la polynomialité de $\Sy(\gotq)$ où $\gotq$ est une contraction parabolique par une sous-algèbre de Borel. Elle conclut dans ce cas à la polynomialité de $\Sy(\gotq)$ \cite[\S 5.1]{yak12}. Sa démonstration reprend la famille $\mathcal{F}^\bullet_1, \ldots, \mathcal{F}^\bullet_{n}$ qui engendre librement $\Y(\gotq)$. Elle remarque que $\mathcal{F}^\bullet_{n}$ admet une décomposition en facteurs irréductibles\footnote{qui sont nécessairement des semi-invariants} avec des facteurs de degré $1$, c'est-à-dire dans $\gotq \subset \Sym(\gotq)$. En remplaçant ainsi $\mathcal{F}^\bullet_{n}$ dans la famille $\mathcal{F}^\bullet_1, \ldots, \mathcal{F}^\bullet_{n}$ par ses facteurs irréductibles, elle obtient une certaine famille $\mathbf{F}$, dont elle montre qu'elle engendre librement $\Sy(\gotq)$.\par
Dans cette thèse, on reprend l'idée de décomposer un invariant en facteurs irréductibles puis de remplacer l'invariant par ses facteurs irréductibles. L'idée est d'obtenir une famille $\mathbf{F}$ dont on espère qu'elle engendre librement l'algèbre des semi-invariants. En revanche, pour démontrer que $\mathbf{F}$ engendre bien l'algèbre des semi-invariants, l'approche que nous avons choisie diverge de celle de Yakimova. En effet, un point clé de la démonstration de Yakimova est le calcul explicite de ce que l'on appelle un \emph{semi-invariant fondamental}. Son calcul est spécifique au cas du Borel et utilise le fait que la troncation canonique de $\gotq$ est ici égale à l'algèbre de Lie dérivée $\gotq'$, de sorte que $\Sy(\gotq)=\Y(\gotq')$. Or si $\gotq$ est une contraction parabolique quelconque en type $A$, les inclusions $\gotq' \subset \gotq_\Lambda \subset \gotq$ sont dans bien des cas des inclusions strictes\footnote{pour $\gotq$ une contraction parabolique de $\gl_n$, on calculera précisément $\gotq_\Lambda$ à la section \ref{sec5}}. Une adaptation des arguments de Yakimova dans un cadre plus général nous semble difficile.\par
Aussi, une démonstration passant par le théorème \ref{pan13} de Panyushev a été envisagée, en prenant $\gotk=\gotq_\Lambda$\footnote{on rappelle que $\Sy(\gotq)=\Y(\gotq_\Lambda)$}. Une hypothèse cruciale de ce théorème est la \emph{propriété de codimension $2$}. Celle-ci est vérifiée si et seulement si le semi-invariant fondamental associé est constant \cite{js10}. Or dans le cas où $\gotq$ est une contraction par une sous-algèbre de Borel $\gotb$, avec $\gotq'=\gotq_\Lambda$, Yakimova \cite[théorème 5.5]{yak12} montre que, hormis le cas du type $A$, le semi-invariant fondamental de $\gotq'=\gotq_\Lambda$ n'est pas constant, donc l'algèbre de Lie $\gotq_\Lambda$ ne vérifie pas la propriété de codimension $2$. Une démonstration passant par le théorème \ref{pan13} ne pourra donc pas se généraliser en dehors du type $A$.
\section*{Résultats principaux}
Présentons un résumé des résultats de ce rapport. Le résultat principal de cette thèse est celui-ci :
\begin{thep}
Soit $\gotq$ une contraction parabolique d'une algèbre de Lie simple $\g$ par une sous-algèbre parabolique $\gotp$.
\begin{itemize}
\item[$(*)$] Si $\g$ est 
\begin{itemize}
\item soit de type $A$,
\item soit de type $C$ et telle que les facteurs de Levi de $\gotp$ sont de type $A$,\footnote{On entend par "facteur de Levi de type $A$" un facteur ne faisant apparaître que des racines courtes. Par exemple, si le facteur de Levi ne fait apparaître que la racine longue, on ne considèrera pas ce facteur de Levi comme "de type $A$" bien qu'il soit a priori isomorphe à un produit de la forme $\spl_2 \times \gota$ avec $\gota$ une algèbre de Lie abélienne.}
\end{itemize}
alors $\Sy(\gotq)$ est polynomiale.
\item[$(**)$] Il existe une contraction parabolique $\gotq$, avec $\g$ de type $C$ et telle que les facteurs de Levi de $\gotp$ ne sont pas de type $A$, pour laquelle $\Sy(\gotq)$ n'est pas polynomiale.
\end{itemize}
\end{thep}
\subsection*{Contractions paraboliques de $\gl_n$}
Pour montrer la polynomialité dans les cas $(*)$, on se base sur l'étude des contractions paraboliques de l'ensemble $\gl_n$ des matrices de taille $n$. Si $\gotq$ est une contraction parabolique de $\gl_n$, la polynomialité de $\Sy(\gotq)$ se démontre en trois étapes :
\begin{itemize}
\item Étape 1 : exhiber une famille particulière $\mathbf{F}$ algébriquement indépendante de $\Sy(\gotq)$,
\item Étape 2 : montrer que $\mathbf{F}$ est une base de transcendance de $\Sy(\gotq)$,
\item Étape 3 : conclure que $\Sy(\gotq)=\C[\mathbf{F}]$.
\end{itemize}
\paragraph{Étape 1 :} Tout facteur de semi-invariant est encore un semi-invariant. Ainsi, de la même manière que Yakimova dans le cas du Borel, on trouve des semi-invariants propres (c'est-à-dire de poids non nul) en factorisant des invariants connus. Si $F_m \in \Sym(\gl_n)$ est la somme des mineurs principaux de taille $m$, alors $F_1, \ldots, F_n$ engendre librement $\Y(\gl_n)$ et $F_1^\bullet, \ldots, F_n^\bullet$ engendre librement $\Y(\gotq)$\footnote{On déduit ce point de l'étude des $\mathcal{F}_m^\bullet$ en type $A$ par Panyushev et Yakimova \cite{py13} (voir la proposition \ref{fdroit}).}, où l'on note $f^\bullet$ la composante de $f$ de degré maximal en $\gotn^-$. Dans un premier temps, on factorise donc les $F_m^\bullet$ (section \ref{sec4}). Soit $\gotl$ un facteur de Levi associé à $\gotq$. L'algèbre $\gotl$ est isomorphe à un certain produit $\gl_{i_1} \times \ldots \times \gl_{i_s}$. On dit que chaque $\gl_{i_k}$ est un \emph{bloc} de $\gotl$, de sorte que $s$ est le nombre de blocs de $\gotl$. Deux blocs $\gl_{i_k}$ et $\gl_{i_{k'}}$ sont dits \emph{isomorphes} si $i_k=i_{k'}$. On note également $\imax = \max_k i_k$ la taille du plus gros bloc. Pour tout $m \in \llbracket 1,n \rrbracket$,
\begin{itemize}
\item s'il existe $i \in \{i_1, \ldots, i_s\}$ tel que $m=\sum_k \min(i_k,i)$, alors on pose $r_m$ le nombre de blocs de $\gotl$ qui sont de taille $i$, autrement dit $r_m=\Card(\{k \, | \, i_k=i\})$,
\item sinon, on pose $r_m=1$.
\end{itemize}
\begin{the0}[voir théorème \ref{semiinv}]
Pour tout $m \in \llbracket 1,n \rrbracket$, l'invariant $F_m^{\bullet}$ est produit de $r_m$ facteurs homogènes non constants, notés $F_{m,1}, \ldots, F_{m,r_m}$. Ces facteurs sont des semi-invariants et vérifient de plus :
\begin{enumerate}[label=(\arabic*)]
\item pour tout $t \in \llbracket 1, r_m-1 \rrbracket$, $F_{m,t} \in \Sym(\gotn^-)$,
\item notant $\mathfrak{L}_m:={\lambda}_{m,1}, \ldots, {\lambda}_{m,r_m}$ la famille des poids de $F_{m,1}, \ldots, F_{m,r_m}$, on a $\lambda_{m,1} + \ldots + {\lambda}_{m,r_m}=0$ et $\mathfrak{L}_m$ est de rang $r_m-1$,
\item les espaces vectoriels $\vect(({\lambda}_{m,t})_t)$ pour $m \in \llbracket 1,n \rrbracket$ sont en somme directe,
\item la famille $\mathbf{F}$ des $F_{m,t}$ est algébriquement indépendante.
\end{enumerate}
\end{the0}
Avec ce théorème, on obtient ainsi des facteurs non triviaux de $F_m^\bullet$ si $r_m \geq 2$, facteurs dont on peut décrire explicitement le poids et le degré. On obtient la décomposition du théorème ainsi que le point (1) en étudiant combinatoirement les invariants $F_m^\bullet$ (sous-section \ref{sec3}). On montre les points (2) et (3) en calculant explicitement les poids des $F_{m,t}$ (sous-section \ref{ssec4.2}). Ce calcul des poids permet également de montrer le point (4) grâce au théorème plus général suivant :
\begin{the0}[voir théorème \ref{alin}]
Soit $\gotk$ une algèbre de Lie de dimension finie. Soit $d \geq 1$ et $(f_m)_{1 \leq m \leq d}$ une famille d'éléments algébriquement indépendants de $\Y(\gotk)$. Supposons que pour tout $m \in \llbracket 1,d \rrbracket$, l'invariant $f_m$ se décompose en un produit de semi-invariants $f_m = \prod_{t=1}^{r_m} f_{m,t}^{s_{m,t}}$ avec $r_m \geq 1$, et les $s_{m,t} \geq 1$ des entiers premiers entre eux (dans leur ensemble). On suppose de plus que les semi-invariants $f_{m,t}$ vérifient :
\begin{itemize}
\item à $m$ fixé, l'ensemble des poids $\lambda_{m,t}$ des $f_{m,t}$ pour $t \in \llbracket 1,r_m \rrbracket$ forme une famille de rang $r_m-1$,
\item les espaces vectoriels $\vect \left((\lambda_{m,t})_{1 \leq t \leq r_{m}}\right)$ pour $m \in \llbracket 1, d \rrbracket$ sont en somme directe.
\end{itemize}
Alors les $(f_{m,t})_{1 \leq m \leq d, \, 1 \leq t \leq r_m}$ sont algébriquement indépendants. 
\end{the0}
\paragraph{Étape 2 :} L'étape suivante est de montrer que $\mathbf{F}$ est une base de transcendance de $\Sy(\gotq)$. Pour cela, on montre que la dimension de Gelfand-Kirillov $\GK \C[\mathbf{F}]$ de l'algèbre polynomiale $\C[\mathbf{F}]$, engendrée par $\mathbf{F}$, est égale à la dimension de Gelfand-Kirillov de $\Sy(\gotq)$. Comme $\GK \C[\mathbf{F}]$ est le degré de transcendance du corps des fractions de $\C[\mathbf{F}]$, par l'algébrique indépendance de $\mathbf{F}$, c'est également le cardinal de $\mathbf{F}$. Il suffit alors de montrer que $\GK \Sy(\gotq) = \Card(\mathbf{F})$. On a déjà $\GK \Sy(\gotq) = \GK \Y(\gotq_\Lambda) = \ind \gotq_\Lambda$.\par
Ensuite, on calcule $\ind \gotq_\Lambda$, qui est donné par le théorème suivant :
\begin{the0}[voir théorème \ref{dimind}]
Soit $s$ le nombre de blocs d'un facteur de Levi $\gotl$ associé à $\gotq$ et $p$ le nombre de classes d'isomorphisme de blocs de $\gotl$. Autrement dit, $\gotl$ est de la forme $\gotl \simeq \gl_{i_1} \times \ldots \times \gl_{i_s}$ avec $\Card(\{i_1, \ldots, i_s\})=p$. On a
$$\dim \gotq_{\Lambda}=n^2-(s-p) \qquad \text{et} \qquad \ind \gotq_{\Lambda}=\Card(\mathbf{F})=\GK (\C[\mathbf{F}])=n+(s-p)$$
\end{the0}
Pour montrer ce dernier théorème, on utilise une égalité de Ooms et van den Bergh \cite[Proposition 3.1]{oom10}
$$ \dim \gotq + \ind \gotq = \dim \gotq_\Lambda + \ind \gotq_\Lambda$$
On a $\dim \gotq= \dim \g = n^2$ et Panyushev et Yakimova ont montré que $\ind \gotq=\rg \g =n$ \cite[Théorème 3.1]{py13}. On sait que $\ind \gotq_\Lambda = \GK \Sy(\gotq) \geq \GK \C[\mathbf{F}] = \Card \mathbf{F} = n+(s-p)$. Pour conclure pour le théorème, on montre alors que $\dim \gotq_\Lambda \geq n^2-(s-p)$ :
\begin{itemize}
\item on part de l'inclusion vraie pour toute algèbre de Lie $\gotq' \oplus \gotz(\gotq) \subset \gotq_\Lambda$ (où $\gotz(\gotq)$ est le centre de $\gotq$) qui fournit un sous-espace vectoriel de $\gotq_\Lambda$ de dimension $n^2-s+1$,
\item puisque les $F_m^\bullet$ appartiennent à $\Sy(\gotq)\subset\Sym(\gotq_\Lambda)$, on trouve les $p-1$ dimensions restantes en calculant certains facteurs dans $\h_\Lambda$ (où $\h_\Lambda:=\h \cap \gotq_\Lambda$ avec $\h$ une sous-algèbre de Cartan de $\gl_n$) de termes de certains $F_m^\bullet$ bien choisis (section \ref{sec5}).
\end{itemize}
\paragraph{Étape 3 :} Il reste alors à montrer que les $F_{m,t}$ engendrent bien $\Sy(\gotq)$ (section \ref{sec6}). On montre d'abord un premier théorème applicable a priori en dehors du contexte des contractions paraboliques.
\begin{the0}[voir théorème \ref{puissant}]
Soit $\gotk$ une algèbre de Lie de dimension finie. On suppose que $\Y(\gotk)$ est une algèbre factorielle. On suppose également que
\begin{enumerate}
\item[(I)] il existe un morphisme de $\C$-algèbres $\vartheta : \Sym(\gotk) \rightarrow \Y(\gotk)$ tel que $\vartheta_{|\Y(\gotk)}$ est un isomorphisme.
\end{enumerate}
Soit $(f_m)_{1 \leq m \leq d}$ une famille d'invariants irréductibles de $\Y(\gotk)$. Étant donné pour tout $m \in \llbracket 1,d \rrbracket$ une décomposition de $f_m$ dans $\Sym(\gotk)$ de la forme
\[
f_m = \prod_{t=1}^{r_{m}} (f_{m,t})^{\nu_{m,t}} \label{decompintro} \tag{$\clubsuit$}
\]
avec $\nu_{m,t} \in \N^*$, on suppose pour tout $m$ que l'on est dans au moins un des deux cas suivants :
\begin{itemize}
\item la décomposition \eqref{decompintro} est triviale, c'est-à-dire $r_m=1$ et $\nu_{m,1}=1$, autrement dit $f_m = f_{m,1}$,
\item la décomposition \eqref{decompintro} est la décomposition de $f_m$ en éléments irréductibles dans $\Sym(\gotk)$.
\end{itemize}
On note $\mathbf{f}$ l'ensemble des $f_{m,t}$. Si $\GK \Sy(\gotk)=\GK \C[\mathbf{f}]$, alors l'ensemble des poids\footnote{qui en général est un semi-groupe} de $\Sy(\gotk)$ est un groupe et $\Sy(\gotk)$ est engendrée par $\Y(\gotk)$ et les éléments de $\mathbf{f}$. En particulier, si les $f_m$ engendrent $\Y(\gotk)$, alors $\Sy(\gotk)=\C[\mathbf{f}]$.
\end{the0}
On va montrer que les contractions paraboliques $\gotq$ dans les cas $(*)$ du théorème principal vont vérifier les hypothèses du théorème ci-dessus, donc en particulier que l'ensemble des poids de $\Sy(\gotq)$ est un groupe. Ceci est particulier aux contractions paraboliques car une telle propriété n'est pas vraie en général. Par exemple pour une sous-algèbre parabolique $\gotp$ propre d'une algèbre de Lie $\g$ semi-simple, l'ensemble des poids de $\Sy(\gotp)$ n'est pas un groupe (voir \cite[lemme 4.2.4]{FMJ05}).\par
Sous les hypothèses de ce théorème, on voit que la décomposition de $f_m$ est triviale si et seulement si $r_m=1$. De plus, si $m$ vérifie $r_m \geq 2$, alors on remarque que pour tout $t$, $f_{m,t}$ est un semi-invariant\footnote{c'est un facteur d'un invariant donc un semi-invariant, voir la proposition \ref{dixmier}} et est de poids non nul. Dans notre cas, si pour un certain $m$, on a une décomposition $F_m^\bullet=\prod_{t=1}^{r_m} F_{m,t}$ avec $r_m \geq 2$, on ne sait pas a priori si les $F_{m,t}$ sont bien irréductibles. Pour montrer cela, on utilise le résultat technique suivant.
\begin{the0}[voir théorème \ref{inter}]
Soit $\gotk$ une algèbre de Lie de dimension finie. On suppose que $\Y(\gotk)$ est une algèbre factorielle. Soit $(f_m)_{1 \leq m \leq d}$ une famille d'invariants irréductibles dans $\Y(\gotk)$, et pour tout $m$, une décomposition
$$f_m = \prod_{t=1}^{r_{m}} (f_{m,t})^{\nu_{m,t}}$$
de $f_m$ dans $\Sym(\gotk)$, avec $\nu_{m,t} \in \N^*$ pour tout $m,t$. On note $\mathbf{f}$ l'ensemble des $f_{m,t}$ et $\mathbf{f}^\times$ l'ensemble des $f_{m,t}$ pour lesquels $r_m \geq 2$. On suppose que $\GK \C[{\mathbf{f}}]=\GK \Sy(\gotk)$. Si les trois hypothèses suivantes sont vérifiées :
\begin{enumerate}
\item[(I)] il existe un morphisme de $\C$-algèbres $\vartheta : \Sym(\gotk) \rightarrow \Y(\gotk)$ tel que $\vartheta_{|\Y(\gotk)}$ est un isomorphisme.
\item[(II)] tout $f \in \mathbf{f}^\times$ est indivisible dans le semi-groupe multiplicatif $\Sy(\gotk)$,\footnote{c'est-à-dire $f$ n'est pas une puissance $a$\up{ème} dans $\Sy(\gotk)$ pour tout $a \geq 2$}
\item[(III)] pour tout $f \in \mathbf{f}^\times$, il existe une $\C$-algèbre factorielle $\A_f$ et un morphisme de $\C$-algèbres $\vartheta_f : \Sy(\gotk) \rightarrow \A_f$ tel que $\vartheta_f(f)$ n'est pas inversible dans $\A_f$ et est premier avec $\vartheta_f(g)$ pour tout $g \in \mathbf{f}^\times \setminus \{f\}$. \label{iiiiii}
\end{enumerate}
alors les $f_{m,t}$ dans $\mathbf{f}^\times$ sont irréductibles dans $\Sym(\gotk)$.
\end{the0}
Pour conclure que $\Sy(\gotq)=\C[\mathbf{F}]$ dans le cas d'une contraction parabolique $\gotq$ de $\gl_n$ avec nos semi-invariants $F_{m,t}$ construits à l'étape 1, on montre que les hypothèses (I), (II) et (III) sont vérifiées.\par
Pour l'hypothèse (II), on montre que les semi-invariants $F_{m,t}$ pour $r_m \geq 2$ sont indivisibles en remarquant qu'ils sont de degré partiel $1$ en certains éléments $e_{v,w}$ de la base canonique de $\gl_n$.\par
On rappelle que les algèbres $\Sym(\gotk)$ et $\C[\gotk^*]$ sont isomorphes. De même, si $\A$ est une $\C$-algèbre, les $\A$-algèbres $\A \otimes_{\C} \Sym(\gotk)$ et $\A[\A \otimes_{\C} \gotk^*]$ sont naturellement isomorphes, ce qui donne un sens à $f(q) \in \A$ où $f \in \A \otimes_{\C} \Sym(\gotk)$ et $q \in \A \otimes_{\C} \gotk^*$. Pour les hypothèses (I) et (III), on démontre en fait des hypothèses (I') et (III') plus fortes, mais plus adaptées à notre cas :
\begin{enumerate}
\item[(I')] il existe $g_1, \ldots, g_{d'}$ qui engendrent librement $\Y(\gotk)$ et $q \in \C[X_1, \ldots, X_{d'}] \otimes \gotk^*$ tel que pour tout $m \in \llbracket 1,d' \rrbracket$, on a $g_m(q) = X_m$,
\item[(III')] pour tout $m \in \llbracket 1,d \rrbracket$ tel que $r_m \geq 2$ et $t \in \llbracket 1, r_m \rrbracket$, il existe $q_{m,t} \in \C[X] \otimes \gotk^*$ tel que $\deg_X f_{m,t}(q_{m,t}) \geq 1$ et $f_{\mu,\tau}(q_{m,t}) \in \C^\times$ pour $(\mu,\tau) \neq (m,t)$.
\end{enumerate}
Avec (I'), le morphisme $\vartheta$ dans (I) est alors $f \in \Sym(\gotk) \mapsto f(q) \in \C[X_1, \ldots, X_{d'}] \simeq \Y(\gotk)$. L'hypothèse (III') est une reformulation de (III) avec $\mathcal{A}_f=\C[X]$. On remarque au passage que l'hypothèse (I') implique l'existence d'une section de Kostant-Weierstrass pour $\Y(\gotq)$. Les démonstrations de (I'), où les $g_1, \ldots, g_{d'}$ sont les $F_1^\bullet, \ldots, F_n^\bullet$, et de (III') sont combinatoires et utilisent des graphes orientés pondérés que l'on appelle \textit{cheminements}. Un cheminement est un graphe orienté pondéré d'ensemble de sommets $\llbracket 1,n \rrbracket$ pour lequel il existe une unique arête allant d'un sommet $x$ à un sommet $y$. Lorsqu'on représente graphiquement un cheminement, on ne représente que les arêtes de poids non nul.\par
Soit $\A=\C[X_1, \ldots, X_n]$. À $q \in \A \otimes \gl_n^*$ fixé, on associe un cheminement $\mathcal{G}(q)$ de poids appartenant à $\A$ appelé \emph{graphe de $q$}. Le cheminement $\mathcal{G}(q)$ est tel que l'arête de $x$ vers $y$ dans $\mathcal{G}(q)$ est de poids $e_{x,y}(q)$. Les $F_m^\bullet$ sont sommes de termes de la forme $\pm \prod_{l \in J} e_{l,\sigma(l)}$ avec $J$ un sous-ensemble de $\llbracket 1,n \rrbracket$ et $\sigma$ une permutation des éléments de l'ensemble $J$. Ainsi pour calculer $F_m^\bullet(q)$, il suffit de
\begin{itemize}
\item déterminer dans $\mathcal{G}(q)$ les \emph{sous-graphes circuits} de longueur $m$, c'est-à-dire les unions de circuits de $\mathcal{G}(q)$ à sommets disjoints dont toutes les arêtes sont de poids non nul, et qui totalisent $m$ sommets,
\item montrer que ces sous-graphes sont bien associés à des monômes de $F_m^\bullet$, c'est-à-dire vérifier qu'ils ont bien le bon degré en $\gotn^-$.
\end{itemize}
Pour le point (I'), on choisit alors $q$ tel que $\mathcal{G}(q)$ est de la forme suivante :
\begin{center}
\begin{tikzpicture}
  \tikzset{LabelStyle/.style = {fill=white}}
  \tikzset{VertexStyle/.style = {%
  shape = circle, minimum size = 28pt,draw}}
  \SetGraphUnit{2.5}
  \Vertex[L=$v_1$]{1}
  \EA[L=$v_2$](1){2}
  \EA[L=$v_3$](2){3}
  \Loop[dist = 2cm, dir = WE, label = $X_1$](1.west)
  \Edge[style= {->}, label = 1](2)(1)
  \Edge[style= {->}, label = 1](3)(2)
  
  \tikzset{EdgeStyle/.style = {->,bend left=30}}
  \Edge[label=$X_2$](1)(2)
  \tikzset{EdgeStyle/.style = {->,bend left=34}}
  \Edge[label=$X_3$](1)(3)
  \SetVertexNormal[Shape = circle, LineColor=white, MinSize=28pt]
  \tikzset{EdgeStyle/.style = {->}}
  \EA[L=$\ldots$](3){4}
  \Edge[style= {->}, label = 1](4)(3)
  \SetVertexNormal[Shape = circle, LineColor=black, MinSize=28pt]
  \EA[L=$v_{n-1}$](4){n-1}
  \Edge[style= {->}, label = 1](n-1)(4)
  \EA[L=$v_n$](n-1){n}
  \Edge[style= {->}, label = 1](n)(n-1)
  \tikzset{EdgeStyle/.style = {->,bend left=38}}
  \Edge[label=$X_{n-1}$](1)(n-1)
  \tikzset{EdgeStyle/.style = {->,bend left=42}}
  \Edge[label=$X_n$](1)(n)
\end{tikzpicture}
\end{center}
avec $\{v_1, \ldots, v_n\}=\llbracket 1,n \rrbracket$. À $m \in \llbracket 1,n \rrbracket$ fixé, ce cheminement admet un unique sous-graphe circuits de longueur $m$, qui est le circuit suivant :
\begin{center}
\begin{tikzpicture}
  \tikzset{LabelStyle/.style = {fill=white}}
  \tikzset{VertexStyle/.style = {%
  shape = circle, minimum size = 28pt,draw}}
  \SetGraphUnit{2.5}
  \Vertex[L=$v_1$]{1}
  \EA[L=$v_2$](1){2}
  \EA[L=$v_3$](2){3}
  \Edge[style= {->}, label = $1$](2)(1)
  \Edge[style= {->}, label = $1$](3)(2)
  
  \tikzset{EdgeStyle/.style = {->,bend left=30}}
  \SetVertexNormal[Shape = circle, LineColor=white, MinSize=28pt]
  \tikzset{EdgeStyle/.style = {->}}
  \EA[L=$\ldots$](3){4}
  \Edge[style= {->}, label = $1$](4)(3)
  \SetVertexNormal[Shape = circle, LineColor=black, MinSize=28pt]
  \EA[L=$v_{m-1}$](4){m-1}
  \Edge[style= {->}, label = $1$](m-1)(4)
  \EA[L=$v_m$](m-1){m}
  \Edge[style= {->}, label = $1$](m)(m-1)
  \tikzset{EdgeStyle/.style = {->,bend left=42}}
  \Edge[label=$X_m$](1)(m)
\end{tikzpicture}
\end{center}
que l'on fait correspondre à un unique monôme $\mathcal{S}_m$ de $F_m$. Avec de bons choix de la suite $(v_1, \ldots ,v_n)$, c'est-à-dire de l'ordre des $v_i$, le monôme $\mathcal{S}_m$ est un monôme de $F_m^\bullet$ pour tout $m$, de sorte que $F_m^\bullet(q)=\mathcal{S}_m(q)=X_m$.\par
Pour le point (III'), à $m,t$ fixés, on spécialise ce $q$ en $X_m=X$ et $X_{\mu}=1$ pour $\mu \neq m$, de sorte que $F_m^\bullet(q)=X$ et $F_{\mu,\tau}(q) \in \C^\times$ pour $\mu \neq m$. Pour un bon choix de $(v_1, \ldots, v_n)$ parmi ceux qui vérifient le point (I'), on a $F_{m,t}(q) \in \C^\times X$ et $F_{\mu,\tau}(q) \in \C^\times$ pour $(\mu,\tau) \neq (m,t)$. Les hypothèses (I'), (II) et (III') étant vérifiées, on a bien $\Sy(\gotq)=\C[\mathbf{F}]$ et $\mathbf{F}$ engendre donc librement $\Sy(\gotq)$ en type $\gl_n$.
\subsection*{Contractions paraboliques en types $A$ et $C$}
Pour montrer la partie $(*)$ du théorème principal, on étudie la polynomialité de $\Sy(\widetilde{\gotq})$ lorsque $\widetilde{\gotq}$ est une contraction parabolique de $\g=\spl_n$ ou $\g=\syp_n$ (chapitre \ref{chap4}) en remarquant que $\widetilde{\gotq}$ est nécessairement une sous-algèbre de Lie d'une contraction parabolique $\gotq$ de $\gl_n$. Pour cela, une idée est de projeter sur $\widetilde{\gotq}$ les résultats que l'on a sur $\gotq$. Soit $\pr : \Sym(\gotq) \rightarrow \Sym(\widetilde{\gotq})$ une projection telle que l'application linéaire sous-jacente $\pr : \gotq \rightarrow \widetilde{\gotq}$ soit un morphisme de $\widetilde{\gotq}$-modules. Dans les cas que l'on considère, on a $\pr(F_m^\bullet)=\pr(F_m)^\bullet  = \prod_{t=1}^{r_m} \pr(F_{m,t})$.\par
En type $A$ (section \ref{sec7}), on a $\g=\spl_n$ l'algèbre de Lie du groupe spécial linéaire, c'est-à-dire l'ensemble des matrices de taille $n$ de trace nulle. On considère la projection $\pr : \Sym(\gl_n) \rightarrow \Sym(\spl_n)$ induite par la décomposition $\gl_n = \spl_n \oplus \C \id$. Dans ce cas, on a $\pr(F_1^\bullet)=\pr(\id)=0$, mais les $\pr(F_m^\bullet)$ pour $m \in \llbracket 2,n \rrbracket$ engendrent librement $\Y(\widetilde{\gotq})$ par \cite{py13}. Le poids de $\pr(F_{m,t})$ est le poids de $F_{m,t}$ restreint à $\spl_n$, et on montre que les $\pr(F_{m,t})$ sont algébriquement indépendants à nouveau grâce au théorème \ref{alin}. On voit facilement que les $\pr(F_{m,t})$ forment une base de transcendance de $\Sy(\widetilde{\gotq})$. Enfin, si $x \in \Sy(\widetilde{\gotq})$, alors $x \in \Sy(\gotq)$ puisque $\C \id = \gotz(\gotq)$, ainsi $x=\pr(x)$ est un polynôme en les $F_{m,t}$, donc en les $\pr(F_{m,t})$.\par
En type $C$, on a $n$ pair et $\g=\syp_n$ l'algèbre de Lie du groupe symplectique\footnote{On prend la définition de $\syp_n$ donnée par \cite[Chap. VIII]{bou06}.}. On considère la projection $\pr : \Sym(\gl_n) \rightarrow \Sym(\syp_n)$ induite par la décomposition $\gl_n = \syp_n \oplus \left(\syp_n^\perp \oplus \C \id\right)$ où $\syp_n^\perp \subset \spl_n$ est l'orthogonal de $\syp_n$ pour la forme de Killing de $\spl_n$. Dans ce cas, on a $\pr(F_m^\bullet)=0$ pour tout $m$ impair, mais les $\pr(F_m^\bullet)$ pour $m \in \llbracket 1,n \rrbracket$ pair engendrent librement $\Y(\widetilde{\gotq})$ par \cite{py13}.\par
Si l'on se place dans le cas $(*)$ du théorème principal, c'est-à-dire dans le cas où le facteur de Levi associé à la contraction $\widetilde{\gotq}$ n'a pas de facteur de type $C$, alors les facteurs de Levi $\gotl$ associés à la contraction parabolique $\gotq$ de $\gl_n$ sont de la forme $\gotl \simeq \gl_{i_1} \times \ldots \times \gl_{i_s}$ avec $s$ pair (section \ref{sec9}). Dans ce cas, la démonstration de la polynomialité de $\Sy(\widetilde{\gotq})$ reprend globalement le même schéma de preuve qu'en type $\gl_n$. Dans $\gotq$, la décomposition $F_m^\bullet = \prod_{t=1}^{r_m} F_{m,t}$ est non triviale, c'est-à-dire telle que $r_m \geq 2$, seulement si $m$ est \emph{pair}. On obtient alors des décompositions de la forme $\pr(F_m^\bullet) = \pm \prod_{t=1}^{b_m} f_{m,t}^{s_{m,t}}$, où certaines puissances $s_{m,t} \geq 2$ apparaissent en regroupant les facteurs colinéaires après projection. On peut à nouveau utiliser le théorème \ref{alin} pour montrer que la famille de semi-invariants deux à deux non-colinéaires $\mathbf{f}=(f_{m,t})_{m,t}$ que l'on obtient est algébriquement indépendante. Cette famille est de cardinal $n/2+s/2 = \ind \widetilde{\gotq} +s/2$. Ainsi $\dim \widetilde{\gotq}_\Lambda \geq \dim \widetilde{\gotq}' = \dim \widetilde{\gotq} - s/2$ et $\ind \widetilde{\gotq}_\Lambda \geq \ind \widetilde{\gotq} +s/2$. Par l'équation de Ooms et Van den Bergh \eqref{ovdb}, on a donc les égalités, de sorte que la famille $\mathbf{f}$ forme une base de transcendance de $\Sy(\widetilde{\gotq})$. Pour montrer que $\Sy(\widetilde{\gotq})=\C[\mathbf{f}]$, on montre encore les hypothèses (I), (II), et (III) du théorème \ref{inter}, en reprenant les hypothèses (I') et (III') mentionnées précédemment pour montrer (I) et (III).\par
Pour l'hypothèse (II), on montre que pour la plupart des $f_{m,t}$, leur poids $\lambda_{m,t}$ est indivisible dans l'ensemble des poids $\Lambda$ de $\Sy(\widetilde{\gotq})$, c'est-à-dire que $\lambda_{m,t} \notin a \Lambda$ pour $a \geq 2$. En fait, puisque $\Lambda \subset \sum_k \rat \varpi_k$ où les $\varpi_k$ sont les poids fondamentaux de $\syp_n$, il suffit de vérifier que $\lambda_{m,t}$ est irréductible dans $\sum_k \rat \varpi_k$. Il existe des poids $\lambda_{m,t}$ qui ne vérifient pas cette dernière propriété et dans ces cas particuliers, on vérifie l'indivisibilité des semi-invariants directement en montrant qu'ils sont de degré $1$ en certains éléments d'une base de $\syp_n$.\par
Pour l'hypothèse (I'), on réutilise la notion de graphe de $q$, que l'on modifie légèrement en type $C$. Soit $\A=\C[X_1, \ldots, X_{n/2}]$. À $q \in \A \otimes \syp_n^*$ fixé, on associe un cheminement $\mathcal{G}(q)$ de poids dans $\A$ appelé \emph{graphe de $q$}, tel que l'arête de $x$ vers $y$ est de poids $\pr(e_{x,y})(q)$. Cette définition implique une certaine symétrie dans les graphes de $q$, dans le sens où s'il existe une arête $x \rightarrow y$ de poids non nul dans le graphe de $q$, l'arête $n+1-y \rightarrow n+1-x$ est également de poids non nul\footnote{Ce résultat provient de la construction de $\syp_n$ donnée dans \cite{bou06}, dans laquelle la sous-algèbre de Cartan de $\syp_n$ est diagonale.}. On exhibe alors comme en type $\gl_n$ un élément $q$ tel que son graphe $\mathcal{G}(q)$ admet un unique sous-graphe circuits à $m$ sommets, qui est en fait un circuit. Le dit graphe a la forme suivante :
\begin{center}
\begin{tikzpicture}
  \tikzset{LabelStyle/.style = {fill=white}}
  \tikzset{VertexStyle/.style = {%
  shape = circle, minimum size = 33pt,draw}}
  \SetGraphUnit{2.5}
  \Vertex[L=$v_n$]{1}
  \EA[L=$v_{n-1}$](1){2}
  \EA[L=$v_{n-2}$](2){3}
  \Edge[style= {->}](1)(2)
  \Edge[style= {->}](2)(3)
  \SetVertexNormal[Shape = circle, LineColor=white, MinSize=33pt]
  \tikzset{EdgeStyle/.style = {->}}
  \EA[L=$\ldots$](3){4}
  \NO[L=$\ldots$](4){4'}
  \Edge(3)(4)
  \SetVertexNormal[Shape = circle, LineColor=black, MinSize=33pt]
  \EA[L=$v_{n/2+2}$](4){m-1}
  \Edge[style= {->}](4)(m-1)
  \EA[L=$v_{n/2+1}$](m-1){m}
  \Edge[style= {->}](m-1)(m)
  \NO[L=$v_1$](1){1'}
  \NO[L=$v_2$](2){2'}
  \NO[L=$v_3$](3){3'}
  \NO[L=$v_{n/2-1}$](m-1){m-1'}
  \NO[L=$v_{n/2}$](m){m'}
  \Edge[style= {->}](2')(1')
  \Edge[style= {->}](3')(2')
  \Edge[style= {->}](4')(3')
  \Edge[style= {->}](m-1')(4')
  \Edge[style= {->}](m')(m-1')
  \Edge[style= {->,bend right=30}, label =$1$](1')(1)
  \Edge[style= {->,bend right=30}, label =$X_1$](1)(1')
  \Edge[style= {->,bend right=30}, label =$X_2$](2)(2')
  \Edge[style= {->,bend right=30}, label =$X_3$](3)(3')
  \Edge[style= {->,bend right=30}, label =$X_{n/2-1}$](m-1)(m-1')
  \Edge[style= {->,bend right=30}, label =$X_{n/2}$](m)(m')
\end{tikzpicture}
\end{center}
avec $\{v_1, \ldots, v_{n}\}=\llbracket 1,n \rrbracket$.\par
Pour le point (III'), il n'existe pas suffisamment de suites $(v_1, \ldots, v_n)$ qui conviennent au point (I'), donc on ne peut pas procéder comme pour une contraction parabolique de $\gl_n$ en adaptant simplement le $q$ obtenu au point (I'). On choisit d'abord un certain $q' \in \widetilde{\gotq}^*$, pour lequel $\mathcal{G}(q')$ est un graphe circuits union disjointe de circuits $\mathcal{C}_1, \ldots, \mathcal{C}_{l}$. On note $\mathcal{S}_1, \ldots, \mathcal{S}_{l}$ les monômes associés, et de ce choix de $q'$ on tire $ \mathcal{S}_j(q')=1$ pour tout $j$ et $\pr(F_\mu^\bullet)(q')=\prod_{j=1}^{i} \mathcal{S}_j(q')=1$ pour tout $\mu$ (pair) tel que $r_\mu \geq 2$, où $i$ est tel que $\mu=\sum_{k} \min(i_k,i)$. Fixons $m,t$. Pour vérifier l'hypothèse (III'), l'idée est de modifier $q'$ en multipliant un terme de $q'$ par $X$. Si $q$ est le $q'$ modifié, on a alors $\mathcal{S}_{i}(q)=X^p$ avec $p \in \{1,2 \}$ et $\mathcal{S}_{j}(q)=1$ pour $j \neq i$, et donc $\pr(F_m^\bullet)(q)=X^p$. Or avec cette seule opération, on obtient également $\pr(F_\mu^\bullet)(q)=X^p$ pour $\mu > m$ tel que $r_\mu \geq 2$, ce qui ne concorde pas avec l'hypothèse (III'), à moins que $i=\imax:=\max i_k$, c'est-à-dire la plus grande taille de bloc de $\gotl$. Si $i \neq \imax$, pour éviter cela, on modifie un peu plus $q'$ et la structure des circuits $\mathcal{C}_{i}$ et $\mathcal{C}_{i+1}$ dans le graphe circuits $\mathcal{G}(q')$, pour cette fois, obtenir $q$ qui vérifie $\deg_X \pr(F_m^\bullet)(q) \geq 1$ et $\deg_X \pr(F_\mu^\bullet)(q)=0$ pour $\mu \neq m$ tel que $r_\mu \geq 2$. En choisissant soigneusement sur quelles arêtes effectuer la modification, on fait en sorte que $q$ vérifie la condition (III').\par
Plaçons-nous maintenant dans le cas où le facteur de Levi associé à la contraction $\widetilde{\gotq}$ a un facteur de type $C$, c'est-à-dire dans le cas où les facteurs de Levi $\gotl$ associés à la contraction parabolique $\gotq$ de $\gl_n$ sont de la forme $\gotl=\gl_{i_1} \times \ldots \times \gl_{i_s}$ avec $s$ impair. Dans ce cas, il peut exister dans $\gl_n$ des décompositions non triviales $F_m^\bullet = \prod_{t=1}^{r_m} F_{m,t}$ avec $m$ impair. Or pour $m$ impair, $\pr(F_m)=0$ d'où $\pr(F_m^\bullet)=0$ et il existe des cas avec $m$ impair où $\pr(F_{m,t}) \neq 0$ pour certains $t$. Dans ces cas, si l'on ne considère que les $\pr(F_{m,t})$ pour $m$ pair, la famille $\mathbf{f}$ qu'ils forment n'est pas une base de transcendance de $\Sy(\gotq)$ : il "manque" des éléments. Prenons un exemple (section \ref{sec9}) et considérons $\gotq$ la contraction parabolique de $\gl_8$ définie par le diagramme suivant\footnote{la ligne pleine sépare les $e_{x,y}$ dans $\gotp$ et ceux dans $\gotn^-$, et les $e_{x,y}$ entre la ligne pleine et la ligne en pointillés forment une base d'un facteur de Levi $\gotl$} :
\begin{center}
\begin{tikzpicture}[scale=0.6]
\foreach \k in {1,2,...,7}
	{\draw[color=gray!20]  (0,\k)--(8,\k);
	\draw[color=gray!20] (\k,0)--(\k,8);}
\draw (0,0)--(8,0);
\draw (0,0)--(0,8);
\draw (0,8)--(8,8);
\draw (8,0)--(8,8);
\draw[ultra thick] (0,7)--(1,7)--(1,5)--(3,5)--(3,3)--(5,3)--(5,1)--(7,1)--(7,0);
\draw (2,2) node{\LARGE $\gotn^-$};
\draw (6,6) node{\LARGE $\gotp$};
\draw[dashed] (1,8)--(1,7)--(3,7)--(3,5)--(5,5)--(5,3)--(7,3)--(7,1)--(8,1);
\end{tikzpicture}
\end{center}
On note $\widetilde{\gotq}$ la contraction parabolique de $\syp_8$ correspondante. Dans ce cas-ci, on a une décomposition $F_8^\bullet=F_{8,1}F_{8,2}F_{8,3}$ qui se projette en $\pr(F_8^\bullet)=\pr(F_{8,1})^2\pr(F_{8,2})$ ainsi qu'une décomposition $F_5^\bullet=F_{5,1} F_{5,2}$. En projetant, on obtient $\pr(F_5^\bullet)=\pr(F_{5,1})=0$ alors que $\pr(F_{5,2}) = \pr(e_{1,8}) \neq 0$. Or $\pr(F_{5,2})$ n'est proportionnel à aucun des $\pr(F_{m,t})$ avec $m$ pair. Pour appliquer la même démarche que dans les cas précédents, on souhaiterait retrouver ce $\pr(F_{5,2})$ comme facteur d'un invariant, c'est-à-dire ici un polynôme en $\pr(F_2^\bullet), \pr(F_4^\bullet), \pr(F_6^\bullet), \pr(F_8^\bullet)$. On trouve que $\pr(F_{5,2})$ divise $\pr(F_8^\bullet)-\frac{1}{4} \pr(F_4^\bullet)^2$, et on note $\pr(F_8^\bullet)-\frac{1}{4} \pr(F_4^\bullet)^2=\widetilde{F} \pr(F_{5,2})$. On montre alors que la famille $\pr(F_2^\bullet), \pr(F_6^\bullet), \pr(F_{8,1}), \pr(F_{8,2}), \widetilde{F}, \pr(F_{5,2})$ est algébriquement indépendante en calculant les poids des semi-invariants et en reprenant l'idée des cas précédents. On montre enfin que la famille $\pr(F_2^\bullet), \pr(F_4^\bullet), \pr(F_6^\bullet), \pr(F_{8,1}), \pr(F_{8,2}), \widetilde{F}, \pr(F_{5,2})$ engendre l'algèbre $\Sy(\widetilde{\gotq})$ en montrant à la main les hypothèses (I'), (II) et (III)\footnote{et non (III') qui ne peut pas être simplement montré ici}. Puisque $\widetilde{F} \pr(F_{5,2})+\frac{1}{4}\pr(F_4^\bullet)^2=\pr(F_{8,1})^2\pr(F_{8,2})$, l'algèbre
$$\Sy(\widetilde{\gotq})=\C[\pr(F_2^\bullet), \pr(F_4^\bullet), \pr(F_6^\bullet), \pr(F_{8,1}), \pr(F_{8,2}), \widetilde{F}, \pr(F_{5,2})]$$
est isomorphe à l'algèbre $\C[X_1, X_2, X_3, X_4, X_5, X_6, X_7]/(X_6X_7+\frac{1}{4}X_2^2-X_4^2X_5)$ qui n'est pas polynomiale.\par
Les phénomènes menant à ce contre-exemple ne sont pas isolés. On présente à la section \ref{sec11} quelques conjectures dans ce sens.

\subsection*{Quelques pistes pour des contractions paraboliques en d'autres types}

On discute à la section \ref{sec12} les limites de l'approche que nous avons menée et quelques pistes possibles pour étudier des cas en d'autres types. Le schéma de preuve de polynomialité ou de non-polynomialité de l'algèbre des semi-invariants associée à une contraction parabolique $\gotq$ présenté ici repose a minima sur la connaissance d'une famille génératrice minimale de $\Y(\gotq)$. Une telle famille est donnée notamment en types $A$ et $C$ par Panyushev et Yakimova \cite{py13}. En types $B$ et $D$ ainsi qu'en types exceptionnels, la méconnaissance d'une telle famille de manière générale rend difficile une telle étude. De plus, certaines propriétés des contractions paraboliques qui sont vraies en types $A$ et $C$ ne le sont plus nécessairement en types $B$ ou $D$. Par exemple, il est possible que $\pr(F_m)^\bullet \neq \pr(F_m^\bullet)$ c'est-à-dire $\pr(F_m^\bullet)=0$ alors que $\pr(F_m)^\bullet \neq 0$, ce qui complique la combinatoire pour trouver des semi-invariants.\par
À l'inverse, dans certains cas pour lesquels l'approche de Panyushev et Yakimova a échoué, on peut espérer trouver une base de transcendance d'invariants en étudiant des semi-invariants connus. Par exemple, on peut utiliser la proposition suivante qui donne les semi-invariants de degré $1$ pour une contraction parabolique.
\begin{propn0}[voir proposition \ref{degre1}]
Soit $\gotq=\gotp \ltimes \gotn^-$ une contraction parabolique d'une algèbre simple $\g$ associée à une certaine sous-algèbre de Cartan, une certaine base $\pi$ du système de racines $R$ associé et un certain sous-ensemble $\pi' \varsubsetneq \pi$. On note $\theta$ la plus grande racine de $R$. Soit $e \in \gotq$ non nul. Alors $e$ est un semi-invariant (de degré $1$) de $\Sym(\gotq)$ si et seulement si $e \in \g_{\beta}$, avec $\beta \in (-\pi \setminus -\pi') \cup \{\theta\}$ tel que le sommet associé à $-\beta$ dans le diagramme de Dynkin étendu de $R$ n'est relié à aucun sommet associé à un élément de $\pi'$.
\end{propn0}
Dans l'exemple \cite[remarque 4.6]{py13} en type $D_6$ avec $\pi'=\{\alpha_3,\alpha_4,\alpha_5\}$, la famille $\pr(F_2)^\bullet$, $\ldots$, $\pr(F_{10})^\bullet, f^\bullet$ (avec $f$ une racine carrée de $\pr(F_{12})$) n'est pas algébriquement indépendante et est donc "trop petite". En observant les semi-invariants de degré $1$, on remarque que $\pr(F_4)^\bullet$ appartient à l'idéal de $\Sym(\gotq)$ engendré par des semi-invariants $x_1, x_2$ de degré $1$. En poussant les calculs on obtient $\pr(F_4)^\bullet=x_1y_1 + x_2y_2 + x_1x_2y_{1,2}$, où $y_1, y_2, y_{1,2}$ sont des semi-invariants et $x_1y_1$,  $x_2y_2$,  $x_1x_2y_{1,2}$ sont des invariants de même degré et degré en $\gotn^-$ que $\pr(F_4)^\bullet$. On peut espérer avec ces trois invariants obtenir une famille "plus grande" voire algébriquement indépendante.\par
Le travail de cette thèse peut être relié à d'autres articles récemment publiés. Rappelons que l'on décrit explicitement les générateurs de $\Sy(\gotq)$ lorsque $\Sy(\gotq)$ est polynomial (avec leur poids et leur degré) en type $A$ et en type $C$ avec un facteur de Levi de type $A$. Cela semble également donner une écriture explicite des générateurs de l'algèbre polynomiale $\Sym(\gotn^-)^{\gotp'}$. Cette question a été évoquée par Joseph et Fittouhi (voir \cite[\S 2.2.3]{FJ20}). Toujours selon Joseph et Fittouhi, le nombre de ces générateurs est aussi égal au nombre de variétés orbitales d'hypersurface dans le radical nilpotent $\gotn$ de la sous-algèbre parabolique $\gotp$ (voir \cite[\S 2.3.4]{FJ20}). Ces variétés orbitales d'hypersurface ont été décrites en type $A$ par Joseph et Melnikov \cite{JM03} et par Perelman \cite{Per03} pour les autres types classiques.
\newpage
\selectlanguage{english}
\chapter*{Introduction in English}
\label{english}
\addcontentsline{toc}{chapter}{Introduction in English}
The goal of this PhD thesis is to study an important class of non-reductive Lie algebras called parabolic contractions (which are deformations of reductive Lie algebras). More precisely, we study the symmetric algebra of semi-invariants associated with such parabolic contractions. Our study relies on crucial results from Panyushev and Yakimova about the polynomiality of the symmetric algebra of invariants  (in types $A$ and $C$) associated with these parabolic contractions.

\section*{Algebras of invariants and semi-invariants}
First we define algebras of invariants and semi-invariants. Let $\C$ be an algebraically closed field with characteristic zero, $(\gotk, [\, , \, ])$ a finite-dimensional Lie algebra over $\C$ and $\gotk^*$ its dual vector space. Define the index $\ind \gotk$ as the smallest dimension of the kernel of $\phi_f$ for $f \in \gotk^*$, where $\phi_f$ is the bilinear form defined by $\phi_f : (x,y) \rightarrow f([x,y])$. The adjoint representation of $\gotk$ on itself is the natural Lie algebra representation defined from its Lie bracket $\ad(g)(h) = g \cdot h = [g,h]$ for all $g, h \in \gotk$. Extend this representation to a unique representation, which is still denoted by $\ad$, from $\gotk$ to the symmetric algebra $\Sym(\gotk)$ of $\gotk$ such that $\ad(g)$ is a derivation on $\Sym(\gotk)$ for all $g \in \gotk$.\par
We focus on the algebra of invariants  $\Y(\gotk)$ and the algebra of semi-invariants  $\Sy(\gotk)$, which are subalgebras of $\Sym(\gotk)$. The algebra of invariants $\Y(\gotk)$ is the algebra of all invariants (or symmetric invariants), \emph{i.e.} all elements of $\Sym(\gotk)$ in the kernel of every $\ad(g)$, $g \in \gotk$. The algebra of semi-invariants $\Sy(\gotk)$ is the algebra \emph{generated by} all semi-invariants (or symmetric semi-invariants), \emph{i.e.} the elements of $\Sym(\gotk)$ which are eigenvectors of every $\ad(g)$, $g \in \gotk$. Let $s$ be a non-zero semi-invariant, then there exists $\lambda \in \gotk^*$ such that $\ad(g)(s)=\lambda(g)s$ for every $g \in \gotk$. Call $\lambda$ the weight of $s$. Endow $\Sym(\gotk)$ with a Poisson bracket $\{ \, , \}$ extending the Lie bracket on $\gotk$, such that the associative algebra $\Sym(\gotk)$ becomes a \emph{Poisson algebra}. The algebras $\Y(\gotk)$ and $\Sy(\gotk)$ are respectively the centre and the semi-centre of this Poisson algebra. Their Gelfand-Kirillov dimensions $\GK \Y(\gotk)$ and $\GK \Sy(\gotk)$ coincide with the transcendence degrees of their associated fields of fractions. By a theorem of Rosenlicht, we get $\GK \Y(\gotk) \leq \ind \gotk$ \cite{ros63}, with equality when $\Y(\gotk)=\Sy(\gotk)$ (see \cite[Chap. I, Sec. B, 5.12]{fau14}). In invariant theory associated with Lie algebras, one of the crucial questions related to the algebras $\Y(\gotk)$ and $\Sy(\gotk)$ is the polynomiality, \emph{i.e.} determining whether these algebras are isomorphic to some algebras of polynomials.\par
Geometrically, let us consider the case when $\gotk$ is the Lie algebra of a connected algebraic group $K$. The algebra $\Y(\gotk)$ identifies with the algebra $\C[\gotk^*]^K$ of polynomial functions on $\gotk^*$ which are invariant under the coadjoint action of $K$. If $\Y(\gotk)$ has finite type, the variety associated with the algebra of invariants  is then $ \Spec \Y(\gotk) = \gotk^* \sslash K$. The polynomiality of $\Y(\gotk)$ then implies that $\gotk^* \sslash K$ is an affine space. Similarly to algebras, one can also define the field of invariants $\FY(\gotk)$, which is a subfield of the fraction field $\Frac(\Sym(\gotk))$ of $\Sym(\gotk)$. If $\FY(\gotk)$ is a purely transcendental extension of $\C$, then geometrically, $\gotk^* \sslash K$ is a rational variety. If $\FY(\gotk)=\Frac(\Y(\gotk))$, then the polynomiality of $\Y(\gotk)$ clearly implies the rationality of $\FY(\gotk)$. However, the converse is not true: Dixmier has given a counterexample in which $\Y(\gotk)$ is not polynomial and even not finitely generated but $\FY(\gotk)=\Frac(\Y(\gotk))$ is indeed rational \cite[4.4.8 and 4.9.20]{Dix}.\par
The first well-known result of polynomiality of algebra of invariants is due to Chevalley \cite{Dix}. It states that if $\g$ is a reductive Lie algebra, \emph{i.e.} the product of a semisimple and an abelian Lie algebra, then $\Y(\g)$ is polynomial. In the case where $\g$ is semisimple, the proof relies on an isomorphism between the algebra of invariants $\Y(\g) \simeq \Y(\g^*)$ and the algebra $\Sym(\h^*)^W$, where $\h$ is a Cartan subalgebra of $\g$ and $W$ is the Weyl group associated with the root system of $(\g,\h)$. In particular, $W$ can be seen as a finite subgroup of $\GL(\h^*)$ generated by reflections. Chevalley then identifies $\Sym(\h^*)^W$ with a polynomial algebra thanks to a result of commutative algebra (see \cite[theorem 31.1.6]{TYu} for more details). A result of Kostant \cite{kos63} gives another interpretation of this result. According to it, there exists an affine subspace $\mathcal{V}$ of $\g^*$ such that the restriction morphism $\Y(\g) \simeq \C[\g^*]^G \rightarrow \C[\mathcal{V}]$ to $\mathcal{V}$ is an isomorphism from $\Y(\g)$ to the algebra $\C[\mathcal{V}]$ of polynomial functions on $\mathcal{V}$. Such isomorphisms are called "Kostant-Weierstrass sections"\footnote{They are also often called "Kostant slices" or "Weierstrass sections". These terms can also designate the affine subspace $\mathcal{V}$.}.\par 
If $\gotk=\g^e$ is the centraliser of a nilpotent element $e$ of a simple Lie algebra $\g$, Premet, Panyushev and Yakimova have shown the polynomiality of $\Y(\g^e)$ when $\g$ is simple of type $A$ or $C$ \cite{ppy07}. Their proof relies on the following theorem of Panyushev \cite[theorem 1.2]{pan07}.
\begin{theE}
Let $\gotk$ be a finite-dimensional Lie algebra. Assume that $\gotk$ verifies the codimension $2$ property, and $\GK \Y(\gotk) = \ind \gotk$. Let $l= \ind \gotk$, and $f_1, \ldots, f_l$ be algebraically independent homogeneous elements of $\Y(\gotk)$. Then
\begin{enumerate}
\item $\sum_{i=1}^l \deg f_i \geq (\dim \gotk + \ind \gotk )/2$,
\item if $\sum_{i=1}^l \deg f_i = (\dim \gotk + \ind \gotk )/2$, then $\Y(\gotk)=\C[f_1, \ldots, f_l]$.
\end{enumerate}
\label{pan13}
\end{theE}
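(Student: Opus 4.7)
The plan is to compare the invariant $l$-vector $\omega := df_1 \wedge \ldots \wedge df_l \in \Sym(\gotk) \otimes \Lambda^l \gotk$ with a canonical ``reference'' $l$-vector $\omega_0$ extracted from the Poisson structure, then exploit the codimension~$2$ hypothesis to show that the two are proportional with a \emph{polynomial} ratio. Note first that $d := \dim \gotk$ and $l := \ind \gotk$ satisfy $d-l$ even, since $d-l$ is the dimension of a generic coadjoint orbit, which is symplectic.

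\textbf{Step 1 (reference form).} For $\xi \in \gotk^*$, form the Pfaffian $\phi_\xi^{\wedge (d-l)/2} \in \Lambda^{d-l} \gotk^*$, polynomial of degree $(d-l)/2$ in $\xi$. After fixing a non-zero top form on $\gotk$, the canonical isomorphism $\Lambda^{d-l}\gotk^* \otimes \Lambda^{d} \gotk \simeq \Lambda^l \gotk$ converts this into a polynomial $l$-vector $\omega_0 \in \Sym(\gotk) \otimes \Lambda^l \gotk$ of degree exactly $(d-l)/2$, whose value at any regular $\xi$ is a generator of the line $\Lambda^l \ker \phi_\xi$. Since $df_i(\xi) \in \ker \phi_\xi$ for each invariant $f_i$ and each $\xi$, the value $\omega(\xi)$ also lies in $\Lambda^l \ker \phi_\xi$ at regular $\xi$, and thus $\omega = g\,\omega_0$ for some rational $g$ on $\gotk^*$. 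This $g$ is regular on $\gotk^*_{\text{reg}}$ because $\omega_0$ never vanishes there; by the codimension~$2$ hypothesis, $\gotk^*_{\text{sing}}$ has codimension $\geq 2$, and the normality of the affine space $\gotk^*$ forces $g \in \Sym(\gotk)$.

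\textbf{Step 2 (proof of (1)).} Comparing degrees in $\omega = g\,\omega_0$: the left-hand side has degree $\sum_i \deg f_i - l$, the right-hand side $\deg g + (d-l)/2$. Since $\deg g \geq 0$ we obtain $\sum_i \deg f_i \geq (d+l)/2$.

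\textbf{Step 3 (proof of (2)).} In the equality case, $g$ is a non-zero constant, so $df_1(\xi), \ldots, df_l(\xi)$ form a basis of $\ker \phi_\xi$ for every $\xi \in \gotk^*_{\text{reg}}$. Let $f \in \Y(\gotk)$ be homogeneous. Then $df(\xi) \in \ker \phi_\xi$ for all $\xi$, so on $\gotk^*_{\text{reg}}$ it is a linear combination of the $df_i(\xi)$; equivalently $df \wedge df_1 \wedge \ldots \wedge df_l$ vanishes on the dense open $\gotk^*_{\text{reg}}$, hence identically. Replacing $df_i$ by $df$ in the wedge product gives $l$-vectors $\omega^{(i)}$, each again proportional to $\omega_0$ by the same codimension~$2$ argument, so Cramer's rule yields $df = \sum_i a_i df_i$ with $a_i \in \Sym(\gotk)$. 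The \textbf{main obstacle} is to upgrade this to $f \in \C[f_1, \ldots, f_l]$. I would proceed as follows: the morphism $\pi : \gotk^* \to \C^l$, $\xi \mapsto (f_1(\xi), \ldots, f_l(\xi))$, has Jacobian $\omega = c\,\omega_0$ non-vanishing on $\gotk^*_{\text{reg}}$, hence is smooth there of relative dimension $d-l$; its generic fibres contain a regular coadjoint orbit of the same dimension $d - l$ and therefore coincide with its closure. This forces $\C(f_1, \ldots, f_l) = \Frac(\Y(\gotk))$. Since $\Y(\gotk)$ has the same Krull dimension as $\C[f_1, \ldots, f_l]$, every element of $\Y(\gotk)$ is algebraic over $\C[f_1, \ldots, f_l]$, and combined with the equality of fraction fields, integrality over the normal ring $\C[f_1, \ldots, f_l]$ yields $\Y(\gotk) \subseteq \C[f_1, \ldots, f_l]$, whence equality. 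The delicate point, which one has to justify carefully, is that the algebraic dependence is in fact integral: this uses that $\Y(\gotk)$ sits inside the UFD $\Sym(\gotk)$ and that $\C[f_1, \ldots, f_l]$ is factorially closed in $\Sym(\gotk)$ under the present hypotheses, so a rational function in the $f_i$'s that lies in $\Sym(\gotk)$ is automatically polynomial in the $f_i$'s.
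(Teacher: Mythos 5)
The paper does not prove this theorem: it is quoted directly from Panyushev (\cite[théorème 1.2]{pan07}), and the text that follows only uses it as a black box. So there is no ``paper's own proof'' to compare against; what I can do is evaluate your reconstruction on its own merits.

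Your Steps 1 and 2 correctly reproduce the standard Pfaffian argument: one compares the $l$-vector $\omega = df_1 \wedge \dots \wedge df_l$ with the reference $\omega_0$ built from $\phi_\xi^{\wedge (d-l)/2}$, uses that $\gotk^*_{\text{sing}}$ has codimension $\geq 2$ together with normality of $\gotk^*$ to get $\omega = g\,\omega_0$ with $g$ polynomial, and reads off the degree inequality. The genuine gap is in Step 3. You assert that ``generic fibres \dots coincide with [an orbit's] closure,'' i.e.\ that the generic fibre of $\pi = (f_1,\dots,f_l)$ is irreducible; but the tangent-space computation only shows that each connected component of the regular part of a generic fibre is a single coadjoint orbit, not that there is only one such component, and without this the claimed equality $\C(f_1,\dots,f_l)=\Frac(\Y(\gotk))$ does not follow. (Your subsequent ``factorially closed'' step, by contrast, can be justified via dominance of $\pi$ and a Nullstellensatz argument, so that is not where the real difficulty lies.) A cleaner route that sidesteps the fibre question entirely is the one Panyushev essentially uses: observe that each $\omega^{(i)} = df_1\wedge\dots\wedge df \wedge\dots\wedge df_l$ is a $\gotk$-invariant element of $\Sym(\gotk)\otimes\Lambda^l\gotk$, that $\omega_0$ is semi-invariant of the unimodularity character, and that in the equality case (where $\omega/\omega_0$ is a nonzero scalar, forcing that character to be trivial) the coefficients $a_i = \pm\omega^{(i)}/\omega$ produced by Cramer therefore lie in $\Y(\gotk)$ itself, not merely in $\Sym(\gotk)$. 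Euler's identity then gives $f = \sum_i (\deg f_i/\deg f)\,a_i f_i$ with invariant, strictly lower-degree $a_i$, and induction on degree concludes $\Y(\gotk)=\C[f_1,\dots,f_l]$.
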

Here the \emph{codimension $2$ property} is defined via the following:
\begin{def0}
Let $\gotk$ be a Lie algebra. For every $f \in \gotk^*$, we say that $f$ is \emph{regular} in $\gotk^*$ if the kernel of the bilinear form\footnote{introduced earlier} $\phi_f$ has minimal dimension. Otherwise, we say that $f$ is \emph{singular} in $\gotk^*$.\par
We say that $\gotk$ \emph{verifies the codimension $2$ property} if the set of singular elements $\gotk^*_{\text{sing}}$ in $\gotk^*$ has codimension at least $2$ in $\gotk^*$.
\end{def0}
Thus the question of the polynomiality of $\Y(\gotk)$ when $\gotk=\g^e$ has been thoroughly studied, and several cases of non-polynomiality have been discovered in this case. If $e$ is a highest root vector for $\g$ simple of type $E_8$, then one has $\Y(\g^e) \simeq \Sy(\gotp)$ for a certain parabolic subalgebra $\gotp$ of $\g$ and Yakimova has shown that this algebra was not polynomial \cite{yak07}. Charbonnel and Moreau extended this study by giving a necessary and sufficient condition for the polynomiality of $\Y(\g^e)$. If $\Y(\g^e)$ is polynomial, they also showed that $\Sym(\g^e)$ is a free $\Y(\g^e)$-module. They then use these conditions to study the polynomiality of $\Y(\g^e)$ depending on the element $e \in \g$ when $\g$ is simple of type $B$ or $D$. For example they find a case where $\Y(\g^e)$ is not polynomial, when $\g$ is of type $D_7$ \cite[\S 7.3]{chm14}.\par
Polynomiality results concerning the algebra of semi-invariants $\Sy(\gotk)$ are more fragmented. These algebras can be studied via the \emph{canonical truncation} of $\gotk$. For a Lie algebra $\gotk$, the canonical truncation $\gotk_\Lambda$ of $\gotk$ is the largest Lie subalgebra of $\gotk$ which vanishes on all weights of $\Sy(\gotk)$. Fauquant-Millet and Joseph\footnote{following Borho, Gabriel and Rentschler in the nilpotent case} showed in a quite general frame\footnote{in particular if $\gotk$ is an ad-algebraic Lie algebra} that $\Sy(\gotk)=\Y(\gotk_\Lambda)=\Sy(\gotk_\Lambda)$ \cite[Appendice B.2]{fmj08}. Thus, studying the polynomiality of an algebra of semi-invariants reduces to, in a certain sense, the study of the polynomiality of an algebra of invariants . For example, if $\g$ is a reductive Lie algebra, one has $\g_\Lambda=\g$, so that $\Sy(\g)=\Y(\g)$, thus $\Sy(\g)$ is polynomial. Joseph \cite{jos77} showed that in the case of a Borel subalgebra $\gotb$ of a simple Lie algebra $\g$, the algebra of semi-invariants  $\Sy(\gotb)$ is polynomial and generated by $\rk \g$ generators ($\rk \g$ being the rank of $\g$) which are explicitly computed. In the case where $\gotp$ is a biparabolic Lie subalgebra in type $A$ or $C$, Joseph and Fauquant-Millet \cite{FMJ05} showed the polynomiality of $\Sy(\gotp)$ by finding an upper bound and a lower bound to the formal character of $\Sy(\gotp)$ and by showing the bounds are equal notably in types $A$ and $C$. In types $A$ and $C$, they successfully computed the weights and degrees of homogeneous generators.
In type $A$, Joseph also exhibited Kostant-Weierstrass sections for $\Y(\gotp_\Lambda)=\Sy(\gotp)$ \cite{jos08}.\par
\section*{Parabolic contractions}
In this PhD thesis, we focus on a particular class of Lie algebras called \emph{parabolic contractions}, which are deformations of reductive Lie algebras. These algebras have been introduced by E. Feigin, in the case of a contraction by a Borel subalgebra \cite{Fei12}, and Panyushev and Yakimova for all parabolic subalgebras \cite{py13}. Let $\g$ be a reductive Lie algebra and $\gotp$ a parabolic subalgebra of $\g$. Denote by $\gotn$ the nilradical of $\gotp$ and $\gotn^-$ the nilradical of the  opposite parabolic subalgebra of $\gotp$ so that, if $\gotl$ is a Levi factor of $\gotp$, one has $\g=\gotn \oplus \gotl \oplus \gotn^-$. Then $\gotp=\gotn \oplus \gotl$ thus $\g = \gotp \oplus \gotn^-$. Define $\gotq = \gotp \ltimes (\gotn^-)^{\text{a}}$ the parabolic contraction of $\g$ by $\gotp$. Here, $(\gotn^-)^\text{a}$ is the vector subspace $\gotn^-$ seen as an abelian ideal\footnote{For practical reasons, we will omit this notation after this and simply note this Lie algebra $\gotn^-$.} of $\gotq$ as well as a $\gotp$-module with the identification $\gotn^- \simeq \g/\gotp$. One can see this Lie algebra as an Inönu-Wigner contraction\footnote{for a general definition of Inönu-Wigner contractions, see \cite{iw53} or \cite[\S 2.3]{dr85}} of $\g$. For every $t \neq 0$, let $c_t : \g \rightarrow \g$ be the automorphism of $\C$-vector space defined by
$$\forall p \in \gotp, \forall n \in \gotn^-, \, c_t(p+n)=p+tn$$
Define then $\g_{(t)}$ the Lie algebra equal to $\g$ as vector space, with its Lie bracket defined as
$$\forall q,q' \in \g_{(t)}, [q,q']_{\g_{(t)}} = c_t^{-1}([c_t(q),c_t(q')]_{\g}) $$
The limit, when $t$ goes to $0$, of the Lie algebras\footnote{In other words, the limit, when $t$ goes to $0$, of the Lie brackets on $\g_{(t)}$ gives the Lie bracket on $\gotq$.} $\g_{(t)}$ gives the parabolic contraction $\gotq$.\par
From these parabolic contractions, Feigin defines flag variety degenerations and studies them combinatorially in detail. With these degenerations, he gets a combinatorial link with the sequence of Genocchi numbers \cite{fem2012}.\par
Other previously studied Lie algebras are similar to some parabolic contractions in particular cases. Lie algebras of the form $\gota \ltimes \gota^*$ with $\gota$ a Lie algebra, are particular cases of \emph{Drinfeld doubles} (see \cite{dri88}, \cite{es01} or \cite[\S 3.2]{jos95}). When $\gota=\gotb$ is a Borel subalgebra, the quotient of the Lie algebra $\gotb \ltimes \gotb^*$ by its centre\footnote{here the centre of $\gotb \ltimes \gotb^*$ is isomorphic to the Cartan subalgebra associated with $\gotb^*$} is isomorphic to a parabolic contraction by a Borel subalgebra. After the results presented in this dissertation, Bar-Natan and Van der Veen in type $A$ \cite{bnv20}, then Bulois and Ressayre for a Borel subalgebra in all types \cite{br20} study the automorphisms of such algebras. For this, Bulois and Ressayre redefine these algebras in the setting of Kac-Moody algebras.\par
In type $A$, a parabolic contraction by a Borel subalgebra can be seen as the Lie algebra associated with a \emph{Brauer cyclic scheme}, introduced by Knutson and Zinn-Justin \cite[1.1 and 2.1]{kzj07}. The Brauer cyclic scheme is defined as the set $E$ of all matrices $M$ with size $n \times n$ such that $M \bullet M = 0$, where $\bullet$ is an associative product which deforms the usual product on matrices. The Lie algebra structure on $E$ associated with the associative product $\bullet$ gives rise to a parabolic contraction by a Borel subalgebra in type $A$.\par
Panyushev and Yakimova have also studied other types of contractions, e.g., $\rat_2$-contractions (see for instance \cite{yak14-2} and \cite{pan07}), of the form $\g_0 \ltimes \g_1$ where $\g=\g_0 \oplus \g_1$ is a $\rat_2$-grading of a reductive Lie algebra $\g$. More generally, Panyushev and Yakimova are aiming at classifying all Lie algebras of the form $\g \ltimes V$ with $\g$ a simple Lie algebra and $V$ a finite-dimensional representation of $\g$ such that the associated algebra of invariants  is polynomial (e.g., see \cite{py19}).\par
\section*{Algebras of invariants and semi-invariants associated with parabolic contractions}
This PhD thesis lies in the continuity of the work of Panyushev and Yakimova concerning the study of $\Y(\gotq)$ \cite{py13} where $\gotq$ is a parabolic contraction and we use their essential results. According to Panyushev and Yakimova (\cite[\mbox{}1]{py13}) for a parabolic contraction $\gotq$, there exists a connected algebraic group $Q$ such that $\gotq$ is its Lie algebra. In \cite{py13}, they study the polynomiality of the algebras of invariants $\C[\gotq]^Q$ and $\C[\gotq^*]^Q$. Using algebraic geometry\footnote{essentially some form of the Igusa lemma}, Panyushev and Yakimova easily show that $\C[\gotq]^Q$ is a polynomial algebra. For every parabolic contraction $\gotq$ by a Borel subalgebra, they also show \cite{py12} that $\C[\gotq^*]^Q \simeq \Y(\gotq)$ is polynomial (using properties which are specific to the nilradical of a Borel subalgebra). However, studying the algebra $\Y(\gotq) \simeq \C[\gotq^*]^Q$ more generally is more involved. Set $\gotq$ a parabolic contraction of a semisimple Lie algebra $\g$ by a parabolic subalgebra $\gotp$ and $G, P$ connected algebraic groups such that $\g, \gotp$ are the Lie algebras respectively of $G, P$. From algebraically independent homogeneous generators $\mathcal{F}_1, \ldots, \mathcal{F}_n$ of the algebra $\Y(\g)$, Panyushev and Yakimova construct some elements $\mathcal{F}_1^\bullet, \ldots, \mathcal{F}_n^\bullet$ of $\Y(\gotq)$ by degenerating $\mathcal{F}_1, \ldots, \mathcal{F}_n$ via $c_t$. These $\mathcal{F}_1^\bullet, \ldots, \mathcal{F}_n^\bullet$ freely generate $\Y(\gotq)$ in the case of a contraction by a Borel subalgebra ; they also are potential candidates to freely generate $\Y(\gotq)$ in the general case. To show this last point in the case of any parabolic subalgebra, they use the idea of Kostant-Weierstrass sections and adapt it to the parabolic contractions case. They construct an injection\footnote{which is a restriction map} $\psi$ from $\Y(\gotq)$ to a certain algebra $\Sym(\g^e)^{P^e}$, where $e$ is a nilpotent element of $\g$. The invariants $\mathcal{F}_1^\bullet, \ldots \mathcal{F}_n^\bullet \in \Y(\gotq) \subset \C[\gotq^*]$ can then be seen in the algebra $\Sym(\g^e)^{P^e}$ by setting $\eav \mathcal{F}_m=\psi(\mathcal{F}_m^\bullet)$. In fact, the elements $\eav \mathcal{F}_m$ belong to $\Sym(\g^e)^{G^e}=\Y(\g^e)$. In types $A$ and $C$, Panyushev and Yakimova showed that $\Sym(\g^e)^{P^e}=\Y(\g^e)$ and that the $\eav \mathcal{F}_m$ are algebraically independent and generate $\Y(\g^e)$ (via their study with Premet \cite{ppy07} previously mentioned). Furthermore, in these cases, the restriction map $\psi$, which was injective, becomes an isomorphism of algebras between $\Y(\gotq)$ and $\Y(\g^e)$. With these latest results, they successfully show that $\Y(\gotq)$ is polynomial and generated by $\mathcal{F}_1^\bullet, \ldots \mathcal{F}_n^\bullet$.\par
Yakimova has also considered the question of the polynomiality of $\Sy(\gotq)$ where $\gotq$ is a parabolic contraction by a Borel subalgebra. In this case, she shows the polynomiality of $\Sy(\gotq)$ \cite[\S 5.1]{yak12}. The proof again uses the family of free generators $\mathcal{F}^\bullet_1, \ldots, \mathcal{F}^\bullet_{n}$ of $\Y(\gotq)$. She notes emarks that $\mathcal{F}^\bullet_{n}$ has a unique factorisation as a product of irreducible elements\footnote{which are then semi-invariants} of degree $1$, \emph{i.e.} in $\gotq \subset \Sym(\gotq)$. Thus, by replacing $\mathcal{F}^\bullet_{n}$ in the family $\mathcal{F}^\bullet_1, \ldots, \mathcal{F}^\bullet_{n}$ by its irreducible factors, she gets a certain family $\mathbf{F}$, and shows that it freely generates $\Sy(\gotq)$.\par
In this thesis, we take this idea of decomposing an invariant into a product of irreducible elements then replacing the invariant with its irreducible factors. The idea is to get a family $\mathbf{F}$ which would hopefully freely generate the algebra of semi-invariants . However, to show that $\mathbf{F}$ does generate the algebra of semi-invariants, our approach differs from the one of Yakimova. In fact, one of the key points of Yakimova's proof is the explicit computation of what is called a \emph{fundamental semi-invariant}. This computation is specific to the Borel case and uses the fact that in this specific case, the canonical truncation $\gotq_\Lambda$ is equal to the derived Lie algebra $\gotq'$, so that $\Sy(\gotq)=\Y(\gotq')$. Yet if $\gotq$ is any parabolic contraction in type $A$, the inclusions $\gotq' \subseteq \gotq_\Lambda \subseteq \gotq$ are in many cases proper\footnote{for $\gotq$ a parabolic contraction of $\gl_n$, we compute precisely $\gotq_\Lambda$ in section \ref{sec5}}. We think that adapting Yakimova's arguments in a more general setting may be difficult.\par
Also, we considered a proof using Panyushev's theorem \ref{pan13}, with $\gotk=\gotq_\Lambda$\footnote{remember that $\Sy(\gotq)=\Y(\gotq_\Lambda)$}. In this theorem, a crucial hypothesis is the \emph{codimension $2$ property}. This property is verified if and only if the associated fundamental semi-invariant is scalar \cite{js10}. Yet in the case when $\gotq$ is a contraction by a Borel subalgebra $\gotb$, for which $\gotq'=\gotq_\Lambda$, Yakimova \cite[theorem 5.5]{yak12} shows that, except in type $A$, the fundamental semi-invariant associated with $\gotq_\Lambda$ is not scalar, so the Lie algebra $\gotq_\Lambda$ does not satisfy the codimension $2$ property. Thus a proof using theorem \ref{pan13} cannot be generalised outside type $A$.
\section*{Main results}
We give an outline of the results of this dissertation. Our main result is the following:
\begin{thepE}
Let $\gotq$ be a parabolic contraction of a simple Lie algebra $\g$ by a parabolic subalgebra $\gotp$.
\begin{itemize}
\item[$(*)$] If $\g$ is
\begin{itemize}
\item either of type $A$,
\item or of type $C$ with a Levi factor of $\gotp$ of type $A$,\footnote{By "Levi factor of type $A$", we mean a factor only associated with short roots. For example, if the Levi factor is associated only with the long root, we will not consider this factor as a "type $A$" Levi factor, even if it is isomorphic to a certain product $\spl_2 \times \gota$ with $\gota$ an abelian Lie algebra.}
\end{itemize}
then $\Sy(\gotq)$ is polynomial.
\item[$(**)$] There exists a parabolic contraction $\gotq$, with $\g$ of type $C$ and such that the Levi factor of $\gotp$ is not of type $A$, for which $\Sy(\gotq)$ is not polynomial.
\end{itemize}
\end{thepE}
\subsection*{Parabolic contractions of $\gl_n$}
To show the polynomiality in the $(*)$ cases, we rely on the study on the parabolic contractions of $\gl_n$, which is the Lie algebra of all matrices with size $n$. If $\gotq$ is a parabolic contraction of $\gl_n$, the polynomiality of $\Sy(\gotq)$ is shown in three steps:
\begin{itemize}
\item Step 1: finding a specific algebraically independent family $\mathbf{F}$ of $\Sy(\gotq)$,
\item Step 2: showing that $\mathbf{F}$ is a transcendence basis of $\Sy(\gotq)$,
\item Step 3: conclude that $\Sy(\gotq)=\C[\mathbf{F}]$.
\end{itemize}
\paragraph{Step 1:} Any factor of a semi-invariant is still a semi-invariant. Thus, in the same way as Yakimova in the Borel case, we find proper semi-invariants (\emph{i.e.} with a non-zero weight) by factorising some known invariants. If $F_m \in \Sym(\gl_n)$ is the sum of the principal minors of size $m$, then $F_1, \ldots, F_n$ freely generate $\Y(\gl_n)$ and $F_1^\bullet, \ldots, F_n^\bullet$ freely generate $\Y(\gotq)$\footnote{This point can be deduced from the study of the $\mathcal{F}_m^\bullet$ in type $A$ by Panyushev and Yakimova \cite{py13} (see proposition \ref{fdroit}).}, where $f^\bullet$ is the component of $f$ with maximal degree in $\gotn^-$. So firstly, we factorise the $F_m^\bullet$ (section \ref{sec4}). Let $\gotl$ be a Levi factor associated with $\gotq$. The algebra $\gotl$ is isomorphic to a certain product $\gl_{i_1} \times \ldots \times \gl_{i_s}$. We say that each $\gl_{i_k}$ is a \emph{block} of $\gotl$, so that $s$ is the number of blocks of $\gotl$. Two blocks $\gl_{i_k}$ and $\gl_{i_{k'}}$ are said to be \emph{isomorphic} if $i_k=i_{k'}$. Also set $\imax = \max_k i_k$ the size of the biggest block. For every $m \in \llbracket 1,n \rrbracket$,
\begin{itemize}
\item if there exists $i \in \{i_1, \ldots, i_s\}$ such that $m=\sum_k \min(i_k,i)$, then set $r_m$ as the number of blocks of $\gotl$ with size $i$, \emph{i.e.} $r_m=\Card(\{k \, | \, i_k=i\})$,
\item otherwise, set $r_m=1$.
\end{itemize}
\begin{the0E}[see theorem \ref{semiinv}]
For every $m \in \llbracket 1,n \rrbracket$, the invariant $F_m^{\bullet}$ is a product of $r_m$ non-constant homogeneous factors, denoted by $F_{m,1}, \ldots, F_{m,r_m}$. These factors are semi-invariants and also verify the following properties:
\begin{enumerate}[label=(\arabic*)]
\item for all $t \in \llbracket 1, r_m-1 \rrbracket$, $F_{m,t} \in \Sym(\gotn^-)$,
\item setting $\mathfrak{L}_m:={\lambda}_{m,1}, \ldots, {\lambda}_{m,r_m}$ for the weights of $F_{m,1}, \ldots, F_{m,r_m}$, one has $\lambda_{m,1} + \ldots + {\lambda}_{m,r_m}=0$ and $\mathfrak{L}_m$ has rank $r_m-1$,
\item the vector spaces $\vect(({\lambda}_{m,t})_t)$ for $m \in \llbracket 1,n \rrbracket$ are in direct sum,
\item the family $\mathbf{F}$ of all $F_{m,t}$ is algebraically independent.
\end{enumerate}
\end{the0E}
With this theorem, we get non-trivial factors of $F_m^\bullet$ if $r_m \geq 2$, with their weights and their degrees. The decomposition of $F_m^{\bullet}$ in the above theorem together with (1) is obtained by a combinatorial study of the invariants $F_m^{\bullet}$ (subsection \ref{sec3}). We show (2) and (3) by explicitly computing the weights of the semi-invariants $F_{m,t}$ (subsection \ref{ssec4.2}). This computation also induces (4) with the following general theorem:
\begin{the0E}[see theorem \ref{alin}]
Let $\gotk$ be a finite-dimensional Lie algebra. Let $d \geq 1$ and $(f_m)_{1 \leq m \leq d}$ be a family of algebraically independent elements of $\Y(\gotk)$. Assume that for every $m \in \llbracket 1,d \rrbracket$, the invariant $f_m$ decomposes into a product of semi-invariants $f_m = \prod_{t=1}^{r_m} f_{m,t}^{s_{m,t}}$ with $r_m \geq 1$, and $s_{m,t} \geq 1$ some (setwise) coprime numbers. Also assume that the semi-invariants $f_{m,t}$ verify the following:
\begin{itemize}
\item for a fixed $m$, the family of weights $\lambda_{m,t}$ of $f_{m,t}$ for $t \in \llbracket 1,r_m \rrbracket$ has rank $r_m-1$,
\item the sum of the vector spaces $\vect \left((\lambda_{m,t})_{1 \leq t \leq r_{m}}\right)$ for $m \in \llbracket 1, d \rrbracket$ is a direct sum.
\end{itemize}
Then the $(f_{m,t})_{1 \leq m \leq d, \, 1 \leq t \leq r_m}$ are algebraically independent. 
\end{the0E}
\paragraph{Step 2:} The next step is to show that $\mathbf{F}$ is a transcendence basis of $\Sy(\gotq)$. For that purpose, we show that the Gelfand-Kirillov dimension $\GK \C[\mathbf{F}]$ of the polynomial algebra $\C[\mathbf{F}]$ generated by $\mathbf{F}$, is equal to the Gelfand-Kirillov dimension of $\Sy(\gotq)$. Since $\GK \C[\mathbf{F}]$ is the transcendence degree of the field of fractions of $\C[\mathbf{F}]$, and since $\mathbf{F}$ is algebraically independent, then $\GK \C[\mathbf{F}]$ is also the cardinality of $\mathbf{F}$. Thus it is sufficient to show that $\GK \Sy(\gotq) = \Card(\mathbf{F})$. We already have $\GK \Sy(\gotq) = \GK \Y(\gotq_\Lambda) = \ind \gotq_\Lambda$.\par
Then, we compute $\ind \gotq_\Lambda$, which is given by the following theorem:
\begin{the0E}[see theorem \ref{dimind}]
Let $s$ be the number of blocks of a Levi factor $\gotl$ associated with $\gotq$ and $p$ the number of isomorphism classes of blocks of $\gotl$. That is, $\gotl$ verifies $\gotl \simeq \gl_{i_1} \times \ldots \times \gl_{i_s}$ with $\Card(\{i_1, \ldots, i_s\})=p$. Then
$$\dim \gotq_{\Lambda}=n^2-(s-p) \qquad \text{and} \qquad \ind \gotq_{\Lambda}=\Card(\mathbf{F})=\GK (\C[\mathbf{F}])=n+(s-p)$$
\end{the0E}
To show the above theorem, we use an equality proved by Ooms and van den Bergh \cite[Proposition 3.1]{oom10}
$$ \dim \gotq + \ind \gotq = \dim \gotq_\Lambda + \ind \gotq_\Lambda$$
One has $\dim \gotq= \dim \g = n^2$, and Panyushev and Yakimova have shown that $\ind \gotq=\rg \g =n$ \cite[Theorem 3.1]{py13}. Also $\ind \gotq_\Lambda = \GK \Sy(\gotq) \geq \GK \C[\mathbf{F}] = \Card \mathbf{F} = n+(s-p)$. To conclude, we then show that $\dim \gotq_\Lambda \geq n^2-(s-p)$:
\begin{itemize}
\item we start with the inclusion $\gotq' \oplus \gotz(\gotq) \subset \gotq_\Lambda$ (where $\gotz(\gotq)$ is the centre of $\gotq$) which is true for any Lie algebra, this inclusion gives us a vector subspace of $\gotq_\Lambda$ with dimension $n^2-s+1$,
\item since the invariants $F_m^\bullet$ belong to $\Sy(\gotq)\subset\Sym(\gotq_\Lambda)$, the $p-1$ remaining dimensions are given by computing some factors in $\h_\Lambda$ (where $\h_\Lambda:=\h\cap \gotq_\Lambda$ with $\h$ a Cartan subalgebra of $\gl_n$) of some terms for well-chosen $F_m^{\bullet}$ (section \ref{sec5}).
\end{itemize}
\paragraph{Step 3:} It suffices then to show that the $F_{m,t}$ generate $\Sy(\gotq)$ (section \ref{sec6}). We first show a theorem which can be applied outside the frame of parabolic contractions.
\begin{the0E}[see theorem \ref{puissant}]
Let $\gotk$ be a finite-dimensional Lie algebra. Assume that $\Y(\gotk)$ is a UFD and that
\begin{enumerate}
\item[(I)] there exists a $\C$-algebra homomorphism $\vartheta : \Sym(\gotk) \rightarrow \Y(\gotk)$ such that $\vartheta_{|\Y(\gotk)}$ is an isomorphism.
\end{enumerate}
Let $(f_m)_{1 \leq m \leq d}$ be a family of invariants which are irreducible in $\Y(\gotk)$. Given for all $m \in \llbracket 1,d \rrbracket$ a decomposition of $f_m$ in $\Sym(\gotk)$ as
\[
f_m = \prod_{t=1}^{r_{m}} (f_{m,t})^{\nu_{m,t}} \label{decompintro} \tag{$\clubsuit$}
\]
with $\nu_{m,t} \in \N^*$, assume that for each $m$, we are in one of the two following cases:
\begin{itemize}
\item the decomposition \eqref{decompintro} is trivial, \emph{i.e.} $r_m=1$ and $\nu_{m,1}=1$, that is $f_m = f_{m,1}$,
\item the décomposition \eqref{decompintro} is the decomposition of $f_m$ as a product of irreducible elements in $\Sym(\gotk)$.
\end{itemize}
Let $\mathbf{f}$ be the set of all $f_{m,t}$. If $\GK \Sy(\gotk)=\GK \C[\mathbf{f}]$, then the set of weights\footnote{which is a semi-group in the general case} of $\Sy(\gotk)$ is a group and $\Sy(\gotk)$ is generated by $\Y(\gotk)$ and the elements of $\mathbf{f}$. In particular, if the elements $f_m$ generate $\Y(\gotk)$, then $\Sy(\gotk)=\C[\mathbf{f}]$.
\end{the0E}
We will show that parabolic contractions $\gotq$ in the $(*)$ cases of the main theorem verify the hypotheses from the above theorem, so that in particular, the set of weights of $\Sy(\gotq)$ is a group. This is specific to parabolic contractions since such property is not true in general. For instance, for a proper parabolic subalgebra $\gotp$ of a semi-simple Lie algebra $\g$, the set of weights of $\Sy(\gotp)$ is not a group (see \cite[lemma 4.2.4]{FMJ05}).\par
Under the hypotheses of this theorem, we see that the decomposition of $f_m$ is trivial if and only if $r_m=1$. Moreover, if $m$ is such that $r_m \geq 2$, then we see that for all $t$, $f_{m,t}$ is a semi-invariant\footnote{it is a factor of an invariant so a semi-invariant, see proposition \ref{dixmier}} and has non-zero weight. However in our case, when we obtain a decomposition of $F_m^\bullet=\prod_{t=1}^{r_m} F_{m,t}$ with $r_m \geq 2$, we do not know a priori whether all $F_{m,t}$ are indeed irreducible. To show this, we use the more technical following result.
\begin{the0E}[see theorem \ref{inter}]
Let $\gotk$ be a finite-dimensional Lie algebra. Assume that $\Y(\gotk)$ is a UFD. Let $(f_m)_{1 \leq m \leq d}$ be a family of invariants which are irreducible in $\Y(\gotk)$, and for every $m$, assume that $f_m$ has a decomposition as
$$f_m = \prod_{t=1}^{r_{m}} (f_{m,t})^{\nu_{m,t}}$$
with $\nu_{m,t}\in \N^*$. Set $\mathbf{f}$ the set of all $f_{m,t}$ and $\mathbf{f}^\times$ the set of $f_{m,t}$ for which $r_m \geq 2$. Assume that $\GK \C[{\mathbf{f}}]=\GK \Sy(\gotk)$. Assume further that the following three hypotheses are satisfied:
\begin{enumerate}
\item[(I)] there exists a homomorphism of $\C$-algebras $\vartheta : \Sym(\gotk) \rightarrow \Y(\gotk)$ such that $\vartheta_{|\Y(\gotk)}$ is an isomorphism.
\item[(II)] each $f \in \mathbf{f}^\times$ is indivisible in the multiplicative semi-group $\Sy(\gotk)$,\footnote{\emph{i.e.} $f$ is not an $a$\up{th} power in $\Sy(\gotk)$ for any $a \geq 2$}
\item[(III)] for each $f \in \mathbf{f}^\times$, there exists a $\C$-algebra $\A_f$, which is a UFD, and a $\C$-algebra homomorphism $\vartheta_f : \Sy(\gotk) \rightarrow \A_f$ such that $\vartheta_f(f)$ is not invertible in $\A_f$ and such that $\vartheta_f(f)$ is coprime to every $\vartheta_f(g)$ for all $g \in \mathbf{f}^\times \setminus \{f\}$. \label{iiiiii}
\end{enumerate}
then the $f_{m,t}$ in $\mathbf{f}^\times$ are irreducible in $\Sym(\gotk)$.
\end{the0E}
To conclude that $\Sy(\gotq)=\C[\mathbf{F}]$ in the case of a parabolic contraction $\gotq$ of $\gl_n$ with the semi-invariants $F_{m,t}$ we constructed in step 1, we show that the three hypotheses (I), (II) and (III) are satisfied.\par
For the hypothesis (II), we show that the semi-invariants $F_{m,t}$ for $r_m \geq 2$ are indivisible by noting that they have degree $1$ in some $e_{v,w}$ where $(e_{v,w})_{1 \leq v,w \leq n}$ is the canonical basis of $\gl_n$.\par
Recall that the algebras $\Sym(\gotk)$ and $\C[\gotk^*]$ are isomorphic. In the same way, if $\A$ is a $\C$-algebra, the $\A$-algebras $\A \otimes_{\C} \Sym(\gotk)$ and $\A[\A \otimes_{\C} \gotk^*]$ are naturally isomorphic, which allows to write $f(q) \in \A$ where $f \in \A \otimes_{\C} \Sym(\gotk)$ and $q \in \A \otimes_{\C} \gotk^*$. For hypotheses (I) and (III), in fact we show some stronger hypotheses (I') and (III'), which are more adapted to our case:
\begin{enumerate}
\item[(I')] there exists $g_1, \ldots, g_{d'}$ which freely generate $\Y(\gotk)$ and $q \in \C[X_1, \ldots, X_{d'}] \otimes \gotk^*$ such that for every $m \in \llbracket 1,d' \rrbracket$, one has $g_m(q) = X_m$,
\item[(III')] for all $m \in \llbracket 1,d \rrbracket$ such that $r_m \geq 2$ and $t \in \llbracket 1, r_m \rrbracket$, there exists $q_{m,t} \in \C[X] \otimes \gotk^*$ such that $\deg_X f_{m,t}(q_{m,t}) \geq 1$ and $f_{\mu,\tau}(q_{m,t}) \in \C^\times$ for $(\mu,\tau) \neq (m,t)$.
\end{enumerate}
With (I'), the homomorphism $\vartheta$ in (I) is $f \in \Sym(\gotk) \mapsto f(q) \in \C[X_1, \ldots, X_{d'}] \simeq \Y(\gotk)$. The hypothesis (III') is a reformulation of (III) with $\mathcal{A}_f=\C[X]$. Note that hypothesis (I') implies the existence of a Kostant-Weierstrass section for $\Y(\gotq)$. Proofs of (I'), with the $g_1, \ldots, g_{d'}$ being the $F_1^\bullet, \ldots, F_n^\bullet$, and of (III') are combinatorial and use weighted digraphs\footnote{We denote by "digraph" a directed graph.} we call \textit{pathways}. A pathway is a weighted digraph with set of vertices $\llbracket 1,n \rrbracket$ and for which there is exactly one arrow going from any vertex $x$ to any vertex $y$. When we graphically represent a pathway, we only represent arrows with non-zero weights.\par
Let $\A=\C[X_1, \ldots, X_n]$. Fix $q \in \A \otimes \gl_n^*$. We define a pathway $\mathcal{G}(q)$ associated with $q$ with weights belonging to $\A$ which we call the \emph{graph of $q$}. The pathway $\mathcal{G}(q)$ is such that the arrow going from $x$ to $y$ in $\mathcal{G}(q)$ has  weight $e_{x,y}(q)$. The $F_m^\bullet$ are sums of terms of the form $\pm \prod_{l \in J} e_{l,\sigma(l)}$ with $J$ a subset of $\llbracket 1,n \rrbracket$ and $\sigma$ a permutation of the set $J$. Thus to compute $F_m^\bullet(q)$, it suffices to
\begin{itemize}
\item determine the \emph{dicyclic subgraphs} of $\mathcal{G}(q)$ with length $m$, \emph{i.e.} unions of directed cycles in $\mathcal{G}(q)$ with disjoint sets of vertices such that the arrows of all considered cycles have a non-zero weight, with the total subgraph having $m$ vertices,
\item show that these dicyclic subgraphs are associated with monomials of $F_m^\bullet$, \emph{i.e.} verify that they have the right degree in $\gotn^-$.
\end{itemize}
For (I'), we choose $q$ such that $\mathcal{G}(q)$ is represented graphically as follows:
\begin{center}
\begin{tikzpicture}
  \tikzset{LabelStyle/.style = {fill=white}}
  \tikzset{VertexStyle/.style = {%
  shape = circle, minimum size = 28pt,draw}}
  \SetGraphUnit{2.5}
  \Vertex[L=$v_1$]{1}
  \EA[L=$v_2$](1){2}
  \EA[L=$v_3$](2){3}
  \Loop[dist = 2cm, dir = WE, label = $X_1$](1.west)
  \Edge[style= {->}, label = 1](2)(1)
  \Edge[style= {->}, label = 1](3)(2)
  
  \tikzset{EdgeStyle/.style = {->,bend left=30}}
  \Edge[label=$X_2$](1)(2)
  \tikzset{EdgeStyle/.style = {->,bend left=34}}
  \Edge[label=$X_3$](1)(3)
  \SetVertexNormal[Shape = circle, LineColor=white, MinSize=28pt]
  \tikzset{EdgeStyle/.style = {->}}
  \EA[L=$\ldots$](3){4}
  \Edge[style= {->}, label = 1](4)(3)
  \SetVertexNormal[Shape = circle, LineColor=black, MinSize=28pt]
  \EA[L=$v_{n-1}$](4){n-1}
  \Edge[style= {->}, label = 1](n-1)(4)
  \EA[L=$v_n$](n-1){n}
  \Edge[style= {->}, label = 1](n)(n-1)
  \tikzset{EdgeStyle/.style = {->,bend left=38}}
  \Edge[label=$X_{n-1}$](1)(n-1)
  \tikzset{EdgeStyle/.style = {->,bend left=42}}
  \Edge[label=$X_n$](1)(n)
\end{tikzpicture}
\end{center}
with $\{v_1, \ldots, v_n\}=\llbracket 1,n \rrbracket$. If we fix $m \in \llbracket 1,n \rrbracket$, this pathway has a unique dicyclic subgraph of length $m$, which is the following directed cycle:
\begin{center}
\begin{tikzpicture}
  \tikzset{LabelStyle/.style = {fill=white}}
  \tikzset{VertexStyle/.style = {%
  shape = circle, minimum size = 28pt,draw}}
  \SetGraphUnit{2.5}
  \Vertex[L=$v_1$]{1}
  \EA[L=$v_2$](1){2}
  \EA[L=$v_3$](2){3}
  \Edge[style= {->}, label = $1$](2)(1)
  \Edge[style= {->}, label = $1$](3)(2)
  
  \tikzset{EdgeStyle/.style = {->,bend left=30}}
  \SetVertexNormal[Shape = circle, LineColor=white, MinSize=28pt]
  \tikzset{EdgeStyle/.style = {->}}
  \EA[L=$\ldots$](3){4}
  \Edge[style= {->}, label = $1$](4)(3)
  \SetVertexNormal[Shape = circle, LineColor=black, MinSize=28pt]
  \EA[L=$v_{m-1}$](4){m-1}
  \Edge[style= {->}, label = $1$](m-1)(4)
  \EA[L=$v_m$](m-1){m}
  \Edge[style= {->}, label = $1$](m)(m-1)
  \tikzset{EdgeStyle/.style = {->,bend left=42}}
  \Edge[label=$X_m$](1)(m)
\end{tikzpicture}
\end{center}
This cycle corresponds to a unique monomial $\mathcal{S}_m$ of $F_m$. With a good choice of the sequence $(v_1, \ldots ,v_n)$, \emph{i.e.} of the order of the $v_i$, then the monomial $\mathcal{S}_m$ is a monomial of $F_m^\bullet$ for every $m$, so that $F_m^\bullet(q)=\mathcal{S}_m(q)=X_m$.\par
For (III'), fix $m,t$. We take $q$ as constructed above and set $X_m=X$ and $X_{\mu}=1$ for $\mu \neq m$, so that $F_m^\bullet(q)=X$ and $F_{\mu,\tau}(q) \in \C^\times$ for $\mu \neq m$. With a good choice of the sequence $(v_1, \ldots, v_n)$ among those that satisfy point (I'), we get $F_{m,t}(q) \in \C^\times X$ and $F_{\mu,\tau}(q) \in \C^\times$ for $(\mu,\tau) \neq (m,t)$. With hypotheses (I'), (II) and (III') satisfied, we eventually have $\Sy(\gotq)=\C[\mathbf{F}]$ so $\mathbf{F}$ freely generate $\Sy(\gotq)$ in type $\gl_n$.
\subsection*{Parabolic contractions in type $A$ and $C$}
To show the polynomiality in cases $(*)$ of the main theorem, we study the polynomiality of $\Sy(\widetilde{\gotq})$ when $\widetilde{\gotq}$ is a parabolic contraction of $\g=\spl_n$ or $\g=\syp_n$ (chapter \ref{chap4}). Note that $\widetilde{\gotq}$ is a Lie subalgebra of a parabolic contraction $\gotq$ of $\gl_n$. To study the polynomiality, an idea is to project on $\widetilde{\gotq}$ the results we got on $\gotq$. Let $\pr : \Sym(\gotq) \rightarrow \Sym(\widetilde{\gotq})$ be a projection such that its restriction $\pr : \gotq \rightarrow \widetilde{\gotq}$ is a $\widetilde{\gotq}$-module homomorphism. In the cases we consider here, one has $\pr(F_m^\bullet)=\pr(F_m)^\bullet  = \prod_{t=1}^{r_m} \pr(F_{m,t})$.\par
In type $A$ (section \ref{sec7}), one has $\g=\spl_n$ the Lie algebra of the special linear group, \emph{i.e.} the set of all matrices of size $n$ with trace zero. We consider the projection $\pr : \Sym(\gl_n) \rightarrow \Sym(\spl_n)$ coming from the decomposition $\gl_n = \spl_n \oplus \C \id$. In this case, we get $\pr(F_1^\bullet)=\pr(\id)=0$, and the $\pr(F_m^\bullet)$ for $m \in \llbracket 2,n \rrbracket$ freely generate $\Y(\widetilde{\gotq})$ according to \cite{py13}. The weight of $\pr(F_{m,t})$ is the weight of $F_{m,t}$ restricted to $\spl_n$, and we show that the $\pr(F_{m,t})$ are again algebraically independent making use of the theorem \ref{alin}. We then easily see that the set of $\pr(F_{m,t})$ provides a transcendence basis of $\Sy(\widetilde{\gotq})$. Finally, if $x \in \Sy(\widetilde{\gotq})$, then $x \in \Sy(\gotq)$ since $\C \id = \gotz(\gotq)$, thus $x=\pr(x)$ is a polynomial in the $F_{m,t}$, so it is also a polynomial in the $\pr(F_{m,t})$.\par
In type $C$, the integer $n$ is even and we take $\g=\syp_n$ the Lie algebra of the symplectic group\footnote{We take the definition of $\syp_n$ given by \cite{bou06}.}. Consider the projection $\pr : \Sym(\gl_n) \rightarrow \Sym(\syp_n)$ coming from the decomposition $\gl_n = \syp_n \oplus \left(\syp_n^\perp \oplus \C \id\right)$ where $\syp_n^\perp \subset \spl_n$ is the orthogonal of $\syp_n$ with respect to the Killing form of $\spl_n$. In this case, we get $\pr(F_m^\bullet)=0$ for every odd integer $m$, but the $\pr(F_m^\bullet)$ for even $m \in \llbracket 1,n \rrbracket$ freely generate $\Y(\widetilde{\gotq})$ according to \cite{py13}.\par
In the $(*)$ case of the main theorem, \emph{i.e.} when the Levi factor associated with the contraction $\widetilde{\gotq}$ has no factor of type $C$, then the Levi factors $\gotl$ associated with the parabolic contraction $\gotq$ of $\gl_n$ can be written as $\gotl \simeq \gl_{i_1} \times \ldots \times \gl_{i_s}$ with $s$ even (section \ref{sec9}). In this case, the proof of the polynomiality of $\Sy(\widetilde{\gotq})$ mainly follows the pattern of the type $\gl_n$. In $\gotq$, the decomposition $F_m^\bullet = \prod_{t=1}^{r_m} F_{m,t}$ is non-trivial, \emph{i.e.} such that $r_m \geq 2$, only if $m$ is \emph{even}. We then get decompositions $\pr(F_m^\bullet) = \pm \prod_{t=1}^{b_m} f_{m,t}^{s_{m,t}}$, where some powers $s_{m,t} \geq 2$ appear by gathering the factors that become colinear after projection. Again, we can use theorem \ref{alin} to show that the family $\mathbf{f}=(f_{m,t})_{m,t}$ of pairwise non colinear semi-invariants is algebraically independent. This family has cardinality $n/2+s/2 = \ind \widetilde{\gotq} +s/2$. Thus $\dim \widetilde{\gotq}_\Lambda \geq \dim \widetilde{\gotq}' = \dim \widetilde{\gotq} - s/2$ and $\ind \widetilde{\gotq}_\Lambda \geq \ind \widetilde{\gotq} +s/2$. With Ooms and Van den Bergh's equality \eqref{ovdb}, these inequalities become equalities, so that the family $\mathbf{f}$ becomes a trascendence basis of $\Sy(\widetilde{\gotq})$. To show that $\Sy(\widetilde{\gotq})=\C[\mathbf{f}]$, we once again show that the hypotheses (I), (II), and (III) from theorem \ref{inter} are verified, and again, we use the hypotheses (I') and (III') previously mentioned to show (I) and (III).\par
For (II), we show that most of $f_{m,t}$'s weights, say $\lambda_{m,t}$, are indivisible in the set of weights $\Lambda$ of $\Sy(\widetilde{\gotq})$, \emph{i.e.} $\lambda_{m,t} \notin a \Lambda$ for $a \geq 2$. In fact, since $\Lambda \subset \sum_k \rat \varpi_k$ with $\varpi_k$ being the fundamental weights of $\syp_n$, it is enough to prove that $\lambda_{m,t}$ is indivisible in $\sum_k \rat \varpi_k$. However, there remain some weights $\lambda_{m,t}$ which do not verify this last property. In these particular cases, we verify the semi-invariants indivisibility directly by proving that they have degree $1$ in certain elements of a basis of $\syp_n$.\par
For (I'), we reuse the notion of pathway, which we modify slightly in type $C$. Let $\A=\C[X_1, \ldots, X_{n/2}]$. Fix $q \in \A \otimes \syp_n^*$. Define a pathway $\mathcal{G}(q)$ with weights in $\A$ called the \emph{graph of $q$}, such that the arrow from $x$ to $y$ has weight $\pr(e_{x,y})(q)$. This definition implies that all graphs in type $C$ have a symmetry, in that an arrow $x \rightarrow y$ with non-zero weight in the graph of $q$ implies an arrow $n+1-y \rightarrow n+1-x$ with non-zero weight\footnote{This fact comes from the construction of $\syp_n$ given in \cite{bou06}, where the Cartan subalgebra of $\syp_n$ is diagonal.}. As in type $\gl_n$, we then construct an element $q$ such that its graph $\mathcal{G}(q)$ has a unique dicyclic subgraph of length $m$, which is in fact a directed cycle. Said graph has the following representation:
\begin{center}
\begin{tikzpicture}
  \tikzset{LabelStyle/.style = {fill=white}}
  \tikzset{VertexStyle/.style = {%
  shape = circle, minimum size = 33pt,draw}}
  \SetGraphUnit{2.5}
  \Vertex[L=$v_n$]{1}
  \EA[L=$v_{n-1}$](1){2}
  \EA[L=$v_{n-2}$](2){3}
  \Edge[style= {->}](1)(2)
  \Edge[style= {->}](2)(3)
  \SetVertexNormal[Shape = circle, LineColor=white, MinSize=33pt]
  \tikzset{EdgeStyle/.style = {->}}
  \EA[L=$\ldots$](3){4}
  \NO[L=$\ldots$](4){4'}
  \Edge(3)(4)
  \SetVertexNormal[Shape = circle, LineColor=black, MinSize=33pt]
  \EA[L=$v_{n/2+2}$](4){m-1}
  \Edge[style= {->}](4)(m-1)
  \EA[L=$v_{n/2+1}$](m-1){m}
  \Edge[style= {->}](m-1)(m)
  \NO[L=$v_1$](1){1'}
  \NO[L=$v_2$](2){2'}
  \NO[L=$v_3$](3){3'}
  \NO[L=$v_{n/2-1}$](m-1){m-1'}
  \NO[L=$v_{n/2}$](m){m'}
  \Edge[style= {->}](2')(1')
  \Edge[style= {->}](3')(2')
  \Edge[style= {->}](4')(3')
  \Edge[style= {->}](m-1')(4')
  \Edge[style= {->}](m')(m-1')
  \Edge[style= {->,bend right=30}, label =$1$](1')(1)
  \Edge[style= {->,bend right=30}, label =$X_1$](1)(1')
  \Edge[style= {->,bend right=30}, label =$X_2$](2)(2')
  \Edge[style= {->,bend right=30}, label =$X_3$](3)(3')
  \Edge[style= {->,bend right=30}, label =$X_{n/2-1}$](m-1)(m-1')
  \Edge[style= {->,bend right=30}, label =$X_{n/2}$](m)(m')
\end{tikzpicture}
\end{center}
with $\{v_1, \ldots, v_{n}\}=\llbracket 1,n \rrbracket$.\par
For (III'), there are not enough sequences $(v_1, \ldots, v_n)$ satisfying (I'), so we cannot proceed as for a parabolic contraction of $\gl_n$ by simply adapting the $q$ constructed in (I'). Instead we first choose $q' \in \widetilde{\gotq}^*$, such that $\mathcal{G}(q')$ is a dicyclic graph, \emph{i.e.} a disjoint union of cycles $\mathcal{C}_1, \ldots, \mathcal{C}_{l}$. Set $\mathcal{S}_1, \ldots, \mathcal{S}_{l}$ the associated monomials, and from this choice of cycles we get $ \mathcal{S}_j(q')=1$ for all $j$ and $\pr(F_\mu^\bullet)(q')=\prod_{j=1}^{i} \mathcal{S}_j(q')=1$ for every (even) $\mu$ verifying $r_\mu \geq 2$, where $i$ is such that $\mu=\sum_{k} \min(i_k,i)$. Fix $m,t$. To verify (III'), the idea is to modify $q'$ by multiplying a term of $q'$ by $X$. If we note $q$ the modified $q'$, we then have $\mathcal{S}_{i}(q)=X^p$ with $p \in \{1,2 \}$ and $\mathcal{S}_{j}(q)=1$ for $j \neq i$, and so $\pr(F_m^\bullet)(q)=X^p$. Yet we also get $\pr(F_\mu^\bullet)(q)=X^p$ for $\mu > m$ such that $r_\mu \geq 2$, which does not coincide with (III'), unless $i=\imax:=\max(i_k)$ \emph{i.e.} the maximal size of blocks for the Levi factor $\gotl$. To avoid this when $i \neq \imax$, we modify $q'$ further and the structure of the cycles $\mathcal{C}_{i}$ and $\mathcal{C}_{i+1}$ in the dicyclic graph $\mathcal{G}(q')$, to obtain $q$ which satisfies $\deg_X \pr(F_m^\bullet)(q) \geq 1$ et $\deg_X \pr(F_\mu^\bullet)(q)=0$ for $\mu \neq m$ such that $r_\mu \geq 2$. By carefully choosing the edges on which we make this modification, the newly constructed $q$ satisfies (III').\par
Now we prove part $(**)$ of the main theorem. Consider the case where the Levi factor associated with the contraction $\widetilde{\gotq}$ has a type $C$ factor, \emph{i.e.} the case where the Levi factors $\gotl$ associated with the parabolic contraction $\gotq$ of $\gl_n$ can be written $\gotl=\gl_{i_1} \times \ldots \times \gl_{i_s}$ with $s$ odd. In this case, non-trivial decompositions $F_m^\bullet = \prod_{t=1}^{r_m} F_{m,t}$ in $\gl_n$ can occur for some odd $m$. However, for an odd $m$, one gets $\pr(F_m)=0$ thus $\pr(F_m^\bullet)=0$ and there are cases with $m$ odd where $\pr(F_{m,t}) \neq 0$ for certain $t$. In these cases, if we only consider the set of $\pr(F_{m,t})$ for even $m$, the resulting family $\mathbf{f}$ is not a transcendence basis of $\Sy(\widetilde{\gotq})$: some elements are "missing". Take the following example (section \ref{sec9}) and consider $\gotq$ the parabolic contraction of $\gl_8$ defined by the following diagram\footnote{the full line separates elements $e_{x,y}$ in $\gotp$ from those in $\gotn^-$, and the $e_{x,y}$ between the full line and the dashed line form a basis of a Levi factor $\gotl$}:
\begin{center}
\begin{tikzpicture}[scale=0.6]
\foreach \k in {1,2,...,7}
	{\draw[color=gray!20]  (0,\k)--(8,\k);
	\draw[color=gray!20] (\k,0)--(\k,8);}
\draw (0,0)--(8,0);
\draw (0,0)--(0,8);
\draw (0,8)--(8,8);
\draw (8,0)--(8,8);
\draw[ultra thick] (0,7)--(1,7)--(1,5)--(3,5)--(3,3)--(5,3)--(5,1)--(7,1)--(7,0);
\draw (2,2) node{\LARGE $\gotn^-$};
\draw (6,6) node{\LARGE $\gotp$};
\draw[dashed] (1,8)--(1,7)--(3,7)--(3,5)--(5,5)--(5,3)--(7,3)--(7,1)--(8,1);
\end{tikzpicture}
\end{center}
Set $\widetilde{\gotq}=\pr(\gotq)$ the corresponding parabolic contraction of $\syp_8$. In this case, we get a decomposition $F_8^\bullet=F_{8,1}F_{8,2}F_{8,3}$ projecting to $\pr(F_8^\bullet)=\pr(F_{8,1})^2\pr(F_{8,2})$ as well as a decomposition $F_5^\bullet=F_{5,1} F_{5,2}$. Projecting the latter decomposition gives $\pr(F_5^\bullet)=\pr(F_{5,1})=0$, however $\pr(F_{5,2}) = \pr(e_{1,8}) \neq 0$. Yet $\pr(F_{5,2})$ is not proportional to any $\pr(F_{m,t})$ with $m$ even. To apply the same pattern of proof as in the previous cases, we would like to find $\pr(F_{5,2})$ as a factor of an invariant, \emph{i.e.} here a polynomial in $\pr(F_2^\bullet), \pr(F_4^\bullet), \pr(F_6^\bullet), \pr(F_8^\bullet)$. We find that $\pr(F_{5,2})$ divides $\pr(F_8^\bullet)-\frac{1}{4} \pr(F_4^\bullet)^2$, and we set $\pr(F_8^\bullet)-\frac{1}{4} \pr(F_4^\bullet)^2=\widetilde{F} \pr(F_{5,2})$. We then show that the family $\pr(F_2^\bullet), \pr(F_6^\bullet), \pr(F_{8,1}), \pr(F_{8,2}), \widetilde{F}, \pr(F_{5,2})$ is algebraically independent by computing their weights, and using the same kind of proof as in the previous cases. We then show that the family $\pr(F_2^\bullet), \pr(F_4^\bullet), \pr(F_6^\bullet), \pr(F_{8,1}), \pr(F_{8,2}), \widetilde{F}, \pr(F_{5,2})$ generates the algebra $\Sy(\widetilde{\gotq})$ by verifying hypotheses (I'), (II) et (III)\footnote{instead of (III') which cannot be simply showed here}. Since $\widetilde{F} \pr(F_{5,2})+\frac{1}{4}\pr(F_4^\bullet)^2=\pr(F_{8,1})^2\pr(F_{8,2})$, the algebra
$$\Sy(\widetilde{\gotq})=\C[\pr(F_2^\bullet), \pr(F_4^\bullet), \pr(F_6^\bullet), \pr(F_{8,1}), \pr(F_{8,2}), \widetilde{F}, \pr(F_{5,2})]$$
is isomorphic to the algebra $\C[X_1, X_2, X_3, X_4, X_5, X_6, X_7]/(X_6X_7+\frac{1}{4}X_2^2-X_4^2X_5)$ which is not polynomial.\par
The phenomenon yielding this counterexample is not isolated. We present in chapter \ref{sec11} some conjectures in this direction.

\subsection*{Some tracks for parabolic contractions in other types}

We discuss in section \ref{sec12} the limits of our approach and some possible tracks to study some cases in other types. The pattern to prove the polynomiality (or not) of the algebra of semi-invariants associated with a parabolic contraction $\gotq$ which we presented here is at least based on the knowledge of a minimal generating family of $\Y(\gotq)$. Such family  is given in types $A$ and $C$ by Panyushev and Yakimova \cite{py13}. In types $B$ and $D$ as well as in exceptional types, this type of study is much harder because we do not know such family. Moreover, some properties of parabolic contractions which are true in types $A$ and $C$ may not be true anymore in types $B$ or $D$. For instance, one may have $\pr(F_m)^\bullet \neq \pr(F_m^\bullet)$ \emph{i.e.} $\pr(F_m^\bullet)=0$ while $\pr(F_m)^\bullet \neq 0$, which makes the combinatorial study of the semi-invariants considerably harder.\par
Conversely, in some cases where the approach of Panyushev and Yakimova failed, we can hope to find a trascendence basis of the algebra of invariants by studying known semi-invariants. For instance, we can use the following proposition which gives the semi-invariants with degree $1$ for a parabolic contraction.
\begin{propn0E}[see proposition \ref{degre1}]
Let $\gotq=\gotp \ltimes \gotn^-$ be a parabolic contraction of a simple Lie algebra $\g$ associated with a certain Cartan subalgebra, a certain basis $\pi$ of the associated root system $R$ and a certain subset $\pi' \varsubsetneq \pi$. Set $\theta$ the highest root of $R$. Let $e \in \gotq$ be non zero. Then $e$ is a semi-invariant (of degree $1$) of $\Sym(\gotq)$ if and only if $e \in \g_{\beta}$, with $\beta \in (-\pi \setminus -\pi') \cup \{\theta\}$ such that the vertex associated with $-\beta$ in the extended Dynkin diagram of $R$ is not linked to any vertex associated with an element of $\pi'$.
\end{propn0E}
In the example \cite[remark 4.6]{py13} in type $D_6$ with $\pi'=\{\alpha_3,\alpha_4,\alpha_5\}$, the family $\pr(F_2)^\bullet$, $\ldots$, $\pr(F_{10})^\bullet, f^\bullet$ (with $f$ a square root of $\pr(F_{12})$) is not algebraically independent thus is "too small". By studying the semi-invariants of degree $1$, we remark that $\pr(F_4)^\bullet$ is in the ideal of $\Sym(\gotq)$ generated by some semi-invariants $x_1, x_2$ of degree $1$. More explicitly, we get $\pr(F_4)^\bullet=x_1y_1 + x_2y_2 + x_1x_2y_{1,2}$, where $y_1, y_2, y_{1,2}$ are semi-invariants and $x_1y_1$,  $x_2y_2$,  $x_1x_2y_{1,2}$ are invariants with the same degree and degree in $\gotn^-$ as $\pr(F_4)^\bullet$. With these three invariants, we can hope to get a "wider" family, maybe even an algebraically independent one.\par
The work of this thesis can be linked to other recently published articles. Recall that we explicitly describe the generators of $\Sy(\gotq)$ when $\Sy(\gotq)$ is polynomial (with their weights and their degrees) in type $A$ and in type $C$ with a Levi factor of type $A$. This also seems to give an explicit formula for the generators of the polynomial algebra $\Sym(\gotn^-)^{\gotp'}$. This question has been mentioned by Fittouhi and Joseph (see \cite[\S 2.2.3]{FJ20}). Also, according to Joseph and Fittouhi, the number of these generators is equal to the number of hypersurface orbital varieties in the nilradical $\gotn$ of the parabolic subalgebra $\gotp$ (see \cite[\S 2.3.4]{FJ20}). These hypersurface orbital varieties have been described in type $A$ by Joseph and Melnikov \cite{JM03} and by Perelman \cite{Per03} for the other classical types.

\selectlanguage{french}
\chapter{Notations, premières définitions}
\section{Notations, définitions générales}
\label{sec2}
On rappelle que $\C$ est un corps de caractéristique $0$ algébriquement clos. Tous les espaces vectoriels et algèbres considérés ont pour corps de base $\C$. Soit $\N=\{0,1,2,\ldots\}$ l'ensemble des entiers naturels, $\rat$ l'ensemble des entiers relatifs, $\Q$ l'ensemble des nombres rationnels. On note $\N^*=\N \setminus \{0\}$. Pour tous $a<b$ dans $\rat$, on note $\llbracket a,b \rrbracket$ \index{$\llbracket a,b \rrbracket$} l'ensemble des entiers $k$ tels que $a \leq k \leq b$. Si $a > b$, on pose par convention $\llbracket a,b \rrbracket=\emptyset$. Pour un ensemble $J$ fini, on notera son cardinal $\vert J \vert$ ou $\Card(J)$. On note $\mathfrak{S}(J)$ l'ensemble des permutations de $J$, et pour tout $k \in \N^*$, $\mathfrak{S}_k:=\mathfrak{S}_{\llbracket 1,k \rrbracket}$. On note $\delta$ le symbole de Kronecker, c'est-à-dire pour tous $u,v$ dans un ensemble $E$, $\delta_{u,v}$ vaut $1$ si $u=v$ et $0$ sinon. Si $\mathcal{R}$ est un anneau, on note $\mathcal{R}^\times$ l'ensemble des éléments inversibles de $\mathcal{R}$.\par
Pour $V$ un $\C$-espace vectoriel de dimension finie on note $\dim V$ la dimension de $V$, et pour $a,b \in V$, on note $a \propto b$ \index{$a \propto b$} s'il existe $c \in \C^\times$ tel que $a=cb$. On note $V^*$ le dual de $V$ et $\Sym(V)$ l'algèbre symétrique de $V$, qui est une algèbre de polynômes de $\dim V$ générateurs. On munit $\Sym(V)$ \index{$\Sym(V)$} de sa structure d'algèbre graduée. Si $V_1$ et $V_2$ sont des sous-espaces vectoriels supplémentaires de $V$, on note $V_1 \oplus V_2$ la somme directe de $V_1$ et $V_2$ ainsi que $\pr_{V_1,V_2}$ la projection de $V$ sur $V_1$ parallèlement à $V_2$. Si $\mathcal{X}$ est un semi-groupe abélien, on dit que $x \in \mathcal{X}$ est un élément \textbf{indivisible} (ou \textbf{non divisible}) de $\mathcal{X}$ si pour tout entier $k \geq 2$, on a $x \notin k\mathcal{X}:=\{kx \, | \, x \in \mathcal{X}\}$. Si $Y$ est une variété algébrique, on note $\C[Y]$ \index{$\C[Y]$} l'algèbre des fonctions régulières sur $Y$.\par
Soit $n \in \N^*$. Soit $\g:=\gl_n$ \index{$\g=\gl_n$} l'ensemble des matrices de taille $n \times n$ à coefficients dans $\C$ muni d'une structure d'algèbre de Lie avec le crochet $[x,y]=xy-yx$ (où ici le produit $xy$ est le produit matriciel) pour tous $x,y \in \gl_n$. On note également $\widehat{\g}$\index{$\widehat{\g}$} le sous-espace vectoriel de $\g$ formé des matrices de diagonale nulle. On fixe $I:=\llbracket 1,n \rrbracket$\index{$I$}, $(e_{p,q})_{p,q \in I}$ \index{$e_{p,q}$, $e^*_{p,q}$} la base canonique de $\gl_n$ et $(e^*_{p,q})_{p,q \in I}$ la base duale de $\gl_n^*$ associée. Pour tous $J,J' \subset I$, on note $\g_{J,J'}$ \index{$\g_{J,J'}$, $\g_{J}$, $V_{J,J'}$, $V_{J}$} le sous-espace vectoriel de $\g=\gl_n$ engendré par les $e_{p,q}$ avec $p \in J, q \in J'$. On note aussi $\g_J:=\g_{J,J}$. Si $V$ est un sous-espace vectoriel de $\g$, on notera $V_{J,J'}:=V \cap \g_{J,J'}$ (et de même que précédemment, $V_{J}:=V_{J,J}$). Pour $J,J' \subset I$ de même cardinal, on écrit $\Delta_{J,J'} \in \Sym(\gl_n)$ \index{$\Delta_{J,J'}$, $\Delta_{J}$} le mineur associé aux coefficients de ligne $J$ et de colonne $J'$, autrement dit :
$$\Delta_{J,J'}=\sum_{\sigma \in \Bij(J,J')} \varepsilon(\sigma)\prod_{j \in J} e_{j,\sigma(j)}$$
(le produit est cette fois-ci le produit tensoriel dans $\Sym(\gl_n)$) où $\Bij(J,J')$ est l'ensemble des bijections de $J$ dans $J'$ et pour tout $\sigma \in \Bij(J,J')$, on note $\varepsilon(\sigma)=\varepsilon(\sigma)=(-1)^{\inv(\sigma)}$ où $\inv(\sigma)$ est le nombre d'inversions de $\sigma$. On note également $\Delta_{J} :=\Delta_{J,J}$. Un \textbf{mineur principal} est un mineur de la forme $\Delta_{J}$, pour $J \subset I$. On note alors
\begin{equation}
F_j=\sum_{|J|=j} \Delta_J \label{Fm}
\end{equation}\index{$F_m$}
Soit $\B$ une $\C$-algèbre associative, commutative intègre et unitaire, on appelle monôme en $a_1, \ldots, a_d \in \B \setminus \{0\}$ une expression de la forme $k \, a_1^{r_1} \ldots a_d^{r_d}$, avec $k \in \C^\times$ et $r_i \in \N$ pour tout $1 \leq i \leq d$. Si $P$ est un polynôme en les $a_1, \ldots, a_d$, qui s'écrit $P = \sum_{r_1, \ldots, r_d} k_{r_1, \ldots, r_d} \, a_1^{r_1} \ldots a_d^{r_d}$, on appellera \textbf{monôme de $P$} (en les $a_1, \ldots, a_d$) un $k_{(r_i)_i} \, a_1^{r_1} \ldots a_d^{r_d}$ avec $k_{r_1, \ldots, r_d} \neq 0$. On note également $\C[a_1, \ldots, a_d]$ la $\C$-algèbre engendrée par $a_1, \ldots, a_d$.\par
Soit $\A$ une $\C$-algèbre associative commutative unitaire intègre de type fini. La \textbf{dimension de Gelfand-Kirillov} de $\A$, notée $\GK \A$ \index{$\GK \A$} coïncide avec le degré de transcendance sur $\C$ du corps des fractions de $\A$. C'est en particulier le cardinal maximal d'une famille algébriquement indépendante de $\A$. Si $\mathbf{a}=(a_1, \ldots, a_d)$ est une famille finie algébriquement indépendante d'éléments de $\A$, la famille $\mathbf{a}$ est \textbf{une base de transcendance} de $\A$ si $d=\GK \A$. De manière équivalente, la famille $\mathbf{a}$ est une base de transcendance de $\A$ si $\mathbf{a}$ est algébriquement indépendante et pour tout $x \in \A$, la famille $(a_1, \ldots, a_d,x)$ est algébriquement liée. La famille $\mathbf{a}=(a_1, \ldots, a_d)$ \textbf{engendre librement} $\A$ si $\mathbf{a}$ est algébriquement indépendante et $\A=\C[a_1, \ldots, a_d]$. En particulier, toute famille engendrant librement $\A$ est une base de transcendance de $\A$. On dit qu'une algèbre est \textbf{polynomiale} lorsqu'elle est isomorphe à une algèbre de polynômes, c'est-à-dire lorsqu'elle admet un sous-ensemble fini qui l'engendre librement.

\section{Rappels sur les algèbres de Lie, contractions paraboliques, algèbres d'invariants et de semi-invariants}
Toutes les algèbres de Lie considérées par la suite seront de dimension finie. Pour toute algèbre de Lie $\gotk$, on note $\gotk'=[\gotk,\gotk]$ l'algèbre de Lie dérivée de $\gotk$ et $\gotz(\gotk)$ le centre de $\gotk$. Dans toute cette section, on considère $\gotk$ une algèbre de Lie (de dimension finie). Comme dans \cite{Dix}, si $\gotr$ et $\gots$ sont deux idéaux de $\gotk$ d'intersection nulle, on note $\gotr \oplus \gots$ la somme des deux espaces vectoriels, et $\gotr \times \gots$ l'algèbre de Lie produit.
 
\subsection{Rappels sur les algèbres de Lie}
On reprend quelques définitions de base de \cite{Dix}.
\begin{defi}
Pour tout $f \in \gotk^*$, on définit la forme bilinéaire alternée $\phi_f$ par $\phi_f(x,y)=f([x,y])$ pour tous $x,y \in \gotk$. Soit $\gotk^{(f)} \subset \gotk$ le noyau de la forme bilinéaire $\phi_f$. On définit l'\textbf{indice} de $\gotk$ par $\ind \gotk=\min \{ \dim \gotk^{(f)} \, | \, f \in \gotk^* \}$. \index{$\ind \gotk$}
\end{defi}
\begin{defi}
Soient $\gota$ et $\gotb$ deux algèbres de Lie, et $\phi : \gota \rightarrow \Der \gotb$ un morphisme d'algèbres de Lie entre $\gota$ et l'algèbre de Lie des dérivations $\Der \gotb$ de $\gotb$. On définit le produit semi-direct $\gota \ltimes_\phi \gotb$ comme étant l'espace vectoriel $\gota \oplus \gotb$ muni du crochet de Lie défini par :
$$\forall a, a' \in \gota, \forall b,b' \in \gotb, [a+b,a'+b']=[a,a']+[b,b']+\phi(a)(b')-\phi(a')(b)$$
En particulier, $\gotb$ est un idéal de $\gota \ltimes_\phi \gotb$. On note souvent $\gota \ltimes \gotb$ pour $\gota \ltimes_\phi \gotb$ lorsqu'il n'y a pas d'ambiguïté sur le morphisme $\phi$.
\end{defi}
\subsection{Contractions paraboliques}
\begin{defi}
Soit $\g= \g' \oplus \gotz(\g)$ une algèbre de Lie réductive de crochet de Lie $[\, , ]_{\g}$, et $\h:=\h^{\text{ss}} \oplus \gotz(\g)$ une sous-algèbre de Cartan de $\g$. Soit $R$ le système de racines associé à $(\g',\h^{\text{ss}})$, et $\pi$ une base de $R$. Pour tout $\beta \in R$, on note $\g_{\beta}=\left\{x \in \g \; : \; \forall h \in \h, [h,x]_{\g}=\beta(h)x\right\}$\index{$\g_{\beta}$}. On note $R=R^+ \sqcup R^-$ la décomposition de $R$ en racines positives et négatives induite par $\pi$. Soit $\pi' \subset \pi$\index{$\pi$, $\pi'$}. On note $R_{\pi'}$ le sous-système de racines de $R$ engendré par $\pi'$, et $R_{\pi'} = R_{\pi'}^+ \sqcup R_{\pi'}^-$ la décomposition en racines positives et négatives induite par $\pi'$. Soit $\gotp=\bigoplus_{\beta \in R^+ \sqcup R^-_{\pi'}} \g_{\beta} \oplus \h$. On dit que $\gotp$ est \textbf{la sous-algèbre parabolique} associée à $\h$, $\pi$ et $\pi'$. On appelle sous-algèbre parabolique de $\g$ une sous-algèbre de Lie de $\g$ ainsi définie pour certains choix de $\h$ et $\pi'$. Soit $\gotn^+=\bigoplus_{\beta \in R^+ \setminus R^+_{\pi'}} \g_{\beta}$ et $\gotn^-=\bigoplus_{\beta \in R^- \setminus R^-_{\pi'}} \g_{\beta}$. On a $\g=\gotp \oplus \gotn^-$. Soit $(\gotn^-)^{\text{a}}$ l'espace vectoriel $\gotn^-$ muni d'une structure d'algèbre de Lie abélienne. On définit alors $\gotq := \gotp \ltimes (\gotn^-)^{\text{a}}$ la \textbf{contraction parabolique} de $\g$ par $\gotp$, où le produit semi-direct découle du morphisme $\phi : \gotp \rightarrow \Der \, (\gotn^-)^\text{a} \simeq \End \gotn^-$ défini par
$$\phi(p)(n)=p \centerdot n =[p,n]_{\gotq}= \pr_{\gotn^-,\gotp}([p,n]_{\g}), \, p \in \gotp, n \in \gotn^-.$$
Pour des raisons pratiques, on omettra souvent l'exposant $( \; \; \; \; )^\text{a}$ de l'algèbre de Lie $(\gotn^-)^\text{a}$ et on notera simplement cette algèbre de Lie $\gotn^-$. Pour tous $p_1, p_2 \in \gotp, n_1, n_2 \in \gotn^-$, on a donc :
$$[p_1,p_2]_{\gotq}=[p_1,p_2]_{\g} \qquad [p_1,n_1]_{\gotq}=\pr_{\gotn^-,\gotp} \left([p_1,n_1]_{\g}\right) \qquad [n_1,n_2]_{\gotq}=0$$
On remarque en particulier que pour tout $h \in \h$ et tout $x \in \g$, on a $[h,x]_{\gotq}=[h,x]_{\g}$. On appelle \textbf{contraction parabolique de $\g$} toute construction d'un tel $\gotq$ pour tout choix de sous-algèbre parabolique $\gotp$, c'est-à-dire à conjugaison près, pour tout choix de $\pi' \subset \pi$.
\label{conpara}
\end{defi}
\begin{rema}
L'algèbre de Lie $\gotq$ n'est en général pas réductive. La sous-algèbre $\gotn^-$ est un idéal abélien de $\gotq$. Les espaces vectoriels $\gotq$ et $\g$ sont égaux et ont la même structure de $\h$-module. Un calcul simple donne $\gotq'=\gotp' \ltimes \gotn^-$.
\end{rema}
\begin{prop}[Panyushev, Yakimova \cite{py13}]
Une contraction parabolique $\gotq$ d'une algèbre réductive $\g$ est algébrique, c'est-à-dire l'algèbre de Lie d'un groupe algébrique $Q$.
\end{prop}
\begin{theo}[Panyushev, Yakimova \cite{py13}]
Pour toute contraction parabolique $\gotq$ d'une algèbre de Lie réductive $\g$, l'indice de $\gotq$ est égal au rang de $\g$.
\end{theo}
On définit certains cas particuliers importants de contractions paraboliques.
\begin{defi}
On prend $\g=\gl_n$ et $\h \subset \g$ l'ensemble des matrices diagonales, qui est une sous-algèbre de Cartan de $\g$. On a $\gl_n'=\spl_n:=\{x \in \gl_n \, | \, \trace(x)=0\}$ (où $\trace(x)$ désigne la trace de $x$) et $\gotz(\gl_n)=\C \id$ (où $\id$ est la matrice identité). On note $\h^A:=\h \cap \g'$ (qu'on a noté $\h^{\text{ss}}$ dans la définition \ref{conpara}). Pour tout $\ell \in \llbracket 1,n-1 \rrbracket$, soit $h_\ell=e_{\ell,\ell}-e_{\ell+1,\ell+1}$ et $\varpi_{\ell} \in (\h^A)^*$ le poids fondamental associé à $h_{\ell}$, \index{$h_\ell$}\index{$\varpi_\ell$} c'est-à-dire que la base $(\varpi_{\ell})_\ell$ est la base de $(\h^A)^*$ duale de la base $(h_\ell)_\ell$ de $\h^A$. On étend alors ces poids fondamentaux à des éléments de $\g^*$ en posant pour tout $\ell$, $(\varpi_\ell)_{|\widehat{\g} \oplus \C \id}=0$ où $\widehat{\g}$ est l'ensemble des matrices de diagonale nulle. Par convention, on pose également $\varpi_0=\varpi_n=0$. On note $(\epsilon_i)_{i \in \llbracket 1,n \rrbracket}$ la base de $\h^*$ duale de la base $(e_{i,i})_{i \in \llbracket 1,n \rrbracket}$ de $\h$, et pour $i \in \llbracket 1,n-1 \rrbracket$, on note $\alpha_i=\epsilon_i-\epsilon_{i+1}$. On considère la base $\pi=\left\{\alpha_1, \ldots, \alpha_{n-1}\right\}$ de $R$. Soit $\pi' \subset \pi$. Soit $\gotp$ la sous-algèbre parabolique associée à $\h$, $\pi$ et $\pi'$. On appelle \textbf{sous-algèbre parabolique standard} une sous-algèbre parabolique définie dans ces conditions. On appelle également \textbf{contraction parabolique standard de $\g$} une contraction parabolique de $\g$ par une sous-algèbre parabolique standard.
\label{canonique}
\end{defi}
\begin{prop}
Toute contraction parabolique de $\g=\gl_n$ est la conjuguée d'une contraction parabolique standard de $\g$.
\end{prop}
Jusqu'à la fin du chapitre \ref{chap4}, $\gotp$ désignera une sous-algèbre parabolique standard et $\gotq$ une contraction parabolique de $\g$ par $\gotp$, et on reprendra souvent les notations de la définition \ref{canonique}. On choisit la sous-algèbre parabolique $\gotp$ propre, autrement dit $\pi' \neq \pi$. En particulier, on a $n \geq 2$.

\subsection{Algèbres d'invariants et de semi-invariants}
On reprend $\gotk$ une algèbre de Lie quelconque de dimension finie. L'action adjointe de $\gotk$ s'étend en une unique action d'algèbre de Lie de $\gotk$ sur son algèbre symétrique $\Sym(\gotk)$. On note $q \cdot f$ \index{$q \cdot f$} si $q \in \gotk$ et $f \in \Sym(\gotk)$ pour désigner cette action. On rappelle que cette action d'algèbre de Lie est caractérisée par les points suivants :
\begin{itemize}
\item pour $q, f \in \gotk$, on a $q \cdot f =[q,f]$,
\item pour tout $q \in \gotk$, l'application $f \in \Sym(\gotk) \mapsto q \cdot f$ est une dérivation de $\Sym(\gotk)$.
\end{itemize}
Désormais, lorsqu'il n'y a pas de précision, si $q_1, q_2 \in \gotk$, le produit $q_1 q_2$ sera le produit dans l'algèbre commutative $\Sym(\gotk)$, à ne pas confondre avec un éventuel produit associatif dans le cas où $\gotk$ est une sous-algèbre de Lie de $\g=\gl_n$.
\begin{defi}
On définit l'\textbf{algèbre des invariants} de $\Sym(\gotk)$ comme la sous-algèbre $\Y(\gotk) \subset \Sym(\gotk)$ \index{$\Y(\gotk)$} définie $\Y(\gotk)=\left\{s \in \Sym(\gotk) \; | \; \forall q \in \gotk, q \cdot s =0\right\}$. Un élément de $\Y(\gotk)$ est appelé \textbf{invariant} de $\Sym(\gotk)$.
\end{defi}
\begin{defiprop}
Pour $\lambda \in \gotk^*$, soit $\Sym(\gotk)_{\lambda}=\left\{ s \in \Sym(\gotk) \; | \; \forall q \in \gotk, q \cdot s = \lambda(q)s\right\}$. \index{$\Sym(\gotk)_\lambda$}\index{$\Sy(\gotk)$} On appelle \textbf{semi-invariant} de $\Sym(\gotk)$ un élément $f$ non nul appartenant à $\Sym(\gotk)_{\lambda}$ pour un certain $\lambda \in \gotk^*$, et on dit que $\lambda$ est le \textbf{poids} de $f$. On note $\Lambda(\gotk)$\index{$\Lambda(\gotk), \Lambda$} (ou $\Lambda$ s'il n'y a pas d'ambiguïté sur l'algèbre de Lie $\gotk$) l'ensemble des poids de semi-invariants de $\Sym(\gotk)$ (ou par abus, l'ensemble des poids de $\Sym(\gotk)$), c'est-à-dire $\Lambda(\gotk) := \left\{\lambda \in \gotk^* \; | \; \Sym(\gotk)_{\lambda}\neq 0\right\}$. L'espace vectoriel $\Sy(\gotk):=\bigoplus_{\lambda \in \Lambda} \Sym(\gotk)_{\lambda}$ est alors une algèbre, appelée \textbf{algèbre des semi-invariants} de $\Sym(\gotk)$.
\end{defiprop}
La démonstration vient du point suivant :
\begin{prop}
Pour tous $\mu, \nu \in \gotk^*$, on a $\Sym(\gotk)_\mu \Sym(\gotk)_\nu \subset \Sym(\gotk)_{\mu+\nu}$. En particulier, $\Lambda$ admet une structure de semi-groupe. On a $\Y(\gotk)=\Sym(\gotk)_0 \subset \Sy(\gotk)$, de sorte que $\Sy(\gotk)$ ainsi que tous les 
$\Sym(\gotk)_\mu$ sont des $\Y(\gotk)$-modules. \label{algsemiinv}
\end{prop}
\begin{rema}
Si $\gotq$ est une contraction parabolique définie comme à la définition \ref{conpara}, alors $\Lambda(\gotq) \subset ({\h^{\text{ss}}})^*$ (la notation $\h^{\text{ss}}$ a été introduite dans la définition \ref{conpara}).
\end{rema}
La proposition suivante a été montrée par Moeglin (\cite{Moe}).
\begin{propn}
Soit $f \in \Sym(\gotk)$ un semi-invariant non nul, alors tout facteur de $f$ est un semi-invariant.\label{dixmier}
\end{propn}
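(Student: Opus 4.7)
The plan is to exploit two facts: the symmetric algebra $\Sym(\gotk)$ is a unique factorisation domain, and for each $q\in\gotk$ the action $q\cdot$ is a derivation of $\Sym(\gotk)$ that preserves the total $\Sym$-degree. First I would reduce the claim to showing that every \emph{irreducible} factor of $f$ is a semi-invariant: any factor of $f$ is, up to a scalar, a product of some of its irreducible factors (with multiplicities at most those occurring in $f$), and a product of semi-invariants is again a semi-invariant by Proposition~\ref{algsemiinv}.

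So let $p$ be an irreducible factor of $f$, and write $f = p^k g$ with $k\ge 1$ and $\gcd(p,g)=1$ in $\Sym(\gotk)$. Let $\lambda\in\gotk^*$ be the weight of $f$. For any $q\in\gotk$, the Leibniz rule applied to $q\cdot f = \lambda(q)\, f$ gives
$$k\, p^{k-1}(q\cdot p)\, g + p^{k}\,(q\cdot g) \;=\; \lambda(q)\, p^{k}\, g,$$
whence, cancelling $p^{k-1}$ in the integral domain $\Sym(\gotk)$ and rearranging,
$$k\,(q\cdot p)\,g \;=\; p\,\bigl[\lambda(q)\,g - (q\cdot g)\bigr].$$
Since $\Car \C = 0$, $k\in\C^{\times}$, and since $\gcd(p,g)=1$ while $p$ is irreducible, this forces $p$ to divide $q\cdot p$ in $\Sym(\gotk)$.

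The final observation is that the derivation $q\cdot$ preserves the $\Sym$-grading: it sends $\gotk$ into $\gotk$ via the bracket, and then extends by Leibniz, so $\deg(q\cdot p) \le \deg(p)$. Combined with the divisibility $p\mid q\cdot p$, this forces $q\cdot p = \chi(q)\,p$ for some scalar $\chi(q)\in\C$. The assignment $q\mapsto \chi(q)$ is $\C$-linear because $q\mapsto q\cdot p$ is, so $\chi\in\gotk^{*}$ and $p$ is a semi-invariant of weight $\chi$. The only delicate point is the divisibility step that isolates $\chi$, but it is immediate once the UFD structure of $\Sym(\gotk)$ is in hand, so I do not foresee any serious obstacle.
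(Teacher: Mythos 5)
Your proof is correct and follows essentially the same route as the paper's: reduce to irreducible factors, apply the Leibniz rule to the semi-invariance equation, use coprimality in the UFD $\Sym(\gotk)$ together with the degree-preserving property of $q\cdot{}$ to conclude that $q\cdot p$ is a scalar multiple of $p$. The only cosmetic difference is that you treat the prime power $p^{k}$ and its coprime cofactor $g$ in a single Leibniz computation, whereas the paper splits the argument into two sub-lemmas (one for a coprime product $gh$, one for a pure power $g^{p}$).
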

On donne une démonstration élémentaire pour le lecteur.
\begin{proof}
On peut supposer que $f$ est non constant. Par factorialité de $\Sym(\gotk)$, $f$ s'écrit
$$f=\prod_{u=1}^d f_u^{p_u}$$
avec les $f_u \in \Sym(\gotk)$ irréductibles premiers entre eux deux à deux, et $p_u \in \N^*$.
Comme tout produit de semi-invariants est un semi-invariant par la propriété \ref{algsemiinv}, il suffit de montrer que chaque $f_u$ est un semi-invariant et donc de montrer que :
\begin{enumerate}
\item si $g,h$ sont non nuls et premiers entre eux et $gh$ est un semi-invariant, alors $g$ et $h$ sont des semi-invariants,
\item si $g^p$, avec $p \in \N^*$ et $g \neq 0$, est un semi-invariant, alors $g$ est un semi-invariant.
\end{enumerate}\par
Pour (1), supposons $gh$ de poids $\lambda$ alors pour tout $q \in \gotk$, $$q\cdot (gh)=\lambda(q) \, gh = (q \cdot g) \, h + g \, (q \cdot h).$$
Ainsi $(\lambda(q) \, g - q \cdot g) \, h = g \, (q \cdot h)$. Puisque $g$ et $h$ sont premiers entre eux, $h $ divise $ q \cdot h$. Or $\deg(q \cdot h) \leq \deg(h)$ donc il existe $\mu \in \gotk^*$ tel que $q \cdot h = \mu(q) \, h$ et on obtient alors $q \cdot g = (\lambda - \mu)(q) \, g$.\par
Pour (2), supposons $g^p$ de poids $\lambda$, alors pour tout $q \in \gotq$
$$q \cdot g^p = \lambda(q)g^p = p \, ( q \cdot g )\, g^{p-1}$$
d'où $q \cdot g = \frac{\lambda(q)}{p} g $.
\end{proof}
\section{Troncation canonique et ad-algébricité}
Cette sous-section reprend en grande partie les résultats exposés dans \cite[\mbox{}2.4.5]{jos95} et \cite[Sec. B, Chap. I]{fau14}.
\begin{defi}
Soit $\gotk$ une algèbre de Lie et $\Lambda$ l'ensemble des poids de $\Sym(\gotk)$. La \textbf{troncation canonique} $\gotk_{\Lambda}$ est la sous-algèbre de Lie de $\gotk$ définie par $\gotk_{\Lambda} = \bigcap_{\lambda \in \Lambda} \ker \lambda$. \index{$\gotk_\Lambda$}
\label{troncation}
\end{defi}
\begin{prop}
Soit $\gotk$ une algèbre de Lie, on note $\Lambda = \Lambda(\gotk)$. On a $\gotk' \oplus \gotz(\gotk) \subset \gotk_\Lambda$.
\label{inclusion}
\end{prop}
\begin{proof}
Soit $\lambda \in \Lambda$, et $s$ un semi-invariant non nul de poids $\lambda$. Pour tous $x,y \in \gotk$ on a $\lambda([x,y])s=[x,y] \cdot s=(x \cdot (y \cdot s))-(y \cdot (x \cdot s))=(\lambda(x)\lambda(y)-\lambda(y)\lambda(x))s=0$. Si $x \in \gotz(\gotk)$, on a $\lambda(x) s = x \cdot s = 0$ donc $\lambda(x)=0$.
\end{proof}
La définition suivante est tirée de \cite[\mbox{}24.8.2]{TYu}.
\begin{defi}
L'algèbre de Lie $\gotk$ est \textbf{ad-algébrique} si l'algèbre de Lie $\ad \gotk$ est algébrique, c'est-à-dire une sous-algèbre de Lie algébrique de $\gl(\gotk)$.
\end{defi}
En particulier, une algèbre de Lie algébrique est ad-algébrique. On obtient alors le théorème suivant, présenté dans \cite[Appendice B.2]{fmj08} dans une version un peu plus générale, qui généralise un résultat de Borho, Gabriel et Rentschler \cite[Satz 6.1]{bor73}.
\begin{theo}[Joseph, Millet]
Soit $\gotk$ une algèbre de Lie de dimension finie. Si $\gotk$ est ad-algébrique, alors $\Sy(\gotk)=\Y(\gotk_\Lambda)=\Sy(\gotk_\Lambda)$.
\label{jm}
\end{theo}
En particulier, dans le cas où $\gotq$ est une contraction parabolique d'une algèbre de Lie réductive, $\gotq$ est ad-algebrique (par \cite[\S 2.4.5]{jos95}, voir également \cite[chap. I, Sec. B, 6.3]{fau14}). On a donc $\Sy(\gotq) = \Sy(\gotq_\Lambda)=\Y(\gotq_{\Lambda})$. De plus, par un résultat de Chevalley et Dixmier \cite[lemme 7]{dix57}, qui est un cas particulier d'un théorème de Rosenlicht \cite{ros63}, on a $\degtr \Frac(\Y(\gotq_\Lambda)) \leq \degtr \, (\Frac(\Sym(\gotq_\Lambda)))^{\gotq_\Lambda}=\ind \gotq_\Lambda$. Comme $\Y(\gotq_\Lambda)=\Sy(\gotq_\Lambda)$, on a (voir \cite[Chap. I, Sec. B, 5.12]{fau14})
\begin{theo}
$\GK \Sy(\gotq)=\GK \Y(\gotq_\Lambda)=\ind \gotq_\Lambda$.\label{ai}
\end{theo}

\chapter{Polynomialité de l'algèbre des semi-invariants associée à une contraction parabolique de $\gl_n$}
On rappelle que $\gotq = \gotp \ltimes \gotn^-$ est une contraction parabolique standard de $\gl_n$.
\section{Propriétés relatives à une contraction parabolique}
\label{sec3}
Dans cette section, on développe une combinatoire sur la contraction parabolique qui permettra d'exhiber des semi-invariants comme facteurs d'invariants symétriques. 

\subsection{Combinatoire de la contraction parabolique}
\label{ssec3.1}
Sous les notations de la section \ref{sec2} et la définition \ref{canonique}, on introduit les notations suivantes qui seront utiles pour la suite.
\begin{nota} On rappelle que $\g=\gl_n$ et $I=\llbracket 1,n \rrbracket$ avec $n \geq 2$.
\label{nota} 
\begin{itemize}
\item On pose $\pi \setminus \pi'=\{\alpha_{\iota_1}, \ldots, \alpha_{\iota_{s-1}}\}$ avec $s \geq 2$, et $I_{\gotp}=\left\{0=\iota_0 < \iota_1 < \ldots < \iota_{s-1}< \iota_{s}=n\right\}$\index{$I_{\gotp}$}. Pour tout $k \in \llbracket 1,s \rrbracket$, on note $I_k=\llbracket \iota_{k-1}+1, \iota_{k}\rrbracket$\index{$I_k, J_k, i_k,j_k$}. On obtient alors une partition $I = I_1 \sqcup \ldots \sqcup I_s$, telle que $\gotl:=\g_{I_1} \times \ldots \times \g_{I_s}$ est un facteur de Levi de $\gotp$. Soit $i_k=\Card I_k=\iota_k-\iota_{k-1}$, de sorte que $n=i_1+\ldots+i_s$ et $\g_{I_k} \simeq \gl_{i_k}$ ; on note également $\imax:=\max_{1 \leq k \leq s}(i_k)$\index{$\imax$}. Pour tout $x \in \llbracket 1,n \rrbracket$, on note $k(x)$\index{$k(x), i(x)$} l'unique indice de $\llbracket 1,s \rrbracket$ tel que $x \in I_{k(x)}$ et $i(x)=\Card(I_{k(x)})$. Pour tout $i \in \N$, on pose $K(i)=\{k \in \llbracket 1,s \rrbracket \, | \, i_k \geq i\}$\index{$K(i)$}. Les $K(i)$, $1\leq i \leq \imax$ forment une suite décroissante (pour l'inclusion) de parties de $\llbracket 1,s \rrbracket$. On note $\kappa_i=\left\{k_{i,1} < \ldots < k_{i,\rho_i}\right\}$\index{$\kappa_i$, $k_{i,t}$}\index{$\kappa$, $k_t$|see{$\kappa_i$, $k_{i,t}$}} l'ensemble des $k$ tels que $i_k=i$, c'est-à-dire $\kappa_i=K(i) \setminus K(i+1)$.
\item Pour tout $J \subset I$, l'algèbre $\gotp_J=\gotp \cap \g_J$ est une sous-algèbre parabolique de $\g_J$. On note alors $J_k=J \cap I_k$ pour tout $k$, de sorte que $J = J_1 \sqcup \ldots \sqcup J_s$. On appelle \textbf{partition de $J$} (associée à $\gotp_J$) cette partition, et les $J_k$ sont les \textbf{parts} de cette partition. On pose $j_k=\Card J_k$, d'où $\Card J=j_1+\ldots+j_s$. Si $L \subset \llbracket 1,s\rrbracket$, on note $J_L:= \bigsqcup_{k \in L} J_k$\index{$J_L$}.
\item Pour tout $i \in \llbracket 0, \imax \rrbracket$, on note $\rho_i=\Card \kappa_i = \Card (\left\{k \in \llbracket 1,s \rrbracket \; | \; i_k=i \right\})$\index{$\rho_i$} et $m_i=\sum_{\ell=1}^{\imax} \rho_\ell \min (i,\ell) = \sum_{k=1}^s \min (i,i_k)$\index{$m_i$}. 
On pose $\mathbf{I}$\index{$\mathbf{I}$} l'ensemble des $i$ tels que $\rho_i \geq 1$. Soit $\mathbf{M}_k$\index{$\mathbf{M}_0, \mathbf{M}_1, \mathbf{M}_2$, $\mathbf{M}_k$, $k \in \N$} l'ensemble des $m_i$ tels que $\rho_i \geq k$. La famille des $\mathbf{M}_k$ forme une suite décroissante de sous-ensembles de $\llbracket 0,n \rrbracket$. On a en particulier
\begin{itemize}
\item $\mathbf{M}_0$ l'ensemble des $m_i$ pour $i \in \llbracket 0,\imax \rrbracket$,
\item $\mathbf{M}_1$ l'ensemble des $m_i$ pour $i \in \mathbf{I}$,
\item $\mathbf{M}_2$ l'ensemble des $m_i$ avec $i \in \mathbf{I}$ et $\rho_i \geq 2$.
\end{itemize}
\end{itemize}
\end{nota}
Les propriétés élémentaires suivantes seront utiles pour la suite.
\begin{prop}
On a $\gotq'=\bigoplus_{u \neq v} \C e_{u,v} \oplus \bigoplus_{\iota \notin I_{\gotp}} \C h_\iota$ et $\gotz(\gotq)=\C \id$. \label{qprime}
\end{prop}
\begin{prop}\mbox{}
\label{propelem}
\begin{enumerate}
\item On a $\rho_0=m_0=0$, $m_1=s$ et $m_{\imax}=n$. Pour tout $i$, l'entier $\rho_i$ est non nul si et seulement si il existe $k$ tel que $\Card(I_k)=i$.
\item L'application $i \in \llbracket 0, \imax \rrbracket \mapsto m_i \in \mathbf{M}_0$ est bijective croissante. En particulier, $|\mathbf{I}|=|\mathbf{M}_1|$.
\item Pour tout $i$, on a $|K(i)|=m_i-m_{i-1}$,
\item Pour tout $i \in \llbracket 1, \imax \rrbracket$, on a $m_i-m_{i-1}= \Card(\{k \, | \, i_k \geq  i \, \}) > 0$. Ainsi pour tout $i \in \llbracket 1, \imax-1 \rrbracket$, on a 
$$\left\{ \begin{array}{ll}
m_{i+1}-m_{i} = m_i-m_{i-1}&\text{si } i \notin \mathbf{I} \\
m_{i+1}-m_{i} < m_i-m_{i-1}&\text{si } i \in \mathbf{I}
\end{array}\right.$$
En particulier, la suite $(m_i-m_{i-1})$ est décroissante.
\item Si $m \in \mathbf{M}_1$, alors $m-1 \notin \mathbf{M}_0$, sauf dans le cas où $m=m_{\imax}=n$ et $n \notin \mathbf{M}_2$,
\item Pour tout $i$, on a $K(i+1)=K(i) \setminus \kappa_i$.
\end{enumerate}
\end{prop}
\begin{proof}[Preuve de (5)]
Si $m \in \mathbf{M}_1$ et $m \neq n$, alors $m=m_i$ avec $i \in \mathbf{I} \setminus \{\imax\}$. Donc par le point (4), on a $m_i-m_{i-1} > m_{i+1}-m_i \geq 1$, d'où $m_i-m_{i-1} \geq 2$. En particulier, par le point (2), $m_i-1=m-1 \notin \mathbf{M}_0$. Si $i=\imax$ et $n=m_{\imax} \in \mathbf{M}_2$, alors on a également $m_i-m_{i-1}=\Card(\{k \, | \, i_k \geq  i \, \}) \geq 2$ par définition de $\mathbf{M}_2$.
\end{proof}
\begin{exem}
\label{leexemple}
En type $\gl_n$, pour illustrer les différentes constructions, on se basera sur l'exemple suivant : on prend $n=12$, et $\pi'=\{\alpha_1, \alpha_2, \alpha_3, \alpha_6, \alpha_7, \alpha_8, \alpha_{10}\}$, de sorte que $I_1=\{1,2,3,4\}$, $I_2=\{5\}$, $I_3=\{6,7,8,9\}$, $I_4=\{10,11\}$ et $I_5=\{12\}$. On schématise cet exemple comme suit :
\begin{center}
\begin{tikzpicture}[scale=0.4]
\foreach \k in {1,2,...,11}
	{\draw[color=gray!20]  (0,\k)--(12,\k);
	\draw[color=gray!20] (\k,0)--(\k,12);}
\draw (0,0)--(12,0);
\draw (0,0)--(0,12);
\draw (0,12)--(12,12);
\draw (12,0)--(12,12);
\draw[ultra thick] (0,8)--(4,8)--(4,7)--(5,7)--(5,3)--(9,3)--(9,1)--(11,1)--(11,0);
\draw (3,3) node{\LARGE $\gotn^-$};
\draw (9,9) node{\LARGE $\gotp$};
\draw[dashed] (4,12)--(4,8)--(5,8)--(5,7)--(9,7)--(9,3)--(11,3)--(11,1)--(12,1);
\draw (2,10) node{\Large $\g_{I_1}$};
\draw (4.5,7.5) node{\tiny $\g_{I_2}$};
\draw (7,5) node{\Large $\g_{I_3}$};
\draw (10,2) node{\Large $\g_{I_4}$};
\draw (11.5,0.5) node{\tiny $\g_{I_5}$};
\end{tikzpicture}
\end{center}
\end{exem}
\begin{nota}
Soient $x,y \in \llbracket 1,n \rrbracket$.
On définit alors
\begin{itemize}
\item une relation d'équivalence par $x \sim y$\index{$x \sim y$, $x \nsim y$, $x \preceq y$, $x \prec y$} si $k(x)=k(y)$, on note également $x \nsim y$ si $k(x) \neq k(y)$,
\item un préordre $x \preceq y$ si $k(x) \leq k(y)$, on note également $x \prec y$ si $k(x) < k(y)$ ; on pourra également noter la relation dans l'autre sens.
\end{itemize} \label{ordre}
\end{nota}
\begin{prop}
Soient $x,y \in \llbracket 1,n \rrbracket$. On a $x \leq y \Rightarrow x \preceq y$ et $x \prec y \Rightarrow x < y$, les réciproques de ces implications étant fausses en général.\label{propordre}
\end{prop}
\begin{prop}
Soit $p,q \in \llbracket 1,n \rrbracket$. Alors $e_{p,q}$ appartient à $\gotn^{-}$ (respectivement à $\gotp$) si et seulement si $p \succ q$ (respectivement $p \preceq q$). \label{pn}
\end{prop}

\subsection{Degrés en $\gotn^-$ des $F_j$ et mineurs de taille $j$ de degré maximal en $\gotn^-$}
On rappelle que pour tout $j \in \llbracket 1,n \rrbracket$, on note $F_j$ la somme des mineurs principaux de taille $j$ (voir équation \eqref{Fm}). Ces $F_j$ pour $j \in \llbracket 1,n \rrbracket$ engendrent librement $\Y(\gl_n)$ et vont jouer un rôle crucial dans la description de l'algèbre des invariants symétriques $\Y(\gotq)$ (voir proposition \ref{fdroit}).
\begin{defi} \mbox{}
\label{invbas}
\begin{itemize}
\item Soit $N$ un sous-espace vectoriel de $\gl_n$ engendré par des éléments de la base canonique $(e_{i,j})_{1 \leq i,j \leq n}$. Pour un monôme de la forme $a \, e_{i_1, j_1} \ldots e_{i_r, j_r}\in \Sym(\gl_n)$ avec $a \in \C^\times$, \textbf{son degré en $N$}, noté $\deg_{N}(a \, e_{i_1, j_1} \ldots e_{i_r, j_r})$, est le nombre de $k$ tels que $e_{i_k, j_k}$ est dans $N$ (on pose par convention $\deg_{N}(0)=-\infty$). On pose $\deg_{e_{v,w}}$ pour $\deg_{\C e_{v,w}}$. Si $F \in \Sym(\gotq)$ est une somme de tels monômes linéairement indépendants, le degré $\deg_{N}(F)$ \index{$\deg_{N}(F)$} de $F$ en $N$ est défini comme le maximum des degrés en $N$ de ses monômes. On dit que $F$ est \textbf{homogène en $N$} lorsque tous ses monômes sont de même degré en $N$. On dit que $F$ est \textbf{bihomogène en $N$} lorsque $F$ est homogène pour le degré total sur $\Sym(\gotq)$ et homogène en $N$.
\item Pour $F \in \Sym(\gotq)$, on note $F^{\bullet} \in \Sym(\gotq)$ \index{$F^\bullet$} la composante de plus haut degré en $\gotn^-$ de $F$, c'est-à-dire la somme de ses monômes de degré maximal en $\gotn^-$.
\item Pour $F \in \Sym(\gotq)$ bihomogène en $\gotn^-$, on appelle \textbf{bidegré} (en $\gotp$ et $\gotn^-$) de $F$ le couple $(\deg_{\gotp} F, \deg_{\gotn^-} F)$, et on le note $\bideg_{\gotp, \gotn^-} F$, ou $\bideg F$ \index{$\bideg F$} s'il n'y a pas d'ambiguïté. Comme $\deg F = \deg_{\gotp} F + \deg_{\gotn^-} F$, deux éléments de même bidegré ont même degré total.
\end{itemize}
\end{defi}
\begin{prop}\mbox{}
\begin{enumerate}
\item Soient $F, F' \in \Sym(\gotq)$ bihomogènes en $\gotn^-$. Les conditions suivantes sont alors équivalentes :
\begin{enumerate}
\item $\bideg F = \bideg F'$,
\item $(\deg F, \deg_{\gotn^-} F)=(\deg F', \deg_{\gotn^-} F')$,
\item $(\deg F, \deg_{\gotp} F)=(\deg F', \deg_{\gotp} F')$.
\end{enumerate}
\item Pour tous $F_1, F_2 \in \Sym(\gotq)$, on a $(F_1F_2)^\bullet = F_1^\bullet F_2^\bullet$.
\end{enumerate}
\end{prop}
Dans toute la suite, on va chercher à calculer les $F_j^\bullet$. Cela nécessite de connaître les degrés en $\gotn^-$ des $F_j$. Pour les calculer, on a besoin de déterminer les degrés en $\gotn^-$ des mineurs principaux.
\begin{lem}
On se place sous les notations de \ref{nota}. Soit $J \subset I$. On rappelle que $\Delta_J=\sum_{\sigma \in \mathfrak{S}_J} \varepsilon(\sigma)  \prod_{\ell \in J} e_{\ell, \sigma(\ell)}$. Alors le degré en $\gotn^-$ de $\Delta_J$ est égal à $|J|-\max (j_k)$.\label{clé}
\end{lem}
\begin{rema}
Comme les $\left(\prod_{l \in J} e_{l, \sigma(l)}\right)_{\sigma \in \mathfrak{S}_J}$ sont linéairement indépendants, on a $$\deg_{\gotn^-} \Delta_J=\max_{\sigma \in \mathfrak{S}_J} \left( \deg_{\gotn^-} \left(\prod_{l \in J} e_{l, \sigma(l)}\right) \right)$$ Autrement dit, le degré en $\gotn^-$ de $\Delta_J$ est le maximum sur l'ensemble des $\sigma \in \mathfrak{S}_J$ du nombre de $e_{l, \sigma(l)}$ dans $\gotn^-$.\label{rq1}
\end{rema}
\begin{nota}
\label{notaop}
Soit $J \subset I$, $j=\Card(J)$. Soit $\theta_J$\index{$\theta_J, \Theta_J$} l'unique bijection strictement croissante de $J$ dans $\llbracket 1,j \rrbracket$. Cette bijection induit un isomorphisme d'algèbres de Lie $\Theta_J : \g_J \stackrel{\simeq}{\rightarrow} \gl_j$.
\end{nota}
\begin{proof}[Preuve du lemme \ref{clé}]
Comme $\deg_{\gotn^-} \Delta_J = \deg_{\gotn_J^-} \Delta_J$, il suffit de travailler sur $\g_J$. La bijection $\theta_J$ et l'isomorphisme $\Theta_J$ permettent alors de se ramener au cas où $J=\llbracket 1,j \rrbracket$. Sans perdre de généralité, on supposera donc dans la suite que $J=I$. En particulier, $\Delta_J=:\Delta$ est simplement le déterminant sur $\g$, $j=n$ et pour tout $k$, on a $j_k=i_k$.
Soit $k_0 \in \kappa_{\imax}$ (c'est-à-dire tel que $i_{k_0}=\imax$). Le déterminant $\Delta$ est donné par :
$$\Delta=\sum_{\sigma \in \mathfrak{S}_n}\varepsilon(\sigma) \; e_{1, \sigma(1)} \ldots e_{n, \sigma(n)}$$
Chaque $e_{p,q}$ appartient soit à $\gotp$, soit à $\gotn^-$. En particulier, par la propriété \ref{pn}, tout élément de la forme $e_{p,q}$ avec $p-q=\imax$ est dans $\gotn^{-}$ ; en effet, $p >q$ donc $p \succeq q$, et tout $I_k$ est de la forme $\llbracket a_k, b_k \rrbracket$ avec $b_k-a_k=i_k-1 \leq \imax-1$, donc $p \nsim q $. Ainsi les éléments $e_{\imax+1,1}, \, \ldots, \, e_{n,n-\imax}$ sont bien dans $\gotn^-$. Puisque les termes $e_{1, n-\imax+1}, \, \ldots, \, e_{\imax,n}$ ne sont pas dans $\gotn^-$ (ils sont de la forme $e_{p,q}$ avec $p<q$), le monôme $e_{\imax+1,1} \ldots e_{n,n-\imax} e_{1, n-\imax+1} \ldots e_{\imax,n}$, qui (au signe près) est un monôme de $\Delta$, est de degré $n-\imax$ en $\gotn^-$.\par
Montrons maintenant que ce degré est maximal sur les monômes de $\Delta$. Soit $\sigma \in \mathfrak{S}_n$. Le produit $e_{1, \sigma(1)} \ldots e_{n, \sigma(n)}$ est (au signe près) un monôme de $\Delta$. Supposons que $e_{l, \sigma(l)}, e_{l', \sigma(l')} \in \gotn^-$ avec $l < l'$ et $\sigma(l)>\sigma(l')$. Alors on a $l' > l \succ \sigma(l) > \sigma(l')$. Ainsi, avec les propriétés \ref{propordre} et \ref{pn}, en prenant $\tau=(\sigma(l) \; \sigma(l'))$, on a également $e_{l, \tau\sigma(l)}, e_{l', \tau\sigma(l')} \in \gotn^-$. Ceci implique par récurrence qu'il existe $\sigma' \in \mathfrak{S}_n$ vérifiant les deux conditions suivantes :
\begin{itemize}
\item le monôme $e_{1, \sigma'(1)} \ldots e_{n, \sigma'(n)}$ est de même degré en $\gotn^-$ que $e_{1, \sigma(1)} \ldots e_{n, \sigma(n)}$,
\item si $e_{l_1, \sigma'(l_1)}, \ldots, e_{l_d, \sigma'(l_d)}$ avec $l_1 < \ldots < l_d$ est l'ensemble des $e_{l,\sigma'(l)}$ dans $\gotn^-$, alors l'application $\sigma'_{|\left\{l_1, \ldots, l_d\right\}}$ est strictement croissante.
\end{itemize}
On considère de tels $l_1 < \ldots < l_d$ et on va montrer que $d \leq n-\imax$. Soit $t$ l'indice maximal tel que $l_t \in \bigsqcup_{k=1}^{k_0} I_k$. Alors $\left\{l_{t+1}, \ldots, l_d\right\}\subset \bigsqcup_{k=k_0+1}^s I_k$. En prenant les cardinaux, on obtient $d-t \leq \sum_{k=k_0+1}^{s} i_k$. De plus, puisque $e_{l_t, \sigma'(l_t)}\in \gotn^-$, par la propriété \ref{pn}, on a $\sigma'(l_t) \in \bigsqcup_{k=1}^{k_0-1} I_k$. Alors, par croissance de $\sigma'$, on a $\left\{\sigma'(l_1), \ldots, \sigma'(l_t)\right\} \subset \bigsqcup_{k=1}^{k_0-1} I_k$. En prenant les cardinaux, on obtient $t \leq \sum_{k=1}^{k_0-1} i_k$. Finalement, 
\[
d=t+(d-t)
\leq \sum_{k=1}^{k_0-1} i_k + \sum_{k=k_0+1}^{s} i_k
=n-i_{k_0}=n-\imax.\qedhere
\]
\end{proof}
\begin{rema}
Comme les $(\Delta_J)_{J \subset I, |J|=j}$ sont linéairement indépendants, on a par le lemme \ref{clé}
\[\deg_{\gotn^-} F_j=\max_{J \subset I, |J|=j} \deg_{\gotn^-} \Delta_J =\max_{J \subset I, |J|=j} \left(j-\max_k(j_k)\right)=j-\min_{J \subset I, |J|=j}\max_k(j_k).\]
Autrement dit,
$$\deg_{\gotp} F_j = \min_{J \subset I, |J|=j}\max_k(j_k).$$
\label{degnFj}
\end{rema}
\begin{exem}
On reprend l'exemple \ref{leexemple}. Posons $j=6$. Si $J=\llbracket 1, 6 \rrbracket$, alors $j_1=4$, $j_2=1$, $j_3=1$, $j_4=j_5=0$, d'où $\max_k j_k=4$. Si l'on prend un $J$ construit comme suit :
\begin{itemize}
\item on prend un entier dans chaque $I_k$ pour tout $k$, ce qui donne un ensemble à $5$ éléments,
\item pour le dernier élément de $J$, on prend un autre entier quelconque de $I_1$, $I_3$ ou $I_4$,
\end{itemize}
on a $\max_k j_k=2$, et on ne peut pas choisir $J \subset I = \llbracket 1,12 \rrbracket$ de cardinal $6$ de sorte que $\max_k j_k$ soit strictement plus petit : si $J$ était tel que $\max_k j_k \leq 1$, alors $j=\sum_{k=1}^5 j_k \leq \sum_{k=1}^5 1 = 5$ ce qui est absurde. Dans ce cas, on a donc $\deg_{\gotn^-} F_6=6-2=4$.
\end{exem}
\begin{coro}
Pour tout $j \in \llbracket 1,n \rrbracket$, l'entier $j- \deg_{\gotn^-} F_j$ est l'unique $i \in \llbracket 1,\imax \rrbracket$ tel que $$m_{i-1} < j \leq m_{i }$$
(voir le point (2) de la propriété \ref{propelem}). La suite $(\deg_{\gotn^-} F_j)_{1 \leq j \leq n}$ vérifie alors la relation de récurrence suivante : on a $\deg_{\gotn^-} F_1=0$ et pour tout $j \in \llbracket 1,n-1 \rrbracket$, 
\begin{equation}
deg_{\gotn^-} F_{j+1} = \left\{ \begin{array}{ll}
\deg_{\gotn^-} F_{j}&\text{si } j=m_i, i \in \llbracket 0,\imax-1 \rrbracket \\
\deg_{\gotn^-} F_{j}+1&\text{sinon}
\end{array}\right.\label{degrec}
\end{equation}
\label{coro}
\end{coro}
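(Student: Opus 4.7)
La démarche est de calculer la quantité $\min_{J \subset I,\,|J|=j} \max_k(j_k)$ qui apparaît dans la remarque \ref{degnFj}, puis d'en déduire formellement la relation de récurrence \eqref{degrec}.

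Dans un premier temps, j'établirais la minoration $\min_J \max_k j_k \geq i$, où $i$ est défini par $m_{i-1} < j \leq m_i$. Pour tout $J \subset I$ avec $|J|=j$, posons $M := \max_k j_k$. L'encadrement $j_k \leq \min(M, i_k)$ pour tout $k$ donne par sommation $j = \sum_k j_k \leq \sum_k \min(M,i_k) = m_M$. La stricte croissance de $\ell \mapsto m_\ell$ sur $\llbracket 0, \imax \rrbracket$ (point (2) de la propriété \ref{propelem}) force alors $M \geq i$.

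Dans un second temps, je montrerais que cette borne est atteinte en exhibant un $J$ explicite : en partant de $j_k := \min(i-1, i_k)$ pour tout $k$ (ce qui fournit $m_{i-1}$ éléments au total), puis en ajoutant un élément supplémentaire à $j-m_{i-1}$ des parts indexées par un $k \in K(i)$ (ce qui est possible car $|K(i)| = m_i - m_{i-1} \geq j - m_{i-1}$), on obtient un $J$ de cardinal $j$ tel que $\max_k j_k = i$ (l'inégalité $j > m_{i-1}$ assure qu'au moins un $j_k$ atteint la valeur $i$). Par la remarque \ref{degnFj}, il en résulte $\deg_{\gotp} F_j = i$, d'où $\deg_{\gotn^-} F_j = j - i$, ce qui est la première assertion.

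La relation de récurrence \eqref{degrec} en découle formellement. Notant $i(j)$ l'unique entier de $\llbracket 1, \imax \rrbracket$ tel que $m_{i(j)-1} < j \leq m_{i(j)}$, on a
$$\deg_{\gotn^-} F_{j+1} - \deg_{\gotn^-} F_j = 1 - \bigl(i(j+1) - i(j)\bigr).$$
La fonction $j \mapsto i(j)$ est croissante et augmente de $1$ précisément lorsque $j = m_i$ pour un certain $i \in \llbracket 1,\imax-1 \rrbracket$ (on utilise ici $m_{i+1} > m_i$ fourni par le point (4) de la propriété \ref{propelem}) ; elle est constante sinon. Les deux cas de \eqref{degrec} s'en déduisent. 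L'unique point non complètement trivial est la construction du $J$ optimal dans la seconde étape, mais celle-ci ne fait que réaliser la définition de $m_i$ et je n'anticipe pas d'obstacle majeur.
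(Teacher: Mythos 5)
Your proof is correct and follows essentially the same route as the paper's: you establish $\min_J \max_k j_k \geq i$ (directly via monotonicity of $\ell \mapsto m_\ell$, where the paper argues by contradiction — an equivalent rephrasing), exhibit the same optimal $J$ by distributing $j - m_{i-1}$ extra elements over parts indexed by $K(i)$ (this is exactly Définition-Propriété~\ref{typej}), and then deduce the recurrence formally from the first claim. The only difference is that you spell out the recurrence step $\deg_{\gotn^-} F_{j+1} - \deg_{\gotn^-} F_j = 1 - (i(j+1)-i(j))$, which the paper leaves as an immediate consequence.
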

\begin{proof}
On note $i$ l'unique entier de $\llbracket 1, \imax \rrbracket$ tel que $m_{i-1} < j \leq m_i$. Pour le deuxième point, on a $F_1=\id$ de degré $0$ en $\gotn^-$ (puisque $\h \cap \gotn^-=\{0\}$), donc la relation de récurrence est une conséquence du premier point. Par la remarque \ref{degnFj}, il s'agit de montrer que pour tout $J$ de cardinal $j$, on a $\max{(j_k)} \geq i$ et qu'il existe $J$ pour lequel l'égalité est vérifiée (voir \ref{typej}).\par
Soit $J \subset I$ de cardinal $j$. Si $\max(j_k) < i$, alors pour tout $k$, on a $j_k \leq i-1$. Comme par définition, $j_k \leq i_k$, on a $j_k \leq \min (i-1,i_k)$. Ainsi
\begin{equation}
j=\sum_{k=1}^s j_k \leq \sum_{k=1}^s \min (i-1,i_k)=m_{i-1}<j \label{padide}
\end{equation}
ce qui est absurde.
\begin{defiprop}
Soit $i$ l'unique entier de $\llbracket 1, \imax \rrbracket$ tel que $m_{i-1} < j \leq m_i$. On a $j-m_{i-1} \leq \Card(\{k \in \llbracket 1,s \rrbracket \, | \, i_k \geq i \})$.\par
Soit $K \subset \{k \in \llbracket 1,s \rrbracket \, | \, i_k \geq i \}$ de cardinal $j-m_{i-1}$. Soit alors un sous-ensemble $J \subset I$ défini par les choix de $J_k$ suivants :
\begin{itemize}
\item si $k \notin K$, on choisit $J_k \subset I_k$ de cardinal $\min(i-1,i_k)$,
\item si $k \in K$, on choisit $J_k \subset I_k$ de cardinal $i$.
\end{itemize}
On note $\mathcal{J}(j)$\index{$\mathcal{J}(j)$} l'ensemble des $J \subset I$ ainsi définis, pour tous choix de $K$ et de $J_k$ pour $k \in \llbracket 1,s \rrbracket$. Pour tout $J \in \mathcal{J}(j)$, on a $\Card(J)=j$ et $\max(j_k)=i$. En particulier, pour tout $J \in \mathcal{J}(j)$, on a $\deg_{\gotn^-} \Delta_J=\deg_{\gotn^-} F_j=j-i$.
\label{typej}
\end{defiprop}
\begin{proof}\mbox{}
\begin{itemize}
\item \emph{On a $j-m_{i-1} \leq \Card(\{k \in \llbracket 1,s \rrbracket \, | \, i_k \geq i \})$.} En effet, par hypothèse sur $i$, on a 
$$
0<j-m_{i-1} \leq m_i - m_{i-1}= \Card(\{k \in \llbracket 1,s \rrbracket \, | \, i_k \geq i \})
$$
par la propriété \ref{propelem}.
\item \emph{Pour tout $J \in \mathcal{J}(j)$, on a $\Card(J)=j$.} En effet,
\begin{align*}
\Card(J)&=\sum_{k=1}^s j_k \\
&=\sum_{k \in K} i + \sum_{k \notin K} \min(i-1,i_k)\\
&=\sum_{k \in K} 1 + \sum_{k \in K} (i-1) + \sum_{k \notin K} \min(i-1,i_k) \\
&=\Card(K) + \sum_{k \in K} \min(i-1,i_k) + \sum_{k \notin K} \min(i-1,i_k)\\
&=\Card(K) + \sum_{k=1}^s \min(i-1,i_k)\\
&=(j-m_{i-1})+m_{i-1}=j
\end{align*}
\end{itemize}
Par construction, pour de tels $J$, on a alors $\max(j_k)=i$.
\end{proof}
Ceci conclut donc la démonstration du corollaire \ref{coro}.
\end{proof}
\begin{exem}
\label{exnot}
Appliquons la définition-propriété \ref{typej} à deux cas particuliers.
\begin{itemize}
\item Pour $m=m_i \in \mathbf{M}_0$ non nul, on a $m-\deg_{\gotn^-} F_m=i$ (corollaire \ref{coro}). Comme $m-m_{i-1}=\Card (\{k \, | \, i_k \geq i\})$ (propriété \ref{propelem}), l'ensemble $J$ est dans $\mathcal{J}(m)$ si et seulement si $j_k=\min(i,i_k)$ pour tout $k \in \llbracket 1,s \rrbracket$.
\item Supposons que $m=m_i \in \mathbf{M}_0 \setminus \{n\}$. On a $m+1-\deg_{\gotn^-} F_{m+1}=i+1$ (corollaire \ref{coro}). Dans ce cas, l'ensemble $J \subset I$ appartient à $\mathcal{J}({m+1})$ si et seulement si les conditions suivantes sont vérifiées (définition \ref{typej}) :
\begin{itemize}
\item il existe un unique $k_J$\index{$k_J$} tel que $j_{k_J}=i +1 \leq i_{k_J}$,
\item pour tout $k \neq k_J$, $j_k=\min(i,i_k)$.
\end{itemize}
\end{itemize}
\end{exem}
\begin{coro}
Soit $i \in \llbracket 1, \imax \rrbracket$ et $m:=m_i$.
\begin{enumerate}
\item Soit $J \subset I$ de cardinal $m$. On a $\max (j_k) \geq i$. Alors l'ensemble $J$ est dans $\mathcal{J}(m)$ si et seulement si $\max (j_k) = i$. 
\item Si $J \in \mathcal{J}(m+1)$, alors pour tout $l \in J_{k_J}$ (notation de l'exemple \ref{exnot}), on a $J \setminus \left\{l\right\} \in \mathcal{J}(m)$. Inversement, si $J \in \mathcal{J}(m)$, $m \neq n$, alors pour tout $l \in I \setminus J$, $J \sqcup \left\{l\right\} \in \mathcal{J}(m+1)$.
\end{enumerate}
\label{comb}
\end{coro}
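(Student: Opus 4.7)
Both parts rely on the explicit combinatorial characterizations established in Example \ref{exnot}: for $m = m_i \in \mathbf{M}_0$ non-zero, $J \in \mathcal{J}(m)$ if and only if $j_k = \min(i, i_k)$ for every $k$; and for $m = m_i \in \mathbf{M}_0 \setminus \{n\}$, $J \in \mathcal{J}(m+1)$ if and only if there is a (necessarily unique) index $k_J$ with $j_{k_J} = i+1 \leq i_{k_J}$ and $j_k = \min(i, i_k)$ for $k \neq k_J$. The plan is to reduce everything to these two characterizations.

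For (1), the inequality $\max(j_k) \geq i$ is obtained by the same counting argument as in equation \eqref{padide}: if $\max(j_k) < i$, then $j_k \leq \min(i-1, i_k)$ for every $k$, giving $m = \sum_k j_k \leq m_{i-1} < m$, contradiction. The direct implication in the equivalence is immediate from the characterization of $\mathcal{J}(m)$, since $j_k = \min(i,i_k)$ implies $\max(j_k) = \min(i, \imax) = i$. For the converse, if $|J| = m$ and $\max(j_k) = i$, then $j_k \leq i$ and $j_k \leq i_k$ give $j_k \leq \min(i, i_k)$; summing yields $m \leq \sum_k \min(i, i_k) = m_i = m$, so all intermediate inequalities are equalities and $j_k = \min(i, i_k)$ for every $k$, hence $J \in \mathcal{J}(m)$.

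For (2), the first direction is mostly bookkeeping: if $J \in \mathcal{J}(m+1)$ and $l \in J_{k_J}$, set $J' := J \setminus \{l\}$, so $|J'| = m$ and $j'_k = j_k$ for $k \neq k_J$ while $j'_{k_J} = i$. Since $i_{k_J} \geq i+1 > i$, one has $j'_{k_J} = i = \min(i, i_{k_J})$, and by the characterization of $\mathcal{J}(m+1)$, $j'_k = \min(i, i_k)$ for $k \neq k_J$. Thus $J' \in \mathcal{J}(m)$ by (1) or directly by the characterization. For the converse, let $J \in \mathcal{J}(m)$ with $m \neq n$ and $l \in I \setminus J$; let $k_0$ be the unique index with $l \in I_{k_0}$. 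Because $l \notin J_{k_0}$, one has $j_{k_0} < i_{k_0}$, which in view of $j_{k_0} = \min(i, i_{k_0})$ forces $i < i_{k_0}$, hence $i_{k_0} \geq i+1$. Setting $J' := J \sqcup \{l\}$ gives $j'_{k_0} = i+1 \leq i_{k_0}$ and $j'_k = \min(i, i_k)$ for $k \neq k_0$, so $J' \in \mathcal{J}(m+1)$ with $k_{J'} = k_0$.

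The proof is essentially a routine application of the characterization from Example \ref{exnot}; no real obstacle is expected, the only mildly delicate step being the observation in the converse of (2) that $l \notin J$ forces $i_{k_0} \geq i+1$ rather than merely $i_{k_0} \geq i$.
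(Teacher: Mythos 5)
Your proof is correct and follows the same route as the paper: part (1) is established exactly as in the source (using the counting argument of \eqref{padide} and the characterization of $\mathcal{J}(m)$ from Example \ref{exnot}), and for part (2), which the paper dismisses as ``clair,'' you supply precisely the bookkeeping the paper had in mind, including the mildly delicate observation that $l \notin J$ together with $j_{k_0} = \min(i, i_{k_0}) < i_{k_0}$ forces $i_{k_0} \geq i+1$.
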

\begin{proof}
Le point (2) est clair. Montrons le point (1). Comme dans la démonstration du corollaire \ref{coro} (voir équation \ref{padide}), on montre que $\max (j_k) \geq i$. Si $J \in \mathcal{J}(m)$, alors $\max(j_k)=i$ par construction. Il reste à montrer la réciproque. Si $\max (j_k) = i$, pour tout $k$, on a $j_k \leq i$. Comme $J_k \subset I_k$, on a $j_k \leq i_k$, d'où $j_k \leq \min(i,i_k)$. Puisque $\sum_k j_k=m=\sum_k \min(i,i_k)$, ceci implique que $j_k = \min(i,i_k)$ pour tout $k$, c'est-à-dire $J \in \mathcal{J}(m)$ (exemple \ref{exnot}).
\end{proof}
\begin{nota}
Soit $k \in \llbracket 1,s \rrbracket$, et $j \in \llbracket 1,n \rrbracket$. On note $\mathcal{J}(j)_k$\index{$\mathcal{J}(j)_k$, $\mathcal{J}_k$}, ou plutôt $\mathcal{J}_k$ quand il n'y aura pas d'ambiguïté sur $j$, l'ensemble des $J_k =J \cap I_k$ pour $J \in \mathcal{J}(j)$. En particulier, par l'exemple \ref{exnot}, pour tout $m=m_i \in \mathbf{M}_0$, on a $\mathcal{J}(m)_k=\{J_k \subset I_k \, | \, |J_k|= \min(i,i_k)\}$ pour tout $k \in \llbracket 1,s \rrbracket$, et on a une bijection
\begin{align*}
\prod_{k=1}^s \mathcal{J}(m)_k &\longrightarrow \mathcal{J}(m) \\
(J_k)_{1 \leq k \leq s} &\longmapsto \bigsqcup_{k=1}^s J_k
\end{align*}
\label{jrond}
\end{nota}
\begin{exem}
\label{exemple}
On reprend l'exemple \ref{leexemple}. On rappelle que $\gotl=\gl_4 \times \gl_1 \times \gl_4 \times \gl_2 \times \gl_1$ est un facteur de Levi de $\gotp$. On a alors $\mathbf{I}=\{1,2,4\}$, et $m_1=5$, $m_2=8$ et $m_4=12$. Remarquons que $3 \notin \mathbf{I}$, et donc que $m_3=10$ n'est pas dans $\mathbf{M}_1$.\par
On représente $\gotp$ comme suit :
\begin{center}
\begin{tikzpicture}[scale=0.5]
\draw (0,0)--(0,5)--(4,5)--(4,4)--(1,4)--(1,3)--(4,3)--(4,2)--(2,2)--(2,1)--(1,1)--(1,0)--(0,0);
\draw (0,1)--(1,1);
\draw (0,2)--(2,2);
\draw (0,3)--(1,3);
\draw (0,4)--(1,4);
\draw (1,1)--(1,3);
\draw (1,4)--(1,5);
\draw (2,2)--(2,3);
\draw (2,4)--(2,5);
\draw (3,4)--(3,5);
\draw (3,2)--(3,3);
\draw (-0.5,4.5) node{$4$};
\draw (-0.5,3.5) node{$1$};
\draw (-0.5,2.5) node{$4$};
\draw (-0.5,1.5) node{$2$};
\draw (-0.5,0.5) node{$1$};
\end{tikzpicture}
\end{center}
Le nombres de cases de la ligne $k$ est égal à $i_k$. Par exemple, on représente $i_1=4$ par les quatre cases de la première ligne. On obtient une bijection entre les sous-algèbres paraboliques standards $\gotp$ de $\gl_n$ et de tels diagrammes à $n$ cases.\par
Pour tout $J \subset I$, on représente les valeurs de $j_k$ en hachurant sur la ligne $k$ les $j_k$ premières cases. Par exemple, si $m=m_2 = 8$, et $J \in \mathcal{J}(8)$, on a $j_k=\min(2,i_k)$ pour tout $k$, et les valeurs des $j_k$ sont représentées par le diagramme suivant
\begin{center}
\begin{tikzpicture}[scale=0.5]
\draw (0,0)--(0,5)--(4,5)--(4,4)--(1,4)--(1,3)--(4,3)--(4,2)--(2,2)--(2,1)--(1,1)--(1,0)--(0,0);
\draw (0,1)--(1,1);
\draw (0,2)--(2,2);
\draw (0,3)--(1,3);
\draw (0,4)--(1,4);
\draw (1,1)--(1,3);
\draw (1,4)--(1,5);
\draw (2,2)--(2,3);
\draw (2,4)--(2,5);
\draw (3,4)--(3,5);
\draw (3,2)--(3,3);
\draw[dashed] (2,-0.5)--(2,5.5);
\draw (2,-1) node{$m_2=8$};
\fill [pattern=north east lines]
	(0,0)--(0,5)--(2,5)--(2,4)--(1,4)--(1,3)--(2,3)--(2,1)--(1,1)--(1,0)--(0,0);
\draw (-0.5,4.5) node{$4$};
\draw (-0.5,3.5) node{$1$};
\draw (-0.5,2.5) node{$4$};
\draw (-0.5,1.5) node{$2$};
\draw (-0.5,0.5) node{$1$};
\end{tikzpicture}
\end{center}
Les deux diagrammes suivants illustrent les deux possibilités pour $(j_k)_{1 \leq k \leq 5}$ si $J$ est dans $\mathcal{J}(m_2+1)$, c'est-à-dire $J \in \mathcal{J}(9)$.
\begin{center}
\begin{tikzpicture}[scale=0.5]
\draw (0,0)--(0,5)--(4,5)--(4,4)--(1,4)--(1,3)--(4,3)--(4,2)--(2,2)--(2,1)--(1,1)--(1,0)--(0,0);
\draw (0,1)--(1,1);
\draw (0,2)--(2,2);
\draw (0,3)--(1,3);
\draw (0,4)--(1,4);
\draw (1,1)--(1,3);
\draw (1,4)--(1,5);
\draw (2,2)--(2,3);
\draw (2,4)--(2,5);
\draw (3,4)--(3,5);
\draw (3,2)--(3,3);
\draw[dashed] (2,-0.5)--(2,5.5);
\draw (2,-1) node{$m_2=8$};
\fill [pattern=north east lines]
	(0,0)--(0,5)--(3,5)--(3,4)--(1,4)--(1,3)--(2,3)--(2,1)--(1,1)--(1,0)--(0,0);
\draw (-0.5,4.5) node{$4$};
\draw (-0.5,3.5) node{$1$};
\draw (-0.5,2.5) node{$4$};
\draw (-0.5,1.5) node{$2$};
\draw (-0.5,0.5) node{$1$};
\end{tikzpicture}
\hspace{3cm}
\begin{tikzpicture}[scale=0.5]
\draw (0,0)--(0,5)--(4,5)--(4,4)--(1,4)--(1,3)--(4,3)--(4,2)--(2,2)--(2,1)--(1,1)--(1,0)--(0,0);
\draw (0,1)--(1,1);
\draw (0,2)--(2,2);
\draw (0,3)--(1,3);
\draw (0,4)--(1,4);
\draw (1,1)--(1,3);
\draw (1,4)--(1,5);
\draw (2,2)--(2,3);
\draw (2,4)--(2,5);
\draw (3,4)--(3,5);
\draw (3,2)--(3,3);
\draw[dashed] (2,-0.5)--(2,5.5);
\draw (2,-1) node{$m_2=8$};
\fill [pattern=north east lines]
	(0,0)--(0,5)--(2,5)--(2,4)--(1,4)--(1,3)--(3,3)--(3,2)--(2,2)--(2,1)--(1,1)--(1,0)--(0,0);
\draw (-0.5,4.5) node{$4$};
\draw (-0.5,3.5) node{$1$};
\draw (-0.5,2.5) node{$4$};
\draw (-0.5,1.5) node{$2$};
\draw (-0.5,0.5) node{$1$};
\end{tikzpicture}
\end{center}
On n'a pas $\max(j_k)=i +1 \Rightarrow J \in \mathcal{J}(m_i+1)$. En effet, les $j_k$ représentés par le diagramme suivant donnent également $\max(j_k)=i +1$ :
\begin{center}
\begin{tikzpicture}[scale=0.5]
\draw (0,0)--(0,5)--(4,5)--(4,4)--(1,4)--(1,3)--(4,3)--(4,2)--(2,2)--(2,1)--(1,1)--(1,0)--(0,0);
\draw (0,1)--(1,1);
\draw (0,2)--(2,2);
\draw (0,3)--(1,3);
\draw (0,4)--(1,4);
\draw (1,1)--(1,3);
\draw (1,4)--(1,5);
\draw (2,2)--(2,3);
\draw (2,4)--(2,5);
\draw (3,4)--(3,5);
\draw (3,2)--(3,3);
\draw[dashed] (2,-0.5)--(2,5.5);
\draw (2,-1) node{$m_2=8$};
\fill [pattern=north east lines]
	(0,0)--(0,5)--(3,5)--(3,4)--(1,4)--(1,3)--(3,3)--(3,2)--(1,2)--(1,0)--(0,0);
\draw (-0.5,4.5) node{$4$};
\draw (-0.5,3.5) node{$1$};
\draw (-0.5,2.5) node{$4$};
\draw (-0.5,1.5) node{$2$};
\draw (-0.5,0.5) node{$1$};
\end{tikzpicture}
\end{center}
\end{exem}

\subsection{Les $F_m^\bullet$ engendrent librement $\Y(\gotq)$}
\begin{defi}
Soit $\gotq=\gotp \ltimes \gotn^-$ une contraction parabolique standard de $\gl_n$. Soit $\gotp^A=\gotp \cap \spl_n$ (on a $\gotn^- \subset \spl_n$). On pose alors $\gotq^A=\gotp^A \ltimes \gotn^-$\index{$\gotq^A$}. L'algèbre de Lie $\gotq^A$ est la \textbf{contraction parabolique standard} de $\spl_n$ associée à $\gotq$. Son crochet de Lie $[\; , \; ]_{\gotq^A}$ vérifie $[x,y]_{\gotq^A}=[x,y]_{\gotq}$ pour tous $x,y \in \gotq^A$.
\label{defqA}
\end{defi}
Comme dans $\gl_n$, toute contraction parabolique de $\spl_n$ est la conjuguée d'une contraction parabolique standard de $\spl_n$. Dans la suite, on posera toujours $\gotq^A$ la contraction parabolique standard de $\spl_n$ associée à $\gotq$.
\begin{defi}
Soit $\pr^A : \gl_n \rightarrow \spl_n$ \index{$\pr^A$} la projection sur $\spl_n$ parallèlement à $\C \id$. La projection $\pr^A$ s'étend en un unique morphisme d'algèbres graduées $\pr^A: \Sym(\gl_n) \rightarrow \Sym(\spl_n)$, qui est la projection sur $\Sym(\spl_n)$ parallèlement à $\Sym(\gl_n)\id$. On note aussi $\mathcal{F}_j:=\pr^A(F_{j+1})$ \index{$\mathcal{F}_j$} pour tout $j \in \llbracket 1,n-1 \rrbracket$. \label{prA}
\end{defi}
\begin{prop}
La projection $\pr^A$ vue de $\gotq$ dans $\gotq^A$ (qui sont égales respectivement à $\gl_n$ et $\spl_n$ comme espaces vectoriels) est un morphisme de $\gotq^A$-modules.
\end{prop}
\begin{proof}
Soient $q_1 \in \gotq^A$ et $q_2=\pr^A(q_2) + x \id \in \gotq$, avec $x \in \C$. On a
$$\pr^A([q_1,q_2]_{\gotq})=\pr^A([q_1,\pr^A(q_2)]_{\gotq} + x[q_1,\id]_{\gotq})=\pr^A([q_1,\pr^A(q_2)]_{\gotq})=[q_1,\pr^A(q_2)]_{\gotq^A} $$\qedhere
\end{proof}
\begin{prop}
Pour tout $j \in \llbracket 1,n-1 \rrbracket$, on a $\mathcal{F}_j^\bullet := \pr^A(F_{j+1})^\bullet=\pr^A(F_{j+1}^\bullet)$.
\label{ftypeA}
\end{prop}
\begin{proof}
On pose 
\begin{equation}
F_{j+1}=\sum_{k=0}^d F_{j+1}^{(k)} \label{composante}
\end{equation}
où $F_{j+1}^{(k)}$ est la composante de $F_{j+1}$ de degré $k$ en $\gotn^-$ et on suppose que $F_{j+1}^{(d)} \neq 0$. Alors par définition, on a $F_{j+1}^{(d)} =F_{j+1}^\bullet$. En projetant l'équation \eqref{composante}, on obtient
$$\mathcal{F}_j:=\pr^A(F_{j+1})=\sum_{k=0}^d \pr^A \left(F_{j+1}^{(k)}\right)$$
Pour tout $s \in \Sym(\gotq)$ non nul bihomogène en $\gotn^-$, on a soit $\deg_{\gotn^-} \pr^A(s) = \deg_{\gotn^-} s$, soit $\pr^A(s)=0$. Il s'agit donc de montrer que $\pr^A \left(F_{j+1}^{(d)}\right) = \pr^A \left(F_{j+1}^\bullet\right) \neq 0$. Autrement dit, il s'agit de montrer que $F_{j+1}^\bullet \notin \Sym(\gl_n) \id$. Puisque $\pr^A$ stabilise $\Sym(\gl_n) \h$ et est l'identité sur $\Sym(\widehat{\g})$, où $\widehat{\g}$ est l'ensemble des matrices de diagonale nulle, il suffit de montrer que $F_{j+1}^\bullet$ a une composante dans $\Sym(\widehat{\g})$. Comme les monômes de $F_{j+1}$, a fortiori de $F_{j+1}^\bullet$ sont linéairement indépendants, il suffit de montrer qu'il existe un monôme de $F_{j+1}^\bullet$ qui est dans $\Sym(\widehat{\g})$. Or, si $\Delta_J$ est un mineur de taille $j+1$ de degré maximal en $\gotn^-$, on peut exhiber un monôme de $\Delta_J$ qui soit dans $\Sym(\widehat{\g})$, de la même manière que dans la démonstration du lemme \ref{clé}.
\end{proof}
\begin{theo}[Panyushev, Yakimova]
Les $\mathcal{F}_j^\bullet$, $1 \leq j \leq n-1$ engendrent librement $\Y({\gotq^A})$ (\cite{py13}, théorème 5.1).
\label{fipt}
\end{theo}
\begin{propn}
Les $F_j^{\bullet}$, $1 \leq j \leq n$ engendrent librement $\Y(\gotq)$.
\label{fdroit}
\end{propn}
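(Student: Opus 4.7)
L'idée directrice est de se ramener au cas de la contraction parabolique $\gotq^A$ de $\spl_n$ en exploitant la décomposition $\gotq = \gotq^A \oplus \C \id$. Je commencerais par établir l'identification $\Y(\gotq) = \Y(\gotq^A)[\id]$, où $\id$ joue le rôle d'une variable polynomiale au-dessus de $\Y(\gotq^A)$. Pour cela, j'invoquerais la proposition \ref{qprime} qui donne $\gotz(\gotq) = \C \id$ : $\id$ étant central, l'action adjointe de $\gotq$ sur $\Sym(\gotq)$ se factorise à travers $\gotq/\C\id \simeq \gotq^A$, de sorte que $\Y(\gotq) = \Sym(\gotq)^{\gotq^A}$. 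Comme $\Sym(\gotq) = \Sym(\gotq^A)[\id]$ en tant qu'algèbre de polynômes en $\id$ sur $\Sym(\gotq^A)$, et comme $\gotq^A$ agit trivialement sur $\id$, on en tire $\Y(\gotq) = \Sym(\gotq^A)^{\gotq^A}[\id] = \Y(\gotq^A)[\id]$.

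Je mobiliserais ensuite le théorème \ref{fipt} de Panyushev et Yakimova pour obtenir que $\Y(\gotq^A)$ est librement engendrée par $\mathcal{F}_1^\bullet, \ldots, \mathcal{F}_{n-1}^\bullet$, ce qui fournit $\Y(\gotq) = \C[\id, \mathcal{F}_1^\bullet, \ldots, \mathcal{F}_{n-1}^\bullet]$ librement. Je ferais alors le lien avec les $F_j^\bullet$ via deux observations : d'abord, $F_1 = \sum_{i=1}^n e_{i,i} = \id$ est de degré $0$ en $\gotn^-$, donc $F_1^\bullet = \id$ ; ensuite, la proposition \ref{ftypeA} donne $\pr^A(F_{j+1}^\bullet) = \mathcal{F}_j^\bullet$ pour tout $j \in \llbracket 1,n-1 \rrbracket$. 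Puisque $\pr^A$ est la projection parallèle à $\id \cdot \Sym(\gl_n)$, ceci se traduit par une relation triangulaire $F_{j+1}^\bullet = \mathcal{F}_j^\bullet + \id \cdot H_{j+1}$ avec $H_{j+1} \in \Y(\gotq)$ homogène de degré $j$.

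Il resterait alors à conclure par un argument standard de relèvement triangulaire dans une algèbre de polynômes graduée. Pour l'indépendance algébrique, toute relation polynomiale non triviale $P(F_1^\bullet, \ldots, F_n^\bullet) = 0$ se réduirait modulo $\id$ (qui est un élément premier de $\Sym(\gotq)$) en une relation entre $\mathcal{F}_1^\bullet, \ldots, \mathcal{F}_{n-1}^\bullet$, contredisant leur indépendance ; la divisibilité de $P$ par $X_1$ permettrait alors de conclure par récurrence sur le degré de $P$ en $X_1$. Pour la génération, je procèderais par récurrence sur le degré : la relation $\mathcal{F}_j^\bullet = F_{j+1}^\bullet - \id \cdot H_{j+1}$, combinée au fait que $H_{j+1}$, étant de degré $j$, s'exprime comme polynôme en $\id, \mathcal{F}_1^\bullet, \ldots, \mathcal{F}_{j-1}^\bullet$ (seuls ces générateurs ont un degré au plus $j$), permettra d'exprimer inductivement chaque $\mathcal{F}_j^\bullet$ dans $\C[F_1^\bullet, \ldots, F_n^\bullet]$. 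Aucun obstacle majeur n'est attendu : l'essentiel du travail repose sur les résultats antérieurs (théorème \ref{fipt} et proposition \ref{ftypeA}), et l'argument final est purement algébrique.
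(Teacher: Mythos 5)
Your approach is essentially the one the paper follows: identify $\Y(\gotq)=\Y(\gotq^A)\otimes\C[\id]$ via $\gotz(\gotq)=\C\id$, invoke le théorème \ref{fipt} for the free generators $\mathcal{F}_j^\bullet$, and exploit the triangular relation $F_{j+1}^\bullet = \mathcal{F}_j^\bullet + \id\cdot H_{j+1}$ (the paper writes this as l'équation \eqref{pol}). Your independence argument (reduction modulo $\id$, using that $\id$ is a prime not dividing any nonzero element of $\Sym(\spl_n)$, then induction on the degree in $X_1$) is cosmetically different from the paper's linear change of variables $\psi=\phi(X_1, X_2+P_2 X_1,\ldots)$, but the two are equivalent in substance.

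One point your proposal glosses over: everything after the first step presupposes that $F_j^\bullet\in\Y(\gotq)$, which is not a formal consequence of $\mathcal{F}_{j-1}^\bullet\in\Y(\gotq^A)$ and the decomposition $F_j^\bullet=\mathcal{F}_{j-1}^\bullet+\id\cdot H_j$ — to deduce that $H_j$ is an invariant you already need $F_j^\bullet$ invariant, which is circular. The paper therefore starts by noting that the $F_j^\bullet$ lie in $\Y(\gotq)$, by an argument analogous to \cite{py12}, théorème 1.1 (degenerating the invariance of $F_j$ under the family of brackets $[\,,\,]_{\g_{(t)}}$ as $t\to 0$). You should add this prerequisite before the rest of your argument can run.
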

\begin{proof}
Par une démonstration analogue à \cite{py12}, théorème 1.1, on montre que les $F_j^{\bullet}$, $1 \leq j \leq n$ sont bien dans $\Y(\gotq)$. Comme $\C \id \subset \gotz(\gotq)$, on a $\Y(\gotq)=\Y(\gotq^A) \otimes \Sym(\C \id)$, ainsi $\id, \mathcal{F}_1^\bullet, \ldots, \mathcal{F}_{n-1}^\bullet$ est une famille algébriquement indépendante de polynômes engendrant $\Y(\gotq)$. On remarque que $F_1^\bullet=\id$.\par
Comme $\deg \mathcal{F}_j^\bullet=j+1$, pour tout $j \in \llbracket 2,n \rrbracket$, il existe $P_j \in \C[X_1, \ldots,X_{j-1}]$ tel que
\begin{equation}
F_j^\bullet=\mathcal{F}_{j-1}^\bullet+P_j(\id, \mathcal{F}_1^\bullet, \mathcal{F}_2^\bullet, \ldots, \mathcal{F}_{j-2}^\bullet)\id.\label{pol}
\end{equation}
Si donc $\phi \in \C[X_1, \ldots,X_{n}]$ est tel que $\phi(F_1^\bullet, \ldots, F_n^\bullet)=0$, en posant
$$\psi=\phi(X_1, X_2+P_2(X_1)X_1, \ldots, X_n+P_n(X_1, \ldots, X_{n-1})X_1)$$
on obtient $\psi(\id, \mathcal{F}_1^\bullet, \ldots, \mathcal{F}_{n-1}^\bullet)=\phi(F_1^\bullet, \ldots, F_n^\bullet)=0$, d'où $\psi=0$. On peut alors revenir de $\psi$ à $\phi$ par un changement de variables, ainsi $\phi=0$, donc les $F_j^\bullet$ sont algébriquement indépendants.\par
Avec l'équation \eqref{pol}, on montre par récurrence que les $\mathcal{F}_{j-1}^\bullet$ sont des polynômes en les $F_k^\bullet$, donc les $F_j^\bullet$ engendrent bien $\Y(\gotq)$.
\end{proof}

\section{Une famille de semi-invariants algébriquement indépendants}
\label{sec4}
On reprend les notations de \ref{nota}.
\begin{nota}
On rappelle que $i \in \mathbf{I} \rightarrow m_i \in \mathbf{M}_1$ est bijective (voir propriété \ref{propelem}). Pour tout $m \in \llbracket 1,n \rrbracket$, on définit alors $r_m$ par :
\begin{itemize}
\item $r_m=\rho_i$ si $m=m_i \in \mathbf{M}_1$,
\item $r_m=1$ sinon. \index{$r_m$}
\end{itemize}
\end{nota} 
Dans cette section, on va montrer le théorème suivant :
\begin{theo}
Soit $m \in \llbracket 1,n \rrbracket$. Alors $F_m^{\bullet}$ est (à une constante multiplicative non nulle près) produit de $r:=r_m$ facteurs homogènes non constants, notés $F_{m,1}, \ldots, F_{m,r}$. Ces facteurs sont des semi-invariants de $\Sym(\gotq)$ et vérifient de plus :
\begin{enumerate}
\item pour tout $t \in \llbracket 1, r-1 \rrbracket$, $F_{m,t} \in \Sym(\gotn^-)$,
\item notant $\mathfrak{L}_m:={\lambda}_{m,1}, \ldots, {\lambda}_{m,r}$ la famille des poids de $F_{m,1}, \ldots, F_{m,r}$, on a $\lambda_{m,1} + \ldots + {\lambda}_{m,r}=0$ et $\mathfrak{L}_m$ est de rang $r-1$,
\item les espaces vectoriels $\vect(({\lambda}_{m,t})_t)$ pour $m \in \llbracket 1,n \rrbracket$ sont en somme directe,
\item la famille $\mathbf{F}=(F_{m,t})_{1 \leq m \leq n, \, 1 \leq t \leq r_m}$ est algébriquement indépendante.
\end{enumerate}
\label{semiinv}
\end{theo}
Les $F_{m,t}$ sont définis à l'équation \eqref{expmochesemiinv}, à la fin de la sous-section \ref{ssec2.2}. Le point (1) est montré à la suite. Les points (2) et (3) sont montrés dans la sous-section \ref{ssec4.2}. Le point (4) est montré dans la sous-section \ref{ssec4.3}.
\begin{exem}
\mbox{}
\begin{itemize}
\item Dans le cas où $\gotp$ est une sous-algèbre de Borel, pour $i=\imax=1$, on a $r=s=n$ et $m=n$. Pour tout $t \in \llbracket 1, n-1 \rrbracket$, on a $F_{n,t}=e_{t+1,t}$, et $F_{n,n}=e_{1,n}$. Pour tout $t \in \llbracket 1, n-1 \rrbracket$, le semi-invariant $F_{n,t}$ est bien dans $\Sym(\gotn^-)$. Les $F_1^\bullet,  F_2^\bullet, \ldots, F_{n-1}^\bullet, F_{n,1}, \ldots, F_{n,n}$ forment une famille algébriquement indépendante. On retrouve bien la construction de Yakimova \cite[\S 5.1]{yak12} dans sa démonstration de la polynomialité de $\Sy(\gotq)$.
\item Si $n$ est pair, on appelle \emph{cas de la racine centrale} le cas où $\pi'=\pi \setminus \{\alpha_{n/2}\}$, autrement dit le cas représenté par le diagramme suivant :
\begin{center}
\begin{tikzpicture}[scale=0.3]
\draw (0,0)--(9,0)--(9,9)--(0,9)--(0,0);
\draw (0,4.5)--(4.5,4.5)--(4.5,0);
\draw[dashed] (4.5,9)--(4.5,4.5)--(9,4.5);
\draw (2.25,2.25) node{$\gotn^-$};
\draw (2.25,6.75) node{$\g_{I_1}$};
\draw (6.75,6.75) node{$\gotp$};
\draw (6.75,2.25) node{$\g_{I_2}$};
\end{tikzpicture}
\end{center}
Dans ce cas, on a $I_1=\llbracket 1,n/2 \rrbracket$ et $I_2=\llbracket n/2+1,n \rrbracket$, d'où $\mathbf{I}=\{n/2\}$ et $\mathbf{M}_1=\{n\}$. On obtient
$$F_n^\bullet \propto F_{n,1} \; F_{n,2}$$
où $F_{n,1}=\Delta_{I_2, I_1}$ est le déterminant du quadrant bas gauche et $F_{n,2}=\Delta_{I_1, I_2}$ est le déterminant du quadrant haut droite. On n'exhibe pas de semi-invariant non trivial à partir des autres $F_j^\bullet$.
\end{itemize}
\label{raccenetc}
\end{exem}
\subsection{Factorisation des $F_m^{\bullet}$}
Dans cette section, on fixe $i \in \mathbf{I}$, $r:=\rho_i$, $m:=m_i$ ; on note $\kappa:=\kappa_i=\{k_1 < \ldots < k_r\}$ (voir notation \ref{nota}).
\begin{nota}
On rappelle que $\theta_J$ est défini en \ref{notaop}. On note $\mathfrak{T}= \llbracket 1,m \rrbracket$\index{$\mathfrak{T}, \mathfrak{T}_k$}. Pour tous $J,J' \in \mathcal{J}(m)$, et pour tout $k$, par \ref{jrond}, on a $\theta_J(J_k)=\theta_{J'}(J'_k)$. On note alors $\mathfrak{T}_k:=\theta_J(J_k)$ pour n'importe quel $J \in \mathcal{J}(m)$. Ceci implique en particulier que
si $J, J' \in \mathcal{J}(m)$ vérifient $J_{k}=J'_{k}$, alors $(\theta_J)_{|J_{k}}=(\theta_{J'})_{|J'_{k}}$.\label{Tgot}
\end{nota}
Grâce aux résultats de la section \ref{sec3}, on a une formule plus précise pour $F_m^\bullet$.
\begin{propn}
On a $\deg_{\gotn^-}F_m=m-i$, et
$$F_m^{\bullet}=\sum_{J \in \mathcal{J}(m)} \Delta_J^{\bullet}$$
\label{oui}
\end{propn}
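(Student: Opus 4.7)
The plan is to reduce the statement to the combinatorial results on mineurs already established in subsection \ref{ssec3.1}, specifically Lemma \ref{clé} and Corollaries \ref{coro} and \ref{comb}. The first equality $\deg_{\gotn^-} F_m = m-i$ is not new: since $m = m_i$ with $i \in \mathbf{I}$, the inequality $m_{i-1} < m \leq m_i$ holds, so Corollary \ref{coro} gives immediately $\deg_{\gotn^-} F_m = m - i$. The real content is therefore the explicit expression for $F_m^\bullet$.

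For the second formula, I would start from the defining sum $F_m = \sum_{|J|=m} \Delta_J$ and analyse each term separately. By Lemma \ref{clé}, $\deg_{\gotn^-} \Delta_J = m - \max_k(j_k)$, and Corollary \ref{comb}(1) says that for any $J \subset I$ with $|J|=m$ one has $\max_k(j_k) \geq i$, with equality if and only if $J \in \mathcal{J}(m)$. Consequently, for $J \in \mathcal{J}(m)$ the $\bullet$-component $\Delta_J^{\bullet}$ sits in degree $m-i$ in $\gotn^-$, while for $J \notin \mathcal{J}(m)$ the whole $\Delta_J$ lives in strictly smaller degree in $\gotn^-$. Extracting from $F_m$ the component of maximal $\gotn^-$-degree therefore gives
$$F_m^\bullet = \sum_{J \in \mathcal{J}(m)} \Delta_J^\bullet,$$
provided that no cancellation occurs between different $\Delta_J^\bullet$.

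The only subtle point — and the only step that is not an immediate rereading of earlier results — is justifying this absence of cancellation. For this I would observe that each monôme occurring in $\Delta_J$ has the form $\pm \prod_{\ell \in J} e_{\ell, \sigma(\ell)}$ with $\sigma \in \mathfrak{S}_J$, so the multiset of first indices of its factors is exactly $J$. Two distinct subsets $J, J' \subset I$ of cardinal $m$ therefore give rise to disjoint sets of monômes in the canonical basis of $\Sym(\gl_n)$; hence the $\Delta_J$'s (and a fortiori the $\Delta_J^\bullet$'s) are linearly independent, and passing to the $\bullet$-component commutes with the finite sum indexed by $\mathcal{J}(m)$. Combining this with the degree analysis above yields the announced identity.
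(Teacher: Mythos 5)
Your proof is correct and follows essentially the same route as the paper: both rely on Lemma \ref{clé} and Corollary \ref{comb}(1) to single out $\mathcal{J}(m)$ as the set of $J$ whose $\Delta_J$ attains the maximal $\gotn^-$-degree $m-i$, and both invoke the fact that the $\Delta_J$'s have pairwise-disjoint monomial supports (recorded in Remark \ref{degnFj}) so that the $\bullet$-component of the sum is the sum of the $\bullet$-components. The only cosmetic difference is that you unfold the no-cancellation point explicitly, while the paper compresses it into the citation of Remark \ref{degnFj}.
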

\begin{proof}
On rappelle que $F_m=\sum_{J \; \text{tq} \; |J|=m} \Delta_J$ (équation \eqref{Fm}). Le lemme \ref{clé} montre que $\deg_{\gotn^-}\Delta_J=m-\max(j_k)$ et par le corollaire \ref{comb} (1), $\max(j_k) \geq i$, où l'égalité est vérifiée si et seulement si $J \in \mathcal{J}(m)$. On conclut par la remarque \ref{degnFj}.
\end{proof}
\begin{propn}
Soit $J \in \mathcal{J}(m)$. Soit $\sigma \in \mathfrak{S}(J)$ tel que $\prod_{l \in J} e_{l, \sigma(l)}$ soit un monôme de degré $m-i$ en $\gotn^-$. Alors pour tout $t \in \llbracket 1,r-1 \rrbracket$, on a $\sigma(J_{\llbracket k_t+1,k_{t+1}\rrbracket})=J_{\llbracket k_t,k_{t+1}-1\rrbracket}$ et $e_{l,\sigma(l)} \in \gotn^-$ pour tout $l \in J_{\llbracket k_t+1,k_{t+1}\rrbracket}$.
\label{segment}
\end{propn}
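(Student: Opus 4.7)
I would translate the hypothesis on $\sigma$ into a transportation-matrix framework: define the non-negative integer matrix $n_{k,k'} := |\{l \in J_k : \sigma(l) \in J_{k'}\}|$, with row sums $j_k = \min(i,i_k)$ (by the definition of $\mathcal{J}(m)$ recalled in \ref{jrond}) and column sums $j_{k'}$. By proposition \ref{pn}, the condition $\deg_{\gotn^-}\left(\prod_{l \in J} e_{l,\sigma(l)}\right) = m-i$ amounts to $\sum_{k>k'} n_{k,k'} = m-i$, that is, to exactly $i$ non-descent pairs $(l,\sigma(l))$ (those with $k(l) \leq k(\sigma(l))$). A key initial observation is that for $k_t, k_{t+1} \in \kappa_i$, the hypothesis $i_{k_t} = i_{k_{t+1}} = i$ forces $j_{k_t} = j_{k_{t+1}} = i$, so $J_{k_t} = I_{k_t}$, $J_{k_{t+1}} = I_{k_{t+1}}$, and in particular $|S_t| = |T_t|$, where $S_t := J_{\llbracket k_t+1,k_{t+1}\rrbracket}$ and $T_t := J_{\llbracket k_t,k_{t+1}-1\rrbracket}$.

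I would then prove both assertions of the proposition by an exchange argument in the spirit of the one carried out in the proof of lemma \ref{clé}. Suppose, for contradiction, that some $l_0 \in S_t$ violates one of the conclusions, i.e.\ either $\sigma(l_0) \notin T_t$ or $k(\sigma(l_0)) = k(l_0)$. Since $|S_t| = |T_t|$ and $\sigma$ is a bijection, there necessarily exists a companion $l_0'$ whose image is a "misplacement" dual to that of $l_0$ — typically $l_0' \notin S_t$ with $\sigma(l_0') \in T_t$. I would consider the swapped permutation $\sigma' := (\sigma(l_0) \; \sigma(l_0')) \circ \sigma$ and show by a block-level comparison of the pairs $(l_0,\sigma'(l_0))$ and $(l_0',\sigma'(l_0'))$ that $\sigma'$ produces at least as many descents as $\sigma$ does, and strictly more for an appropriate choice of $(l_0,l_0')$. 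Iterating until no misplacement remains would then contradict the maximality of $\deg_{\gotn^-} = m-i$ ensured by lemma \ref{clé}.

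The main obstacle is that a naive swap may merely preserve — rather than strictly increase — the degree in $\gotn^-$, as illustrated by the presence of descents of magnitude $\geq 2$ inside a single pair $(S_t,T_t)$. I would circumvent this by choosing $l_0 \in S_t$ so that $k(\sigma(l_0))$ is extremal (either $\geq k_{t+1}$, the "ascending" case, or $< k_t$, the "long-descending" case) against an $l_0'$ whose image lies strictly inside $T_t$: the extremality and the rigidity imposed by the saturated columns at $k \in \kappa_i$ (whose $j_k = i$ leaves no slack) force a strict gain of one descent after the swap. The supplementary conclusion $e_{l,\sigma(l)} \in \gotn^-$ for every $l \in S_t$ — that is, $k(l) > k(\sigma(l))$, which rules out the fixed-block case $k(\sigma(l)) = k(l)$ — is handled by the same kind of exchange: a fixed-block term $\sigma(l) \in J_{k(l)}$ with $l \in S_t$ would leave an unused descending slot in $T_t$, again contradicting optimality after a suitable swap. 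Once this is established, proposition \ref{pn} applied to $k(l) > k(\sigma(l))$ immediately gives $e_{l,\sigma(l)} \in \gotn^-$.
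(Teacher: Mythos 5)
Your approach is genuinely different from the paper's, and it has a real gap at the crucial step.

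The paper's proof avoids any exchange argument. It introduces a \emph{coarser} parabolic contraction $\gotq_{\text{res}}$ on $\g_J$ obtained by fusing the blocks $I_{k_t},\ldots,I_{k_{t+1}}$ into a single one, so that $\gotn^-_J$ splits as $\gotn^-_{J_{\llbracket k_t,k_{t+1}\rrbracket}} \oplus \gotn^-_{\text{res}}$. The $\gotn^-$-degree of $\prod_{l\in J}e_{l,\sigma(l)}$ then decomposes into two pieces, each of which is bounded by applying Lemma~\ref{clé} to the corresponding restricted parabolic, and the two bounds sum to $m-i$. Maximality therefore forces \emph{both} inequalities to be equalities, and a cardinality count using $\gotn^-_{J_{\llbracket k_t,k_{t+1}\rrbracket}} \subset \g_{J_{\llbracket k_t+1,k_{t+1}\rrbracket},\,J_{\llbracket k_t,k_{t+1}-1\rrbracket}}$ then delivers both conclusions at once. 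Nothing is ever swapped: Lemma~\ref{clé} is reused as a black box at two scales.

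Your plan is to argue by contradiction via a single transposition $\sigma'=(\sigma(l_0)\;\sigma(l_0'))\circ\sigma$ that strictly increases the number of descents. This is where the proposal breaks down, and it is exactly where you yourself flag the difficulty ("a naive swap may merely preserve"). The promised circumvention — pick $l_0\in S_t$ with $k(\sigma(l_0))$ extremal against an $l_0'$ whose image lies strictly inside $T_t$, and argue that "the rigidity imposed by the saturated columns $\ldots$ force a strict gain of one descent" — is an assertion, not a computation. If you actually write out the block-level gain
$$\bigl[k(l_0)>k(\sigma(l_0'))\bigr]+\bigl[k(l_0')>k(\sigma(l_0))\bigr]-\bigl[k(l_0)>k(\sigma(l_0))\bigr]-\bigl[k(l_0')>k(\sigma(l_0'))\bigr],$$
you will find it can equal $0$ even in the configurations you single out: for instance when $l_0'\in J_{\leq k_t}$ and $k(l_0)\leq k(\sigma(l_0'))$, the cross-terms contribute nothing. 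Choosing $k(\sigma(l_0))$ maximal does not by itself guarantee $k(l_0)>k(\sigma(l_0'))$. To rescue the argument you would need either a cleverer, globally optimized pairing of $(l_0,l_0')$, or a multi-swap / double-counting argument — neither of which is sketched. As it stands the proposal reproduces the setup (transportation matrix $n_{k,k'}$, saturated rows $j_{k_t}=j_{k_{t+1}}=i$, $|S_t|=|T_t|$, Proposition~\ref{pn}) but leaves the decisive inequality unproved. The paper's two-scale decomposition is precisely what sidesteps the combinatorial case analysis you would otherwise need to finish this.
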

\begin{proof}
Si $r=1$, la proposition est vide. On suppose donc dans la suite que $r \geq 2$. Pour tout $k$, on a $j_k=\min(i_k,i)$, donc $j_k=i$ pour tout $k \in \kappa$. Soit $t \in \llbracket 1,r-1 \rrbracket$ et $\gotq_{\text{res}}=\gotp_{\text{res}} \ltimes \gotn^-_{\text{res}}$ la contraction parabolique de $\g_J$ définie par $\gotp_{\text{res}}=\gotp_J \oplus \gotn^-_{J_{\llbracket k_t, k_{t+1}\rrbracket}}=\gotp_J + \g_{J_{\llbracket k_t, k_{t+1}\rrbracket}}$ et $\gotn^-_{\text{res}}= \bigoplus_{e_{p,q} \in \g_J \setminus \gotp_{\text{res}}} \C e_{p,q}$, de sorte que la partition associée à cette contraction parabolique est $J_1 \sqcup \ldots \sqcup J_{k_t-1} \sqcup J_{\llbracket k_t,k_{t+1}\rrbracket} \sqcup J_{k_{t+1}+1} \sqcup \ldots \sqcup J_s$. En particulier, les espaces vectoriels $\gotq_{\text{res}}$ et $\gotq_J$ sont égaux.
\begin{center}
\begin{tikzpicture}[scale=0.6]
\draw (0,4)--(9,4)--(9,13)--(0,13)--(0,4);
\draw[dashed] (0,12)--(1,12);
\draw[dashed] (1,13)--(1,12);
\draw[dotted] (1.2,11.8)--(1.8,11.2);
\draw (0.5,12.5) node[scale=0.7]{$\g_{J_1}$};
\draw[dashed] (2,11)--(2,9)--(4,9);
\draw[dashed] (2,11)--(4,11)--(4,9);
\draw (3,10) node{$\g_{J_{k_t}}$};
\draw[dotted] (4.2,8.8)--(4.8,8.2);
\draw[dashed] (5,8)--(5,6)--(7,6);
\draw[dashed] (5,8)--(7,8)--(7,6);
\draw (6,7) node{$\g_{J_{k_{t+1}}}$};
\draw[dotted] (7.2,5.8)--(7.8,5.2);
\draw[dashed] (8,5)--(8,4);
\draw[dashed] (8,5)--(9,5);
\draw (8.5,4.5) node[scale=0.7]{$\g_{J_s}$};

\draw[color=red] (1.9,11.1)--(1.9,5.9)--(7.1,5.9)--(7.1,11.1)--(1.9,11.1);
\draw[color=red] (5,11.1)--(5.5,11.6);
\draw (6,12) node[color=red, scale=0.9]{$\g_{J_{\llbracket k_t, k_{t+1} \rrbracket}}$};
\draw (9,11)--(9.5,11.5);
\draw (10,12) node[scale=0.9]{$\g_{J}$};
\draw[color=blue] (2.05,8.95)--(2.05,6.05)--(4.95,6.05)--(4.95,8.95)--(2.05,8.95);
\draw[color=blue] (2.05,8.05)--(-2.05,7.05);
\draw (-3.05,6.65) node[color=blue, scale=0.9]{$\g_{J_{\llbracket k_t+1, k_{t+1}\rrbracket},J_{\llbracket k_t, k_{t+1}-1\rrbracket}}$};
\fill [pattern=horizontal lines]
	(2.1,8.9)--(2.1,6.1)--(4.9,6.1)--(4.9,7.9)--(3.9,8.9)--(2.1,8.9);
\fill [color=white]
	(2.3,7.9)--(4.6,7.9)--(4.6,7.0)--(2.3,7.0)--(2.3,7.9);
\fill [pattern=dots]
	(0.1,11.9)--(0.1,4.1)--(7.9,4.1)--(7.9,5.1)--(6.9,5.9)--(1.9,5.9)--(1.9,10.9)--(0.9,11.9)--(0.1,11.9);
\fill [color=white]
	(0.4,4.6)--(0.4,5.3)--(1.6,5.3)--(1.6,4.6)--(0.4,4.6);
\draw (1,5) node{$\gotn^-_{\text{res}}$};
\draw (3.5,7.5) node[scale=0.85]{$\gotn^-_{J_{\llbracket k_t, k_{t+1}\rrbracket}}$};
\draw[color=white] (15.05,8.05)--(15.05,7.05);;
\end{tikzpicture}
\end{center}
On a alors $\gotn^-_{J}=\gotn^-_{J_{\llbracket k_t, k_{t+1}\rrbracket}} \oplus \gotn^-_{{\text{res}}}$, et de plus, tout $e_{u,v}$ appartenant à $\gotn^-_{J}$ est soit dans $\gotn^-_{J_{\llbracket k_t, k_{t+1}\rrbracket}}$, soit dans $\gotn^-_{\text{res}}$. Ainsi 
$$\deg_{\gotn^-} \prod_{l \in J} e_{l, \sigma(l)} = \deg_{\gotn^-_{J_{\llbracket k_t, k_{t+1}\rrbracket}}} \prod_{l \in J} e_{l,\sigma(l)} + \deg_{\gotn^-_{\text{res}}} \prod_{l \in J} e_{l,\sigma(l)}.$$
Notons $J'$ l'ensemble des $l \in J_{\llbracket k_t, k_{t+1}\rrbracket}$ tels que $\sigma(l) \in J_{\llbracket k_t, k_{t+1}\rrbracket}$. Alors 
$$\deg_{\gotn^-_{J_{\llbracket k_t, k_{t+1}\rrbracket}}} \prod_{l \in J} e_{l,\sigma(l)} = \deg_{\gotn^-_{J_{\llbracket k_t, k_{t+1}\rrbracket}}} \prod_{l \in J'} e_{l,\sigma(l)}.$$
Le produit $\prod_{l \in J'} e_{l,\sigma(l)}$ est lui-même un facteur d'un produit $\prod_{l \in J_{\llbracket k_t, k_{t+1}\rrbracket}} e_{l,\sigma'(l)}$ pour un certain $\sigma' \in \mathfrak{S}(J_{\llbracket k_t, k_{t+1}\rrbracket})$, d'où
\begin{align*}
\deg_{\gotn^-_{J_{\llbracket k_t, k_{t+1}\rrbracket}}} \prod_{l \in J} e_{l,\sigma(l)} &\leq \deg_{\gotn^-_{J_{\llbracket k_t, k_{t+1}\rrbracket}}} \prod_{l \in J_{\llbracket k_t, k_{t+1}\rrbracket}} e_{l,\sigma'(l)} \\
&\leq \deg_{\gotn^{-}} \Delta_{J_{\llbracket k_t, k_{t+1}\rrbracket}}.
\end{align*}
par la remarque \ref{rq1}. Ainsi par le lemme \ref{clé} (on rappelle que $j_{k_t}=j_{k_{t+1}}=i$ et par construction, $j_k \leq i$ pour tout $k$),
$$\deg_{\gotn^-_{J_{\llbracket k_t, k_{t+1}\rrbracket}}} \prod_{l \in J} e_{l,\sigma(l)} \leq \deg_{\gotn^{-}}\Delta_{J_{\llbracket k_t, k_{t+1}\rrbracket}}=\Card(J_{\llbracket k_t,k_{t+1}\rrbracket})-i.$$
De même, par le lemme \ref{clé}, on a $\deg_{\gotn^-_{\text{res}}} \prod_{l \in J} e_{l,\sigma(l)} \leq m-\Card(J_{\llbracket k_t,k_{t+1}\rrbracket})$. Le terme $\prod_{l \in J} e_{l, \sigma(l)}$ étant supposé de degré maximal en $\gotn^-$ égal à $m-i$, on a donc
$$\deg_{\gotn^-_{J_{\llbracket k_t, k_{t+1}\rrbracket}}} \prod_{l \in J} e_{l,\sigma(l)} = \Card\left(\left\{l \in J \, | \, e_{l,\sigma(l)} \in \gotn^-_{J_{\llbracket k_t, k_{t+1} \rrbracket}}\right\}\right) = \Card(J_{\llbracket k_t,k_{t+1}\rrbracket})-i \quad $$
$$\deg_{\gotn^-_{\text{res}}} \prod_{l \in J} e_{l,\sigma(l)} = \Card\left(\left\{l \in J \, | \, e_{l,\sigma(l)} \in \gotn^-_{\text{res}}\right\}\right) = m-\Card(J_{\llbracket k_t,k_{t+1}\rrbracket}).$$
On a $\gotn^-_{J_{\llbracket k_t, k_{t+1}\rrbracket}} \subset\g_{J_{\llbracket k_t+1, k_{t+1}\rrbracket},J_{\llbracket k_t, k_{t+1}-1\rrbracket}} $ et $\Card(J_{\llbracket k_t+1, k_{t+1}\rrbracket})=\Card(J_{\llbracket k_t, k_{t+1}-1\rrbracket})=\Card(J_{\llbracket k_t,k_{t+1}\rrbracket})-i$. Alors on a $\left\{ l \in J \, | \, e_{l,\sigma(l)} \in \gotn^-_{J_{\llbracket k_t, k_{t+1}\rrbracket}} \right\} \subset J_{\llbracket k_t+1, k_{t+1}\rrbracket}$, d'où l'égalité par égalité des cardinaux. Ainsi pour tout $l \in J_{\llbracket k_t+1, k_{t+1} \rrbracket}$, on a $\sigma(l) \in J_{\llbracket k_t, k_{t+1}-1 \rrbracket}$. Autrement dit, $\sigma\left( J_{\llbracket k_t+1, k_{t+1} \rrbracket}\right) \subset J_{\llbracket k_t, k_{t+1}-1 \rrbracket}$, et on a égalité par injectivité de $\sigma$ et égalité des cardinaux.
\end{proof}
\begin{propn}
Pour tout $J \in \mathcal{J}(m)$, $\Delta_J^{\bullet}$ est produit de $r$ facteurs homogènes non constants.
\label{fact}
\end{propn}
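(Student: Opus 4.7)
The plan is to exploit the constraint given by Proposition~\ref{segment} on the permutations contributing to the top-$\gotn^-$-degree component of $\Delta_J$, and to show that this constraint forces a block decomposition of $\Delta_J^{\bullet}$ into $r$ factors.

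By Proposition~\ref{oui} combined with Proposition~\ref{segment}, any $\sigma \in \mathfrak{S}(J)$ such that $\prod_{l \in J} e_{l, \sigma(l)}$ has degree $m-i$ in $\gotn^-$ satisfies $\sigma(J_{\llbracket k_t+1, k_{t+1}\rrbracket}) = J_{\llbracket k_t, k_{t+1}-1\rrbracket}$ for every $t \in \llbracket 1, r-1 \rrbracket$. Taking complements in $J$, such a $\sigma$ then also restricts to a bijection
$$\sigma_{\text{ext}} : J_{\llbracket 1, k_1\rrbracket} \sqcup J_{\llbracket k_r+1, s\rrbracket} \longrightarrow J_{\llbracket 1, k_1-1\rrbracket} \sqcup J_{\llbracket k_r, s\rrbracket}.$$
Thus each such $\sigma$ is encoded uniquely by the tuple $(\sigma_1, \ldots, \sigma_{r-1}, \sigma_{\text{ext}})$, and conversely each such tuple defines a valid $\sigma$.

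Next I would define the $r$ candidate factors. For $t \in \llbracket 1, r-1 \rrbracket$, set
$$G_t := \sum_{\sigma_t} \varepsilon(\widetilde{\sigma_t}) \prod_{l \in J_{\llbracket k_t+1, k_{t+1}\rrbracket}} e_{l, \sigma_t(l)},$$
where the sum is over bijections $\sigma_t : J_{\llbracket k_t+1, k_{t+1}\rrbracket} \to J_{\llbracket k_t, k_{t+1}-1\rrbracket}$, and $\widetilde{\sigma_t}$ is the permutation of $\llbracket 1, |J_{\llbracket k_t+1, k_{t+1}\rrbracket}| \rrbracket$ obtained from $\sigma_t$ by transport along the order-preserving bijections $\theta_\bullet$ of Notation~\ref{notaop}. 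Define $G_r$ analogously from the $\sigma_{\text{ext}}$. The cardinality $|J_{\llbracket k_t+1, k_{t+1}\rrbracket}| \geq j_{k_{t+1}} = i \geq 1$ and similarly $|J_{\llbracket 1, k_1\rrbracket} \sqcup J_{\llbracket k_r+1, s\rrbracket}| \geq j_{k_1} = i \geq 1$, so each $G_t$ is a sum of monomials of a common positive total degree, hence homogeneous and non-constant.

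Finally, to conclude $\Delta_J^{\bullet} = \pm G_1 \cdots G_r$, one needs to verify that the global sign $\varepsilon(\sigma)$ decomposes as $\prod_{t=1}^{r-1} \varepsilon(\widetilde{\sigma_t}) \cdot \varepsilon(\widetilde{\sigma_{\text{ext}}})$ up to an overall sign depending only on $J$. This is a block sign-bookkeeping argument: after reindexing the source and target of each $\sigma_t$ via $\theta_\bullet$, the permutation $\sigma$ is the composition of the block permutations with a fixed reshuffling of $J$, whose parity is independent of $\sigma$. The matching of the monomials in the product $G_1 \cdots G_r$ with those of $\Delta_J^\bullet$ is then exactly given by the block decomposition above. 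The main obstacle (aside from cleanly writing the set-theoretic bookkeeping of the blocks) is to carry out this sign computation rigorously; once done, collecting the overall sign into one of the $G_t$ yields the desired factorization into $r$ homogeneous non-constant factors.
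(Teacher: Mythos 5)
Your overall strategy is the same as the paper's: use Propositions~\ref{oui} and~\ref{segment} to constrain the permutations contributing to $\Delta_J^{\bullet}$, encode each such permutation by a tuple of block bijections, and factor the sum via a sign-reshuffling permutation fixed once and for all. However, there is a genuine gap in your ``conversely'' step and, as a consequence, in your definition of the factors $G_t$.

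You sum $G_t$ over \emph{all} bijections $\sigma_t : J_{\llbracket k_t+1, k_{t+1}\rrbracket} \to J_{\llbracket k_t, k_{t+1}-1\rrbracket}$, and you assert that every tuple $(\sigma_1,\ldots,\sigma_{r-1},\sigma_{\text{ext}})$ produces a $\sigma$ of maximal $\gotn^-$-degree. This is false. Proposition~\ref{segment} yields not only the block constraint $\sigma\bigl(J^{(t)}\bigr)=J^{[t]}$ (the paper's condition (C1)) but also that $e_{l,\sigma(l)}\in\gotn^-$ for every $l\in J^{(t)}$ with $t<r$ (condition (C2)), and the external block must satisfy a cardinality constraint on the number of entries in $\gotn^-$ (condition (C3)). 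These conditions are not automatic: for instance, if $k_{t+1}>k_t+2$, then one may choose $l\in J_{k_t+1}$ and $l'\in J_{k_{t+1}-1}$ with $k(l)<k(l')$, so $e_{l,l'}\in\gotp$, and any bijection $\sigma_t$ sending $l\mapsto l'$ would produce a monomial of sub-maximal $\gotn^-$-degree. Consequently your $G_t$ is, up to sign, the full rectangular minor $\Delta_{J^{(t)},J^{[t]}}$ rather than its top $\gotn^-$-degree component $\Delta_{J^{(t)},J^{[t]}}^{\bullet}$, the product $G_1\cdots G_r$ is not $\gotn^-$-homogeneous, and the claimed identity $\Delta_J^{\bullet}=\pm G_1\cdots G_r$ fails. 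The repair is simple and is what the paper does: restrict the sum defining the $t$-th factor to the bijections $\sigma_t$ whose monomials lie entirely in $\Sym(\gotn^-)$ (for $t<r$), resp.\ satisfy (C3) (for $t=r$), i.e.\ take $\Delta_{J^{(t)},J^{[t]}}^{\bullet}$ in place of $\Delta_{J^{(t)},J^{[t]}}$. With that correction your sign argument (which mirrors the paper's use of the permutation $\eta$, though you leave it as an ``obstacle'' rather than carrying it out) and your non-constancy argument both go through.
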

\begin{nota}
\label{nota46}
Pour tout $J \subset I$, on définit deux partitions :
\begin{itemize}
\item $J=J^{(1)} \, \sqcup \, \ldots \, \sqcup \, J^{(r-1)} \, \sqcup J^{(r)}$ où $J^{(t)}:=J_{\left\llbracket k_t+1, k_{t+1} \right\rrbracket}$\index{$J^{(t)}$, $J^{[t]}$, $j^{(t)}$, $j^{[t]}$,$\mathfrak{T}^{(t)}$, $\mathfrak{T}^{[t]}$} pour $1 \leq t \leq r-1$ et $J^{(r)}:=J \setminus \bigsqcup_{t=1}^{r-1} J^{(t)}=J_{\left\llbracket 1, k_1 \right\rrbracket \sqcup \left\llbracket k_r+1, s \right\rrbracket}$ ; pour tout $t \in \llbracket 1,r \rrbracket$, on note $j^{(t)}:=\Card J^{(t)}$ et $\mathfrak{T}^{(t)}:=\theta_J\left(J^{(t)}\right)$ (voir notation \ref{Tgot}),
\item $J=J^{[1]} \, \sqcup \, \ldots \, \sqcup \, J^{[r-1]} \, \sqcup J^{[r]}$ où $J^{[t]}:=J_{\left\llbracket k_t, k_{t+1}-1 \right\rrbracket}$ pour $1 \leq t \leq r-1$ et $J^{[r]}:=J \setminus \bigsqcup_{t=1}^{r-1} J^{[t]}=J_{\left\llbracket 1, k_1-1 \right\rrbracket \sqcup \left\llbracket k_r, s \right\rrbracket}$ ; pour tout $t \in \llbracket 1,r \rrbracket$, on note $j^{[t]}=\Card J^{[t]}$ et $\mathfrak{T}^{[t]}:=\theta_J\left(J^{[t]}\right)$.
\end{itemize}
Si $J \in \mathcal{J}(m)$, par définition de $\kappa$, pour tout $t \in \llbracket 1,r \rrbracket$, on a $j^{(t)}=j^{[t]}$.
\label{simp}
\end{nota}
\begin{proof}[Preuve de la proposition \ref{fact}]
Notons $S_J$ le sous-ensemble de $\mathfrak{S}(J)$ tel que 
$$\Delta_J^{\bullet}=\sum_{\sigma \in S_J} \varepsilon(\sigma) \; \prod_{l \in J} e_{l, \sigma(l)}$$
c'est-à-dire l'ensemble des $\sigma \in \mathfrak{S}(J)$ tel que $\deg_{\gotn^-} \prod_{l \in J} e_{l,\sigma(l)} = \deg_{\gotn^-} \Delta_J$. On a montré avec les propositions \ref{oui} et \ref{segment} que la permutation $\sigma$ est dans $S_J$ si et seulement si toutes les conditions suivantes sont vérifiées :
\begin{enumerate}
\item[(C1)] pour tout $t \in \llbracket 1,r \rrbracket$,
$\sigma\left(J^{(t)} \right) = J^{[t]}$,
\item[(C2)] pour tout $t \in \llbracket 1,r-1 \rrbracket$ et pour tout $l \in J^{(t)}$, $e_{l, \, \sigma(l)} \in \gotn^-$
\item[(C3)] $\Card \left( \left\{ l \in J^{(r)} \; | \; e_{l, \, \sigma(l)} \in \gotn^-\right\} \right) = m-i-\sum_{t \in \llbracket 1,r-1 \rrbracket} j^{(t)}=j^{(r)}-i$
\end{enumerate}
Soit l'application
$$\sigma \in S_J \longmapsto \left(\sigma_{|J^{(t)}}\right)_{1 \leq t \leq r} \in S_J^{(1)} \times \ldots \times S_J^{(r-1)} \times S_J^{(r)}$$
où $S_J^{(t)}$\index{$S_J^{(t)}$} est l'ensemble des bijections de $J^{(t)}$ dans $J^{[t]}$ vérifiant (C2) si $t < r$ et (C3) si $t=r$. Par (C1), cette application est bijective.
Soit $\eta \in \mathfrak{S}_m$\index{$\eta$} définie par :
\begin{itemize}
\item $\eta_{|\mathfrak{T}^{[t]}}=\mathfrak{T}^{(t)}$ pour tout $t \in \llbracket 1,r \rrbracket$,
\item $\eta$ est croissante sur chacun des $\mathfrak{T}^{[t]}$,
\end{itemize}
et $\eta':=\theta_J^{-1} \eta \theta_J \in \mathfrak{S}(J)$. Alors pour tout $\sigma \in \mathfrak{S}_m$ et pour tout $t \in \llbracket 1,r \rrbracket$, on a $(\eta' \circ \sigma)_{|J^{(t)}} = \eta' \circ \sigma_{|J^{(t)}}$ qui est une permutation de $J^{(t)}$, d'où $\varepsilon(\eta' \circ \sigma)=\varepsilon(\eta' \circ \sigma_{|J^{(1)}}) \ldots \varepsilon(\eta' \circ \sigma_{|J^{(r)}})$. Finalement, on obtient
\begin{align*}
\varepsilon(\sigma)&=\varepsilon(\eta')\; \varepsilon(\eta' \circ \sigma_{|J^{(1)}}) \ldots \varepsilon(\eta' \circ \sigma_{|J^{(r)}})\\
&=\varepsilon(\eta)\; \varepsilon(\sigma_{|J^{(1)}}) \ldots \varepsilon(\sigma_{|J^{(r)}})
\end{align*}
(on rappelle que $\varepsilon(\sigma)$ a été défini à la section \ref{sec2} pour une bijection quelconque entre sous-ensembles de $\N$). Ainsi :
\begin{align*}
\Delta_J^{\bullet}&= \sum_{\sigma \in S_J} \varepsilon(\sigma) \; \prod_{l \in J} e_{l,\sigma(l)} \\
&= \varepsilon(\eta) \sum_{(\sigma_1, \ldots, \sigma_{r}) \in S_J^{(1)} \times \ldots \times S_J^{(r)}} \; \prod_{t=1}^{r} \left( \varepsilon(\sigma_t) \prod_{l \in J^{(t)}} e_{l, \, \sigma_t(l)} \right)
\end{align*}
et donc
\begin{equation}
\Delta_J^{\bullet}=\varepsilon(\eta) \prod_{t=1}^{r} \left( \sum_{\sigma_t \in S_J^{(t)}} \varepsilon(\sigma_t) \prod_{l \in J^{(t)}} e_{l, \, \sigma_t(l)} \right)
\label{factorisation}
\end{equation}
Cette dernière égalité découle du fait général suivant : soit $A$ un anneau associatif et commutatif, $E_1, \ldots, E_d$ des ensembles finis et pour tout $y_i \in E_i$ avec $i \in \llbracket 1,d \rrbracket$, soit $a_{y_i}$ un élément de $A$, alors
\begin{equation}
\sum_{(y_1, \ldots, y_d) \in E_1 \times \ldots \times E_d} \prod_{i=1}^d a_{y_i} = \prod_{i=1}^d \sum_{y_i \in E_i} a_{y_i}
\label{prepa}
\end{equation}
\end{proof}
En utilisant les notations de la sous-section \ref{sec2}, l'égalité \eqref{factorisation} se simplifie en
\begin{equation}
\Delta_J^\bullet = \varepsilon(\eta) \prod_{t=1}^r \Delta_{J^{(t)},J^{[t]}}^\bullet \label{deltajpt}
\end{equation}
où l'on a donc
\begin{equation}
\Delta_{J^{(t)},J^{[t]}}^\bullet = \sum_{\sigma_t \in S_J^{(t)}} \varepsilon(\sigma_t) \prod_{l \in J^{(t)}} e_{l, \, \sigma_t(l)} \label{deltattpt}
\end{equation}
Par la condition (C2), on a $\Delta_{J^{(t)},J^{[t]}}^\bullet \in \Sym(\gotn^-)$ pour $t<r$.
\begin{rema}
\label{remadiag}
On présente une manière plus visuelle de voir pourquoi $\Delta_{J^{(t)},J^{[t]}}^\bullet$, de degré $j^{(t)}$ (équation \eqref{deltattpt}), est dans $\Sym(\gotn^-)$ pour $t<r$. Par définition, pour tout $k \in \llbracket k_t+1, k_{t+1}-1 \rrbracket$, on a $|J_k| \leq i = |J_{k_t}|=|J_{k_{t+1}}|$. Pour $t < r$, le sous-espace $\g_{J^{(t)},J^{[t]}}$ est alors de la forme suivante
\begin{center}
\begin{tikzpicture}[scale=0.6]
\draw (0,0)--(0,8)--(8,8)--(8,0)--(0,0);
\draw (0,8)--(8,0);
\draw (2,8)--(2,7)--(3,7)--(3,6)--(4,6)--(4,4)--(6,4)--(6,3)--(7,3)--(7,2)--(8,2);
\fill [pattern=horizontal lines] (0.1,0.1)--(0.1,7.9)--(1.9,7.9)--(1.9,6.9)--(2.9,6.9)--(2.9,5.9)--(3.9,5.9)--(3.9,3.9)--(5.9,3.9)--(5.9,2.9)--(6.9,2.9)--(6.9,1.9)--(7.9,1.9)--(7.9,0.1)--(0.1,0.1);
\fill [color=white] (0.9,1.5)--(3.1,1.5)--(3.1,2.5)--(0.9,2.5)--(0.9,1.5);
\draw (2,2) node{$\gotn^-_{J^{(t)},J^{[t]}}$};
\draw (6,6) node{$\gotp_{J^{(t)},J^{[t]}}$};
%
\end{tikzpicture}
\end{center}
Le produit dans $\Sym(\gotq)$ des $e_{u,v}$ "diagonaux" (la diagonale vue dans $\g_{J^{(t)},J^{[t]}}$, tracée dans la figure précédente) est dans $\Sym(\gotn^-)$, et donc les monômes de $\Delta_{J^{(t)},J^{[t]}}$ de degré maximal en $\gotn^-$ sont en fait dans $\Sym(\gotn^-)$. Le facteur $\Delta_{J^{(t)},J^{[t]}}^\bullet$ est donc dans $\Sym(\gotn^-)$.
\end{rema}
\begin{propn}
$F_m^{\bullet}$ est produit de $r$ facteurs homogènes non constants.
\label{Fact}
\end{propn}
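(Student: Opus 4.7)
I will build on Proposition \ref{fact} by summing the local factorization \eqref{deltajpt} over $J \in \mathcal{J}(m)$ using Proposition \ref{oui}. Two observations make the sum itself factor. First, the sign $\varepsilon(\eta)$ in \eqref{deltajpt} does not depend on $J$: by \ref{Tgot} the subsets $\mathfrak{T}^{(t)}$ and $\mathfrak{T}^{[t]}$ of $\mathfrak{T}$ depend only on the fixed cardinalities $j_k = \min(i, i_k)$, so the permutation $\eta$ introduced in the proof of \ref{fact} is the same for every $J \in \mathcal{J}(m)$. Second, by \ref{jrond} a choice of $J \in \mathcal{J}(m)$ is equivalent to an independent choice of $J_k \in \mathcal{J}(m)_k$ for each $k \in \llbracket 1,s \rrbracket$, and $\mathcal{J}(m)_k$ is a singleton whenever $k \in \kappa_i$, hence for every $k_t$.

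Since the endpoints $k_t$ of each range are rigid, the factor $\Delta_{J^{(t)}, J^{[t]}}^\bullet$ with $t < r$ depends only on the free choices $J_k$ for $k \in \llbracket k_t + 1, k_{t+1} - 1 \rrbracket$, while $\Delta_{J^{(r)}, J^{[r]}}^\bullet$ depends only on the free choices for $k \in \llbracket 1, k_1 - 1 \rrbracket \cup \llbracket k_r + 1, s \rrbracket$. These $r$ collections of free choices are pairwise disjoint. Applying the distributive identity \eqref{prepa} to
$$F_m^\bullet \;=\; \varepsilon(\eta) \sum_{J \in \mathcal{J}(m)} \prod_{t=1}^{r} \Delta_{J^{(t)}, J^{[t]}}^\bullet$$
then gives
$$F_m^\bullet \;=\; \varepsilon(\eta) \prod_{t=1}^{r} F_{m,t}, \qquad F_{m,t} := \sum \Delta_{J^{(t)}, J^{[t]}}^\bullet,$$
the inner sum running over the independent free choices attached to the $t$-th range.

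To conclude, each $F_{m,t}$ is homogeneous, since every summand $\Delta_{J^{(t)}, J^{[t]}}^\bullet$ has the common degree $j^{(t)}$ (determined by the cardinalities alone). Non-constancy then follows immediately: $F_m^\bullet \neq 0$ by \ref{oui}, and since $\Sym(\gotq)$ is a domain the factorization above forces every $F_{m,t} \neq 0$; moreover $j^{(t)} \geq i \geq 1$ because $J^{(t)} \supseteq I_{k_{t+1}}$ when $t < r$ and $J^{(r)} \supseteq I_{k_1}$. The main obstacle in this plan is observation (i): the invariance of $\varepsilon(\eta)$ under $J$ must be established by carefully unpacking the definition of $\eta$ given in the proof of \ref{fact}; once this is in place, the rest is a clean application of the combinatorics of Section \ref{sec3}.
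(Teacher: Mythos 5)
Your proof is correct and follows essentially the same route as the paper's: write $F_m^\bullet = \sum_{J \in \mathcal{J}(m)} \Delta_J^\bullet$, insert the local factorization \eqref{deltajpt}, observe that $\varepsilon(\eta)$ is fixed (by Notation \ref{Tgot} the sets $\mathfrak{T}^{(t)}, \mathfrak{T}^{[t]}$, and hence $\eta$, depend only on $m$, not on the particular $J \in \mathcal{J}(m)$), note that the indices $k \in \kappa$ contribute no freedom while $\llbracket 1,s\rrbracket \setminus \kappa$ splits into the disjoint ranges $K_t$, and distribute via \eqref{prepa}. Your closing justification of homogeneity (each summand has degree $j^{(t)}$) and non-constancy ($F_m^\bullet \neq 0$ in a domain, plus $j^{(t)} \geq i \geq 1$ since $J^{(t)}$ always contains a block of $\kappa_i$) is a touch more explicit than the paper's appeal to linear independence of the $\Delta_{J^{(t)},J^{[t]}}^\bullet$, but both are valid; and the step you flag as the "main obstacle" is in fact immediate from Notation \ref{Tgot}, so there is no real gap.
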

\begin{proof}
De ce qui précède, on obtient (voir les notations de la définition \ref{typej} et la notation \ref{jrond}) :
\begin{align*}
F_m^{\bullet}&=\sum_{J \in \mathcal{J}(m)} \Delta_J^{\bullet}=\sum_{(J_k) \in \prod_k \mathcal{J}_k} \Delta_J^{\bullet}= \\
&=\varepsilon(\eta) \sum_{(J_k) \in \prod_k \mathcal{J}_k} \prod_{t=1}^r \Delta_{J^{(t)},J^{[t]}}^\bullet
\end{align*}
(on continue de noter $J=\bigsqcup_k J_k$). Pour tout $J \in \mathcal{J}(m)$ et $k \in \kappa$ (voir notation \ref{nota}), on a $j_k=i_k=i$, ainsi pour $k \in \kappa$, l'ensemble $\mathcal{J}_k=\{I_k\}$ n'a qu'un élément. L'ensemble $J \in \mathcal{J}(m)$ est donc uniquement déterminé par les $J_k, k \notin \kappa$. Si pour $k \in \kappa$, on note toujours $J_k = I_k$ dans la somme, alors $J_k$ n'est plus une variable et on a
$$F_m^{\bullet}=\varepsilon(\eta) \sum_{(J_k) \in \prod_{k \notin \kappa} \mathcal{J}_k} \prod_{t=1}^r \Delta_{J^{(t)},J^{[t]}}^\bullet$$
On a $\llbracket 1,s \rrbracket \setminus \kappa = \left(\bigsqcup_{t=1}^{r-1} \llbracket k_{t}+1,k_{t+1} \rrbracket\right) \sqcup \left(\llbracket 1,k_1-1 \rrbracket \sqcup \llbracket k_r+1,s \rrbracket\right)$. Pour tout $t \in \llbracket 1,r-1 \rrbracket$, les ensembles $J^{(t)}$ et $J^{[t]}$ ne dépendent que des choix de $J_k \in \mathcal{J}_k$ pour $k \in \llbracket k_t +1, k_{t+1}-1 \rrbracket$. De la même manière, les ensembles $J^{(r)}$ et $J^{[r]}$ que des choix de $J_k \in \mathcal{J}_k$ pour $k \in \llbracket 1, k_1-1 \rrbracket \sqcup \llbracket k_r+1, s \rrbracket$. Par l'équation \eqref{prepa}, on peut donc écrire :
$$
F_m^{\bullet}=\varepsilon(\eta) \prod_{t=1}^{r}\sum_{(J_k) \in \prod_{k \in K_t} \mathcal{J}_k} \Delta_{J^{(t)},J^{[t]}}^\bullet
$$
où $K_t=\left\{ \begin{array}{ll}
\llbracket k_t+1, k_{t+1}-1 \rrbracket&\text{si } t<r\\\llbracket 1, k_1-1 \rrbracket \sqcup \llbracket k_r+1, s \rrbracket& \text{si } t=r
\end{array}\right.$\index{$K_t$}. On pose alors pour $1 \leq t \leq r$\index{$F_{m,t}$}
\begin{equation}
F_{m,t}:= \sum_{(J_k) \in \prod_{k \in K_t} \mathcal{J}_{k}} \; \Delta_{J^{(t)},J^{[t]}}^\bullet\label{expmochesemiinv} 
\end{equation}
On a alors
$$ F_m^{\bullet}=c_m \prod_{t =1}^r F_{m,t}$$
avec $c_m=\varepsilon(\eta) \in \C^\times$\index{$c_m$} ($\eta$ dépend de $m$). 
\end{proof}
Les $F_{m,t}$ sont bien des semi-invariants non constants (par linéaire indépendance des $\Delta_{J^{(t)},J^{[t]}}^\bullet$) et dans $\Sym(\gotn^-)$ pour $t < r$ (car les $\Delta_{J^{(t)},J^{[t]}}^\bullet$ sont dans $\Sym(\gotn^-)$). Le point (1) du théorème \ref{semiinv} est donc vérifié.
\begin{defi}
Si $m=m_i \in \mathbf{M}_1$, on note $r_m:=\rho_i$. Pour $m \in \llbracket 1,n \rrbracket \setminus \mathbf{M}_1$, on note $r_m=1$ et $c_m=1$, de sorte que pour tout $m \in \llbracket 1,n \rrbracket$, on a
\begin{equation}
F_m^\bullet=c_m\prod_{t=1}^{r_m} F_{m,t} \label{decomp}
\end{equation}
On note alors $\mathbf{F}$\index{$\mathbf{F}$} la famille composée de l'ensemble des $F_{m,t}$ pour $1 \leq m \leq n$ et $1 \leq t \leq r_m$.
\label{familleF}
\end{defi}
\begin{exem}
Dans le cas de l'exemple \ref{leexemple}, où $\gotl \simeq \gl_4 \times \gl_1 \times \gl_4 \times \gl_2 \times \gl_1$, on a $\mathbf{I}=\{1,2,4\}$ et $\mathbf{M}_1=\{5,8,12\}$. On obtient donc
$$F_5^\bullet \propto F_{5,1} F_{5,2} \qquad F_8^\bullet \propto F_{8,1} \qquad F_{12}^\bullet \propto F_{12,1} F_{12,2}$$
où le nombre de facteurs de $F_{m_i}^\bullet$ est le nombre d'occurrences de $\gl_i$ dans $\gotl$. Par exemple, le semi-invariant $F_{5,1}$ est la somme des $\Delta_{\{a,b,12\},\{5,a,b\}}^\bullet$ pour $a \in I_3$ et $b \in I_4$, et $F_{12,1}=\Delta_{\llbracket 5,9 \rrbracket, \llbracket 1,5 \rrbracket}^\bullet$.
\label{exsi}
\end{exem}
\label{ssec2.2}
\subsection{Poids des semi-invariants $F_{m,t}$}
\label{ssec4.2}
Pour $i \in \mathbf{I}$, on a posé $m:=m_i$, $r:=\rho_i$ et $\lambda_{m,t}$ le poids du semi-invariant $F_{m,t}$ pour tout $t \in \llbracket 1,r \rrbracket$. On rappelle également que $\Lambda:=\Lambda(\gotq)$ est l'ensemble des poids de semi-invariants de $\Sym(\gotq)$.
\begin{nota}
Pour $m \in \llbracket 1,n \rrbracket \setminus \mathbf{M}_1$, on note $\lambda_{m,1}=0$, de sorte que pour tout $m \in \llbracket 1,n \rrbracket$ et $t \in \llbracket 1,r_m \rrbracket$, le semi-invariant $F_{m,t}$ soit de poids $\lambda_{m,t}$.
\end{nota}
Par la proposition \ref{dixmier}, les $F_{m,t}$ sont des semi-invariants. Calculons leurs poids. Si $r=1$, il n'y a qu'un seul facteur (qui est $F_{m,r}$) qui est donc un invariant (donc de poids nul), il suffit donc de s'intéresser au cas $r=\rho_i \geq 2$. Comme $F_m^\bullet$ est un invariant, on a $\sum_{t=1}^r \lambda_{m,t}=0$, donc il suffit de s'intéresser aux poids $\lambda_{m,t}$ pour $1 \leq t \leq r-1$.\par
On rappelle que $I_{\gotp}=\left\{0=\iota_0 < \iota_1 < \ldots < \iota_{s-1}< \iota_{s}=n\right\}$ est la partition de la sous-algèbre parabolique $\gotp$, vérifiant $I_k=\llbracket \iota_{k-1}+1, \iota_{k}\rrbracket$, et que $\kappa=\kappa_i=\left\{k_1 < \ldots < k_r\right\}$ est l'ensemble des $k$ tels que $i_k=i$ (notation \ref{nota}). On rappelle également que les $\varpi_k$ pour $1 \leq k \leq n-1$ sont les poids fondamentaux et que l'on a posé $\varpi_0=\varpi_n=0$.
\begin{lem}
On a $\Lambda \subset \bigoplus_{\iota \in I_{\gotp}} \rat \varpi_{\iota}$.
\label{poidsgen}
\end{lem}
\begin{proof}
Par la propriété \ref{qprime}, on a
$$\gotq' \oplus \gotz(\gotq) = \left(\bigoplus_{p \neq q} \C e_{p,q}  \oplus \bigoplus_{\iota \notin I_{\gotp}} \C h_\iota\right) \oplus \C \id$$
Par la propriété \ref{inclusion}, on a également $\gotq' \oplus \gotz(\gotq) \subset \gotq_\Lambda$. Ainsi pour tout $\lambda \in \Lambda$, on a $\lambda(\gotq' \oplus \gotz(\gotq))=0$. Autrement dit, $\lambda \in \bigoplus_{\iota \in I_{\gotp}} \C \varpi_{\iota}$. Or pour tout $e_{u,v}$ et pour tout $\iota \in I_{\gotp} \setminus \{0,n\}$, on a $h_{\iota} \cdot e_{u,v}=[h_\iota, e_{u,v}]_{\gotq} \in \rat \, e_{u, v}$, d'où le lemme.
\end{proof}
\begin{propn}
Pour tout $t \in \llbracket 1, r-1 \rrbracket$, le poids de $F_{m,t}$ est\index{$\lambda_{m,t}$}
\begin{equation}
\lambda_{m,t}=w_{{k_t}}-w_{{k_{t+1}}}
\label{pdssi}
\end{equation}
où $w_k=\varpi_{\iota_{k-1}}-\varpi_{\iota_{k}}$. Les $\lambda_{m,t}$ vérifient les propriétés (2) et (3) du théorème \ref{semiinv}. On a également $\lambda_{m,r}=w_{{k_r}}-w_{{k_{1}}}$.
\end{propn}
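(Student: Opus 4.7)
Le plan est d'évaluer directement le poids de $F_{m,t}$ en faisant agir les éléments $h=e_{v,v} \in \h$ sur un monôme quelconque, puis de traduire le résultat en termes de poids fondamentaux. Pour $t \in \llbracket 1, r-1 \rrbracket$, par l'équation \eqref{deltattpt}, $F_{m,t}$ est combinaison linéaire de monômes de la forme $\varepsilon(\sigma_t)\prod_{l \in J^{(t)}} e_{l,\sigma_t(l)}$, indexés par $(J_k)_{k \in K_t}$ et $\sigma_t \in S_J^{(t)}$ bijection de $J^{(t)}$ sur $J^{[t]}$. Comme $e_{v,v}$ agit sur $e_{u,w}$ par multiplication par $\delta_{v,u} - \delta_{v,w}$, la valeur propre sur un tel monôme est $\mathbb{1}_{J^{(t)}}(v) - \mathbb{1}_{J^{[t]}}(v)$. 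L'observation clé est que $k_t, k_{t+1} \in \kappa_i$ entraîne $J_{k_t} = I_{k_t}$ et $J_{k_{t+1}} = I_{k_{t+1}}$ pour tout $J \in \mathcal{J}(m)$, de sorte que $J^{(t)} \setminus J^{[t]} = I_{k_{t+1}}$ et $J^{[t]} \setminus J^{(t)} = I_{k_t}$ (notation \ref{nota46}). La valeur propre se réduit donc à $\mathbb{1}_{I_{k_{t+1}}}(v) - \mathbb{1}_{I_{k_t}}(v)$, indépendamment de tous les choix, ce qui donne le poids de $F_{m,t}$ en restriction à $\h$.

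Pour identifier ce poids à $w_{k_t} - w_{k_{t+1}}$, on effectue le calcul élémentaire $\varpi_\ell(e_{v,v}) = \mathbb{1}_{\llbracket 1,\ell \rrbracket}(v) - \ell/n$ (obtenu en résolvant $\varpi_\ell(h_k) = \delta_{k,\ell}$ complété par $\varpi_\ell(\id)=0$), d'où l'on tire $w_k(e_{v,v}) = -\mathbb{1}_{I_k}(v)$. Il s'ensuit que $(w_{k_t} - w_{k_{t+1}})(e_{v,v})$ coïncide avec le poids calculé plus haut. Comme $\Lambda \subset \bigoplus_{\iota \in I_{\gotp}} \rat \varpi_\iota$ par le lemme \ref{poidsgen}, les poids sont entièrement déterminés par leur restriction à $\h$, ce qui donne $\lambda_{m,t} = w_{k_t} - w_{k_{t+1}}$ pour $t \in \llbracket 1, r-1 \rrbracket$. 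L'expression de $\lambda_{m,r}$ en découle : $F_m^\bullet$ étant un invariant, on a $\sum_{t=1}^r \lambda_{m,t} = 0$, ce qui donne par télescopage $\lambda_{m,r} = w_{k_r} - w_{k_1}$.

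Les propriétés (2) et (3) du théorème \ref{semiinv} découlent alors directement de la description explicite $\lambda_{m,t}(e_{v,v}) = \mathbb{1}_{I_{k_{t+1}}}(v) - \mathbb{1}_{I_{k_t}}(v)$. Pour (2), l'annulation $\sum_t \lambda_{m,t}=0$ vient d'être établie ; pour le rang $r-1$, il suffit de choisir pour chaque $s \in \llbracket 1,r \rrbracket$ un élément $v_s \in I_{k_s}$ et d'observer que la matrice $\bigl(\lambda_{m,t}(e_{v_s,v_s})\bigr)_{1 \leq t \leq r-1,\, 1 \leq s \leq r}$ est bidiagonale à coefficients $\pm 1$, donc de rang $r-1$. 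Pour (3), la description précédente montre que $\vect((\lambda_{m,t})_t)$ est inclus dans $\vect(\mathbb{1}_{I_k})_{k \in \kappa_i}$ pour $m=m_i \in \mathbf{M}_1$ ; les $\kappa_i$ pour $i \in \mathbf{I}$ étant deux à deux disjoints et les $(\mathbb{1}_{I_k})_{1 \leq k \leq s}$ linéairement indépendantes dans $\h^*$, les sous-espaces $\vect(\mathbb{1}_{I_k})_{k \in \kappa_i}$ sont en somme directe, et donc les $\vect((\lambda_{m,t})_t)$ le sont \emph{a fortiori} (les $m \notin \mathbf{M}_1$ contribuant trivialement puisque $\lambda_{m,1}=0$ dans ce cas). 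La seule étape un peu subtile est l'identification combinatoire $J^{(t)} \setminus J^{[t]} = I_{k_{t+1}}$ et $J^{[t]} \setminus J^{(t)} = I_{k_t}$ : elle est immédiate à partir des définitions, mais c'est elle qui garantit que toute la combinatoire se réduit aux sous-ensembles simples $I_{k_t}, I_{k_{t+1}}$ indépendamment des choix paramétrant $F_{m,t}$.
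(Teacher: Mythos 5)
Your proof is correct and follows the same overall strategy as the paper's, namely computing the weight by the Lie-algebra action of $\h$ on a typical monomial of $F_{m,t}$. The differences are in the choices made along the way: the paper tests only against the elements $h_{\iota_k}$ (having first reduced to this by Lemma~\ref{poidsgen}) and works afterwards entirely in terms of the $\varpi_\iota$ and $w_k$; you test against the whole diagonal basis $(e_{v,v})_v$, which makes the restriction of $\lambda_{m,t}$ to $\h$ explicit as a difference of indicator functions of $I_{k_{t+1}}$ and $I_{k_t}$. For (2) and (3) this pays off: the indicators of the disjoint nonempty $I_k$ are visibly linearly independent, so (3) is immediate from the disjointness of the $\kappa_i$, whereas the paper has to argue a little more carefully using the relation $\sum_k w_k=0$ on the rank-$(s-1)$ family $(w_k)$.

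There is one computational slip to fix. From $\varpi_\ell(e_{v,v}) = \delta_{v \in \llbracket 1,\ell\rrbracket} - \ell/n$ one gets
$$w_k(e_{v,v}) \;=\; \varpi_{\iota_{k-1}}(e_{v,v}) - \varpi_{\iota_k}(e_{v,v}) \;=\; -\,\delta_{v \in I_k} + \tfrac{i_k}{n},$$
not $-\delta_{v \in I_k}$; your stated formula would make the $(w_k)_{1\le k\le s}$ linearly independent, contradicting $\sum_{k=1}^s w_k = 0$. The final identification $\lambda_{m,t} = w_{k_t}-w_{k_{t+1}}$ nevertheless holds because $k_t,k_{t+1}\in\kappa_i$ forces $i_{k_t}=i_{k_{t+1}}=i$, so the two $i_k/n$ correction terms cancel in the difference. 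That cancellation deserves to be made explicit: it is precisely the point where the hypothesis that $k_t$ and $k_{t+1}$ index blocks of equal size enters the weight computation, and without it the identification would be false.
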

\begin{proof}
La formule pour $\lambda_{m,r}$ résulte de la formule pour $\lambda_{m,t}$ avec $t < r$ et du fait que $\sum_{t=1}^{r} \lambda_{m,t}=0$.\par
Par le lemme \ref{poidsgen}, il suffit de calculer $h_{\iota_k} \cdot F_{m,t}$ pour $k \in \llbracket 1,s-1 \rrbracket$. Rappelons (équation \eqref{expmochesemiinv}) que pour $t \in \llbracket 1, r-1 \rrbracket$
$$
F_{m,t}:= \sum_{(J_k) \in \prod_{k \in K_t} \mathcal{J}_{k}} \; \Delta_{J^{(t)},J^{[t]}}^\bullet
$$
Alors
$$
h_{\iota_k} \cdot F_{m,t}= \sum_{(J_k) \in \prod_{k \in K_t} \mathcal{J}_{k}} h_{\iota_k} \cdot \Delta_{J^{(t)},J^{[t]}}^\bullet
$$
Fixons $J_k \in \mathcal{J}(m)_k$ pour tout $k \in K_t=\llbracket k_t+1, k_{t+1}-1 \rrbracket$. Alors (par l'équation \eqref{deltattpt})
\begin{align*}
h_{\iota_k} \cdot \Delta_{J^{(t)},J^{[t]}}^\bullet &= h_{\iota_k} \cdot \left( \sum_{\sigma_t \in S_J^{(t)}} \varepsilon(\sigma_t) \prod_{l \in J^{(t)}} e_{l, \, \sigma_t(l)} \right)\\
&= \sum_{\sigma_t \in S_J^{(t)}} \varepsilon(\sigma_t) \sum_{l \in J^{(t)}} h_{\iota_k} \cdot e_{l, \, \sigma_t(l)} \prod_{l' \in J^{(t)} \setminus \{l\} } e_{l', \, \sigma_t(l')}
\end{align*}
Rappelons que $\gotq$ et $\g$ sont isomorphes en tant que $\h$-modules. On a donc :
$$ h_{\iota_k} \cdot e_{l, \, \sigma_t(l)} = \left(\delta_{\iota_k, l}-\delta_{\iota_k, \sigma_t(l)}-\delta_{\iota_k+1, l}+\delta_{\iota_k+1, \sigma_t(l)}\right)  e_{l, \, \sigma_t(l)}$$
Donc 
$$\sum_{l \in J^{(t)}} h_{\iota_k} \cdot e_{l, \, \sigma_t(l)} \prod_{l' \in J^{(t)} \setminus \{l\} } e_{l', \, \sigma_t(l')} = \delta \prod_{l \in J^{(t)}} e_{l, \, \sigma_t(l)}$$
où
\begin{align*}
\delta&= \sum_{l \in J^{(t)}} \left(\delta_{\iota_k, l}-\delta_{\iota_k, \sigma_t(l)}-\delta_{\iota_k+1, l}+\delta_{\iota_k+1, \sigma_t(l)}\right)\\
&=\delta_{\iota_k \in J^{(t)}}-\delta_{\iota_k \in J^{[t]}}-\delta_{\iota_k+1 \in J^{(t)}}+\delta_{\iota_k+1 \in J^{[t]}}\\
&= \delta_{\iota_k \in J_{\llbracket k_t+1, k_{t+1} \rrbracket}} - \delta_{\iota_k \in J_{\llbracket k_t, k_{t+1}-1 \rrbracket}} - \delta_{\iota_k+1 \in J_{\llbracket k_t+1, k_{t+1} \rrbracket}}
+ \delta_{\iota_k+1 \in J_{\llbracket k_t, k_{t+1}-1 \rrbracket}}\\
&=\delta_{\iota_k \in J_{k_{t+1}}} - \delta_{\iota_k \in J_{k_{t}}}- \delta_{\iota_k+1 \in J_{k_{t+1}}}+\delta_{\iota_k+1 \in J_{k_{t}}}
\end{align*}
Or pour tout $u$, $k_u \in \kappa$, donc $J_{k_u}=I_{k_u}$, d'où
\begin{align*}
\delta &=\delta_{\iota_k \in I_{k_{t+1}}} - \delta_{\iota_k \in I_{k_{t}}}- \delta_{\iota_k+1 \in I_{k_{t+1}}}+\delta_{\iota_k+1 \in I_{k_{t}}}\\
&= \delta_{k,k_{t+1}} - \delta_{k,k_t}- \delta_{k,k_{t+1}-1}+\delta_{k,k_t-1}
\end{align*}
par définition des $\iota_k$ (voir notation \ref{nota}). Ainsi $\delta$ ne dépend en fait pas des choix de $J_k$ et $F_{m,t}$ est de poids $\lambda_{m,t}=w_{{k_t}}-w_{{k_{t+1}}}$.
Posons $e_k=\varpi_{\iota_k}$, avec $e_0=e_s=0$. La famille des $e_k$, $1 \leq k \leq s-1$ est libre. La famille des $w_k=e_{k-1} - e_{k}$, $1 \leq k \leq s$ est donc de rang $s-1$, et vérifie $\sum_k w_k=0$.
\begin{exem}
Dans l'exemple \ref{leexemple}, les poids appartiennent au réseau $\rat \varpi_4 \oplus \rat \varpi_5 \oplus \rat \varpi_9 \oplus \rat \varpi_{11}$ que l'on représentera sur le diagramme matriciel comme suit :
\begin{center}
\begin{tikzpicture}[scale=0.5]
\foreach \k in {1,2,...,11}
	{\draw[color=gray!20]  (0,\k)--(10,\k);
	\draw[color=gray!20] (\k,0)--(\k,10);}
\draw (0,0)--(12,0);
\draw (0,0)--(0,12);
\draw (0,12)--(12,12);
\draw (12,0)--(12,12);
\draw[ultra thick] (0,8)--(4,8)--(4,7)--(5,7)--(5,3)--(9,3)--(9,1)--(11,1)--(11,0);
\draw (3,3) node{\LARGE $\gotn^-$};
\draw (9,9) node{\LARGE $\gotp$};
\draw[dashed] (4,12)--(4,8)--(5,8)--(5,7)--(9,7)--(9,3)--(11,3)--(11,1)--(12,1);
\draw[->] (3.5,8.5)--(4.5,7.5);
\draw (4.5,8.5) node{\small $\varpi_4$};
\draw[->] (4.5,7.5)--(5.5,6.5);
\draw (5.5,7.5) node{\small $\varpi_5$};
\draw[->] (8.5,3.5)--(9.5,2.5);
\draw (9.5,3.5) node{\small $\varpi_9$};
\draw[->] (10.5,1.5)--(11.5,0.5);
\draw (11.5,1.5) node{\small $\varpi_{11}$};
\end{tikzpicture}
\end{center}
On a alors
\begin{itemize}
\item $e_0=0$, $e_1=\varpi_4$, $e_2=\varpi_5$, $e_3=\varpi_9$ et $e_4=\varpi_{11}$, $e_5=0$,
\item $w_1=-\varpi_4$, $w_2=\varpi_4-\varpi_5$, $w_3=\varpi_5-\varpi_9$, $w_4=\varpi_9-\varpi_{11}$, $w_5=\varpi_{11}$,
\item $\lambda_{5,1}=w_2-w_5$ et $\lambda_{12,1}=w_1-w_3$ ; on rappelle que $m=5$ (respectivement $m=12$) est associé à $i=1$ (respectivement à $i=4$).
\end{itemize}
\end{exem}
Montrons maintenant les points (2) et (3) du théorème \ref{semiinv}. Supposons que $\sum_{t=1}^{r-1} x_t \lambda_{m,t}=0$ avec $x_t \in \C$ et posons $x_0=x_r=0$. On a donc $\sum_{t=1}^{r-1} x_t (w_{k_t}-w_{k_{t+1}})=0$, c'est-à-dire
$$\sum_{t=1}^r (x_t-x_{t-1}) w_{k_t}=0$$
Comme la famille des $w_k$ est de rang $s-1$ et vérifie $\sum_{k=1}^s w_k=0$, on a $x_1-x_0 = x_2-x_1= \ldots = x_{r-1}-x_{r-2}=x_r-x_{r-1}$, ce qui implique que tous les $x_t$ sont nuls. En particulier, les $\lambda_{m,t}$ sont linéairement indépendants, ce qui montre (2) du théorème \ref{semiinv}.\par
Pour tout $j \in \llbracket 1, \imax \rrbracket$, soit $\lambda_j \in \vect((\lambda_{m_j,t})_t)$. Supposons que $\sum_j \lambda_j=0$ et montrons que tous les $\lambda_j$ sont nuls. Pour tout $j \in \llbracket 1, \imax \rrbracket$, on a
$$\vect((\lambda_{m_j,t})_t) = \vect((w_{k_{j,t}}-w_{k_{j,t+1}})_{t<\rho_j}) \subset \vect((w_{k_{j,t}})_t)$$
où l'on rappelle que $k_{j,t}$ est le $k_t$ correspondant à $\kappa_j$.  Pour tout $j$, on peut donc écrire $\lambda_j=\sum_t x_{j,t} w_{k_{j,t}}$, et on a donc
\begin{equation}
\sum_{j,t} x_{j,t} w_{k_{j,t}}=0 \label{summand}
\end{equation}
De plus, l'ensemble des $w_{k_{j,t}}$ pour $j \in \llbracket 1, \imax \rrbracket$ et $t \in \llbracket 1, \rho_j \rrbracket$ est l'ensemble des $w_k$ pour $k \in \llbracket 1,s \rrbracket$ (car $\bigsqcup_j \kappa_j = \llbracket 1,s \rrbracket$). Ainsi l'équation \eqref{summand} se réécrit
$$\sum_{k} x_{k} w_{k}=0$$
où $x_k=x_{j,t}$ si $k=k_{j,t}$ (tout $k \in \llbracket 1,s \rrbracket$ s'écrit de manière unique $k=k_{j,t}$ pour un certain $(j,t)$). Comme les $w_k$ forment une famille de rang $s-1$ telle que $\sum_k w_k=0$, tous les $x_k$ sont égaux, et donc pour tout $j$, on obtient $\lambda_j \in \C \sum_t w_{k_{j,t}}$. Or pour tout $j$, on a $\C \sum_t w_{k_{j,t}} \cap \vect((w_{k_{j,t}}-w_{k_{j,t+1}})_{t<\rho_j})=0$ . En effet, 
\begin{itemize}
\item si l'ensemble des $w_{k_{j,t}}$ pour $t \in \llbracket 1, \rho_j \rrbracket$ est l'ensemble des $w_k$ pour $k \in \llbracket 1,s \rrbracket$ (c'est-à-dire, si $\kappa_j=\llbracket 1,s \rrbracket$), alors $\sum_t w_{k_{j,t}}=\sum_k w_k=0$,
\item sinon, l'ensemble des $w_{k_{j,t}}$ pour $t \in \llbracket 1, \rho_j \rrbracket$ est strictement inclus dans l'ensemble des $w_k$ pour $k \in \llbracket 1,s \rrbracket$, et donc la famille des $w_{k_{j,t}}$ pour $t \in \llbracket 1, \rho_j \rrbracket$ est linéairement indépendante donc $\sum_t w_{k_{j,t}} \notin \vect((w_{k_{j,t}}-w_{k_{j,t+1}})_{t<\rho_j})$.
\end{itemize}
Ainsi $\lambda_j=0$ pour tout $j$ et les $\vect((\lambda_{m_j,t})_t)$ sont bien en somme directe. On a donc le point (3) du théorème \ref{semiinv}.
\end{proof}
\subsection{Indépendance algébrique}
\label{ssec4.3}
On montre (4) du théorème \ref{semiinv}, c'est-à-dire l'indépendance algébrique des $F_{m,t}$ par le théorème suivant.
\begin{theo}
Soit $\gotk$ une algèbre de Lie. Soit $d \geq 1$ et $(f_m)_{1 \leq m \leq d}$ une famille d'éléments algébriquement indépendants de $\Y(\gotk)$. Supposons que pour tout $m \in \llbracket 1,d \rrbracket$, l'invariant $f_m$ se décompose en un produit de semi-invariants $f_m \propto \prod_{t=1}^{r_m} f_{m,t}^{s_{m,t}}$ avec $r_m \geq 1$ et les $s_{m,t} \geq 1$ des entiers premiers entre eux (dans leur ensemble). On suppose de plus que les semi-invariants $f_{m,t}$ vérifient :
\begin{itemize}
\item à $m$ fixé, l'ensemble des poids $\lambda_{m,t}$ des $f_{m,t}$ pour $t \in \llbracket 1,r_m \rrbracket$ forme une famille de rang $r_m-1$.
\item les espaces vectoriels $\vect \left((\lambda_{m,t})_{1 \leq t \leq r_{m}}\right)$ pour $m \in \llbracket 1, d \rrbracket$ sont en somme directe.
\end{itemize}
Alors les $(f_{m,t})_{1 \leq m \leq d, \, 1 \leq t \leq r_m}$ sont algébriquement indépendants. 
\label{alin}
\end{theo}
\begin{rema}
Les hypothèses du théorème sont vérifiées dans le cas que l'on étudie grâce à la proposition \ref{fdroit} et aux points (2) et (3) du théorème \ref{semiinv}.
\end{rema}
\begin{proof}
Soit $\Lambda({\mathbf{f}})$ le semi-groupe engendré par les poids des $f_{m,t}$. Considérons deux monômes en les $f_{m,t}$ respectivement $\prod_{m=1}^d \prod_{t=1}^{r_m} \left(f_{m,t}\right)^{p_{m,t}}$ et $\prod_{m=1}^d \prod_{t=1}^{r_m} \left(f_{m,t}\right)^{p'_{m,t}}$, et supposons qu'ils sont de même poids, disons $\lambda \in \Lambda(\mathbf{f})$. Pour tout $m$, on note $L_m:=\vect \left((\lambda_{m,t})_{1 \leq t \leq r_{m}}\right)$. On a $\vect(\Lambda(\mathbf{f}))=\bigoplus_m L_m$. On considère alors la décomposition $\lambda=\sum \lambda_m$ avec $\lambda_m \in L_m$. Les $L_m$ étant en somme directe, pour tout $m$, les poids de $\prod_{t=1}^{r_m} \left(f_{m,t}\right)^{p_{m,t}}$ et de $\prod_{t=1}^{r_m} \left(f_{m,t}\right)^{p'_{m,t}}$ sont égaux à $\lambda_m$. On fixe désormais $m$. On a donc
$$ \sum_{t=1}^{r_m} p_{m,t} \lambda_{m,t} = \sum_{t=1}^{r_m} p'_{m,t} \lambda_{m,t}=\lambda_m$$
Or la famille des $\lambda_{m,t}$, $1 \leq t \leq r_m$ est de rang $r_m-1$ et vérifie $ \sum_{t=1}^{r_m} s_{m,t} \lambda_{m,t}=0$ (car $f_m \propto \prod_t f_{m,t}^{s_{m,t}}$). Ainsi il existe $\phi_m \in \Q$ tel que pour tout $t$, on a $p'_{m,t}-p_{m,t}=\phi_m s_{m,t}$. Puisque les $s_{m,t}$ sont premiers entre eux, on a nécessairement $\phi_m \in \rat$.\par
En particulier, l'ordre défini sur $\N^{r_m}$ par $(x_{m,t})_t \leq (y_{m,t})_t \Leftrightarrow \forall t, \, x_{m,t} \leq y_{m,t}$ devient un ordre total sur $\mathcal{P}_m:=\{(x_{m,t})_t \in \N^{r_m} \, | \, \sum_{t=1}^{r_m}x_{m,t} \lambda_{m,t}=\lambda_m \}$ et cet ensemble admet un minimum pour cet ordre. Notons $(b_{m,t})_t$ ce minimum. Il ne dépend que de $\lambda_m$ et notamment pas du choix des $p_{m,t}$ et $p'_{m,t}$.\par
Pour tout monôme $\prod_{t=1}^{r_m} \left(f_{m,t}\right)^{p_{m,t}}$ de poids $\lambda_m$, il existe donc un unique $\varphi_m \in \N$ tel que $p_{m,t}=\varphi_m s_{m,t}+ b_{m,t}$ pour tout $t$. L'application $(p_{m,t})_t \in \mathcal{P}_m \mapsto \varphi_m \in \N$ est clairement bijective. Ainsi pour tout $m$, on a
\begin{align*}
\prod_{t=1}^{r_m} \left(f_{m,t}\right)^{p_{m,t}}&=\prod_{t=1}^{r_m} \left(f_{m,t}\right)^{\varphi_m s_{m,t}+ b_{m,t}} \\
&=\left(\prod_{t=1}^{r_m} \left(f_{m,t}\right)^{s_{m,t}}\right)^{\varphi_m} \prod_{t=1}^{r_m} \left(f_{m,t}\right)^{b_{m,t}}\\
&\propto (f_m)^{\varphi_m} \prod_{t=1}^{r_m} (f_{m,t})^{b_{m,t}}
\end{align*}
et donc 
\begin{equation}
\prod_{m=1}^d \prod_{t=1}^{r_m} \left(f_{m,t}\right)^{p_{m,t}}\propto\left(\prod_{m=1}^d f_m^{\varphi_m}\right) \prod_{m=1}^d \prod_{t=1}^{r_m} \left(f_{m,t}\right)^{b_{m,t}} \label{prodprod}
\end{equation}
Supposons qu'un polynôme en les $f_{m,t}$ de la forme $\sum_{\mathbf{p}=(p_{m,t})_{m,t}} k_{\mathbf{p}} \prod_{m,t} f_{m,t}^{p_{m,t}}$ est nul. Pour conclure, il suffit de montrer que tous les $k_{\mathbf{p}}$ sont nécessairement nuls. Par l'absurde, supposons qu'au moins un des $k_{\mathbf{p}}$ est non nul. Comme les $\Sym(\gotk)_\nu$ sont en somme directe, on peut supposer que tous les monômes deux à deux non colinéaires $k_{\mathbf{p}} \prod_{m,t} f_{m,t}^{p_{m,t}}$ ont le même poids $\lambda$. Autrement dit, à une constante multiplicative non nulle près, ils s'écrivent tous sous la forme de l'équation \eqref{prodprod}. L'application $(p_{m,t})_{m,t} \in \mathcal{P}_1 \times \ldots \times \mathcal{P}_d \mapsto (\varphi_m)_m \in \N^d$ est bijective et $(b_{m,t})_{m,t}$ ne dépend pas des $p_{m,t}$. Ainsi le facteur $\prod_{m=1}^d \prod_{t=1}^{r_m} \left(f_{m,t}\right)^{b_{m,t}}$ est commun à tous ces monômes et peut être simplifié. Ils deviennent donc (à une constante multiplicative non nulle près) des monômes de la forme $\prod_{m=1}^d f_m^{\varphi_m}$ deux à deux distincts. On obtient donc un polynôme $P$ non trivial qui est nul en les $f_m$, ce qui est absurde car les $f_m$ sont algébriquement indépendants.
\end{proof}
\section{Dimension de Gelfand-Kirillov}
\label{sec5}
On a montré que la famille $\mathbf{F}$ de la définition \ref{familleF} (voir l'équation \eqref{expmochesemiinv}) est une famille algébriquement indépendante de semi-invariants.
Par la proposition \ref{ai}, on sait que $\ind \gotq_{\Lambda} = \GK \Y(\gotq_{\Lambda}) \geq \Card(\mathbf{F})$, où
\begin{align*}
\Card(\mathbf{F})&=\Card (\llbracket 1,n \rrbracket \setminus \mathbf{M}_1)+\sum_{i \in \mathbf{I}} \rho_{i} \\
&=n-\Card(\mathbf{M}_1)+\sum_{i \in \mathbf{I}} \rho_i\\
\end{align*}
Comme $\rho_i = \Card (\left\{k \in \llbracket 1,s \rrbracket \; | \; i_k=i \right\})$ et $\mathbf{I}$ et $\mathbf{M}_1$ sont en bijection, on a
\begin{equation}
\ind \gotq_{\Lambda} \geq n+(s-p) \label{nps}
\end{equation}
où on rappelle que $s$ est le nombre de blocs d'un facteur de Levi de $\gotp$ et $p=\Card (\mathbf{I})$, c'est-à-dire le nombre de \emph{classes d'isomorphismes} de blocs d'un facteur de Levi de $\gotp$, d'où $p \leq s$.\par
Le but de cette section est de montrer le théorème suivant :
\begin{theo}
Soit $\mathbf{F}$ la famille des semi-invariants de la définition \ref{familleF} (voir équation \eqref{expmochesemiinv}). Alors 
\begin{itemize}
\item $\dim \gotq_{\Lambda}=n^2-(s-p)$,
\item $\ind \gotq_{\Lambda}=\Card(\mathbf{F})=n+(s-p)$.
\end{itemize}
Ainsi avec le théorème \ref{semiinv} (4), la famille $\mathbf{F}$ forme une base de transcendance dans $\Sy(\gotq)$. En particulier, si $\C[\mathbf{F}]$\index{$\C[\mathbf{F}]$} est l'algèbre (polynomiale) engendrée par $\mathbf{F}$, alors l'extension $\Sy(\gotq) \supset \C[\mathbf{F}]$ est algébrique.
\label{dimind}
\end{theo}
Pour montrer que $\ind \gotq_\Lambda = \Card(\mathbf{F})$, on utilise le résultat suivant de Ooms et van den Bergh \cite[Proposition 3.1]{oom10} 
\begin{equation}
\dim \gotq + \ind \gotq = \dim \gotq_{\Lambda} + \ind \gotq_{\Lambda}
\label{ovdb}
\end{equation}
Dans le cas de la contraction parabolique, on a $\dim \gotq = \dim \g=n^2$, et par un résultat de Panyushev et Yakimova (\cite{py13}, théorème 3.1), on sait que $\ind \gotq = \ind \g=n$.
\begin{rema}
Par construction des semi-invariants comme facteurs des $F_m^{\bullet}$, la famille $\mathbf{F}$ vérifie
$$\sum_{f \in \mathbf{F}} \deg f = \sum_{m=1}^n \deg F_m^{\bullet}= \sum_{m=1}^n \deg F_m = \dfrac{\dim \gotq + \ind \gotq}{2} = \dfrac{\dim \gotq_{\Lambda} + \ind \gotq_{\Lambda}}{2}$$
Ainsi $\mathbf{F}$ vérifie bien la condition sur les degrés du théorème \ref{pan13}. On ne sait pas conclure avec le théorème \ref{pan13} car on n'a pas trouvé de moyen de vérifier la propriété de codimension $2$ pour $\gotq_\Lambda$.
\end{rema}
On rappelle (preuve du lemme \ref{poidsgen}) que
$$\gotq'  = \left(\bigoplus_{p \neq q} \C e_{p,q}  \oplus \bigoplus_{\iota \notin I_{\gotp}} \C h_\iota\right)$$
est de dimension $n^2-s$. On rappelle que $\gotq' \oplus \C \gotz(\gotq) \subset \gotq_\Lambda$ (propriété \ref{inclusion}). Cette inclusion n'est toutefois pas une égalité en général. Pour obtenir plus d'éléments de $\gotq_{\Lambda}$, on va une nouvelle fois utiliser les $F_j^{\bullet}$.\par
Pour tout $j \in \llbracket 1,n \rrbracket$, puisque $F_j^{\bullet}$ est un invariant de $\Sym(\gotq)$, on a $F_j^{\bullet} \in \Y(\gotq) \subset \Sy(\gotq) = \Y(\gotq_{\Lambda}) \subset \Sym(\gotq_{\Lambda})$ par le théorème \ref{jm}. Soit $\widehat{\g}$ le sous-espace vectoriel de $\g$ formé des matrices de diagonale nulle. On a $\gotq_{\Lambda} =\widehat{\g} \oplus \h_{\Lambda}$ où $\h_{\Lambda}:=\h \cap \gotq_{\Lambda}$\index{$\h_{\Lambda}$}. Ainsi, on a les isomorphismes d'algèbres $\Sym(\gotq) \simeq \Sym(\widehat{\g}) \otimes_{\C} \Sym(\h)$ et $\Sym(\gotq_\Lambda) \simeq \Sym(\widehat{\g}) \otimes_{\C} \Sym(\h_\Lambda)$. On utilise alors le résultat d'algèbre tensorielle suivant.
\begin{prop}
Soit $F \in \Sym(\gotq_{\Lambda})$. Si $F$ s'écrit $\sum_i u_i v_i$, où $u_i \in \Sym(\widehat{\g})$, $v_i \in \Sym(\h)$, avec les $u_i$ linéairement indépendants, alors pour tout $i$, $v_i \in \Sym(\h_{\Lambda}) \subset \Sym(\gotq_{\Lambda})$. \label{algcla}
\end{prop}
On fixe alors $m:=m_i \in \mathbf{M}_1$ tel que $i \neq \imax$ (voir notation \ref{nota}). 
On omettra dans la suite les indices $i$.
\begin{lem}
Si $J \in \mathcal{J}(m)$, alors $\Delta_J^\bullet \in \Sym(\widehat{\g})$. \label{gchap}
\end{lem}
\begin{proof}
On a $\deg_{\gotn^-} F_m=m-i$ (voir la proposition \ref{oui}). Soit $J \in \mathcal{J}(m)$. Supposons par l'absurde que $\Delta_J^\bullet \notin \Sym(\widehat{\g})$, c'est-à-dire qu'il existe $\sigma\in \mathfrak{S}(J)$ avec un point fixe $l'$ tel que $\prod_{l \in J} e_{l,\sigma(l)}$ soit un monôme de degré maximal $m-i$ en $\gotn^-$. Alors $\prod_{l \in J \setminus \{l'\}} e_{l,\sigma(l)}$ est également de degré $m-i$ en $\gotn^-$, et c'est (au signe près) un monôme de $\Delta_{J \setminus \{l'\}}$, qui est lui-même un terme de $F_{m-1}$. Ainsi $m-i=\deg_{\gotn^-} \left( \prod_{l \in J } e_{l,\sigma(l)}\right)\leq \deg_{\gotn^-} F_{m-1}$. Or, comme $m-1 \notin \mathbf{M}_0$ (propriété \ref{propelem}), on a $\deg_{\gotn^-} F_{m-1}=m-i-1$ par l'égalité \eqref{degrec} du corollaire \ref{coro}, ce qui est absurde.
\end{proof}
\begin{propn}
Soit $\mathcal{J}'$ l'ensemble des $J' \subset I$ de cardinal $m+1$ tels que $\deg_{\gotn^-} \Delta_{J'} = \deg_{\gotn^-} F_{m+1}$, de sorte que $F_{m+1}^{\bullet}=\sum_{J' \in \mathcal{J}'} \Delta_{J'}^\bullet$. Par la propriété \ref{typej}, on a $\mathcal{J}(m+1) \subset \mathcal{J}'$ (l'inclusion est en fait stricte).
\begin{enumerate}
\item Pour tout $J \in \mathcal{J}'$, on a 
$\deg_{\h} \Delta_J^\bullet \leq 1$ et l'égalité est vérifiée si et seulement si $J \in \mathcal{J}(m+1)$ (voir définition \ref{typej}). Ainsi $\deg_{\h} F_{m+1}^{\bullet}=1$.
\item Pour $J \in \mathcal{J}(m+1)$, la somme des monômes de degré $1$ en $\h$ dans $\Delta_J^{\bullet}$ est
$$
\sum_{l \in J_{k_J}}  e_{l,l} \Delta_{J \setminus \left\{ l \right\} } ^{\bullet}
$$
où $k_J$ est défini dans l'exemple \ref{exnot}.
\end{enumerate}
\label{comb2}
\label{21}
\end{propn}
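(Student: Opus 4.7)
The plan is to analyze permutations $\sigma \in \mathfrak{S}(J)$ contributing to monomials of $\Delta_J^\bullet$, exploiting the fact that the degree in $\h$ of such a monomial is exactly the number of fixed points of $\sigma$, and removing fixed points yields monomials of $\Delta_{J'}$ for smaller $J'$. The recursion \eqref{degrec} on $\deg_{\gotn^-} F_j$ will control what can occur.

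First I would observe that for $J \in \mathcal{J}'$, by corollary \ref{coro} we have $\deg_{\gotn^-} F_{m+1} = m-i$ (since $m+1 \in (m_i, m_{i+1}]$ because $i\neq \imax$), so every monomial $\pm \prod_{l \in J} e_{l,\sigma(l)}$ of $\Delta_J^\bullet$ has exactly $m-i$ factors in $\gotn^-$ and the remaining $i+1$ factors in $\gotp$. For the upper bound of point (1), I would assume for contradiction that some such $\sigma$ has two distinct fixed points $l_1, l_2$. Then restricting $\sigma$ to $J \setminus \{l_1, l_2\}$ produces a monomial of $\Delta_{J \setminus \{l_1, l_2\}}$ still of degree $m-i$ in $\gotn^-$. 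Since $|J \setminus \{l_1, l_2\}| = m-1$, this forces $\deg_{\gotn^-} F_{m-1} \geq m-i$. But $m = m_i \in \mathbf{M}_1$ with $m \neq n$ (because $i \neq \imax$), hence by propriété \ref{propelem}(5) we have $m-1 \notin \mathbf{M}_0$, and the recursion \eqref{degrec} gives $\deg_{\gotn^-} F_{m-1} = \deg_{\gotn^-} F_m - 1 = m-i-1$, a contradiction.

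Next, for the equivalence in (1), suppose $\sigma$ has a unique fixed point $l_0$. Then $\prod_{l \in J \setminus\{l_0\}} e_{l,\sigma(l)}$ is a monomial of $\Delta_{J\setminus\{l_0\}}$ with degree $m-i$ in $\gotn^-$, so $\deg_{\gotn^-}\Delta_{J\setminus\{l_0\}} = m-i = \deg_{\gotn^-} F_m$, i.e. $J\setminus\{l_0\} \in \mathcal{J}(m)$, and corollary \ref{comb}(2) yields $J \in \mathcal{J}(m+1)$. Conversely, for $J \in \mathcal{J}(m+1)$ and any $l \in J_{k_J}$, corollary \ref{comb}(2) gives $J \setminus \{l\} \in \mathcal{J}(m)$. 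By lemma \ref{gchap}, $\Delta_{J\setminus\{l\}}^\bullet \in \Sym(\widehat{\g})$; extending each of its defining permutations to $J$ by fixing $l$ gives monomials of $\Delta_J^\bullet$ of degree $1$ in $\h$, and since these yield $e_{l,l}\,\Delta_{J\setminus\{l\}}^\bullet \neq 0$, we get $\deg_\h \Delta_J^\bullet \geq 1$. Combined with the upper bound this is an equality, and the argument also locates all degree-1-in-$\h$ monomials: they arise from fixed points $l \in J$ satisfying $J \setminus \{l\} \in \mathcal{J}(m)$. Removing $l \in J_k$ with $k \neq k_J$ leaves $j_{k_J} = i+1 > i$, so $\max j'_{k'} = i+1$ and $J \setminus \{l\} \notin \mathcal{J}(m)$ by corollary \ref{comb}(1). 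Thus the admissible fixed points are exactly those in $J_{k_J}$, and summing the corresponding contributions yields the formula in (2).

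The heart of the argument is routine once the combinatorics of section \ref{sec3} is in place; the only subtle point is the bookkeeping in the equivalence ``fixed point $l$ admissible $\Leftrightarrow l \in J_{k_J}$'', which I expect to be the main obstacle since it requires using both directions of corollary \ref{comb}(2) together with the characterisation in corollary \ref{comb}(1) of $\mathcal{J}(m)$ as those $J'$ with $|J'|=m$ and $\max_k j'_k = i$.
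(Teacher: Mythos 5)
Your proof is correct, and it departs from the paper's argument in a genuine way at the upper bound. To show $\deg_\h\Delta_J^\bullet\leq 1$ for all $J\in\mathcal{J}'$, you suppose $\sigma$ has two fixed points $l_1,l_2$, restrict to $J\setminus\{l_1,l_2\}$ of cardinality $m-1$ without lowering the degree in $\gotn^-$, and derive a contradiction with $\deg_{\gotn^-}F_{m-1}=m-i-1$, which you extract from propriété~\ref{propelem}(5) ($m-1\notin\mathbf{M}_0$) and the recursion \eqref{degrec}. The paper instead splits $\mathcal{J}'$ into two cases according to whether $J$ has one or at least two parts of size $i+1$: in the second case it applies lemma~\ref{clé} to $\Delta_{J\setminus\{l'\}}$ (which retains a part of size $i+1$) to see that a fixed point cannot occur, and in the first case (which is $\mathcal{J}(m+1)$) it uses $\deg_{\gotn^-}\Delta_{J\setminus\{l'\}}\leq \deg_{\gotn^-}F_m=m-i$ together with lemma~\ref{gchap} to rule out a second fixed point. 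Your route is slightly more economical because it treats all $J\in\mathcal{J}'$ uniformly; the paper's case split is a bit longer but makes it immediately visible that the members of $\mathcal{J}'\setminus\mathcal{J}(m+1)$ lie entirely in $\Sym(\widehat{\g})$. From that point on the two proofs essentially coincide: both observe that a unique fixed point $l_0$ forces $J\setminus\{l_0\}\in\mathcal{J}(m)$ (hence $l_0\in J_{k_J}$ by corollary~\ref{comb}), both invoke corollary~\ref{comb}(2) in the other direction, and both use lemma~\ref{gchap} to see that the extended monomials $e_{l_0,l_0}\Delta_{J\setminus\{l_0\}}^\bullet$ contribute exactly the degree-one-in-$\h$ part, giving point (2).
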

\begin{proof}
$\left(\bigoplus_{\sigma \in \mathfrak{S}(J')} \C \prod_{l \in J'} e_{l,\sigma(l)}\right)_{J' \in \mathcal{J}'}$ est une famille d'espaces vectoriels en somme directe. En particulier, $(\Delta_{J'}^\bullet)_{J' \in \mathcal{J}'}$ est une famille d'éléments linéairement indépendants. Par l'égalité \eqref{degrec}, on a $\deg_{\gotn^-} F_{m+1}=m-i$. Si $J \in \mathcal{J}'$, on a donc $\max(j_k)=i+1$ par le lemme \ref{clé}, et on distingue alors deux cas.\par
Si $J \in \mathcal{J}'$ est tel qu'il existe $k_0 \neq k_0'$ tels que $j_{k_0}=j_{k_0'}= i+1$, alors $\Delta_{J}^{\bullet} \in \Sym(\widehat{\g})$. En effet, supposons comme précédemment que $\varepsilon(\sigma) \prod_{l \in J} e_{l,\sigma(l)}$ soit un monôme de $\Delta_{J}$ tel que $\sigma$ admette un point fixe $l'$. On a donc $\deg_{\gotn^-} \left( \prod_{l \in J} e_{l,\sigma(l)}\right)\leq \deg_{\gotn^-} \Delta_{J \setminus \left\{l'\right\}}$. Mais au moins un des deux sous-ensembles $J_{k_0}$ ou $J_{k_0'}$ est une part de la partition de $J \setminus \left\{l'\right\}$, et ces deux sous-ensembles sont de cardinal $i+1$. Par le lemme \ref{clé}, on a alors $\deg_{\gotn^-} \Delta_{J \setminus \left\{l'\right\}}= (m+1-1)-(i+1)=m-i-1$. Le monôme $\prod_{l \in J} e_{l,\sigma(l)}$ n'est donc pas de degré maximal $m-i$ en $\gotn^-$ et n'apparaît pas dans $\Delta_{J}^{\bullet}$.\par
Sinon, on a $J \in \mathcal{J}(m+1)$. Supposons que $\varepsilon(\sigma) \prod_{l \in J} e_{l,\sigma(l)}$ est un monôme de $\Delta_{J}^{\bullet}$ de degré $\geq 1$ en $\h$, c'est-à-dire que $\sigma$ a un point fixe $l'$. Comme $\deg_{\gotn^-} \Delta_J^\bullet =\deg_{\gotn^-} F_{m+1}= m-i$, on a $\deg_{\gotn^-} \left( \prod_{l \in J} e_{l,\sigma(l)}\right) =m-i$. Or $\varepsilon(\sigma) \prod_{l \in J} e_{l,\sigma(l)}=\varepsilon(\sigma_{|J \setminus \{l'\}})e_{l',l'} \prod_{l \in J \setminus \{l'\}} e_{l,\sigma(l)}$ est un monôme de $e_{l',l'} \, \Delta_{J \setminus \left\{l'\right\}}$, ce qui implique que $\deg_{\gotn^-} \Delta_{J \setminus \left\{l'\right\}} \geq m-i$. Comme $\Delta_{J \setminus \left\{l'\right\}}$ est un terme de $F_m$, on a également $\deg_{\gotn^-} \Delta_{J \setminus \left\{l'\right\}} \leq\deg_{\gotn^-} F_{m}= m-i$, et donc d'après ce qui précède, on a l'égalité. Ceci implique, d'après le lemme \ref{clé}, que la partition de $J \setminus \left\{l'\right\}$ a sa plus grande part de cardinal $i$, ce qui implique que $l' \in J_{k_J}$, donc que $J \setminus \{l'\} \in \mathcal{J}(m)$ (corollaire \ref{comb} (2)). Alors par le lemme \ref{gchap}, on a $\Delta_{J \setminus \left\{l'\right\}}^\bullet \in \Sym(\widehat{\g})$. Finalement, $\varepsilon(\sigma) \prod_{l \in J} e_{l,\sigma(l)}$ est un monôme de $e_{l',l'} \, \Delta^\bullet_{J \setminus \left\{l'\right\}}$ et est de degré $1$ en $\h$.
\end{proof}
On note $B_{m+1}$ et $C_{m+1}$ la somme des monômes de $F_{m+1}^\bullet$ de degré respectivement $0$ et $1$ en $\h$. Puisque $\widehat{\g} \subset \gotq_\Lambda$, on a $B_{m+1} \in \Sym(\gotq_\Lambda)$, de plus on rappelle que $F_{m+1}^\bullet=B_{m+1}+C_{m+1} \in \Sym(\gotq_\Lambda)$. Ainsi $C_{m+1} \in \Sym(\gotq_\Lambda)$.
\begin{propn}
Pour tout $i \in \mathbf{I}$, on note $\id^{(i)}= \sum_{k \, | \, \Card(I_k)=i} \id_{\g_{I_k}}$\index{$\id^{(i)}$} avec $\id_{\g_{I_k}}=\sum_{l \in I_k} e_{l,l}$. Alors $\gotq' \oplus \bigoplus_{i \in \mathbf{I}} \C \id^{(i)} \subset \gotq_{\Lambda}$.
\label{qlam}
\end{propn}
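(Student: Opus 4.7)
The inclusion $\gotq' \subset \gotq_\Lambda$ is propriété \ref{inclusion}, and $\id \in \gotz(\gotq) = \C\,\id \subset \gotq_\Lambda$ by propriété \ref{qprime}. Since $\id = \sum_{i \in \mathbf{I}} \id^{(i)}$, the task reduces, after passing to the quotient by $\h \cap \gotq' = \bigoplus_{\iota \notin I_\gotp} \C h_\iota$, to producing $|\mathbf{I}| - 1 = p-1$ additional elements of $\h_\Lambda := \h \cap \gotq_\Lambda$ whose classes, together with that of $\id$, form a basis of $\bigoplus_{i \in \mathbf{I}} \C \id^{(i)}$ modulo $\h \cap \gotq'$.

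For each $i \in \mathbf{I} \setminus \{\imax\}$, set $m := m_i$ and apply proposition \ref{comb2} to $F_{m+1}^\bullet \in \Sy(\gotq) \subset \Sym(\gotq_\Lambda)$. Its degree-$1$ in $\h$ component reads
$$ C_{m+1} \;=\; \sum_{J \in \mathcal{J}(m+1)} \sum_{l \in J_{k_J}} e_{l,l}\, \Delta_{J \setminus \{l\}}^\bullet, $$
and the bijective correspondence $(J,l) \leftrightarrow (J',l)$ with $J' := J \setminus \{l\} \in \mathcal{J}(m)$ provided by corollaire \ref{comb} (2) (together with the observation that $l \in I \setminus J'$ forces $l \in I_k$ for some $k \in K(i+1)$) rewrites this as
$$ C_{m+1} \;=\; \sum_{J' \in \mathcal{J}(m)} \Delta_{J'}^\bullet \cdot h_{J'}, \qquad h_{J'} \;:=\; \sum_{l \in I \setminus J'} e_{l,l} \;=\; \sum_{k \in K(i+1)} \sum_{l \in I_k \setminus J'_k} e_{l,l}. $$
Since $\Delta_{J'}^\bullet \in \Sym(\widehat{\g})$ by lemme \ref{gchap}, and the $\Delta_{J'}^\bullet$ for distinct $J' \in \mathcal{J}(m)$ are linearly independent (having disjoint sets of row indices as supports), proposition \ref{algcla} applied to $C_{m+1} \in \Sym(\gotq_\Lambda)$ yields $h_{J'} \in \h_\Lambda$ for every $J' \in \mathcal{J}(m)$.

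Next, I would reduce $h_{J'}$ modulo $\h \cap \gotq'$. Since $e_{l,l} - e_{l',l'} \in \gotq'$ whenever $l, l' \in I_k$, one has $\sum_{l \in A} e_{l,l} \equiv (|A|/i_k)\, \id_{\g_{I_k}} \pmod{\h \cap \gotq'}$ for any $A \subset I_k$, hence
$$ h_{J'} \;\equiv\; \sum_{k \in K(i+1)} \frac{i_k - i}{i_k}\, \id_{\g_{I_k}} \;=\; \sum_{i' \in \mathbf{I},\, i' > i} \Big(1 - \tfrac{i}{i'}\Big)\, \id^{(i')} \pmod{\h \cap \gotq'}. $$
Call this class $\bar H_i$; it is independent of $J'$. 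Writing $\mathbf{I} = \{i_1 < \cdots < i_p\}$ with $i_p = \imax$, and expressing the $p$ classes $\bar\id, \bar H_{i_1}, \ldots, \bar H_{i_{p-1}}$ in the basis $(\bar\id^{(i_j)})_{j=1}^{p}$ of the image of $\bigoplus_{i \in \mathbf{I}} \C \id^{(i)}$ in $\h/(\h \cap \gotq')$, one gets a triangular matrix with non-zero diagonal entries $1 - i_j/i_{j+1}$ (and a final entry $1$ coming from $\bar\id$), hence a basis. Lifting back to $\gotq_\Lambda$ and adding $\gotq'$ gives the announced inclusion.

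The only delicate step is the combinatorial rewriting of $C_{m+1}$ as a sum over $\mathcal{J}(m)$, and in particular the identification of which pairs $(J',l)$ with $J' \in \mathcal{J}(m)$ produce an element of $\mathcal{J}(m+1)$, which follows cleanly from corollaire \ref{comb} (2); once this is established, the rest is elementary linear algebra over $\h/(\h \cap \gotq')$.
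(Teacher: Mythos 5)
Your proof reproduces the paper's own argument essentially step by step: proposition \ref{comb2} for the degree-one-in-$\h$ component $C_{m+1}$, corollaire \ref{comb} (2) for the reindexing over $\mathcal{J}(m)$, lemme \ref{gchap} plus propriété \ref{algcla} to put $h_{J'}$ in $\h_\Lambda$, then the reduction modulo $\h\cap\gotq'$ and the triangular invertible system (the paper solves for the $\id^{(j)}$, $j>\min\mathbf{I}$, and gets the last one from $\id$, which is the same computation).

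One small but genuine gap: the statement asserts a \emph{direct} sum $\gotq'\oplus\bigoplus_{i\in\mathbf{I}}\C\id^{(i)}$, and you assume without justification that the classes of the $\id^{(i_j)}$ in $\h/(\h\cap\gotq')$ form a basis of the image of $\bigoplus_i\C\id^{(i)}$, i.e.\ that they are linearly independent and that the span meets $\gotq'$ trivially. For the inclusion itself this assumption is not actually needed (invertibility of the triangular coefficient matrix already lets you write each class of $\id^{(i_j)}$ as a linear combination of the classes of $\id,H_{i_1},\dots,H_{i_{p-1}}$, regardless of any relations the former might satisfy), but the directness still has to be proved. The paper settles it at the end via the trace forms $\trace_{I_k}$: these vanish on $\gotq'$, so from a relation $\sum_i x_i\id^{(i)}+q=0$ with $q\in\gotq'$, evaluating $\trace_{I_k}$ for a $k$ with $i_k=i$ gives $x_i\,i_k=0$, hence $x_i=0$ for each $i\in\mathbf{I}$. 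Adding that short argument would make your proof complete.
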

\begin{proof}
On a, d'après la proposition précédente : 
$$C_{m+1}=\sum_{J \in \mathcal{J}(m+1)} \sum_{l \in J_{k_J}} e_{l,l} \Delta_{J \setminus \left\{ l \right\} } ^{\bullet}$$
D'après le corollaire \ref{comb} (2), l'application
\begin{align*}
\{(J',l') \, | \, J' \in \mathcal{J}(m+1), l' \in J_{k_J}\} &\rightarrow \{(J,l) \, | \, J \in \mathcal{J}(m), l \in I \setminus J \}\\
(J',l') &\mapsto (J' \setminus \{l'\},l')
\end{align*}
est une bijection. La somme se réécrit donc :
$$C_{m+1}=\sum_{J \in \mathcal{J}(m)} \left(\sum_{l \in I \setminus J} e_{l,l} \right) \Delta_{J}^{\bullet}$$
Par le lemme \ref{gchap}, les $\Delta_{J}^{\bullet}$, $J \in \mathcal{J}(m)$ sont dans $\Sym(\widehat{\g})$ et linéairement indépendants car $\Delta_{J}^{\bullet} \in \bigoplus_{\sigma \in \mathfrak{S}(J)} \C \prod_{j \in J} e_{j,\sigma(j)}$. D'après la propriété \ref{algcla}, pour tout $J \in \mathcal{J}(m)$, le terme $\sum_{l \in I \setminus J} e_{l,l}$ est donc dans $\Sym(\h_{\Lambda})$, et donc a fortiori dans $\gotq_{\Lambda}$.\par
Du fait que $\overline{\mathcal{J}(m)}:=\left\{ I \setminus J \, | \, J \in \mathcal{J}(m)\right\}$ est aussi l'ensemble des $\overline{J} \subset I$ tels que $\overline{\jmath}_k:=\Card (\overline{J}_k)=\max(i_k-i,0)$, le terme
$$\sum_{k=1}^s \sum_{l \in \overline{J}_{k}} e_{l,l}$$
appartient à $\gotq_{\Lambda}$ pour tout $\overline{J} \in \overline{\mathcal{J}(m)}$.\par
Pour tout $k \in \llbracket 1,s \rrbracket$, on définit $\trace_{I_k}$ comme l'application trace sur la sous-algèbre $\g_{I_k} \subset \gotq$, que l'on étend à une forme linéaire sur $\gotq$ (en posant $\trace_{I_k}(e_{v,w})=0$ si $e_{v,w} \notin \g_{I_k}$). Comme $\g_{I_k} \simeq \gl_{i_k}$, on a $\spl_{i_k} \simeq \g_{I_k}' \subset \gotq'$. Ainsi, pour $q \in \g_{I_k} \subset \gotq$, on a $q \in \gotq'$ si et seulement si $\trace_{I_k}(q)=0$. Modulo $\gotq'$, on a donc
$$\sum_{l \in \overline{J}_{k}} e_{l,l} \equiv \dfrac{|\overline{J}_k|}{|I_k|} \id_{\g_{I_{k}}}$$
où $\frac{|\overline{J}_k|}{|I_k|}=\frac{\max(i_{k}-i,0)}{i_{k}}$, et donc modulo $\gotq'$, on a
\begin{align*}
\sum_{k=1}^s \sum_{l \in \overline{J}_{k}} e_{l,l}&\equiv \sum_{k=1}^s \dfrac{\max(i_{k}-i,0)}{i_{k}} \id_{\g_{I_{k}}} \\
&\equiv \sum_{j \in \mathbf{I}} \sum_{\stackrel{k \in \llbracket 1,s \rrbracket}{i_k=j}} \dfrac{\max(j-i,0)}{j} \id_{\g_{I_{k}}} \\
&\equiv \sum_{j \in \mathbf{I}} \dfrac{\max(j-i,0)}{j} \sum_{\stackrel{k \in \llbracket 1,s \rrbracket}{i_k=j}} \id_{\g_{I_{k}}} \\
&\equiv \sum_{j \in \mathbf{I}} \dfrac{\max(j-i,0)}{j} \id^{(j)}
\end{align*}
En considérant tous les termes $\sum_{j \in \mathbf{I}} \frac{\max(j-i,0)}{j} \id^{(j)}$ pour $i \in \mathbf{I} \setminus \{\imax\}$, on obtient un système linéaire triangulaire inversible en les $\id^{(j)}$, pour $j \in \mathbf{I} \setminus \{\min \mathbf{I}\}$, ainsi les $\id^{(i)}$ pour $i \in \mathbf{I} \setminus \{\min \mathbf{I}\}$ sont dans $\gotq_{\Lambda}$. Comme $\sum_{i \in \mathbf{I}} \id^{(i)}=\id \in \gotz(\gotq) \subset \gotq_\Lambda$, le terme $\id^{(\min \mathbf{I})}$ est également dans $\gotq_{\Lambda}$.\par
Montrons enfin que les $\id^{(i)}$, $i \in \mathbf{I}$ forment une famille linéairement indépendante et que le sous-espace vectoriel qu'ils engendrent est en somme directe avec $\gotq'$. Soient $x_i \in \C$ pour tout $i \in \mathbf{I}$ et $q \in \gotq'$ tels que
$$\sum_{i \in \mathbf{I}} x_i \id^{(i)} + q=0$$
D'après ce qui précède, pour tout $q \in \gotq'$, on a $\trace_{I_k}(q)=0$ pour tout $k \in \llbracket 1,s \rrbracket$. Soit donc $i \in \mathbf{I}$ et $k \in \llbracket 1,s \rrbracket$ tel que $i_k=|I_k|=i$. On a donc
$$\trace_{I_k}\left(\sum_{i \in \mathbf{I}} x_i \id^{(i)} + q\right)=0$$
donc $x_{i_k} i_k = 0$, d'où $x_i=x_{i_k}=0$, et ceci pour tout $i \in \mathbf{I}$, ce qui conclut.
\end{proof}
\begin{exem}
On reprend l'exemple \ref{leexemple}. Dans cet exemple, on obtient trois vecteurs $\id^{(1)}$, $\id^{(2)}$ et $\id^{(4)}$ donnés par les diagrammes respectifs suivants :
\begin{center}
\begin{tikzpicture}[scale=0.3]
\foreach \k in {1,2,...,11}
	{\draw[color=gray!20]  (0,\k)--(12,\k);
	\draw[color=gray!20] (\k,0)--(\k,12);}
\draw (0,0)--(12,0);
\draw (0,0)--(0,12);
\draw (0,12)--(12,12);
\draw (12,0)--(12,12);
\draw[ultra thick] (0,8)--(4,8)--(4,7)--(5,7)--(5,3)--(9,3)--(9,1)--(11,1)--(11,0);
\draw[dashed] (4,12)--(4,8)--(5,8)--(5,7)--(9,7)--(9,3)--(11,3)--(11,1)--(12,1);
\draw (4.5,7.5) node{$1$};
\draw (11.5,0.5) node{$1$};
\end{tikzpicture}
\quad
\begin{tikzpicture}[scale=0.3]
\foreach \k in {1,2,...,11}
	{\draw[color=gray!20]  (0,\k)--(12,\k);
	\draw[color=gray!20] (\k,0)--(\k,12);}
\draw (0,0)--(12,0);
\draw (0,0)--(0,12);
\draw (0,12)--(12,12);
\draw (12,0)--(12,12);
\draw[ultra thick] (0,8)--(4,8)--(4,7)--(5,7)--(5,3)--(9,3)--(9,1)--(11,1)--(11,0);
\draw[dashed] (4,12)--(4,8)--(5,8)--(5,7)--(9,7)--(9,3)--(11,3)--(11,1)--(12,1);
\draw (9.5,2.5) node{$1$};
\draw (10.5,1.5) node{$1$};
\end{tikzpicture}
\quad
\begin{tikzpicture}[scale=0.3]
\foreach \k in {1,2,...,11}
	{\draw[color=gray!20]  (0,\k)--(12,\k);
	\draw[color=gray!20] (\k,0)--(\k,12);}
\draw (0,0)--(12,0);
\draw (0,0)--(0,12);
\draw (0,12)--(12,12);
\draw (12,0)--(12,12);
\draw[ultra thick] (0,8)--(4,8)--(4,7)--(5,7)--(5,3)--(9,3)--(9,1)--(11,1)--(11,0);
\draw[dashed] (4,12)--(4,8)--(5,8)--(5,7)--(9,7)--(9,3)--(11,3)--(11,1)--(12,1);
\draw (0.5,11.5) node{$1$};
\draw (1.5,10.5) node{$1$};
\draw (2.5,9.5) node{$1$};
\draw (3.5,8.5) node{$1$};
\draw (5.5,6.5) node{$1$};
\draw (6.5,5.5) node{$1$};
\draw (7.5,4.5) node{$1$};
\draw (8.5,3.5) node{$1$};
\end{tikzpicture}
\end{center}
\end{exem}
On a finalement
$$\dim \gotq_{\Lambda} \geq \dim \gotq' + \, p = n^2-(s-p)$$
Or on a (équation \eqref{nps}) $\ind \gotq_{\Lambda} \geq n+(s-p)$ d'où $\dim \gotq_{\Lambda} + \ind \gotq_{\Lambda} \geq n^2+n$. Comme (équation \eqref{ovdb}) $\dim \gotq_{\Lambda} + \ind \gotq_{\Lambda}=\dim \gotq + \ind \gotq=n^2+n$, les inégalités précédentes sont en fait des égalités, ce qui implique le théorème \ref{dimind}.

\section{Théorème de polynomialité de semi-centre et application}
\label{sec6}
On a une famille $\mathbf{F}$ de semi-invariants algébriquement indépendants de $\Sy(\gotq)$, qui semble être une bonne candidate pour engendrer $\Sy(\gotq)$. Dans cette section, on donne des théorèmes généraux pour montrer qu'une certaine famille de semi-invariants $\mathbf{f} \subset \Sy(\gotk)$ (où $\gotk$ est une algèbre de Lie quelconque) engendre $\Sy(\gotk)$. On appliquera ensuite ces théorèmes à notre famille $\mathbf{F}$. On appliquera également ces mêmes théorèmes aux autres cas que l'on traitera (notamment en type $C$).

\subsection{Théorème de polynomialité de semi-centre}
Soit $\gotk$ une algèbre de Lie. Dans cette sous-section, on considère $(f_m)_{1 \leq m \leq d}$ une famille d'invariants de $\Sym(\gotk)$ et une famille de semi-invariants de $\Sym(\gotk)$ non constants $\mathbf{f}:=\{f_{m,t} \, | \, 1 \leq m \leq d, 1 \leq t \leq r_{m}\}$ \index{$\mathbf{f}$, $\C[{\mathbf{f}}]$, $\Lambda({\mathbf{f}})$, $\mathbf{f}^\times$}avec $r_m \in \N^*$ pour tout $m$, telles que pour tout $m \in \llbracket 1,d \rrbracket$ et $t \in \llbracket 1,r_m \rrbracket$, il existe des entiers $\nu_{m,t} >0$ tels que
\begin{equation}
f_m \propto \prod_{t=1}^{r_{m}} (f_{m,t})^{\nu_{m,t}} \label{propto}
\end{equation}
On note $\C[{\mathbf{f}}]$ la sous-algèbre de $\Sy(\gotk)$ engendrée par les éléments de $\mathbf{f}$, et on souhaiterait déterminer à quelles conditions on a $\C[{\mathbf{f}}]=\Sy(\gotk)$. Soit $\Lambda({\mathbf{f}})$ le semi-groupe des poids de $\C[{\mathbf{f}}]$. On note aussi $\mathbf{f}^\times$ l'ensemble des éléments de $\mathbf{f}$ de poids non nul.
\begin{rema}
Quitte à retirer un $f_m$ de la liste, on peut d'abord supposer que \emph{les $f_m$ sont deux à deux non colinéaires}. En revanche, on ne peut pas faire de même pour les $f_{m,t}$. Toutefois, pour $f,g \in \mathbf{f}$, quitte à multiplier $f$ et $g$ (et leurs invariants associés) par des constantes, on peut supposer que \emph{$f \propto g \Rightarrow f=g$} (en respectant toujours les hypothèses ci-dessus). On supposera donc ces hypothèses supplémentaires vérifiées dans la suite de cette sous-section. \label{propegal}
\end{rema}
\begin{lem}
Le semi-groupe $\Lambda({\mathbf{f}})$ est un groupe.
\label{premier}
\end{lem} 
\begin{proof}
Pour tous $m,t$, on note $\lambda_{m,t}$ le poids de $f_{m,t}$. Par l'équation \eqref{propto}, on a $-\lambda_{m,t}=\sum_{u \neq t} \nu_{m,u} \lambda_{m,u}+(\nu_{m,t}-1) \lambda_{m,t}$.
\end{proof}
\begin{lem}
On suppose que les $f_m$ sont irréductibles dans $\Y(\gotk)$. Soit $\mathbf{M}_2:=\{m \in \llbracket 1, d \rrbracket \, | \, r_m \geq 2\}$. On a $\mathbf{f}^\times = \{f_{m,t} \, | \, m \in \mathbf{M}_2, 1 \leq t \leq r_m\}$.
\label{lemm}
\end{lem}
\begin{proof}
L'inclusion $\subset$ est claire : si $m \notin \mathbf{M}_2$, alors $r_m=1$ et comme $f_m$ est supposé irréductible dans $\Y(\gotk)$, on a $f_{m,1}=f_m$ qui est de poids nul. Pour l'autre inclusion, comme les éléments de $\mathbf{f}$ sont non constants, un $f_{m,t}$ avec $m \in \mathbf{M}_2$ est nécessairement un facteur non constant de $f_m$. Puisque $f_m$ est irréductible dans $\Y(\gotk)$, le semi-invariant $f_{m,t}$ n'est pas un invariant, donc est de poids non nul.
\end{proof}
Si $x$ est un semi-invariant  de $\Sym(\gotk)$, on voudra montrer (sous certaines hypothèses) que $x \in \C[\mathbf{f}]$. Pour ce faire, le lemme suivant est fondamental.
\begin{lem}
\label{lemcentral}
On suppose que $\GK (\C[{\mathbf{f}}])=\GK \Sy(\gotk)$. Soit $x$ un semi-invariant de $\Sym(\gotk)$. Alors il existe $a \in \N^*$ et $s_{z} \in \N$ pour tout $z \in \mathbf{f}^\times$ tels que
\[x^a \prod_{z \in \mathbf{f}^\times} z^{s_{z}} \in \Y(\gotk)\]
\end{lem}
\begin{proof}
Soit $x$ un semi-invariant de $\Sym(\gotk)$ que l'on peut supposer de poids non nul. Puisque $\GK (\C[{\mathbf{f}}])=\GK \Sy(\gotk)$, il existe un polynôme $P \in \C[{\mathbf{f}}][X]$ non nul tel que $P(x)=0$. Quitte à diviser par une certaine puissance de $X$, on peut supposer que $X$ ne divise pas $P$. Autrement dit, il existe un entier $c$ et des polynômes $P_i((f_{m,t})_{m,t}) \in \C[{\mathbf{f}}]$ avec $P_0((f_{m,t})_{m,t}) \neq 0$ tels que :
$$\sum_{k=0}^c P_k((f_{m,t})_{m,t})x^k=0$$
Il existe alors $a \geq 1$ tel que $P_a((f_{m,t})_{m,t}) \neq 0$. Comme dans la preuve du théorème \ref{alin}, on peut supposer qu'il existe un poids $\lambda$ tel que pour tout $k \in \llbracket 0,c \rrbracket$, pour tout monôme $\mathcal{S}_k$ de $P_k((f_{m,t})_{m,t})$, le semi-invariant $\mathcal{S}_k x^k$ est de poids $\lambda$. En particulier, chaque $P_k((f_{m,t})_{m,t})$ est un semi-invariant et $P_0((f_{m,t})_{m,t})$ est de poids $\lambda$, d'où $\lambda \in \Lambda({\mathbf{f}})$. Puisque $\Lambda({\mathbf{f}})$ est un groupe (lemme \ref{premier}), quitte à multiplier par un certain monôme en les $f \in \mathbf{f}$, on peut supposer que $\lambda=0$. Si $\prod_{f \in \mathbf{f}} {f}^{s_f}$ est (à une constante multiplicative non nulle près) un monôme quelconque de $P_a$, alors $x^a \prod_{f \in \mathbf{f}} {f}^{s_f}$ est un invariant. Ainsi $x^a \prod_{f \in  \mathbf{f}^\times} {f}^{s_{f}}$ est encore un semi-invariant (proposition \ref{dixmier}) et est de poids nul donc est un invariant.
\end{proof}
\begin{coro}
On reprend les hypothèses du lemme \ref{lemcentral}. Alors l'ensemble des poids $\Lambda(\gotk)$ de $\Sy(\gotk)$ est un groupe.\label{groupe}
\end{coro}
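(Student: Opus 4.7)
Le plan est d'exploiter directement le lemme \ref{lemcentral} pour ramener l'inversibilité dans $\Lambda(\gotk)$ à l'inversibilité, déjà établie, dans $\Lambda(\mathbf{f})$. Concrètement, soit $\lambda \in \Lambda(\gotk)$ et $x$ un semi-invariant de poids $\lambda$. Je souhaite montrer que $-\lambda \in \Lambda(\gotk)$, ce qui suffit puisque $\Lambda(\gotk)$ est déjà un semi-groupe (proposition \ref{algsemiinv}).

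Par le lemme \ref{lemcentral}, il existe $a \in \N^*$ et $(s_z)_{z \in \mathbf{f}^\times}$ dans $\N$ tels que $x^a \prod_{z \in \mathbf{f}^\times} z^{s_z}$ soit un invariant, donc de poids nul. En notant $\mu_z$ le poids de chaque $z \in \mathbf{f}^\times$, cela se traduit par l'égalité
$$a\lambda + \sum_{z \in \mathbf{f}^\times} s_z \mu_z = 0$$
dans $\gotk^*$. Autrement dit, $-a\lambda = \sum_{z \in \mathbf{f}^\times} s_z \mu_z$ appartient au semi-groupe $\Lambda(\mathbf{f})$. Or, le lemme \ref{premier} affirme précisément que $\Lambda(\mathbf{f})$ est un groupe, donc on a aussi $-a\lambda \in \Lambda(\mathbf{f}) \subset \Lambda(\gotk)$.

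Il reste à en déduire que $-\lambda$ lui-même est un poids de $\Sym(\gotk)$. Soit $y$ un semi-invariant non nul de $\C[\mathbf{f}]$ de poids $-a\lambda$ (qui existe puisque $-a\lambda \in \Lambda(\mathbf{f})$). Alors $x^{a-1} y$ est un produit d'éléments non nuls de l'algèbre intègre $\Sym(\gotk)$, donc non nul ; c'est un semi-invariant, et son poids est $(a-1)\lambda + (-a\lambda) = -\lambda$. Par conséquent, $-\lambda \in \Lambda(\gotk)$, ce qui conclut.

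Le seul point un peu subtil est la gestion du cas $a=1$ : dans ce cas $x^{a-1}=1$ et l'argument se réduit à $y$ lui-même, qui est déjà de poids $-\lambda$ et appartient à $\C[\mathbf{f}] \subset \Sy(\gotk)$. Aucune autre obstruction : tout le travail a en réalité été fait dans les lemmes \ref{premier} et \ref{lemcentral}, et le corollaire n'est qu'un assemblage formel.
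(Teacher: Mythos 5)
Votre preuve est correcte et suit exactement la même idée que celle du papier : le lemme~\ref{lemcentral} fournit $a \in \N^*$ et des $s_z$ tels que $x^a \prod_z z^{s_z} \in \Y(\gotk)$, et l'on en déduit que $y := x^{a-1}\prod_z z^{s_z}$ est un semi-invariant non nul de poids $-\lambda$. Notons seulement que l'invocation du lemme~\ref{premier} est superflue : vous avez déjà montré directement que $-a\lambda = \sum_z s_z \mu_z \in \Lambda(\mathbf{f})$ comme somme positive des $\mu_z$, sans avoir besoin du fait que $\Lambda(\mathbf{f})$ soit un groupe.
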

\begin{proof}
Soit $x$ un semi-invariant (non nul) de $\Sym(\gotk)$. Alors par le lemme \ref{lemcentral}, il existe $a \in \N^*$ et $s_{z} \in \N$ pour tout $z \in \mathbf{f}^\times$ tels que $x^a \prod_{z \in \mathbf{f}^\times} z^{s_{z}} \in \Y(\gotk)$. Ainsi, il existe un semi-invariant $y$ (non nul) de $\Sym(\gotk)$ tel que $xy \in \Y(\gotk)$. Ainsi, si $x$ est de poids $\lambda$, on a un semi-invariant $y$ de poids $-\lambda$. 
\end{proof}
On a alors le théorème suivant.
\begin{theo}
Soit $\gotk$ une algèbre de Lie. Soit $(f_m)_{1 \leq m \leq d}$ une famille d'invariants de $\Y(\gotk)$. Étant donné pour tout $m \in \llbracket 1,d \rrbracket$ une décomposition de $f_m$ dans $\Sym(\gotk)$ de la forme
\[
f_m = \prod_{t=1}^{r_{m}} (f_{m,t})^{\nu_{m,t}} \label{decompintro} \tag{$\clubsuit$}
\]
avec $\nu_{m,t} \in \N^*$, on suppose pour tout $m$ que l'on est dans au moins un des deux cas suivants :
\begin{itemize}
\item la décomposition \eqref{decompintro} est triviale, c'est-à-dire $r_m=1$ et $\nu_{m,1}=1$, autrement dit $f_m = f_{m,1}$,
\item la décomposition \eqref{decompintro} est la décomposition de $f_m$ en éléments irréductibles dans $\Sym(\gotk)$.
\end{itemize}
On note $\mathbf{f}$ l'ensemble des $f_{m,t}$. Sous les hypothèses suivantes :
\begin{enumerate}
\item[(a)] $\Y(\gotk)$ est une algèbre factorielle,
\item[(b)] $\GK \Sy(\gotk)=\GK \C[\mathbf{f}]$,
\item[(c)] les $f_m$ sont irréductibles dans $\Y(\gotk)$,
\item[(I)] il existe un morphisme de $\C$-algèbres $\vartheta : \Sym(\gotk) \rightarrow \Y(\gotk)$ tel que $\vartheta_{|\Y(\gotk)}$ est un isomorphisme,
\end{enumerate}
alors l'ensemble des poids de $\Sy(\gotk)$ est un groupe et $\Sy(\gotk)$ est engendrée par $\Y(\gotk)$ et les éléments de $\mathbf{f}$. En particulier, si
\begin{itemize}
\item[(d)] les $f_m$ engendrent $\Y(\gotk)$,
\end{itemize}\label{puissant}
alors $\Sy(\gotk)=\C[\mathbf{f}]$.
\end{theo}
\begin{rema}\mbox{}
\begin{itemize}
\item Dans les cas que l'on étudie par la suite, les hypothèses (a), (c) et (d) découleront de l'étude de Panyushev et Yakimova \cite{py13} et l'hypothèse (b) de l'étude précédente (notamment le théorème \ref{dimind}). Les hypothèses (I), (II) et (III) (où (II) et (III) apparaissent au théorème \ref{inter}) demandent une étude plus poussée.
\item L'hypothèse (I) implique que $\Y(\gotk)$ est une algèbre de type fini.
\end{itemize}
\end{rema}
L'hypothèse de factorialité de $\Y(\gotk)$ de ce théorème implique en fait des propriétés fortes sur l'ensemble des poids $\Lambda(\gotk)$, par exemple la proposition suivante.
\begin{propn}
Supposons que $\Y(\gotk)$ soit une algèbre factorielle. Soit $\lambda \in \Lambda(\gotk) \setminus \{0\}$ le poids d'un semi-invariant $x$ irréductible dans $\Sym(\gotk)$. Alors l'un des deux $\Y(\gotk)$-modules parmi $\Sym(\gotk)_\lambda$ et $\Sym(\gotk)_{-\lambda}$ est libre de rang $\leq 1$.
\label{restrictif}
\end{propn}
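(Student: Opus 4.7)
The plan is to identify $N := \Sym(\gotk)_{-\lambda}$ and $M := \Sym(\gotk)_{\lambda}$ with ideals of $A := \Y(\gotk)$ through multiplication by $x$, and to read freeness off the structure of these ideals in the UFD $A$. Setting $B := \Sym(\gotk)$, if $N = 0$ then $N$ is free of rank $0$, so I will assume $N \neq 0$. I would first observe that $xN$ is an $A$-submodule of $A$ (hence an ideal), and in fact equals the contraction $A \cap (x)$: the inclusion $xN \subset A\cap(x)$ is immediate, and conversely if $f \in A\cap(x)$ then $f = xg$ forces $g$ to have weight $-\lambda$, so $g \in N$ and $f \in xN$. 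Since $x$ is irreducible in the UFD $B$, the ideal $(x) \subset B$ is prime, hence $xN$ is a non-zero prime of $A$. I would then split according to its height.

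In the case $\mathrm{ht}_A(xN) = 1$, since $A$ is a UFD the prime $xN$ is principal: write $xN = pA$ with $p$ irreducible, and $p = xq$ with $q \in N$. For any $n \in N$, $xn \in xN = xqA$, so cancellation in the domain $B$ yields $n \in Aq$; hence $N = Aq$, and the $A$-linear map $a \mapsto aq$ gives an isomorphism $A \simeq N$, so $N$ is free of rank $1$.

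The heart of the argument is the remaining case $\mathrm{ht}_A(xN) \geq 2$, where the goal is to conclude instead that $M = Ax$. Here the key elementary fact, which I expect to be the most delicate step to articulate (though its verification is short), is that in any UFD a prime of height $\geq 2$ contains at least two distinct height-$1$ principal primes; otherwise every irreducible factor of a non-zero element of the prime would lie in the unique height-$1$ prime contained in it, collapsing the prime to height $1$. Picking two such principal primes $p_1 A, p_2 A \subset xN$ with $p_1, p_2$ non-associate, and writing $p_i = x q_i$ with $q_i \in N$, the identity
\[
p_1 \cdot (m q_2) \;=\; x q_1 m q_2 \;=\; x q_2 m q_1 \;=\; p_2 \cdot (m q_1)
\]
holds in $A$ for every $m \in M$. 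As $p_1, p_2$ are coprime irreducibles in the UFD $A$, this forces $p_1 \mid m q_1$ in $A$; then $m q_1 / p_1 = m/x$ lies in $B$, so $x \mid m$ in $B$. By proposition~\ref{dixmier} the quotient $m/x$ is a semi-invariant of weight $0$, hence lies in $A$, so $m \in A x$. Combined with $x \in M$ this yields $M = A x$, free of rank $1$ via $a \mapsto a x$.
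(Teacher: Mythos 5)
Your proof is correct, and it takes a genuinely different route from the paper's. The paper's proof picks a semi-invariant $y \in \Sym(\gotk)_{-\lambda}$ with no non-scalar divisor in $\Y(\gotk)$ (obtained by descent on degree, via proposition~\ref{dixmier}), observes that $xy$ is then irreducible in $\Y(\gotk)$, and exploits the identity $(xy)(x'y')=(xy')(x'y)$ together with this irreducibility to show that for every $x'\in\Sym(\gotk)_\lambda$ and $y'\in\Sym(\gotk)_{-\lambda}$ one has $x'\in\Y(\gotk)x$ or $y'\in\Y(\gotk)y$, from which the conclusion follows. You instead package the weight-$(-\lambda)$ component through the contracted prime ideal $x\,\Sym(\gotk)_{-\lambda}=\Y(\gotk)\cap x\Sym(\gotk)$ and split on its height in the UFD $\Y(\gotk)$: height $1$ gives principality of the contraction, hence $\Sym(\gotk)_{-\lambda}$ free of rank one, while height at least $2$ yields two non-associate irreducibles $p_1=xq_1$, $p_2=xq_2$ in the contraction, and the cross-multiplication identity $p_1(mq_2)=p_2(mq_1)$ then forces $\Sym(\gotk)_\lambda=\Y(\gotk)x$. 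Both arguments are UFD computations built around a cross-multiplication trick; the paper's version is shorter and avoids height theory altogether, while yours is more structural, attributing rank-one freeness to principality of the contracted prime and isolating exactly where the nontrivial case lies. One minor slip in your justification of the key lemma: you should say that \emph{some} irreducible factor of a nonzero element of the prime (the one forced into the ideal by primality) lies in the unique height-one prime assumed to exist, rather than \emph{every} irreducible factor; the argument itself is unaffected.
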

\begin{proof}
Si $\Sym(\gotk)_\lambda$ ou $\Sym(\gotk)_{-\lambda}$ est réduit à $0$, la proposition est satisfaite. Sinon, soient $x',y,y'$ des semi-invariants non nuls de poids respectifs $\lambda, -\lambda, -\lambda$. On peut supposer que si $s \in \Y(\gotk)$ divise $y$, alors $s \in \C$. On a
$$(xy)(x'y')=(xy')(x'y)$$
L'invariant $xy$ est irréductible dans $\Y(\gotk)$. En effet, si $s,t \in \Y(\gotk)$ sont tels que $xy=st$, alors comme $x$ est irréductible dans $\Sym(\gotk)$, $x$ divise $s$ ou $t$, et donc $y$ est divisible par $t$ ou par $s$. Par hypothèse, on a donc $t$ ou $s$ dans $\C$.\par
L'invariant $xy$ divise alors $(xy')(x'y)$ dans $\Y(\gotk)$, donc divise $xy'$ ou $x'y$. Si $xy$ divise $xy'$ dans $\Y(\gotk)$, alors il existe $s \in \Y(\gotk)$ tel que $y'=sy$ ; si $xy$ divise $x'y$ dans $\Y(\gotk)$, alors il existe $s \in \Y(\gotk)$ tel que $x'=sx$. Autrement dit, on a $\Sym(\gotk)_\lambda=\Y(\gotk)x$ ou $\Sym(\gotk)_{-\lambda}=\Y(\gotk)y$.
\end{proof}
\begin{rema}
On dit que $z,z' \in \Sym(\gotk)$ sont $\Y(\gotk)$-libres si pour tous $s,s' \in \Y(\gotk)$ tels que $sz+s'z'=0$, on a $s=s'=0$. Supposons qu'il existe un poids $\lambda$ non nul et des semi-invariants $x,x'$ de poids $\lambda$ qui sont $\Y(\gotk)$-libres, ainsi que des semi-invariants $y,y'$ de poids $-\lambda$ qui sont également $\Y(\gotk)$-libres. Par contraposée de la proposition \ref{restrictif}, si l'un des semi-invariants $x,x',y,y'$ est irréductible dans $\Sym(\gotk)$, alors $\Y(\gotk)$ n'est pas factorielle, et donc en particulier n'est pas polynomiale.
\end{rema}
\begin{proof}[Preuve du théorème \ref{puissant}]
On note $\mathbf{f}^\times$ l'ensemble des $f_{m,t}$ de poids non nul. Par le lemme \ref{lemm}, $\mathbf{f}^\times$ est l'ensemble des $f_{m,t}$ avec $m$ tel que $r_m \geq 2$.
\begin{lem}
\label{ultratech}
Sous les hypothèses du théorème \ref{puissant}, soit $x \in \Sy(\gotk)$, $f \in \Y(\gotk)$, $\mathbf{f}^\times = \mathbf{f}' \sqcup \mathbf{f}''$ une partition de $\mathbf{f}^\times$, $a \in \N^*$ et $s_{z} \in \N$ pour tout $z \in \mathbf{f}^\times$ tels que
\begin{equation}
x^a \prod_{z \in \mathbf{f}'} z^{s_{z}}=\left(\prod_{z \in \mathbf{f}''} z^{s_{z}}\right) f. \label{eqhypo}
\end{equation}
Alors il existe $f' \in \Y(\gotk)$ et $t_{z} \in \N$ pour tout $z \in \mathbf{f}^\times$ tels que
$$
x^a= \prod_{z \in \mathbf{f}^\times} z^{t_{z}} f'.
$$
\end{lem}
\begin{proof}
Pour toute sous-algèbre de type fini $\A$ de $\Sym(\gotk)$, on note $\Spec(\A)$ le spectre maximal de $\A$, que l'on peut identifier avec un sous-ensemble algébrique affine de $\gotk^*$ (voir \cite[\mbox{}11.6.3]{TYu}). Pour $f \in \Y(\gotk)$, on note $\V'(f):=\{q \in \Spec \Y(\gotk) \, | \, f(q)=0\}$ le lieu d'annulation de $f$. Pour $f \in \Sym(\gotk)$, on note également $\V''(f):=\{q \in \Spec \Sym(\gotk) \simeq \gotk^* \, | \, f(q)=0\}$. On note $i : \Y(\gotk) \hookrightarrow \Sym(\gotk)$ l'inclusion canonique. Le morphisme $\vartheta$ de l'hypothèse (I) est alors une section de $i$, dans le sens où $\vartheta \circ i$ est un isomorphisme. Soit alors $i^*=\Spec(i) : \Spec \Sym(\gotk) \simeq \gotk^* \rightarrow \Spec \Y(\gotk)$ qui est un morphisme dominant.
\begin{lem}
\label{hypersurf}
Si $\V$ est une hypersurface de $\Spec \Sym(\gotk)$, alors $\overline{i^*(\V)}$ est une sous-variété de codimension au plus un de $\Spec(\Y(\gotk))$.
\end{lem}
\begin{rema}
Plus généralement, si $\V$ est une sous-variété de $\Spec \Sym(\gotk) \simeq \gotk^*$ de codimension $l$, alors $\overline{i^*(\V)}$ est une sous-variété de $\Spec \Y(\gotk)$ de codimension au plus $l$.
\end{rema}
\begin{proof}
On peut supposer $\V$ irréductible. Dans cette preuve, on utilise plusieurs fois le résultat classique de géométrie algébrique donné par \cite[\mbox{}15.5]{TYu}. Soit $i^*_{\V} : \V \rightarrow \overline{i^*(\V)}$ définie par la restriction de $i^*$ à $\V$. Par définition, $i^*_{\V}$ est dominante. Soient $y$ et $y_{\V}$ deux points généraux respectivement de $\Spec \Y(\gotk)$ et de $\overline{i^*(\V)}$. On a $\dim \overline{i^*(\V)} = \dim \V - \dim {i^*_{\V}}^{-1}(y_{\V}) \geq \dim \V - \dim {i^*}^{-1}(y_{\V})$. Pour conclure, on va montrer que toutes les fibres de $i^*$ ont même dimension, de sorte que 
$$\dim  \overline{i^*(\V)} \geq \dim \V - \dim {i^*}^{-1}(y_{\V}) = \dim \gotk^* -1 - \dim {i^*}^{-1}(y)=\dim \Spec \Y(\gotk)-1$$
Par l'hypothèse (I), on a un morphisme $\vartheta : \Sym(\gotk) \rightarrow \Y(\gotk)$ dont le dual $\vartheta^*=\Spec(\vartheta) : \Spec \Y(\gotk) \rightarrow \gotk^*$ est une section de $i^*$, dans le sens où $i^* \circ \vartheta^*$ est un isomorphisme. Soit $\mathcal{K}=\ker \vartheta$ le noyau de $\vartheta$, alors $\Y(\gotk)$ est isomorphe à $\Sym(\gotk) / \mathcal{K}$ et donc $\vartheta^*$ correspond à l'immersion fermée $\vartheta^* : Z \rightarrow \Spec \Sym(\gotk)$, où $Z:=\V''(\mathcal{K}) \simeq \Spec \Y(\gotk)$. Puisque $\vartheta^*$ est une section de $i^*$, pour toute fibre $F$ de $i^*$, l'ensemble $F \cap Z$ est un point. En notant $\codim$ pour la codimension dans $\gotk^*$, on a donc (voir \cite[I, Prop. 5.9]{har77} pour l'inégalité)
\begin{align*}
\dim \gotk^* &= \codim F \cap Z \leq \codim F + \codim Z \\ &= \codim F + \dim \gotk^* - \dim \Spec \Y(\gotk)
\end{align*}
et donc $\dim F \leq \dim \gotk^* - \dim \Spec \Y(\gotk)$. D'un autre côté, toute fibre $F$ de $i^*$ est de dimension supérieure à la dimension d'une fibre générale de $i^*$, qui est de dimension $\dim \gotk^* - \dim \Spec \Y(\gotk)$ (le morphisme $i^*$ est dominant), ce qui donne l'égalité.
\end{proof}
On reprend la preuve du lemme \ref{ultratech}. Si tous les $s_z$ dans \eqref{eqhypo} pour $z \in \mathbf{f}'$ sont nuls, le lemme est bien vérifié. Supposons alors que $\sum_{z \in \mathbf{f}'} s_{z} \geq 1$.  Soit $g=f_{\mu,\tau} \in \mathbf{f}'$ tel que dans l'équation \eqref{eqhypo}, on a $s_{g} \geq 1$. Alors $g$ divise $\prod_{z \in \mathbf{f}''} z^{s_{z}} f$. Comme les éléments de $\mathbf{f}^\times$ sont supposés irréductibles, $g$ divise $f=i(f)$. Pour tout $q \in \gotk^*$, on a donc $g(q)=0 \Rightarrow i(f)(q)=0$. On obtient ainsi $i^*(\V''(g)) \subset \V'(f)$ d'où $\overline{i^*(\V''(g))} \subset \V'(f)$.\par
Or par le lemme \ref{hypersurf}, $\overline{i^*(\V''(g))}$ est de codimension au plus $1$ dans $\Spec(\Y(\gotk))$. De plus, si $\xi \in \V''(g)$, alors $f_\mu(i^*(\xi))=i(f_\mu)(\xi)=f_\mu(\xi) \propto \prod_{\tau'} f_{\mu,\tau'}^{\nu_{\mu,\tau'}}(\xi)=0$ (car $f_{\mu,\tau}(\xi)=0$). Autrement dit, $i^*(\V''(g)) \subset \V'(f_\mu)$. Or $\V'(f_\mu)$ est une hypersurface irréductible de $\Spec \Y(\gotk)$ (car $f_\mu$ est irréductible dans $\Y(\gotk)$) donc $\overline{i^*(\V''(g))}=\V'(f_\mu)$. Ainsi $\V'(f_\mu) \subset \V'(f)$, donc la racine $\sqrt{f \Y(\gotk)}$ de l'idéal $f \Y(\gotk)$ est inclus dans $\sqrt{f_\mu \Y(\gotk)}$. Puisque $\Y(\gotk)$ est supposée factorielle et $f_\mu$ est irréductible, l'idéal $f_\mu \Y(\gotk)$ est premier donc $\sqrt{f_\mu \Y(\gotk)}=f_\mu \Y(\gotk)$ (voir \cite[14.2.3 et 14.3.3]{TYu}). Finalement $f \in \sqrt{f \Y(\gotk)} \subset f_\mu \Y(\gotk)$, d'où $f_\mu$ divise $f$ dans $\Y(\gotk)$.\par
Il existe donc $\widetilde{f} \in \Y(\gotk)$ tel que
$$x^a \prod_{z \in \mathbf{f}'} z^{s_{z}}=\prod_{z \in \mathbf{f}''} z^{s_{z}} \, f_\mu \, \widetilde{f}$$
que l'on peut simplifier avec l'égalité \eqref{decompintro} en une égalité de la forme :
$$x^a \prod_{z \in \mathbf{h}'} z^{t_{z}}=\prod_{z \in \mathbf{h}''} z^{t_{z}} \widetilde{f}$$
avec une partition $\mathbf{f}^\times=\mathbf{h}' \sqcup \mathbf{h}''$ vérifiant
\begin{itemize}
\item $\mathbf{h}' \subset \mathbf{f}'$,
\item $t_{z} \leq s_{z}$ pour tout $z \in \mathbf{h}'$ et il existe au moins un $z \in \mathbf{h}'$ tel que l'inégalité est stricte,
\end{itemize}
ainsi $\sum_{z \in \mathbf{h}'} t_z < \sum_{z \in \mathbf{f}'} s_z $.
On conclut alors par récurrence sur $\sum_{z \in \mathbf{f}'} s_{z}$.
\end{proof}
On peut maintenant conclure pour la preuve du théorème \ref{puissant}. Soit $x$ un semi-invariant de $\Sym(\gotk)$, que l'on suppose irréductible dans $\Sym(\gotk)$. Par le lemme \ref{lemcentral}, il existe $a \in \N^*$, $f \in \Y(\gotk)$ (non nul) et $s_{g} \in \N$ pour tout $g \in \mathbf{f}^\times$ tels que
\begin{equation}
x^a \prod_{g \in \mathbf{f}^\times} g^{s_{g}}=f. \label{pause}
\end{equation}
Par le lemme \ref{ultratech}, il existe donc $f' \in \Y(\gotk)$ et $t_z \in \N$ pour tout $z \in \mathbf{f}^\times$ tels que
$$
x^a= \prod_{z \in \mathbf{f}^\times} z^{t_z} f'.
$$
Si pour tout $z \in \mathbf{f}^\times$, on a $t_z=0$, alors $x^a$ est un semi-invariant de poids nul donc $x$ est lui-même un semi-invariant de poids nul donc un invariant. Sinon il existe un $t_{z}$, $z \in \mathbf{f}^\times$, non nul. Alors $z$ divise $x^a$ d'où $z  \propto x$ (puisque $z$ est supposé irréductible dans $\Sym(\gotk)$).\par
Par la proposition \ref{dixmier}, tout semi-invariant est produit de semi-invariants irréductibles dans $\Sym(\gotk)$, et tout élément de $\Sy(\gotk)$ est somme de semi-invariants, ainsi $\Sy(\gotk)$ est bien engendrée par $\Y(\gotk)$ et les éléments de $\mathbf{f}$.
\end{proof}
Dans le cas général, lorsque l'on a des décompositions $f_m \propto \prod_{t=1}^{r_{m}} (f_{m,t})^{\nu_{m,t}}$, appliquer le théorème \ref{puissant} nécessite de montrer l'irréductibilité de certains $f_{m,t}$. Or ce dernier point est difficile à démontrer a priori. Ainsi, dans les cas qui nous intéresseront ici, on utilisera plutôt le théorème suivant.
\begin{theo}
On reprend les hypothèses et notations du début de cette sous-section. 
On suppose que
\begin{itemize}
\item[(a)] $\Y(\gotk)$ est factorielle,
\item[(b)] $\GK \C[{\mathbf{f}}]=\GK \Sy(\gotk)$,
\item[(c)] les $f_m$ sont irréductibles dans $\Y(\gotk)$,
\item[(I)] il existe un morphisme de $\C$-algèbres $\vartheta : \Sym(\gotk) \rightarrow \Y(\gotk)$ tel que $\vartheta_{|\Y(\gotk)}$ est un isomorphisme.
\item[(II)] tout $f \in \mathbf{f}^\times$ est indivisible dans le semi-groupe multiplicatif $\Sy(\gotk)$ (c'est-à-dire que pour tout $a \geq 2$, $f$ n'est pas une puissance $a$\up{ème} dans $\Sy(\gotk)$),
\item[(III)] pour tout $f \in \mathbf{f}^\times$, il existe une $\C$-algèbre factorielle $\A_f$ et un morphisme de $\C$-algèbres $\vartheta_f : \Sy(\gotk) \rightarrow \A_f$ tel que $\vartheta_f(f)$ n'est pas inversible dans $\A_f$ et est premier avec $\vartheta_f(g)$ pour tout $g \in \mathbf{f}^\times \setminus \{f\}$.
\end{itemize}
Alors les $f_{m,t}$ dans $\mathbf{f}^\times$ sont irréductibles dans $\Sym(\gotk)$. En particulier, d'après le théorème \ref{puissant}, si de plus
\begin{itemize}
\item[(d)] les $f_m$ engendrent $\Y(\gotk)$,
\end{itemize}
alors $\Sy(\gotk)=\C[\mathbf{f}]$.
\label{inter}
\label{irreductibilite}
\end{theo}
\begin{proof}
Soit $\Lambda:=\Lambda(\gotk)$ le semi-groupe des poids de $\Sy(\gotk)$. Soit $f=f_{m,t} \in \mathbf{f}^\times$ et $x$ un facteur irréductible de $f$. Appliquons le lemme \ref{lemcentral} à $x$. Il existe donc $a \in \N^*$ et $s_{z} \in \N$ pour tout $z \in \mathbf{f}^\times$ tels que
$$x^a \prod_{g \in \mathbf{f}^\times} g^{s_{g}} \in \Y(\gotk)$$
Soit $\vartheta$ un morphisme vérifiant l'hypothèse (I). Tout $g \in \mathbf{f}^\times$ s'écrit $g=f_{\mu,\tau}$ pour $\mu,\tau$ avec $\mu \in \mathbf{M}_2$ (par le lemme \ref{lemm}). Alors par l'équation \eqref{propto}, $g=f_{\mu,\tau}$ divise $f_\mu$ donc $\vartheta(g)$ divise $\vartheta(f_\mu)$. Comme $f_{\mu}$ est irréductible dans $\Y(\gotk)$ et $\vartheta_{|\Y(\gotk)}$ est un isomorphisme, l'élément $\vartheta(f_{\mu})$ est irréductible dans $\Y(\gotk)$, donc $\vartheta(g)$ est constant ou $\vartheta(g) \propto \vartheta(f_\mu)$. Le même argument s'applique à $x$ ($x$ divise $f_{m,t}$ donc $f_{m}$). Ainsi il existe $s_{\mu} \in \N$ pour tout $\mu \in \mathbf{M}_2$ tels que
$$\vartheta \left(x^a \prod_{g \in \mathbf{f}^\times} g^{s_{g}}\right)=\vartheta(x)^a \prod_{g \in \mathbf{f}^\times} \vartheta(g)^{s_{g}} \propto \prod_{\mu \in \mathbf{M}_2} \vartheta(f_\mu)^{s_{\mu}}=\vartheta \left(\prod_{\mu \in \mathbf{M}_2} f_\mu^{s_{\mu}} \right)$$
Comme $\vartheta_{|\Y(\gotk)}$ est un isomorphisme, avec les équations \eqref{propto}, on obtient une équation de la forme :
$$x^a \prod_{g \in \mathbf{f}^\times} g^{s_{g}} \propto \prod_{g \in \mathbf{f}^\times} g^{s'_{g}}$$
avec $s'_g \in \N$ pour tout $g \in \mathbf{f}^\times$. En simplifiant cette équation, il existe alors une partition $\mathbf{f}^\times=\mathbf{f}' \sqcup \mathbf{f}''$ et des entiers $u_{g} \geq 0$ pour tout $g \in \mathbf{f}^\times$ tels que
\begin{equation}
x^a \prod_{g \in \mathbf{f}'} g^{u_{g}} \propto \prod_{g \in \mathbf{f}''} g^{u_{g}}  \label{disj}
\end{equation}
Par l'hypothèse (III), on a pour tout $h \in \mathbf{f}^\times$, 
$$
\vartheta_h(x)^a \prod_{g \in \mathbf{f}'} \vartheta_h(g)^{u_{g}} \propto \prod_{g \in \mathbf{f}''} \vartheta_h(g)^{u_{g}}
$$
Supposons que $f \in \mathbf{f}'$, alors
\begin{itemize}
\item pour tout $h \in \mathbf{f}'$, on a $\vartheta_h(h)^{u_{h}}$ qui divise $\prod_{g \in \mathbf{f}''} \vartheta_h(g)^{u_{g}}$. Puisque $\vartheta_h(h)$ est non inversible et premier avec les $\vartheta_h(g)$, $g \in \mathbf{f}^\times \setminus \{h\}$, cela implique que $u_{h}=0$,
\item de même, pour tout $h \in \mathbf{f}''$, on a $\vartheta_h(h)^{u_{h}}$ qui divise $\vartheta_h(x)^a \prod_{g \in \mathbf{f}'} \vartheta_h(g)^{u_{g}}$.
Or $\vartheta_h(h)$ est non inversible et
\begin{itemize}
\item $\vartheta_h(h)$ est premier avec les $\vartheta_h(g)$, $g \in \mathbf{f}'$ (car $\mathbf{f}' \subset \mathbf{f}^\times \setminus \{h\})$,
\item $\vartheta_h(h)$ est premier avec $\vartheta_h(f)$ (puisque $h \neq f$), et donc $\vartheta_h(h)$ est premier avec $\vartheta_h(x)$ (puisque $\vartheta_h(x)$ divise $\vartheta_h(f)$).
\end{itemize}
Ainsi $\vartheta_h(h)$ est premier avec $\vartheta_h(x)^a \prod_{g \in \mathbf{f}'} \vartheta_h(g)^{u_{g}}$ donc $u_h=0$.
\end{itemize}
Finalement, l'équation \eqref{disj} devient
$$x^a \propto 1$$
d'où $x \in \C^\times$, ce qui est absurde car on a supposé que $x$ était irréductible (donc non constant).\par
On a donc $f \in \mathbf{f}''$. Alors de la même manière que précédemment, on montre que pour tout $h \in \mathbf{f}^\times \setminus \{f\}$, on a $u_{h}=0$. On ne peut cependant pas conclure pour $f$. L'équation \eqref{disj} devient alors
$$x^a \propto f^{u_{f}}$$
De même que précédemment, si $u_{f}=0$, on aboutit à une absurdité. On a donc $u_{f} > 0$, d'où $f$ divise $x^a$. Puisque $x$ est supposé irréductible, on a alors $f \propto x^{b}$ pour un certain $b \in \N^*$. Par l'hypothèse (II), on a alors $b=1$, donc $x \propto f$.\par
On vient donc de montrer que tout facteur irréductible $x$ de $f$ était associé à $f$. Le semi-invariant $f$ est donc irréductible.
\end{proof}
Ainsi, lorsqu'on voudra montrer que $\Sy(\gotk)$ est engendrée par certains $f_{m,t}$, on appliquera le théorème \ref{inter}.
\begin{rema}
Dans les cas qui suivront, on appliquera le théorème \ref{inter} dans le cadre suivant :
\label{remaimpo}
\begin{itemize}
\item L'isomorphisme canonique de $\C$-espace vectoriels $f \in \gotk \mapsto [q \in \gotk^* \mapsto q(f)] \in (\gotk^{*})^*$ se prolonge en un unique isomorphisme d'algèbres entre $\Sym(\gotk)$ et l'algèbre $\C[\gotk^*]$ des fonctions polynomiales sur $\gotk^*$. On identifiera dans la suite un élément de $\Sym(\gotk)$ avec son image dans $\C[\gotk^*]$. En particulier, pour tous $x_1, \ldots, x_k \in \gotk$ et tout $q \in \gotk^*$, on a $(x_1 \ldots x_k)(q)=q(x_1) \ldots q(x_k)$.
\item Si $\A$ est une $\C$-algèbre, on pourra voir dans la suite un élément $y \in \Sym(\gotk) \simeq \C[\gotk^*]$ comme un élément de $\A[\A \otimes_{\C} \gotk^*]$ c'est-à-dire une fonction polynomiale sur le $\A$-module $\gotk^*_{\A}:= \A \otimes_{\C} \gotk^*$\index{$\gotk^*_{\A}$} en identifiant $y$ et $\id \otimes y$.
\end{itemize}
\end{rema}
\begin{propn}
On reprend les hypothèses et notations du début de cette sous-section.  Considérons les hypothèses suivantes :
\label{632}
\begin{enumerate}
\item[(I')] il existe $g_1, \ldots, g_{d'}$ qui engendrent librement $\Y(\gotk)$ et $q \in \gotk^*_{\C[X_1, \ldots, X_{d'}]}$ tel que pour tout $m \in \llbracket 1,d' \rrbracket$, on a $g_m(q) \propto X_m$,
\item[(III')] pour tout $m \in \llbracket 1,d \rrbracket$ tel que $r_m \geq 2$ et $t \in \llbracket 1, r_m \rrbracket$, il existe $q_{m,t} \in \gotk^*_{\C[X]}$ tel que $\deg_X f_{m,t}(q_{m,t}) \geq 1$ et $f_{\mu,\tau}(q_{m,t}) \in \C^\times$ pour $(\mu,\tau) \neq (m,t)$.
\end{enumerate}
Alors les hypothèses (I') et (III') impliquent respectivement les hypothèses (I) et (III) du théorème \ref{inter}.
\end{propn}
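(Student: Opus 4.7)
The plan is to construct the required morphisms $\vartheta$ and $\vartheta_f$ directly by \emph{evaluation} at the elements $q$ and $q_{m,t}$ provided by (I') and (III'), exploiting the identification $\Sym(\gotk) \simeq \C[\gotk^*]$ recalled in remark \ref{remaimpo}. Since both implications are essentially formal, I do not expect any genuine obstacle; the only care required is to verify that the evaluated morphisms land in the right target algebras and satisfy the algebraic conditions demanded by (I) and (III).

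For (I') $\Rightarrow$ (I): starting from the element $q \in \gotk^*_{\C[X_1,\ldots,X_{d'}]}$ given by (I'), I would define the $\C$-algebra morphism
\[
\vartheta : \Sym(\gotk) \longrightarrow \C[X_1,\ldots,X_{d'}], \qquad f \longmapsto f(q),
\]
which is well defined by remark \ref{remaimpo}. Since the $g_1,\ldots,g_{d'}$ freely generate $\Y(\gotk)$, there is a canonical isomorphism of $\C$-algebras $\Y(\gotk) \xrightarrow{\sim} \C[X_1,\ldots,X_{d'}]$ sending $g_m$ to $X_m$, and through this identification I view $\vartheta$ as landing in $\Y(\gotk)$. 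The hypothesis $g_m(q) \propto X_m$ then says that the composite $\vartheta_{|\Y(\gotk)}$ sends $g_m$ to a nonzero scalar multiple of $g_m$, so it is a diagonal scaling on the free generators, hence an algebra automorphism of $\Y(\gotk)$. This yields exactly the morphism demanded by (I).

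For (III') $\Rightarrow$ (III): given $f = f_{m,t} \in \mathbf{f}^\times$, take $\A_f := \C[X]$ (which is a UFD) and let
\[
\vartheta_f : \Sy(\gotk) \longrightarrow \C[X], \qquad g \longmapsto g(q_{m,t}),
\]
where $q_{m,t} \in \gotk^*_{\C[X]}$ is the element provided by (III'); again this is a $\C$-algebra morphism via the identification of remark \ref{remaimpo}, restricted from $\Sym(\gotk)$ to $\Sy(\gotk)$. The condition $\deg_X f_{m,t}(q_{m,t}) \geq 1$ says that $\vartheta_f(f)$ is a nonconstant polynomial in $X$, hence not invertible in $\C[X]^\times = \C^\times$. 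The condition $f_{\mu,\tau}(q_{m,t}) \in \C^\times$ for $(\mu,\tau) \neq (m,t)$ says that $\vartheta_f(g)$ is a nonzero constant for every $g \in \mathbf{f}^\times \setminus \{f\}$, and nonzero constants are coprime in $\C[X]$ to every nonzero element, in particular to $\vartheta_f(f)$. Thus (III) holds with $\A_f = \C[X]$ and $\vartheta_f$ as above, completing the proof plan.
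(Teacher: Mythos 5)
Your proof is correct and takes essentially the same approach as the paper: in both cases $\vartheta$ and $\vartheta_f$ are constructed by evaluation at $q$ and $q_{m,t}$ respectively, with the implication (I')\,$\Rightarrow$\,(I) obtained by composing the evaluation map $\Sym(\gotk) \to \C[X_1,\ldots,X_{d'}]$ with the identification $\C[X_1,\ldots,X_{d'}] \simeq \Y(\gotk)$ coming from the free generators $g_m$. You merely spell out two small verifications left implicit in the paper, namely that $\vartheta_{|\Y(\gotk)}$ is a diagonal scaling on the free generators and hence an automorphism, and that nonzero constants in $\C[X]$ are units and therefore coprime to every nonzero polynomial.
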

Les $g_1, \ldots, g_{d'}$ de l'hypothèse (I') seront souvent les $f_1, \ldots, f_d$ introduits au début de cette sous-section, mais pas toujours.
\begin{proof} \mbox{}
\begin{enumerate}
\item[(I')] Sous l'hypothèse (I'), comme $\Y(\gotk)$ est librement engendrée par les $g_m$, $1 \leq m \leq d'$, l'application
\begin{align*}
\Sym(\gotk) &\longrightarrow \C[X_1, \ldots, X_{d'}] \\
f &\longmapsto f(q)
\end{align*}
restreinte à $\Y(\gotk)$ est un isomorphisme. Il suffit alors de composer cette application avec un isomorphisme $\C[X_1, \ldots, X_{d'}]  \stackrel{\simeq}{\rightarrow}\Y(\gotk)$ pour obtenir un morphisme $\vartheta$ voulu.
\item[(III')] Sous l'hypothèse (III'), l'application 
\begin{align*}
\vartheta_{f_{m,t}} : \Sy(\gotk) &\longrightarrow \C[X] \\
f &\longmapsto f(q_{m,t})
\end{align*}
vérifie l'hypothèse (III).
\qedhere
\end{enumerate}
\end{proof}
Les hypothèses (I') et (III') ont des interprétations plus géométriques. Cependant, on montrera ces propriétés telles quelles dans la suite.
\begin{rema}\mbox{}
\begin{itemize}
\item Plaçons-nous dans l'hypothèse (I'). Alors $q \in \gotk^*_{\C[X_1, \ldots, X_{d'}]}$ implique par évaluation une fonction polynomiale $q : (x_1, \ldots, x_{d'}) \in \C^{d'} \mapsto q(x_1, \ldots, x_{d'}) \in \gotk^*$, notons $\mathcal{Q}$ son image et $\overline{\mathcal{Q}}$ sa fermeture (pour la topologie de Zariski). Soit $q^* : f \in \C[\overline{\mathcal{Q}}] \rightarrow f \circ q \in \C[\C^{d'}]$ l'application duale. Supposons que $q^*$ est un isomorphisme. Cette hypothèse est vérifiée par exemple si $q$ est une fonction polynomiale de degré $1$ injective, de sorte que $\overline{\mathcal{Q}}=\mathcal{Q}$ est un espace affine de dimension $d'$. Cette dernière hypothèse sera notamment vérifiée pour les deux $q$ que l'on exhibe aux sous-sections \ref{sec343} et \ref{sec434}. Soit l'application de restriction 
$$\res_{\overline{\mathcal{Q}}} : f \in \Y(\gotk) \longmapsto f_{|\overline{\mathcal{Q}}} \in \C[\overline{\mathcal{Q}}]$$
Par l'hypothèse (I'), l'application $q^* \circ \res_{\overline{\mathcal{Q}}}$ est un isomorphisme, et donc $\res_{\overline{\mathcal{Q}}}$ est un isomorphisme. On dit que le morphisme $\res_{\overline{\mathcal{Q}}}$ est une "tranche" pour $\Y(\gotk)$. Si $\overline{\mathcal{Q}}$ est un espace affine, on parle de section de Kostant-Weierstrass. 
\item Plaçons-nous dans l'hypothèse (III'). Fixons $m,t$. Soit $x$ une racine de $f_{m,t}(q_{m,t})$. Alors en évaluant $q_{m,t}$ en $x$, on obtient un élément $q \in \gotk^*$ tel que $f_{m,t}(q)=0$ et $f_{m',t'}(q) \neq 0$  pour $(m',t') \neq (m,t)$. Autrement dit, si pour toute $f \in \Sym(\gotk) \simeq \C[\gotk^*]$, on note $\mathcal{V}(f)=\{q' \in \gotk^* \, | \, f(q')=0\}$, alors $\mathcal{V}(f_{m,t})$ n'est pas inclus dans l'union des $\mathcal{V}(f_{m',t'})$ pour $(m',t') \neq (m,t)$.
\end{itemize}
\end{rema}
\subsection{Cheminement associé à une forme linéaire sur un sous-module de $\M_n(\A)$}
Soit $\A$ une $\C$-algèbre polynomiale. On introduit une combinatoire pour étudier les $f(q)$ avec $f \in \Sym(\gotq)$ et $q \in \gotq^*_{\A}$. À la manière des graphes de probabilités et des matrices de transitions, dans cette sous-section, on associe un graphe orienté pondéré à toute forme $\A$-linéaire $q : M \rightarrow \A$ où $M$ est un sous-$\A$-module de $\M_n(\A)$.\par
\label{graphe}
\subsubsection{Cheminements : définitions, premières propriétés}
\begin{defi}
On appelle $\A$-\textbf{cheminement} (ou cheminement) un graphe orienté et pondéré par des éléments de $\A$, de sommets $I$, tel que pour tous $x,y \in I$, il existe une et une seule arête de $x$ vers $y$. On dira que $x$ est la base de l'arête et $y$ le but de l'arête.
\end{defi}
\begin{defi}
Soit $\mathcal{G}$ un cheminement.
\begin{itemize}
\item On appelle \textbf{support} de $\mathcal{G}$ et on note $\supp \mathcal{G}$\index{$\supp \mathcal{G}$} l'ensemble des $x \in I$ qui sont la base ou le but d'une arête de $\mathcal{G}$ de poids non nul.
\item Deux cheminements $\mathcal{G}$ et $\mathcal{G}'$ sont dits \textbf{compatibles} si leurs supports sont disjoints, et \textbf{incompatibles} sinon.
\item On appelle \textbf{graphe non trivial induit par $\mathcal{G}$} que l'on note $\widehat{\mathcal{G}}$\index{$\widehat{\mathcal{G}}$} le graphe orienté pondéré d'ensemble de sommets $\supp \mathcal{G}$, et d'ensemble d'arêtes toutes les arêtes de poids non nul de $\mathcal{G}$, avec les mêmes poids.
\end{itemize}
\end{defi}
Autrement dit, le graphe non trivial induit par $\mathcal{G}$ est le graphe dans lequel on ne garde que les arêtes non nulles et les sommets pertinents.
\begin{defi}
Soit $\mathcal{G}$ un cheminement. On appelle une "représentation graphique de $\mathcal{G}$" une représentation graphique de son graphe non trivial induit. Pour $x,y \in I$, on introduit de plus les notations graphiques suivantes :
\begin{itemize}
\item si l'arête de $x$ vers $y$ est de poids $a$, on représentera cette arête de la façon suivante : $x \stackrel{a}{\rightarrow} y$,
\item pour tout $a \in \A \setminus \{0\}$, on note $x \stackrel{(a)}{\rightarrow} y$ si l'on a $x \stackrel{ka}{\rightarrow} y$ avec $k \in \C^\times$,
\item on note $x \rightarrow y$ si l'on a $x \stackrel{(1)}{\rightarrow} y$,
\item on note $x \dashrightarrow y$ si l'on a $x \rightarrow x_1 \rightarrow x_2 \rightarrow \ldots \rightarrow x_l \rightarrow y$ avec $l \in \N$ quelconque,
\item on note $x \stackrel{*}{\rightarrow} y$ si l'on a $x \stackrel{a}{\rightarrow} y$ avec $a \in \A \setminus \{0\}$.
\end{itemize}
\label{repchem}
\end{defi}
\begin{defi}
Soient $\mathcal{G}$ et $\mathcal{G}'$ deux cheminements.
\begin{itemize}
\item On dit que $\mathcal{G}$ et $\mathcal{G}'$ sont \textbf{équivalents} si pour tous sommets $x,y \in I$, l'arête de $x$ vers $y$ dans $\mathcal{G}$ est de poids nul si et seulement si l'arête de $x$ vers $y$ dans $\mathcal{G}'$ est de poids nul.
\item Pour tous $x,y \in I$, soit $a_{x,y}$ (respectivement $a'_{x,y}$) le poids de l'arête de $x$ vers $y$ dans $\mathcal{G}$ (respectivement $\mathcal{G}'$). On appelle \textbf{somme de $\mathcal{G}$ et $\mathcal{G}'$}, et on note $\mathcal{G} + \mathcal{G}'$\index{$\mathcal{G} + \mathcal{G}'$}, le cheminement tel que pour tous $x,y \in I$, l'arête de $x$ vers $y$ est de poids $a_{x,y}+a'_{x,y}$.
\item Le cheminement $\mathcal{G}'$ est un \textbf{sous-cheminement} de $\mathcal{G}$ si $\widehat{\mathcal{G}'}$ est un sous-graphe de $\widehat{\mathcal{G}}$.
\item Le cheminement $\mathcal{G}$ est un \textbf{circuit} si $\widehat{\mathcal{G}}$ est un circuit (c'est-à-dire un graphe cyclique orienté, de poids non nuls quelconques),
\item Le cheminement $\mathcal{G}$ est un \textbf{graphe circuits} si $\widehat{\mathcal{G}}$ est une union de circuits à supports disjoints (d'arêtes de poids non nuls quelconques) ou, de manière équivalente, si $\mathcal{G}$ est une somme de circuits compatibles.
\item Un \textbf{sous-graphe circuits} (respectivement un \textbf{sous-circuit}) de $\mathcal{G}$ est un sous-cheminement de $\mathcal{G}$ qui est également un graphe circuits (respectivement un circuit).
\end{itemize}
\label{circuits}
\end{defi}
\begin{prop}
Pour tout $J \subset \llbracket 1,n \rrbracket$, on a une bijection entre $\mathfrak{S}(J)$ et l'ensemble des graphes circuits de support $J$ à équivalence près.\label{davant}
\end{prop}
\begin{proof}
La bijection consiste à associer à une permutation $\sigma \in \mathfrak{S}(J)$ le graphe $\mathcal{G}^\sigma$ dont les arêtes de poids non nul sont les $l \stackrel{1}{\rightarrow} \sigma(l)$ pour tout $l \in J$.
\end{proof}

\subsubsection{Cheminement associé à $q$}
Soit $M$ un sous-$\A$-module de l'ensemble $\M_n(\A)$ des matrices carrées $n \times n$ à coefficients dans $\A$. La famille $(e_{v,w})_{1 \leq v,w \leq n}$ forme une base canonique du $\A$-module $\M_n(\A)$ et $(e_{v,w}^*)_{1 \leq v,w \leq n}$ est sa base duale. On note $\mor(M,\A)$ l'ensemble des applications $\A$-linéaires de $M$ dans $\A$. Soit $\pr$ une application $\A$-linéaire de $\M_n(\A)$ dans $M$.
\begin{defi}
Pour tout $q \in \mor(M,\A)$, on définit $\mathcal{G}(q)$\index{$\mathcal{G}(q)$} un cheminement associé à $q$ (ainsi que $M$ et $\pr$) appelé \textbf{graphe de $q$}, tel que l'arête de $x$ vers $y$ a pour poids $q(\pr(e_{x,y}))$.\par
Si $M=\M_n(\A)$ et $\pr=\id$, on dira que $\mathcal{G}(q)$ est le \textbf{graphe de $q$ en type $\gl_n$}. Si $M=\syp_n(\A)$ et $\pr=\pr^C$ (que l'on définit à la section \ref{defC}), on dira que $\mathcal{G}(q)$ est le \textbf{graphe de $q$ en type $C$}.
\end{defi}
\begin{exem}
En type $\gl_n$, pour $q=ae_{x,y}^*$, $a \in \A \setminus \{0\}$, le cheminement $\mathcal{G}(q)$ a une seule arête de poids non nul : $x \stackrel{a}{\rightarrow} y$.
\end{exem}
\begin{prop}
Si $q, q' \in \mor(M,\A)$, alors $\mathcal{G}(q+q')=\mathcal{G}(q)+\mathcal{G}(q')$.
\end{prop}
\begin{defiprop}
Pour tout cheminement $\mathcal{H}$, on définit dans l'algèbre symétrique $\Sym(M)$ le monôme $\mathcal{S}_{\mathcal{H}}=\prod_{x \stackrel{*}{\rightarrow} y \, \in \mathcal{H}} \pr(e_{x,y})$\index{$\mathcal{S}_{\mathcal{H}}$}. On appelle $\mathcal{S}_{\mathcal{H}}$ le \textbf{${\mathcal{H}}$-monôme} associé à $M$ et $\pr$. Un \textbf{${\mathcal{H}}$-monôme en type $\gl_n$} est un ${\mathcal{H}}$-monôme associé à $M=\M_n(\A)$ et $\pr = \id$ ; un \textbf{${\mathcal{H}}$-monôme en type $C$} est un ${\mathcal{H}}$-monôme associé à $M=\syp_n(\A)$ et $\pr=\pr^C$ (voir section \ref{defC}).\par
Pour tout sous-cheminement $\mathcal{H}$ de $\mathcal{G}(q)$, on a 
$$\mathcal{S}_{\mathcal{H}}(q) = \prod_{x \stackrel{*}{\rightarrow} y \, \in \mathcal{H}}  q(\pr(e_{x,y})) \neq 0$$
où $ q(\pr(e_{x,y})) \in \A \setminus \{0\}$ est le poids de l'arête de $x$ vers $y$ dans $\mathcal{G}(q)$.\par \label{neq}
\end{defiprop}
\begin{rema}
Si $\mathcal{H}$ est un sous-cheminement de $\mathcal{G}(q)$, la notion de \emph{type de $\mathcal{H}$-monôme} est indépendante du \emph{type de $\mathcal{G}(q)$}. Plus précisément, si $\mathcal{G}(q)$ est un graphe en type $C$, on pourra définir un $\mathcal{H}$-monôme en type $\gl_n$, et vice versa.
\end{rema}
Pour $\gl_n$, on a $$F_m=\sum_{\stackrel{J \subset I}{|J|=m}} \sum_{\sigma \in \mathfrak{S}(J)} \varepsilon(\sigma) \prod_{l \in J} e_{l,\sigma(l)}.$$
Cette expression nous incite alors à étudier les termes du type $\prod_{l \in J} \pr(e_{l,\sigma(l)})$, et plus précisément pour pouvoir vérifier les hypothèses (I) et (III) du théorème \ref{inter}, leur valeur en $q$.
\begin{propn}\mbox{}
\label{elemgr}
Soit $\mathcal{S} \in \Sym(M)$ et $q \in \mor(M,\A)$. Les affirmations suivantes sont équivalentes :
\begin{itemize}
\item $\mathcal{S}$ est de la forme $\prod_{l \in J} \pr(e_{l,\sigma(l)})$ avec $J \subset \llbracket 1,n \rrbracket$, $\sigma \in \mathfrak{S}(J)$ et $\mathcal{S}(q) \neq 0$,
\item $\mathcal{S}=\mathcal{S}_{\mathcal{H}}$ avec $\mathcal{H}$ un sous-graphe circuits de $\mathcal{G}(q)$.
\end{itemize}
\end{propn}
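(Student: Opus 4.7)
La démonstration est essentiellement une traduction directe entre le langage des monômes de la forme $\prod_{l \in J} \pr(e_{l,\sigma(l)})$ et celui des graphes circuits, via la bijection de la proposition \ref{davant}. Le plan est de vérifier séparément les deux implications en utilisant le fait que, par définition du graphe de $q$, l'arête $l \stackrel{*}{\rightarrow} \sigma(l)$ est de poids $q(\pr(e_{l,\sigma(l)}))$ dans $\mathcal{G}(q)$ : cette quantité est non nulle si et seulement si l'arête correspondante est dans le graphe non trivial induit $\widehat{\mathcal{G}(q)}$.

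Pour l'implication descendante, on suppose $\mathcal{S}=\prod_{l \in J} \pr(e_{l,\sigma(l)})$ avec $\sigma \in \mathfrak{S}(J)$ et $\mathcal{S}(q)=\prod_{l \in J} q(\pr(e_{l,\sigma(l)})) \neq 0$. Cette non-nullité (puisque $\A$ est intègre) équivaut à $q(\pr(e_{l,\sigma(l)})) \neq 0$ pour tout $l \in J$. On considère alors le cheminement $\mathcal{G}^\sigma$ fourni par la bijection de la proposition \ref{davant}, dont toutes les arêtes de poids non nul sont les $l \rightarrow \sigma(l)$ pour $l \in J$. C'est un graphe circuits de support $J$, et chacune de ses arêtes de poids non nul est également de poids non nul dans $\mathcal{G}(q)$. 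Ainsi $\widehat{\mathcal{G}^\sigma}$ est un sous-graphe de $\widehat{\mathcal{G}(q)}$, ce qui signifie que $\mathcal{G}^\sigma$ est un sous-graphe circuits de $\mathcal{G}(q)$. De plus, par la définition-propriété \ref{neq}, on a $\mathcal{S}_{\mathcal{G}^\sigma}=\prod_{l \in J} \pr(e_{l,\sigma(l)})=\mathcal{S}$.

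Pour la réciproque, on suppose $\mathcal{S}=\mathcal{S}_\mathcal{H}$ avec $\mathcal{H}$ un sous-graphe circuits de $\mathcal{G}(q)$. Par la proposition \ref{davant} appliquée à $J=\supp \mathcal{H}$, il existe $\sigma \in \mathfrak{S}(J)$ tel que $\mathcal{H}$ soit équivalent à $\mathcal{G}^\sigma$, donc tel que les arêtes de poids non nul de $\mathcal{H}$ soient exactement les $l \stackrel{*}{\rightarrow} \sigma(l)$ pour $l \in J$. Par définition de $\mathcal{S}_\mathcal{H}$, on obtient alors $\mathcal{S}=\prod_{l \in J} \pr(e_{l,\sigma(l)})$. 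Puisque $\mathcal{H}$ est un sous-cheminement de $\mathcal{G}(q)$, chacune de ces arêtes est aussi de poids non nul dans $\mathcal{G}(q)$, c'est-à-dire $q(\pr(e_{l,\sigma(l)})) \neq 0$ pour tout $l \in J$, d'où $\mathcal{S}(q) \neq 0$.

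Il n'y a pas réellement d'obstacle technique ici ; la difficulté principale consiste à bien articuler les définitions (support, équivalence de cheminements, sous-cheminement, $\mathcal{H}$-monôme associé) et à s'assurer que les éventuels points fixes de $\sigma$ (correspondant à des boucles de longueur $1$) sont correctement traités par la notion de circuit adoptée à la définition \ref{circuits}.
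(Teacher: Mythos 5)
Votre démonstration est correcte et suit essentiellement la même voie que celle du papier : construction du sous-cheminement via la bijection de la proposition \ref{davant} dans le sens direct, puis utilisation de la définition-propriété \ref{neq} et de l'intégrité de $\A$ pour le sens réciproque. La seule différence est d'organisation — le papier isole d'abord deux points lemmatiques (1) et (2) avant de conclure, alors que vous traitez les deux implications directement — mais le contenu est identique.
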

\begin{proof}
Montrons les points suivants.
\begin{enumerate}
\item Si $\mathcal{H}$ est un graphe circuits, alors $\mathcal{S}_{\mathcal{H}}$ est de la forme $\prod_{l \in J} \pr(e_{l,\sigma(l)})$ avec $J \subset \llbracket 1,n \rrbracket$ et $\sigma \in \mathfrak{S}(J)$. 
\item Pour tout $\mathcal{S}=\prod_{l \in J} \pr(e_{l,\sigma(l)})$ avec $J \subset \llbracket 1,n \rrbracket$ et $\sigma \in \mathfrak{S}(J)$, et tout $q \in \mor(M,\A)$, on a $\mathcal{S}(q) \neq 0$ si et seulement si $\mathcal{S}=\mathcal{S}_{\mathcal{H}}$ pour un certain sous-graphe circuits $\mathcal{H}$ de $\mathcal{G}(q)$.
\end{enumerate}
Le point (1) est une conséquence directe de la propriété \ref{davant} ($J$ est le support de $\mathcal{H}$). Le sens réciproque du point (2) résulte de la propriété \ref{neq}. Pour le sens direct, si $\mathcal{S}(q) \neq 0$, alors pour tout $l \in J$, on a $q(\pr(e_{l,\sigma(l)})) \neq 0$, donc il existe une arête entre $l$ et $\sigma(l)$. Le sous-cheminement $\mathcal{H}$ de $\mathcal{G}(q)$ de support $J$ et d'arêtes non nulles l'ensemble des arêtes de $l$ vers $\sigma(l)$ pour $l \in J$ forme un sous-graphe circuits de $\mathcal{G}(q)$, tel que $\mathcal{S}_{\mathcal{H}}=\mathcal{S}$.\par
Montrons maintenant la proposition. Soit $\mathcal{S}$ de la forme $\prod_{l \in J} \pr(e_{l,\sigma(l)})$ avec $J \subset \llbracket 1,n \rrbracket$ et $\sigma \in \mathfrak{S}(J)$ tel que $\mathcal{S}(q)\neq 0$. Alors par (2), on a $\mathcal{S}=\mathcal{S}_{\mathcal{H}}$ pour un certain sous-graphe circuits $\mathcal{H}$ de $\mathcal{G}(q)$. Inversement, si $\mathcal{S}=\mathcal{S}_{\mathcal{H}}$ pour un certain sous-graphe circuits $\mathcal{H}$ de $\mathcal{G}(q)$, alors par (1), $\mathcal{S}$ est bien de la forme $\prod_{l \in J} \pr(e_{l,\sigma(l)})$ avec $J \subset \llbracket 1,n \rrbracket$ et $\sigma \in \mathfrak{S}(J)$, et donc en réappliquant (2), on a $\mathcal{S}(q) \neq 0$.
\end{proof}
La proposition \ref{elemgr} affirme donc qu'étudier les monômes $\mathcal{S}$ de la forme $\prod_{l \in J} \pr(e_{l,\sigma(l)})$ avec $J \subset \llbracket 1,n \rrbracket$ et $\sigma \in \mathfrak{S}(J)$ tels que $\mathcal{S}(q)\neq 0$ revient à étudier les sous-graphes circuits de $\mathcal{G}(q)$.
\begin{rema}
La réciproque de (1) dans la démonstration de \ref{elemgr} est fausse en général. Soit $M$ le sous-$\A$-module de $\M_2(\A)$ défini par $M=\A e_{1,1} \oplus \A e_{2,2}$ et $\pr$ l'application $\A$-linéaire de $\M_2(\A)$ dans $M$ définie par $\pr(e_{1,1})=\pr(e_{1,2})=e_{1,1}$ et $\pr(e_{2,1})=\pr(e_{2,2})=e_{2,2}$. Soit $q= e_{1,1}^*+e_{2,2}^*$. Le graphe $\mathcal{G}(q)$ est alors
\vspace{-1cm}
\begin{center}
\begin{tikzpicture}
  \tikzset{LabelStyle/.style = {fill=white}}
  \SetGraphUnit{2}
  \Vertex{1}
  \EA(1){2}
  \Loop[dist = 3cm, dir = WE, label = 1](1.west)
  \Loop[dist = 3cm, dir = EA, label = 1](2.east)
  \tikzset{EdgeStyle/.style = {->,bend right=60}}
  \Edge[label = $1$](1)(2)
  \Edge[label = $1$](2)(1)
\end{tikzpicture}
\end{center}
Le sous-cheminement $\mathcal{H}$ représenté comme suit
\begin{center}
\begin{tikzpicture}
  \useasboundingbox (-3,-2) rectangle (5,2);
  \tikzset{LabelStyle/.style = {fill=white}}
  \SetGraphUnit{2}
  \Vertex{1}
  \EA(1){2}
  \Loop[dist = 3cm, dir = WE, label = 1](1.west)
  \tikzset{EdgeStyle/.style = {->,bend right=60}}
  \Edge[label = $1$](2)(1)
\end{tikzpicture}
\end{center}
\vspace{-1cm}
n'est pas un sous-graphe circuits et donne $\mathcal{S}_{\mathcal{H}}=\pr(e_{1,1})\pr(e_{2,1})=\pr(e_{1,1})\pr(e_{2,2})$, ce qui donne un contre-exemple à la réciproque de (1).
\end{rema}
\subsection{Application du théorème \ref{inter} à $\gotk=\gotq$ en type $\gl_n$}
\label{sec343}
Par le théorème \ref{semiinv}, dans le cas d'une contraction parabolique standard $\gotq$ en type $\gl_n$, l'ensemble $\mathbf{F}$ des $F_{m,t}$ forme une famille d'éléments algébriquement indépendants. On veut donc conclure à la polynomialité de $\Sy(\gotk)$ lorsque $\gotk=\gotq$ en appliquant le théorème \ref{inter} avec $\mathbf{f}=\mathbf{F}$, où les égalités de la forme \eqref{propto} sont ici les égalités \eqref{decomp}. On vérifie d'abord les hypothèses (a), (b), (c) et (d) :
\begin{enumerate}
\item[(a)] L'algèbre $\Y(\gotq)$ est polynomiale (proposition \ref{fdroit}) donc en particulier factorielle.
\item[(b)] Par le théorème \ref{dimind}, on a $\GK \Sy(\gotq)=\ind \gotq_\Lambda = \Card \mathbf{F}$. Comme l'ensemble $\mathbf{F}$ des $F_{m,t}$ forme une famille d'éléments algébriquement indépendants (théorème \ref{semiinv} (4)), on a $\GK \Sy(\gotq)= \GK \C [\mathbf{F}]$.
\item[(c,d)] Les $f_m=F_m^\bullet$ engendrent librement $\Y(\gotq)$ (proposition \ref{fdroit}) donc en particulier, sont irréductibles dans $\Y(\gotq)$.
\end{enumerate}
Il reste donc à vérifier les hypothèses (I), (II) et (III).
\subsubsection{Hypothèse (II)}
Tous les $F_m^\bullet$ sont des sommes de monômes de la forme $\prod_{l \in J} e_{l,\sigma(l)}$ avec $J \subset I$ et $\sigma \in \mathfrak{S}(J)$. En particulier, aucun monôme de $F_m^\bullet$ ne contient de terme $e_{v,w}$ au carré. Autrement dit, pour tous $v,w$, on a $\deg_{e_{v,w}} F_m^\bullet \leq 1$, et on a l'égalité si et seulement si $e_{v,w}$ apparaît dans un monôme de $F_m^\bullet$. Pour un tel $e_{v,w}$ vérifiant ces conditions équivalentes, on a $\deg_{e_{v,w}} F_{m,t} \leq 1$, et on a l'égalité si et seulement si $e_{v,w}$ apparaît dans un monôme de $F_{m,t}$. Par relation sur les degrés partiels, ceci implique que $F_{m,t}$ ne peut pas être une puissance $a$\up{ème} dans $\Sym(\gotq)$ pour $a \geq 2$, et est donc indivisible dans $\Sym(\gotq)$.
\begin{rema}
Soit $F$ un semi-invariant de $\Sy(\gotq)$ de poids $\lambda$. Si $\lambda$ est indivisible dans le semi-groupe additif $\Lambda(\gotq)$ (voir notations de la section \ref{sec2}), alors $F$ est indivisible dans le semi-groupe multiplicatif $\Sym(\gotq)$. On rappelle que par le lemme \ref{poidsgen}, on a $\Lambda(\gotq) \subset \bigoplus_{\ell \in I_{\gotp}} \rat \varpi_\ell$. Pour montrer qu'un poids $\lambda$ est indivisible dans $\Lambda(\gotq)$, il suffit donc de montrer qu'il est indivisible dans $\bigoplus_{\ell \in I_{\gotp}} \rat \varpi_\ell$. Dans notre cas, les poids des $F_{m,t}$ sont presque tous indivisibles dans $\bigoplus_{\ell \in I_{\gotp}} \rat \varpi_\ell$. Il existe cependant une exception : le \textit{cas de la racine centrale}, c'est-à-dire $n$ pair et $\pi \setminus \pi'=\{\alpha_{n/2}\}$. Dans ce cas particulier, on rappelle que (exemple \ref{raccenetc}) $F_{n,1}=\Delta_{I_2, I_1}$ et $F_{n,2}=\Delta_{I_1, I_2}$ sont de poids respectifs $-2\varpi_{n/2}$ et $2\varpi_{n/2}$ ; pour $m \neq n$, on a $r_m=1$ et $F_{m,1}=F_m^\bullet$ donc $\lambda_{m,1}=0$.
Or $F_{n,1}$ et $F_{n,2}$ sont deux polynômes déterminants sur $\gl_n$, donc irréductibles d'où indivisibles dans le semi-groupe multiplicatif $\Sym(\gotq)$.
\end{rema}

\subsubsection{Hypothèses (I) et (III)}
Dans notre cas, on va montrer que $\gotk=\gotq$ vérifie les hypothèses (I') et (III') de la proposition \ref{632}, avec $g_m=f_m=F_m^\bullet$ et $f_{m,t}=F_{m,t}$ pour tous $m,t$.\par
Fixons $\xi \in \llbracket 1,s \rrbracket$. On considère le graphe orienté non pondéré cyclique $\mathcal{C}$ suivant (que l'on ne considère pas comme un cheminement) :
\begin{center}
\begin{tikzpicture}[scale=0.7]
\node (1) at (0,3) {$1$};
\node (2) at (2.598,1.5) {$2$};
\draw[->] (0.366,2.978) arc (83:37:3);
\node (3) at (2.598,-1.5) {$3$};
\draw[->] (2.762,1.172) arc (23:-23:3);
\node (s) at (-2.598,1.5) {$s$};
\draw[->] (-2.396,1.805) arc (143:97:3);
\draw[->, dashed] (2.396,-1.805) arc (-37:-203:3);
\end{tikzpicture}
\end{center}
On pondère cette fois-ci les sommets de ce graphe : le sommet $k$ est pondéré par l'entier $p_{0,k}:=i_k=|I_k|$. La somme des poids est $\sum_k i_k=n$. On définit alors des suites $(v_m)_{1 \leq m \leq n}$, $(t_m)_{1 \leq m \leq n}$ et $(p_{m,k})_{1 \leq m \leq n, 1 \leq k \leq s}$ comme suit.
\begin{itemize}
\item À l'étape $1$,
\begin{enumerate}
\item[(1-a)] on se pose au sommet $t_1:=\xi$ du graphe $\mathcal{C}$, puis
\item[(1-b)] on choisit $v_1 \in I_{t_1}$, puis
\item[(1-c)] on diminue de $1$ le poids du sommet $t_1$.
\end{enumerate}
Après cette étape (1-c), tout sommet $k$ est pondéré par l'entier $p_{1,k}=|I_k \setminus (I_k \cap \{v_1\})|$, et $\sum_k p_{1,k}=n-1$.
\item On définit alors ce processus par récurrence, de sorte qu'à l'étape $u$,
\begin{enumerate}
\item[($u$-a)] depuis $t_{u-1}$, on avance dans le graphe $\mathcal{C}$  suivant l'orientation jusqu'au prochain sommet $t_u$ tel que $p_{u-1,t_u}>0$ (si $t_u=t_{u-1}$, alors on doit parcourir tout le graphe $\mathcal{C}$ suivant l'orientation), puis
\item[($u$-b)] on choisit $v_u$ dans $I_{t_u} \setminus (I_{t_u} \cap \{v_1, \ldots, v_{u-1}\})$, qui est un ensemble de cardinal $p_{u-1,t_u}>0$, puis
\item[($u$-c)] on diminue de $1$ le poids du sommet $t_u$.
\end{enumerate}
Après l'étape ($u$-c), tout sommet $k$ est pondéré par l'entier $p_{u,k}=|I_k \setminus (I_k \cap \{v_1, \ldots, v_{u}\})|$, et $\sum_k p_{u,k}=n-u$.
\item On continue ce procédé jusqu'à la fin de l'étape $n$, c'est-à-dire après l'étape ($n$-c) où la somme des poids est nulle, donc tous les poids des sommets sont nuls.
\end{itemize}
L'application $m \in \llbracket 1,n \rrbracket \mapsto v_m \in I$ est  bijective. En particulier, $\{v_1, \ldots, v_n \}=I$.
\begin{exem}
Dans l'exemple \ref{leexemple}, supposons $\xi=t_1=4$.
On reprend les schémas de l'exemple \ref{exemple}. Cette fois-ci, on remplit les cases des nombres de $1$ à $n$ dans l'ordre croissant \textbf{de gauche à droite puis de haut en bas}. Par exemple, ici
\begin{center}
\begin{tikzpicture}[scale=0.5]
\draw (0,0)--(0,5)--(4,5)--(4,4)--(1,4)--(1,3)--(4,3)--(4,2)--(2,2)--(2,1)--(1,1)--(1,0)--(0,0);
\draw (0,1)--(1,1);
\draw (0,2)--(2,2);
\draw (0,3)--(1,3);
\draw (0,4)--(1,4);
\draw (1,1)--(1,3);
\draw (1,4)--(1,5);
\draw (2,2)--(2,3);
\draw (2,4)--(2,5);
\draw (3,4)--(3,5);
\draw (3,2)--(3,3);
\draw (0.5,4.5) node{$1$};
\draw (1.5,4.5) node{$2$};
\draw (2.5,4.5) node{$3$};
\draw (3.5,4.5) node{$4$};
\draw (0.5,3.5) node{$5$};
\draw (0.5,2.5) node{$6$};
\draw (1.5,2.5) node{$7$};
\draw (2.5,2.5) node{$8$};
\draw (3.5,2.5) node{$9$};
\draw (0.5,1.5) node{$10$};
\draw (1.5,1.5) node{$11$};
\draw (0.5,0.5) node{$12$};
\draw (-0.5,4.5) node{$I_1$};
\draw (-0.5,3.5) node{$I_2$};
\draw (-0.5,2.5) node{$I_3$};
\draw (-0.5,1.5) node{$I_4$};
\draw (-0.5,0.5) node{$I_5$};
\end{tikzpicture}
\end{center}
On construit alors une suite $(v_1, \ldots, v_{12})$ en choisissant et cochant un élément de la ligne $t_1$, puis en choisissant et cochant un élément de la ligne inférieure non déjà coché, et ainsi de suite. S'il n'y a plus de case non cochée sur une ligne, on passe à la ligne suivante. Si on était sur la dernière ligne, on revient à la première ligne. Ici une suite possible est $(10,12,1,5,6,11,2,7,3,8,4,9)$.
\end{exem}
On définit alors $q \in \C[X_1, \ldots, X_n] \otimes_{\C} \gotq^*$ de manière similaire aux matrices compagnons :
\begin{equation}
q=\sum_{\ell=1}^n \, X_\ell \, e_{v_1, v_\ell}^* + \sum_{\ell=1}^{n-1} e_{v_{\ell+1},v_{\ell}}^*
\label{defprec}
\end{equation}
\begin{exem}
Dans notre exemple, avec la suite $(v_\ell)_\ell $ choisie précédemment, on obtient $q$ donné par la matrice suivante :
\begin{center}
\begin{tikzpicture}[scale=0.7]
\foreach \k in {1,2,...,11}
	{\draw[color=gray!20]  (0,\k)--(12,\k);
	\draw[color=gray!20] (\k,0)--(\k,12);}
\draw (0,0)--(12,0);
\draw (0,0)--(0,12);
\draw (0,12)--(12,12);
\draw (12,0)--(12,12);
\draw[ultra thick] (0,8)--(4,8)--(4,7)--(5,7)--(5,3)--(9,3)--(9,1)--(11,1)--(11,0);
\draw (9.5,2.5) node{\small $ X_1$};
\draw (9.5,0.5) node{\Large $1$};
\draw (11.5,2.5) node{\small $X_2$};
\draw (11.5,11.5) node{\Large $1$};
\draw (0.5,2.5) node{\small $ X_3$};
\draw (0.5,7.5) node{\Large $1$};
\draw (4.5,2.5) node{\small $ X_4$};
\draw (4.5,6.5) node{\Large $1$};
\draw (5.5,2.5) node{\small $ X_5$};
\draw (5.5,1.5) node{\Large $1$};
\draw (10.5,2.5) node{\small $ X_6$};
\draw (10.5,10.5) node{\Large $1$};
\draw (1.5,2.5) node{\small $ X_7$};
\draw (1.5,5.5) node{\Large $1$};
\draw (6.5,2.5) node{\small $ X_8$};
\draw (6.5,9.5) node{\Large $1$};
\draw (2.5,2.5) node{\small $ X_9$};
\draw (2.5,4.5) node{\Large $1$};
\draw (7.5,2.5) node{\small $X_{10}$};
\draw (7.5,8.5) node{\Large $1$};
\draw (3.5,2.5) node{\small $ X_{11}$};
\draw (3.5,3.5) node{\Large $1$};
\draw (8.5,2.5) node{\small $X_{12}$};
\end{tikzpicture}
\end{center}
\end{exem}
L'élément $q$ est choisi de sorte que son graphe $\mathcal{G}(q)$ en type $\gl_n$ est :
\begin{center}
\begin{tikzpicture}
  \tikzset{LabelStyle/.style = {fill=white}}
  \tikzset{VertexStyle/.style = {%
  shape = circle, minimum size = 28pt,draw}}
  \SetGraphUnit{2.5}
  \Vertex[L=$v_1$]{1}
  \EA[L=$v_2$](1){2}
  \EA[L=$v_3$](2){3}
  \Loop[dist = 2cm, dir = WE, label = $X_1$](1.west)
  \Edge[style= {->}, label = 1](2)(1)
  \Edge[style= {->}, label = 1](3)(2)
  
  \tikzset{EdgeStyle/.style = {->,bend left=30}}
  \Edge[label=$X_2$](1)(2)
  \tikzset{EdgeStyle/.style = {->,bend left=34}}
  \Edge[label=$X_3$](1)(3)
  \SetVertexNormal[Shape = circle, LineColor=white, MinSize=28pt]
  \tikzset{EdgeStyle/.style = {->}}
  \EA[L=$\ldots$](3){4}
  \Edge[style= {->}, label = 1](4)(3)
  \SetVertexNormal[Shape = circle, LineColor=black, MinSize=28pt]
  \EA[L=$v_{n-1}$](4){n-1}
  \Edge[style= {->}, label = 1](n-1)(4)
  \EA[L=$v_n$](n-1){n}
  \Edge[style= {->}, label = 1](n)(n-1)
  \tikzset{EdgeStyle/.style = {->,bend left=38}}
  \Edge[label=$X_{n-1}$](1)(n-1)
  \tikzset{EdgeStyle/.style = {->,bend left=42}}
  \Edge[label=$X_n$](1)(n)
\end{tikzpicture}
\end{center}
On remarque que pour chaque $m \in \llbracket 1,n \rrbracket$, le cheminement $\mathcal{G}(q)$ admet un seul sous-graphe circuits à $m$ sommets, qui est
\begin{center}
\begin{tikzpicture}
  \tikzset{LabelStyle/.style = {fill=white}}
  \tikzset{VertexStyle/.style = {%
  shape = circle, minimum size = 28pt,draw}}
  \SetGraphUnit{2.5}
  \Vertex[L=$v_1$]{1}
  \EA[L=$v_2$](1){2}
  \EA[L=$v_3$](2){3}
  \Edge[style= {->}, label = $1$](2)(1)
  \Edge[style= {->}, label = $1$](3)(2)
  
  \tikzset{EdgeStyle/.style = {->,bend left=30}}
  \SetVertexNormal[Shape = circle, LineColor=white, MinSize=28pt]
  \tikzset{EdgeStyle/.style = {->}}
  \EA[L=$\ldots$](3){4}
  \Edge[style= {->}, label = $1$](4)(3)
  \SetVertexNormal[Shape = circle, LineColor=black, MinSize=28pt]
  \EA[L=$v_{m-1}$](4){m-1}
  \Edge[style= {->}, label = $1$](m-1)(4)
  \EA[L=$v_m$](m-1){m}
  \Edge[style= {->}, label = $1$](m)(m-1)
  \tikzset{EdgeStyle/.style = {->,bend left=42}}
  \Edge[label=$X_m$](1)(m)
\end{tikzpicture}
\end{center}
Appelons ce cheminement $\mathcal{G}_m$ et $\mathcal{S}_m:=\mathcal{S}_{\mathcal{G}_m}$ le $\mathcal{G}_m$-monôme en type $\gl_n$. Par la proposition \ref{elemgr}, à une constante multiplicative non nulle près, le monôme $\mathcal{S}_m$ est le seul monôme de $F_m$ tel que $\mathcal{S}_m(q) \neq 0$. Par la propriété \ref{neq}, on a 
\begin{equation}
F_m(q)=(-1)^{m+1} \mathcal{S}_m(q) \propto X_m \label{egalite}
\end{equation}
Pour conclure pour le point (I'), il reste à montrer que $(-1)^{m+1} \mathcal{S}_m$ est en fait un monôme de $F_m^\bullet$, et donc pour cela, à montrer que $\mathcal{S}_m = e_{v_1, v_m} e_{v_m,v_{m-1}} \ldots e_{v_2,v_1}$ vérifie $\deg_{\gotn^-} \mathcal{S}_m=\deg_{\gotn^-} F_m$.\par
Pour $m=1$, on a $F_1= \id$ de degré $0$ en $\gotn^-$, et $e_{v_1,v_1}$ est également de degré $0$ en $\gotn^-$. On va alors montrer que la suite $\deg_{\gotn^-} \mathcal{S}_m$ vérifie la même relation de récurrence que $\deg_{\gotn^-} F_m$ (équation \eqref{degrec} du corollaire \ref{coro}). Pour tout $m \in \llbracket 1,n-1 \rrbracket$, on a
$$
\deg_{\gotn^-}\mathcal{S}_{m+1}=\deg_{\gotn^-}\mathcal{S}_{m}+ \deg_{\gotn^-}e_{v_1, v_{m+1}} + \deg_{\gotn^-} e_{v_{m+1},v_m}-\deg_{\gotn^-}e_{v_1, v_{m}}
$$
Si $v_m \prec v_{m+1} \prec v_1$ (voir notation \ref{ordre}), alors par la propriété \ref{pn}, on a $e_{v_1, v_{m+1}}, e_{v_{m+1},v_m}, e_{v_{1},v_m} \in \gotn^-$. Alors $\deg_{\gotn^-}\mathcal{S}_{m+1}=\deg_{\gotn^-}\mathcal{S}_{m}+1$. On a comme ceci six cas à étudier :
\begin{enumerate}[label=(\arabic*)]
\item $v_m \prec v_{m+1} \prec v_1$,
\item $v_{m+1} \preceq v_m \prec v_1$,
\item $v_{m+1} \prec v_1 \preceq v_m$,
\item $v_m \prec v_1 \preceq v_{m+1}$,
\item $v_1 \preceq v_m \prec v_{m+1}$,
\item $v_1 \preceq v_{m+1} \preceq v_m$.
\end{enumerate}
et on obtient le résultat suivant :
$$\deg_{\gotn^-}\mathcal{S}_{m+1}-\deg_{\gotn^-}\mathcal{S}_{m}=\left\{\begin{array}{ll}
1 &\text{dans les cas (1), (3) et (5)}  \\
0&\text{dans les cas (2), (4) et (6)} 
\end{array}\right.$$
Pour conclure, on veut montrer que l'on est dans l'un des cas (2), (4), ou (6) si et seulement si $m=m_i$, $1 \leq i < \imax$. Pour tous $x,y \in \llbracket 1,s \rrbracket$, considérons dans le graphe orienté $\mathcal{C}$ le trajet non trivial le plus court de $x$ vers $y$. On note $] x \circlearrowright y]$ l'ensemble des sommets de $\mathcal{C}$ parcourus via ce trajet, en excluant $x$ et en incluant $y$ (par exemple, si $x=y$, on parcourt tout le graphe $\mathcal{C}$).
\begin{prop}
Soit $m \in \llbracket 1,n \rrbracket$. Les propositions suivantes sont équivalentes :
\begin{enumerate}[label=(\alph*)]
\item l'entier $m$ vérifie l'un des trois cas (2), (4) ou (6),
\item on a $t_1 \in ]t_m \circlearrowright t_{m+1}]$,
\item on a $m=m_i$, $1 \leq i \leq \imax-1$.
\end{enumerate}
\end{prop}
\begin{proof}
En termes de $t_k$, les cas (1) à (6) se réécrivent 
\begin{enumerate}[label=(\arabic*)]
\item $t_m < t_{m+1} < t_1$,
\item $t_{m+1} \leq t_m < t_1$,
\item $t_{m+1} < t_1 \leq t_m$,
\item $t_m < t_1 \leq t_{m+1}$,
\item $t_1 \leq t_{m} < t_{m+1}$,
\item $t_1 \leq t_{m+1} \leq t_m$.
\end{enumerate}
En remarquant que $]t_m \circlearrowright t_{m+1}]$ s'écrit $\llbracket t_m+1, t_{m+1} \rrbracket$ si $t_m < t_{m+1}$ et $\llbracket t_m+1, s \rrbracket \cup \llbracket 1,t_{m+1} \rrbracket$ si $t_m \geq t_{m+1}$, on montre en disjoignant les cas que (a) équivaut à (b).\par

Montrons maintenant que (b) équivaut à (c). On reprend la définition combinatoire de la suite $(v_k)_k$.\par
Dans le graphe, on s'arrête d'abord une première fois à tous les sommets, puisque les poids $p_{0,k}=i_k$ sont nécessairement $ \geq 1$. Puisqu'il y a $s=m_1$ sommets, on se sera arrêté à tous les sommets à la fin de l'étape $m_1$. Plus précisément, à la fin de cette étape, on se trouve juste avant le sommet $t_1$, et on a $p_{m_1,k}=p_{0,k}-1=\max(0,p_{0,k}-1)$ pour tout $k$. Ainsi, c'est à l'étape $m_1+1$ que l'on va repasser pour la deuxième fois par $t_1$. Comme $p_{m_1,t_1}$ peut être nul, à l'étape $m_1+1$, on ne s'arrêtera pas nécessairement au sommet $t_1$, mais on a $t_1 \in ]t_{m_1} \circlearrowright t_{m_1+1}]$.\par
Reprenons juste après ($m_1$-c) et juste avant ($m_1+1$-a). À partir de ce moment, on s'arrête à tous les sommets $k$ tels que $p_{m_1,k}=p_{0,k}-1 \geq 1$, autrement dit, tel que $p_{0,k}=i_k \geq 2$. Puisqu'il y a $m_2-m_1$ tels sommets (propriété \ref{propelem} (3)), on se sera arrêté à tous ces sommets à la fin de l'étape $m_1+(m_2-m_1)=m_2$. À la fin de cette étape, on se trouve au sommet $t_{m_2}$, qui est le dernier sommet avant $t_1$ tel que $p_{0,t_{m_2}} \geq 2$, et on a $p_{m_2,k}=\max(0,p_{0,k}-2)$ pour tout $k$. Ainsi, c'est à l'étape $m_2+1$ que l'on va repasser pour la troisième fois par $t_1$. Comme $p_{m_2,t_1}$ peut être nul, on ne s'arrêtera pas nécessairement au sommet $t_1$, mais on a $t_1 \in ]t_{m_2} \circlearrowright t_{m_2+1}]$.\par
On continue par récurrence ce raisonnement jusqu'à $m_{\imax}=n$. Ainsi les seuls entiers $m \in \llbracket 1,n \rrbracket$ tels que $t_1 \in ]t_m \circlearrowright t_{m+1}]$ sont les entiers de la forme $m_i$ avec $1 \leq i \leq \imax-1$, ce qui donne (b) $\Leftrightarrow$ (c).
\end{proof}

On conclut donc pour le point (I').\par
Pour (III'), on rappelle que $\mathbf{f}^\times=\{F_{m,t} \, | \, m \in \mathbf{M}_2, 1 \leq t \leq r_m\}$ (lemme \ref{lemm}). Posons $m,t$ avec $m \in \mathbf{M}_2$ et $t \in \llbracket 1,r_m \rrbracket$. On a $F_m^\bullet \propto \prod_{u=1}^{r_m} F_{m,u}$. Notons $q'_{m,t}$ un $q$ de la forme de l'équation \eqref{defprec} tel que le paramètre $\xi$ (dont dépend la suite $(v_k)_k$) est choisi dans $\llbracket k_t+1, k_{t+1} \rrbracket$, c'est-à-dire $I_\xi \subset I^{(t)}$ (voir notation \ref{nota46}). On rappelle l'expression de $F_{m,u}$ (équation \eqref{expmochesemiinv}) :
$$F_{m,t}= \sum_{(J_k) \in \prod_{k \in K_t} \mathcal{J}_{k}} \; \Delta_{J^{(t)},J^{[t]}}^\bullet$$
On a donc (voir notation \ref{simp})
\begin{equation}
F_{m,u} \in \Sym\left(\g_{I^{(u)},I^{[u]}}\right) \label{somme}
\end{equation}
Puisque les $\g_{I^{(u)},I^{[u]}}$ sont en somme directe (les $I^{(u)}$ étant deux à deux disjoints), on obtient le lemme suivant :
\begin{lem}
\label{monome}
Soit $J \subset I$ et $\sigma \in \mathfrak{S}(J)$ tels que $\prod_{l \in J} e_{l,\sigma(l)}$ est (au signe près) un monôme de $F_m^{\bullet}$. On a $\prod_{l \in J} e_{l,\sigma(l)} = \prod_{u=1}^{r_m} \left(\prod_{l \in J^{(u)}}e_{l,\sigma(l)}\right)$. Alors pour tout $u$, le facteur $\prod_{l \in J^{(u)}} e_{l,\sigma(l)}$ est (au signe près) un monôme de $F_{m,u}$.
\end{lem}
Puisqu'il existe un unique monôme $\mathcal{S}$ de $F_m^\bullet$ tel que $\mathcal{S}(q'_{m,t}) \neq 0$, il existe un unique monôme $\mathcal{S}_u$ de $F_{m,u}$ tel que $\mathcal{S}_u(q'_{m,t}) \neq 0$ pour tout $u$. Ainsi pour tout $u \in \llbracket 1,r_m \rrbracket$, on a $F_{m,u}(q'_{m,t}) \propto \prod_{v \in J \cap I^{(t)}} e_{v,\sigma(v)}(q'_{m,t})$ où $J = \{v_1, \ldots, v_m\}$ et $\sigma=(v_m \, v_{m-1} \, \ldots \, v_1)$. Par construction de $q'_{m,t}$, on a alors $e_{v_1,v_m}(q'_{m,t})= X_m$, et pour tout $\ell > 1$, $e_{v_\ell,v_{\ell-1}}(q'_{m,t})= 1$. Comme par hypothèse, $v_1$ a été pris dans $I^{(t)}$ et que les $I^{(u)}$ sont disjoints, on a $F_{m,t}(q'_{m,t})\propto X_m$ et pour tout $u \neq t$, on a $F_{m,u}(q'_{m,t}) \propto 1$.\par
On rappelle que pour tout $\mu$, on a
\begin{equation}
F_\mu^\bullet(q'_{m,t}) \propto X_\mu \label{fmu}
\end{equation}
Notons alors $q_{m,t} \in \gotq^*_{\C[X]}$ la spécialisation de $q'_{m,t}$ en $X_\mu=1$ pour tout $\mu \neq m$ et $X_m=X$. On a alors $F_{m,t}(q_{m,t}) \propto X$, $F_{m,u}(q_{m,t}) \propto 1$ pour tout $u \neq t$, et pour tout $(\mu,\tau)$ avec $\mu \neq m$, le polynôme $F_{\mu,\tau}(q_{m,t})$ divise $F_{\mu}(q_{m,t}) \in \C^\times$, donc $F_{\mu,\tau}(q_{m,t}) \in \C^\times$. Ainsi $q_{m,t}$ vérifie l'hypothèse (III') de la proposition \ref{632}.\par
Les hypothèses (I), (II) et (III) sont donc vérifiées et on obtient
\begin{theo}
Soit $\gotq$ une contraction parabolique standard de $\gl_n$, associée à un facteur de Levi $\gotl= \g_{I_1} \times \ldots \times \g_{I_s}$. Alors
\begin{itemize}
\item la troncation canonique $\gotq_\Lambda$ est égale à $\gotq' \oplus \bigoplus_{i} \C \id^{(i)}$, où $\id^{(i)}= \sum_{|I_k|=i} \id_{\g_{I_k}}$,
\item l'indice de $\gotq_\Lambda$ est $\ind \gotq_\Lambda=n+s-p$, où $p$ est le nombre de classes d'isomorphisme des blocs $\g_{I_k}$ dans $\gotl$,
\item l'algèbre des semi-invariants $\Sy(\gotq)=\Y(\gotq_\Lambda)$ est polynomiale et librement engendrée par les semi-invariants $F_{m,t}$, $1 \leq m \leq n$, $1 \leq t \leq r_m$ (voir théorème \ref{semiinv}). En particulier, l'ensemble des poids $\Lambda(\gotq)$ de $\Sy(\gotq)$ est un groupe.
\end{itemize}
\label{finalgl}
\end{theo}
\begin{rema}
On vient de montrer que l'algèbre $\Sy(\gotq)$ est librement engendrée par les $F_{m,t}$. On a déjà montré par le théorème \ref{irreductibilite} que tous les $F_{m,t}$ tels que $r_m \geq 2$ sont irréductibles. En fait, tous les $F_{m,t}$ sont irréductibles : si $x$ est un facteur irréductible non constant de $F_{m,t}$, alors $x$ est un semi-invariant, donc un polynôme en les $F_{\mu,\tau}$, qui divise $F_{m,t}$. Par algébrique indépendance des $F_{\mu,\tau}$, le semi-invariant $x$ est donc associé à $F_{m,t}$. Pour tout $m \in \llbracket 1,n \rrbracket$, l'équation \eqref{decomp}
$$F_m^\bullet=c_m \prod_{t=1}^{r_m} F_{m,t}$$
est donc la décomposition en produit d'éléments irréductibles de $F_m^\bullet$.
\end{rema}
\chapter{Étude de la polynomialité en type $A$ et $C$}
\label{chap4}
\section{Polynomialité en type $A$}
\label{sec7}
L'étude précédente sur $\g=\gl_n$ permet de conclure assez facilement pour le type $A$, c'est-à-dire le cas où l'algèbre de Lie semi-simple est ${\g}^A:=\spl_n$.\par
Dans cette section et uniquement dans cette section, on notera $\pr$ la projection $\pr^A$ de la définition \ref{prA}. On rappelle que si $\gotq=\gotp \ltimes \gotn^-$ est une contraction parabolique standard sur $\gl_n$, on a défini la contraction parabolique $\gotq^A=\gotp^A \ltimes \gotn^-$ en type $A$ avec $\gotp^A=\pr(\gotp)$. Une telle contraction parabolique est appelée \textbf{contraction parabolique standard de $\spl_n$}. Toute contraction parabolique en type $A$ est la conjuguée d'une telle contraction parabolique.
\begin{theo}
Soit $\gotq^A=\gotp^A \ltimes (\gotn^-)^\text{a}$ une contraction parabolique standard en type $A$. Soit $\mathbf{F}$ la famille des $F_{m,t}$ de la définition \ref{familleF}.\par
Alors $\Sy(\gotq^A)$ est polynomiale et $\pr(\mathbf{F}):=\{\pr(f) \, | \, f \in \mathbf{F} \setminus \{F_{1,1}\}\}$ engendre librement $\Sy(\gotq^A)$.
\end{theo}
\begin{rema}\mbox{}
\begin{itemize}
\item On a $F_{1,1}=\id$ donc $\pr(F_{1,1})=0$, c'est pourquoi on exclut $F_{1,1}$ de $\pr(\mathbf{F})$.
\item En type $A$, on a $\deg_{\gotn^-} \pr(F_m)=\deg_{\gotn^-} F_m$ pour tout $m$, c'est-à-dire $\pr(F_m)^\bullet=\pr(F_m^\bullet)$ (voir propriété \ref{ftypeA}).
\end{itemize}
\end{rema}
\begin{proof}
Les $\mathcal{F}_m^\bullet=\pr(F_{m+1}^\bullet)$ engendrent librement $\Y(\gotq^A)$ (théorème \ref{fipt}). En appliquant $\pr$ à $F_{m+1}^\bullet \propto \prod_{t=1}^{r_{m+1}} F_{m+1,t}$ (équation \eqref{decomp}), on obtient
$$\mathcal{F}_m^\bullet \propto \prod_{t=1}^{r_{m+1}} \pr(F_{m+1,t})$$
et le poids de $\pr(F_{m+1,t})$ est le poids de $F_{m+1,t}$ restreint à $\spl_n$. On vérifie alors aisément que les $\pr(F_{m+1,t})$ sont algébriquement indépendants par le théorème \ref{alin}.\par
Si $x \in \Sy(\gotq^A)$, alors $x \in \Sy(\gotq)$ puisque $\C \id \subset \gotz(\gotq)$, ainsi $x$ est un polynôme en les $F_{m,t}$ par le théorème \ref{finalgl}. On conclut alors en appliquant $\pr$.
\end{proof}
\begin{rema}
Puisque $\gotq = \gotq^A \times \C \id$, on peut reprendre l'étude pour $\gotq$ et on trouve facilement que :
\begin{itemize}
\item la troncation canonique $\gotq_\Lambda^A$ est égale à $\gotq' \oplus \bigoplus_{i < \imax} \C \pr(\id^{(i)})$ (on rappelle que les $\id^{(i)}$ pour $i \in \mathbf{I}$ forment une famille libre dans $\gotq_\Lambda$ et que $\sum_{i \in \mathbf{I}} \id^{(i)} = \id$),
\item l'indice de $\gotq_\Lambda^A$ est $\ind \gotq_\Lambda^A=\ind \gotq_\Lambda -1$,
\item l'ensemble des poids $\Lambda(\gotq^A)$ de $\Sy(\gotq^A)$ est $\Lambda(\gotq^A)=\pr(\Lambda(\gotq)):=\{\lambda_{|\g^A} \, | \, \lambda \in \Lambda(\gotq)\}$ et est un groupe.
\end{itemize}
\end{rema}
\section{Généralités en type $C$}
\label{defC}
L'algèbre de Lie $\gotq$ est toujours une contraction parabolique de $\gl_n$ par une sous-algèbre parabolique standard $\gotp$.
\begin{nota}
On définit l'involution $\gamma \in \mathfrak{S}_n$ par $\gamma(k)=n+1-k$. Si $L \subset \llbracket 1,n \rrbracket$, on notera également $L^\gamma=\gamma(L):=\{\gamma(l) \, | \, l \in L\}$\index{$L^\gamma, M^\gamma, V^\gamma, f^\gamma$}. Si $M \in \gl_n$, on note $M^\gamma$ la matrice symétrique de $M$ par rapport à l'antidiagonale (c'est-à-dire $ M^\gamma_{i,j}=M_{j^\gamma, i^\gamma}$), et si $V \subset \gl_n$, on note $V^\gamma=\{M^\gamma \, | \, M \in V\}$. L'application $\C$-linéaire $M \in \gl_n \mapsto  M^\gamma \in \gl_n$ s'étend naturellement en un morphisme d'algèbres de $\Sym(\gl_n)$ (et donc de $\Sym(\gotq)$) dans lui-même. On note $f^\gamma$ l'image de $f \in \Sym(\gl_n)$ par ce morphisme.
\end{nota}
\begin{nota}
On reprend globalement les conventions de \cite[chap. VIII]{bou06}. On suppose ici que $n$ est pair et on pose $n=2n'$. On définit $\syp_{n}$\index{$\syp_{n}$} comme l'ensemble des matrices de la forme
$$ \begin{pmatrix}
M_1&M_2 \\ M_3&-  M_1^\gamma
\end{pmatrix}$$
avec $M_1,M_2,M_3$ des matrices $n' \times n'$ telles que $M_2^\gamma= M_2$ et $M_3^\gamma= M_3$. L'espace vectoriel $\syp_n$ est de dimension $n(n+1)/2$. On appellera abusivement \textbf{la} base canonique de $\syp_{n}$ une base de $\syp_{n}$ constituée des $e_{u,v}-e_{v^\gamma,u^\gamma}$ pour $1 \leq u,v \leq n'$, des $e_{u,v}+e_{v^\gamma,u^\gamma}$ pour $(u,v) \notin \llbracket 1,n' \rrbracket^2$ tels que $u+v \leq n$, et des $e_{u,u^\gamma}$ pour $1 \leq u \leq n$. On complète la représentation graphique de $\gl_{n}$ :
\begin{center}
\begin{tikzpicture}[scale=0.4]
\draw (0,0)--(9,0)--(9,9)--(0,9)--(0,0);
\draw[double, ultra thick] (0,4.5)--(4.5,4.5)--(4.5,0);
\draw[double, ultra thick] (4.5,9)--(4.5,4.5)--(9,4.5);
\draw (2.25,2.25) node{$\gl_{n}^+$};
\draw (2.25,6.75) node{$\gl_{n}^-$};
\draw (6.75,6.75) node{$\gl_{n}^+$};
\draw (6.75,2.25) node{$\gl_{n}^-$};
\end{tikzpicture}
\end{center}
où $\gl_{n}^+$ (respectivement $\gl_{n}^-$)\index{$\gl_{n}^+$, $\gl_{n}^-$} est le sous-espace vectoriel de $\gl_{n}$ constitué de l'ensemble des $x \in \gl_{n}$ tels que $x +  x^\gamma \in \syp_{n}$ (respectivement $x - x^\gamma \in \syp_{n}$).
\end{nota}
\begin{defi}
On reprend les notations combinatoires d'une contraction parabolique de $\gl_n$ (définition \ref{canonique}). Soit $\gotq=\gotp \ltimes \gotn^-$ une contraction parabolique standard de $\gl_n$. On dit que $\gotq$ est contraction parabolique standard \textbf{symétrique} (ou CPSS) si le sous-ensemble $\pi'\subset \pi$ associé à $\gotq$ est symétrique, dans le sens où pour tout $i \in \llbracket 1, n-1 \rrbracket$, on a $\alpha_i \in \pi' \Leftrightarrow \alpha_{n-i} \in \pi'$. De manière équivalente, $\gotq$ est symétrique si et seulement si $\gotp^\gamma=\gotp$ et $(\gotn^-)^\gamma = \gotn^-$.\label{cpcs}
\end{defi}
\begin{defi}
Soit $\gotq=\gotp \ltimes \gotn^-$ une contraction parabolique standard symétrique de $\gl_n$. Soient $\gotp^C=\gotp \cap \syp_n$ et $(\gotn^-)^C=\gotn^- \cap \syp_n$. On a $\syp_n=\gotp^C \oplus (\gotn^-)^C$ (comme $\gotq$ est symétrique, tout vecteur de la base canonique de $\syp_n$ est soit dans $\gotp$, soit dans $\gotn^-$). On pose alors $\gotq^C = \gotp^C \ltimes (\gotn^-)^C$. L'algèbre de Lie $\gotq^C$\index{$\gotq^C$, $\gotp^C$, $(\gotn^-)^C$} est une contraction parabolique de $\syp_n$, que l'on appelle \textbf{contraction parabolique standard de $\syp_n$}. Toute contraction parabolique en type $C$ est la conjuguée d'une telle contraction. Son crochet de Lie $[\; , \; ]_{\gotq^C}$ vérifie $[x,y]_{\gotq^C}=[x,y]_{\gotq}$ pour tous $x,y \in \gotq^C$. On dit également que $\gotq$ est une \textbf{contraction parabolique standard de $\gl_{n}$ au-dessus de $\gotq^C$}.\label{dessus}
\end{defi}
\begin{defi}
Soit $\pr^C=\pr_{\syp_{n}, \syp_{n}^{\perp} \oplus \C \id}$\index{$\pr^C$} la projection de $\gl_{n}$ sur $\syp_{n}$ parallèlement à $\syp_{n}^{\perp} \oplus \C \id$, où $\syp_{n}^{\perp}$ est l'orthogonal de $\syp_{n} \subset \spl_{n}$ pour la forme de Killing de $\spl_{n}$. Jusqu'à la fin de ce chapitre, on note $\pr$ pour $\pr^C$. La projection $\pr$ induit un morphisme d'algèbres $\Sym(\gl_{n}) \rightarrow \Sym(\syp_{n})$ que par abus, on note toujours $\pr$.
\end{defi}
\begin{rema}
Les sous-espaces vectoriels $\gl_n^{\pm}$ sont les ensembles des $x \in \gl_n$ tels que $\pr(x^\gamma)=\pm \pr(x)$.
\end{rema}
Précisons comment la combinatoire de la contraction parabolique standard symétrique $\gotq$ de $\gl_n$ (définition \ref{canonique}) se projette en type $C$. Soit $\h^C:=\pr^C(\h)$ l'ensemble des matrices diagonales de $\syp_n$, qui forment une sous-algèbre de Cartan de $\syp_{n}$. On note $R^C:=\{\beta_{|\syp_n} \, | \, \beta \in R\}$\index{$\varpi_i^C$, $\alpha^C_i$, $\pi^C$, $(\pi^C)'$} le système de racines associé. Pour tout $i \in \llbracket 1,n' \rrbracket$, on note $\varpi_i^C={\varpi_i}_{|\syp_n}$ les poids fondamentaux. Pour tout $i \in \llbracket n'+1,n-1 \rrbracket$, on a ${\varpi_i}_{|\syp_n}={\varpi_{n-i}}_{|\syp_n}=\varpi_{n-i}^C$. Pour $i \in \llbracket 1, n' \rrbracket$, on note $\epsilon_i^C={\epsilon_i}_{|\syp_n}$ et $\alpha^C_i={\alpha_i}_{|\syp_n}$. Pour $i \in \llbracket 1, n'-1 \rrbracket$, on a $\alpha^C_i=\epsilon_i^C-\epsilon_{i+1}^C$, et $\alpha^C_{n'}=2\epsilon_{n'}^C$. On considère la base $\pi^C=\{\beta_{|\syp_n} \, | \, \beta \in \pi\}=\left\{\alpha^C_1, \ldots, \alpha^C_{n'}\right\}$ de $R^C$. Soit $(\pi^C)' =\{\beta_{|\syp_n} \, | \, \beta \in \pi'\}$. Alors $\gotp^C$ est la sous-algèbre parabolique associée à $\h^C$, $\pi^C$ et $(\pi^C)'$.\par
Dans toute la suite, on pose $\gotq$ une CPSS au-dessus d'une contraction parabolique standard $\gotq^C$ de $\syp_n$.
\begin{prop}
La projection $\pr^C$ vue de $\gotq$ dans $\gotq^C$ (qui sont égales respectivement à $\gl_n$ et $\syp_n$ comme espaces vectoriels) est un morphisme de $\gotq^C$-modules.\label{mormor}
\end{prop}
\begin{proof}
On note $\gotq^\perp=\syp_n^\perp \oplus \C \id$. Soient $q_1 \in \gotq^C$ et $q_2=\pr(q_2) + q_2^\perp \in \gotq$ avec $q_2^\perp \in \gotq^\perp$. Par définition de $\gotq^\perp$, on a $[q_1,q_2^\perp]_{\g} \in \gotq^\perp$. En utilisant la définition \ref{conpara} du crochet de Lie sur $\gotq$, on obtient $[q_1,q_2^\perp]_{\gotq} \in \gotq^\perp$ (on pourra définir $\gotp^\perp:=\gotp \cap \gotq^\perp$ et $(\gotn^-)^\perp:=\gotn^- \cap \gotq^\perp$, de sorte que $\gotp=\gotp^C \oplus \gotp^\perp$, $\gotn^-=(\gotn^-)^C \oplus (\gotn^-)^\perp$ et $\gotq^\perp = \gotp^\perp \oplus (\gotn^-)^\perp$). Ainsi
\[\pr([q_1,q_2]_{\gotq})=\pr([q_1,\pr(q_2)]_{\gotq} + [q_1,q_2^\perp]_{\gotq})=\pr([q_1,\pr(q_2)]_{\gotq})=[q_1,\pr(q_2)]_{\gotq^C} \qedhere\]
\end{proof}
En type $C$, les monômes d'un polynôme de $\Sym(\gotq^C)$ et les degrés en $(\gotn^-)^C$ et $\gotp^C$ sont définis et sous-entendus par rapport à la base canonique décrite plus haut. Aussi, le bidegré d'un élément $F \in \Sym(\syp_n)$ bihomogène en $(\gotn^-)^C$ est le couple $(\deg_{\gotp^C} F, \deg_{(\gotn^-)^C} F)$.
\begin{prop}
La projection $\pr$ "préserve les bidegrés" dans le sens où si $s$ est un élément bihomogène en $\gotn^-$ de $\Sym(\gotq)$ (isomorphe à $\Sym(\gl_n)$ comme algèbre), alors soit $\bideg_{\gotp^C,(\gotn^-)^C} \pr(s) = \bideg_{\gotp, \gotn^-} s$, soit $\pr(s)=0$. En particulier, pour tout $f \in \Sym(\gotq)$, on a $\deg_{(\gotn^-)^C} \pr(f) \leq \deg_{\gotn^-} f$.
\label{preserv}
\end{prop}
\begin{proof}
Cette propriété vient du fait que $\pr(\gotp) =\gotp^C$ et $\pr(\gotn^-)=(\gotn^-)^C$.
\end{proof}
On utilise alors cette projection et nos connaissances en type $\gl_n$ pour étudier le type $C$.
\begin{prop}
Soit $F$ un semi-invariant de $\gotq$ de poids $\lambda$. Alors $\pr(F)$ est un semi-invariant de $\gotq^C$, de poids $\lambda_{|\gotq^C}$. \label{yes}
\end{prop}
\begin{proof}
On a une décomposition de $\Sym(\gl_n)$ comme somme \emph{de $\gotq^C$-modules} : $\Sym(\gotq)= \Sym(\gotq^C) \oplus \left( \gotq^\perp \otimes \Sym(\gotq) \right)$. La projection $\pr : \Sym(\gotq) \rightarrow \Sym(\gotq^C)$ est alors une projection parallèlement à $\gotq^\perp \otimes \Sym(\gotq)$.
Pour tout $q \in \gotq^C$, on a donc $q \cdot \pr(F) = \pr(q \cdot F)= \pr (\lambda(q) F)=\lambda(q) \pr(F)$ par la propriété \ref{mormor}.
\end{proof}
Comme en type $A$, on reprend l'étude de l'algèbre des invariants de Panyushev et Yakimova \cite{py13} :
\begin{theo}
Pour toute contraction parabolique $\gotq^C$ de $\syp_n$, l'algèbre des invariants $\Y(\gotq^C)$ est polynomiale et librement engendrée par les $f_{m'}:=\pr(F_{2m'})^\bullet$, $1 \leq m' \leq n'$. \label{pyC}
\end{theo}
On va donc étudier les projetés des semi-invariants $F_{m,t}$ du théorème \ref{semiinv} par la projection $\pr$. Dans toute la suite, on notera abusivement $e_{x,y}^*$\index{$e_{p,q}$, $e^*_{p,q}$} pour la forme linéaire restreinte à $\syp_n$.
\begin{lem}
Pour tous $u,u',v,v' \in \llbracket 1,n \rrbracket$, on a $\pr(e_{u,v})(e_{u',v'}^*) \neq 0$ si et seulement si $(u',v') \in \{(u,v),(v^\gamma,u^\gamma)\}$.\par
De même, on a $\C \pr(e_{u,v}) = \C \pr(e_{u',v'})$ si et seulement si $(u',v') \in \{(u,v),(v^\gamma,u^\gamma)\}$.
\label{lemC}
\end{lem}
\begin{proof}
Le point crucial est de voir que pour tous $u,v$, on a $\pr(e_{u,v})=\frac{1}{2}(e_{u,v}+\varepsilon \, e_{v^\gamma,u^\gamma})$, où $\varepsilon= \pm 1$ si $e_{u,v} \in \gl^{\pm}_{n}$.
\end{proof}
On rappelle que les graphes de forme linéaire sont définis à la sous-section \ref{graphe}.
\begin{coro}
Pour tous $x, y \in \llbracket 1,n \rrbracket$, pour $q = e_{x,y}^*$, le graphe $\mathcal{G}(q)$ est :
\begin{itemize}
\item si $y = x^\gamma$, composé d'une seule arête $x \rightarrow y$,
\item si $y \neq x^\gamma$, composé de deux arêtes $x \rightarrow y $ et $ y^\gamma \rightarrow x^\gamma$.
\label{graphC}
\end{itemize}
\end{coro}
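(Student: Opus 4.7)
The plan is to unfold the definition of the graph $\mathcal{G}(q)$ in type $C$ (from the Definition-Property preceding Lemma \ref{lemC}) and read off the non-zero edges from Lemma \ref{lemC}. Recall that the arrow from $u$ to $v$ in $\mathcal{G}(q)$ carries the weight $\pr(e_{u,v})(q)$; specialising to $q = e_{x,y}^{*}$, this weight becomes $\pr(e_{u,v})(e_{x,y}^{*})$.

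Apply Lemma \ref{lemC} with the primed indices $(u',v')$ set to $(x,y)$. The lemma asserts that $\pr(e_{u,v})(e_{x,y}^{*}) \neq 0$ if and only if $(x,y) \in \{(u,v),\,(v^{\gamma},u^{\gamma})\}$, which is in turn equivalent to
\[
(u,v) \in \{(x,y),\,(y^{\gamma},x^{\gamma})\}.
\]
Thus the only arrows of $\mathcal{G}(q)$ that can carry a non-zero weight are $x \to y$ and $y^{\gamma} \to x^{\gamma}$, and both do.

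It remains to decide when these two arrows coincide. The equality $(x,y)=(y^{\gamma},x^{\gamma})$ is equivalent (via $\gamma^{2}=\id$) to the single condition $y = x^{\gamma}$. Hence, if $y = x^{\gamma}$ the two arrows collapse to the single arrow $x \to y$, whereas if $y \neq x^{\gamma}$ they are distinct and $\mathcal{G}(q)$ consists of the two arrows $x \to y$ and $y^{\gamma} \to x^{\gamma}$. This matches the two cases in the statement, and no step presents any genuine difficulty: the corollary is essentially a repackaging of Lemma \ref{lemC}.
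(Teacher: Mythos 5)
Your proof is correct and is precisely the immediate unfolding of Lemma \ref{lemC} that the paper has in mind (the paper states this as a \emph{corollaire} with no written proof, treating it as obvious). You correctly identify the duality $e_{x,y}^{*}(\pr(e_{u,v})) = \pr(e_{u,v})(e_{x,y}^{*})$, read off the non-vanishing arrows from the lemma, and note that the two candidates coincide exactly when $y = x^{\gamma}$ via $\gamma^{2}=\id$.
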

Soit $m \in \llbracket 1, n \rrbracket$ pair et $\mathcal{E}_{m}$ l'ensemble des monômes $\mathcal{S}$ de $F_{m}$, c'est-à-dire l'ensemble des $\varepsilon(\sigma)\prod_{l \in J} e_{l,\sigma(l)}$ pour $J \subset I$ de cardinal $m$ et $\sigma \in \mathfrak{S}(J)$. On note aussi $\mathcal{E}'_{m}$ l'ensemble des $\pr(\mathcal{S})$ avec $\mathcal{S}$ un monôme de $F_{m}$. On considère l'application $\pr : \mathcal{E}_{m} \longrightarrow \mathcal{E}'_{m}$.
\begin{rema}
L'application $\pr : \mathcal{E}_{m} \longrightarrow \mathcal{E}'_{m}$ est surjective (par définition). Cependant, si $\mathcal{S} \in \mathcal{E}_{m}$ est un monôme de $F_m$, alors $\pr(\mathcal{S}) \in \mathcal{E}'_{m}$ n'est pas nécessairement un monôme de $\pr(F_m)$. Cela tient à la non injectivité de $\pr$ sur $\mathcal{E}_{m}$, autrement dit au fait que l'ensemble des monômes $\mathcal{S}_1, \ldots, \mathcal{S}_d \in \mathcal{E}_{m}$ tels que $\pr(\mathcal{S}_k) \propto \pr(\mathcal{S})$ peut avoir plus d'un élément, et donc qu'il est possible que $\sum_k \pr(\mathcal{S}_k)=0$, ce qui implique que $\pr(\mathcal{S})$ ne soit pas un monôme de $\pr(F_m)$.
\end{rema}
\begin{propn}
Si $\mathcal{S} \in \mathcal{E}_{m}$ est symétrique, c'est-à-dire $\mathcal{S}^\gamma=\mathcal{S}$ alors pour tout $\mathcal{T} \in \mathcal{E}_m$, on a $\pr(\mathcal{T}) \propto \pr(\mathcal{S}) \Longrightarrow \mathcal{T}=\mathcal{S}$. En particulier, si $\mathcal{S}$ est un monôme symétrique de $F_{m}$, alors $\pr(\mathcal{S})$ est un monôme de $\pr(F_{m})$.
\label{tivafor}
\end{propn}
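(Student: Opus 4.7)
The plan is to encode a monomial $\mathcal{S} = \varepsilon(\sigma)\prod_{l \in J} e_{l,\sigma(l)} \in \mathcal{E}_m$ as the decorated data $(J,\sigma)$, equivalently the multiset of ordered pairs $P(\mathcal{S}) := \{(l,\sigma(l)) : l \in J\}$ together with a sign. The action of $\gamma$ on monomials sends $e_{u,v}$ to $e_{v^\gamma,u^\gamma}$, so it acts on pairs by $\tau_\gamma : (u,v) \mapsto (v^\gamma,u^\gamma)$. The condition $\mathcal{S}^\gamma = \mathcal{S}$ means that $P(\mathcal{S})$ is stable under $\tau_\gamma$. Note that the fixed points of $\tau_\gamma$ on pairs are exactly the pairs $(u,u^\gamma)$; all other $\tau_\gamma$-orbits have size $2$.

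The heart of the argument is the following translation of $\pr(\mathcal{T}) \propto \pr(\mathcal{S})$: by Lemma \ref{lemC}, $\C\pr(e_{u,v}) = \C\pr(e_{u',v'})$ if and only if $(u',v')$ and $(u,v)$ lie in the same $\tau_\gamma$-orbit, and for distinct orbits the corresponding $\pr(e_{u,v})$'s belong to distinct lines of the canonical basis of $\syp_n$. Since $\Sym(\syp_n)$ is a polynomial algebra on that basis, two products $\prod_{l \in J}\pr(e_{l,\sigma(l)})$ and $\prod_{l \in K}\pr(e_{l,\tau(l)})$ are proportional if and only if the multisets of $\tau_\gamma$-orbits associated to $P(\mathcal{S})$ and $P(\mathcal{T})$ coincide.

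Now I would finish as follows. Assume $\mathcal{S}$ is symmetric and $\pr(\mathcal{T}) \propto \pr(\mathcal{S})$. For a fixed-point orbit $\{(u,u^\gamma)\}$ appearing once in $P(\mathcal{S})$, the multiset of orbits of $P(\mathcal{T})$ must contain it once, forcing $(u,u^\gamma) \in P(\mathcal{T})$. For a size-two orbit $\{(u,v),(v^\gamma,u^\gamma)\}$ with $v \neq u^\gamma$, which contributes with multiplicity $2$ in $P(\mathcal{S})$ (by symmetry, both pairs appear), $P(\mathcal{T})$ must also contribute multiplicity $2$; but a permutation cannot produce the same first coordinate twice, so the two contributing pairs in $P(\mathcal{T})$ must be the two distinct pairs $(u,v)$ and $(v^\gamma,u^\gamma)$. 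Hence $P(\mathcal{T}) = P(\mathcal{S})$, which gives $K=J$, $\tau=\sigma$, and matching signs $\varepsilon(\tau) = \varepsilon(\sigma)$; thus $\mathcal{T} = \mathcal{S}$.

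For the last assertion, write $\pr(F_m) = \sum_{\mathcal{T} \in \mathcal{E}_m} \pr(\mathcal{T})$ and group the terms according to the $\tau_\gamma$-orbit multiset they define. The first part of the proposition shows that $\mathcal{S}$ is the unique monomial of $F_m$ whose image under $\pr$ is proportional to $\pr(\mathcal{S})$, so no cancellation can occur in the class of $\pr(\mathcal{S})$; consequently $\pr(\mathcal{S}) \neq 0$ appears as a genuine monomial of $\pr(F_m)$. The only potential subtlety to watch here is making sure that the equivalence of ``proportionality'' with ``equal multiset of $\tau_\gamma$-orbits'' holds verbatim even when $P(\mathcal{S})$ has repeated orbits—this is automatic because the canonical basis elements $\pr(e_{u,v})$ form a free family of lines and $\Sym(\syp_n)$ is free polynomial on them.
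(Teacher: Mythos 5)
Your proof is correct. It is the paper's argument repackaged in multiset/orbit language, but the two differ in how the case ambiguity from Lemma \ref{lemC} is resolved. The paper first characterizes the symmetric $\mathcal{S}$ as (proportional to) $\prod_{l\in J}e_{l,\sigma(l)}$ with $J^\gamma=J$ and $\sigma\circ\gamma\circ\sigma=\gamma$; for each $l'\in J'$ it matches a factor $\pr(e_{l',\sigma'(l')})$ with some $\pr(e_{l,\sigma(l)})$, concludes $l'\in J$ (hence $J'=J$) from $J^\gamma=J$, and computes $\sigma'(l')=\sigma(l')$ in both alternatives, the second requiring the identity $\sigma\gamma\sigma=\gamma$. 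You instead encode proportionality of $\pr(\mathcal{S})$ and $\pr(\mathcal{T})$ as equality of $\tau_\gamma$-orbit multisets, and close with a counting observation: $\tau_\gamma$-stability of $P(\mathcal{S})$ forces every non-fixed orbit it meets to appear with multiplicity two, and since a permutation's graph has no repeated first coordinates, multiplicity two in $P(\mathcal{T})$ forces both pairs of that orbit into $P(\mathcal{T})$, giving $P(\mathcal{T})=P(\mathcal{S})$ at once. This replaces the paper's explicit manipulation with $\sigma\gamma\sigma=\gamma$ by a slightly cleaner multiplicity argument; the key input (Lemma \ref{lemC} together with the $\gamma$-symmetry of $\mathcal{S}$) is the same, so the gain is expository rather than substantive.
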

\begin{proof}
Remarquons déjà que $\mathcal{S}$ est symétrique si et seulement si $\mathcal{S}$ est tel que $\mathcal{S} \propto \prod_{l \in J} e_{l,\sigma(l)}$ avec $J \subset \llbracket 1,n \rrbracket$ symétrique (c'est-à-dire tel que $J^\gamma=J$) et $\sigma \in \mathfrak{S}(J)$ tel que $\sigma \circ \gamma \circ \sigma = \gamma$. En effet, par définition de la symétrie de $\mathcal{S}$, pour tout $x,y \in I$, $e_{x,y}$ divise $\mathcal{S}$ si et seulement si $e_{y^\gamma,x^\gamma} $ divise $\mathcal{S}$. Ainsi, si $x,y \in J$, alors $x^\gamma, y^\gamma \in J$ et si $y=\sigma(x)$, alors $x^\gamma=\sigma(y^\gamma)$.\par
Supposons alors que $\pr(\mathcal{T}) \propto \pr(\mathcal{S})$. On écrit $\mathcal{S} \propto \prod_{l \in J} e_{l,\sigma(l)}$ avec $J \subset \llbracket 1,n \rrbracket$ symétrique de cardinal $m$ et $\sigma \in \mathfrak{S}(J)$ tel que $\sigma \circ \gamma \circ \sigma = \gamma$, et $\mathcal{T}\propto\prod_{l' \in J'} \pr(e_{l',\sigma(l')})$ avec $J' \subset \llbracket 1,n \rrbracket$ de cardinal $m$ et $\sigma \in \mathfrak{S}(J')$. On a alors
$$\prod_{l \in J} \pr(e_{l,\sigma(l)}) \propto \prod_{l' \in J'} \pr(e_{l',\sigma'(l')}).$$
Soit $l' \in J'$. Alors il existe $l \in J$ tel que $\pr(e_{l,\sigma(l)}) \propto \pr(e_{l',\sigma'(l')}) $, et donc par le lemme \ref{lemC}, soit $(l',\sigma'(l'))=(l,\sigma(l))$, soit $(l',\sigma'(l'))=(\sigma(l)^\gamma, l^\gamma) $.\par
D'abord, on a soit $l'=l$, soit $l'=\sigma(l)^\gamma$, et donc comme $J$ est symétrique, on a en particulier $l' \in J$. Ainsi $J' \subset J$ et donc $J'=J$ par égalité des cardinaux.\par
Ensuite, on a soit $\sigma'(l')=\sigma(l)=\sigma(l')$, soit $\sigma'(l')=l^\gamma=\sigma^{-1}(\sigma(l))^\gamma=\sigma^{-1}(l'^\gamma)^\gamma=\sigma(l')$ (car $\sigma \circ \gamma \circ \sigma = \gamma$). Ainsi $\sigma'(l')=\sigma(l')$, donc $\sigma'=\sigma$ ce qui conclut.
\end{proof}
\begin{lem}
Pour tout $m \in \llbracket 1,n \rrbracket$ pair, il existe un monôme symétrique $\mathcal{S}$ de $F_{m}^\bullet$.
\end{lem}
\begin{proof}
Il existe $J \in \mathcal{J}(m)$ (définition \ref{typej}) qui est symétrique : en effet, pour tous $k,k'$ tels que $k'=s+1-k$, par symétrie de $\gotq$, on a $I_{k'}=I_k^\gamma$ et on peut choisir $J_k$ et $J_{k'}$ de sorte que $J_{k'}=J_k^\gamma$. Par la proposition \ref{typej}, si $\deg_{\gotn^-} F_{m}=m-i$, alors $\max_k |J_k|=i$. Si $J=\{a_1 < \ldots < a_{m}\}$, comme pour tout $l \in \llbracket 1, m-i \rrbracket$ on a $a_{l+i} \geq a_l$, par la propriété \ref{propordre}, on obtient $a_{l+i} \succeq a_l$ (voir définition \ref{ordre}) ; de plus, on ne peut pas avoir $a_{l+i} \sim a_l$ car sinon $a_l \sim a_{l+1} \sim \ldots \sim a_{l+i}$ ce qui contredit le fait que $\max_k |J_k|=i$, donc $a_{l+i} \succ a_l$. Finalement le terme $e_{a_1, a_{m-i+1}} \ldots e_{a_i, a_{m}} e_{a_{i+1}, a_1} \ldots e_{a_{m}, a_{m-i}}$ est un terme symétrique de degré $m-i$ en $\gotn^-$ (car $a_{m+1-l}=a_l^\gamma$ pour tout $l$).
\end{proof}
D'après la proposition \ref{tivafor}, si $\mathcal{S}$ est un monôme symétrique de $F_m^\bullet$, alors $\pr(\mathcal{S})$ est un monôme de $\pr(F_{m})$. Par la propriété \ref{preserv}, on obtient ainsi $\deg_{(\gotn^-)^C} \pr(\mathcal{S}) \leq \deg_{(\gotn^-)^C} \pr(F_{m}) \leq \deg_{\gotn^-} F_{m} = \deg_{\gotn^-} \mathcal{S}$. Or $\pr(\mathcal{S})$ est un monôme non nul, donc bihomogène en $(\gotn^-)^C$, donc toujours par la propriété \ref{preserv}, on a $\deg_{(\gotn^-)^C} \pr(\mathcal{S})=\deg_{\gotn^-} \mathcal{S}$, d'où $\deg_{(\gotn^-)^C} \pr(F_{m}) = \deg_{\gotn^-} F_{m}$. Ainsi on a :
$$\pr(F_{m})^\bullet=\pr(F_{m}^\bullet)$$
Pour tout $m' \in \llbracket 1,n' \rrbracket$, on note désormais $f_{m'}:=\pr(F_{2m'})^\bullet=\pr(F_{2m'}^\bullet)$.
\begin{rema}
Pour $m$ impair, on a $\pr(F_m^\bullet)=\pr(F_m)^\bullet=0$ car $\pr(F_m)=0$ (voir \cite{bou06}).
\end{rema}

\section{Cas où le facteur de Levi est de type $A$}
\label{sec9}
On rappelle que $n'=n/2$. Soit $\gotq^C$ une contraction parabolique standard de $\syp_n$ telle que $\alpha_{n'}^C \notin (\pi^C)'$. Soit $\gotq$ la CPSS au-dessus de $\gotq^C$. Dans ce cas, on a $e_{{n'}^\gamma,n'}=e_{n'+1,n} \in (\gotn^-)^C$, de sorte que l'algèbre de Lie $\gotq$ admet un facteur de Levi $\gotl$ de la forme
$$\gotl=\g_{I_1} \times \ldots \times \g_{I_s}$$
où l'on a $I_{s+1-k}=I_k^\gamma$ pour tout $k \in \llbracket 1,s \rrbracket$, et où \textbf{$s$ est pair}. On note $s=2s'$.
\begin{rema}
Réciproquement, si $\gotq$ admet un facteur de Levi d'une telle forme, la contraction $\gotq^C$ en type $C$ déduite de $\gotq$ vérifie $\alpha_{n'}^C \notin (\pi^C)'$.
\end{rema}
On a alors $\gotl^C:=\pr(\gotl) \simeq \gl_{i_1} \times \ldots \times \gl_{i_{s'}}$ qui est un facteur de Levi de $\gotq^C$.
\begin{notaprop}
On reprend pour $\gotq$ toutes les notations de \ref{nota}.
\begin{itemize}
\item Pour tout $i \in \llbracket 0, \imax \rrbracket$, les entiers $\rho_i$ et $m_i$ sont pairs. En particulier, $\mathbf{M}_1=\mathbf{M}_2=:\mathbf{M}$\index{$\mathbf{M}$, $\mathbf{M}'$} et tous les éléments de $\mathbf{M}$ sont pairs. Pour tout $i$, on définit alors $\rho'_i:=\rho_i/2$ et $m'_i:=m_i/2$\index{$\rho'_i$, $m'_i$, $r'_{m'}$}. On note $\mathbf{M}':=\{m/2 \, | \, m \in \mathbf{M}\}$. En particulier, pour tout $m=m_i \in \mathbf{M}$, l'entier $r_m=\rho_i$ est pair. Pour tout $m' \in \mathbf{M}'$, on note alors $r'_{m'}:=r_{2m'}/2$.
\item Pour tout $i \in \mathbf{I}$, l'ensemble $\kappa_i=\left\{k_{i,1} < \ldots < k_{i,\rho_i}\right\}$ est symétrique dans le sens où $k_{i,\rho_i+1-t}=s+1-k_{i,t}$ pour tout $t$. Comme précédemment, on omet souvent les indices $i$.
\end{itemize}
\end{notaprop}
\begin{exem}
\label{exempleC}
Pour illustrer les raisonnements de cette section, on se placera dans le cas d'une CPSS où $n=12, s=8, I_1=\{1\}, I_2=\{2,3\}, I_3=\{4,5\}, I_4=\{6\}$ et $I_k=I_{9-k}^\gamma$ pour $5 \leq k \leq 8$, c'est-à-dire schématiquement :
\begin{center}
\begin{tikzpicture}[scale=0.5]
\foreach \k in {1,2,...,11}
	{\draw[color=gray!20]  (0,\k)--(12,\k);
	\draw[color=gray!20] (\k,0)--(\k,12);}
\draw (0,0)--(12,0);
\draw (0,0)--(0,12);
\draw (0,12)--(12,12);
\draw (12,0)--(12,12);
\draw[ultra thick] (0,11)--(1,11)--(1,9)--(3,9)--(3,7)--(5,7)--(5,6)--(6,6)--(6,5)--(7,5)--(7,3)--(9,3)--(9,1)--(11,1)--(11,0);
\draw (3,3) node{\LARGE $\gotn^-$};
\draw (9,9) node{\LARGE $\gotp$};
\draw[dashed] (1,12)--(1,11)--(3,11)--(3,9)--(5,9)--(5,7)--(6,7)--(6,6)--(7,6)--(7,5)--(9,5)--(9,3)--(11,3)--(11,1)--(12,1);
\draw (0.5,11.5) node{\small $\g_{I_1}$};
\draw (2,10) node{\small $\g_{I_2}$};
\draw (4,8) node{\small $\g_{I_3}$};
\draw (5.5,6.5) node{\small $\g_{I_4}$};
\draw (6.5,5.5) node{\small $\g_{I_5}$};
\draw (8,4) node{\small $\g_{I_6}$};
\draw (10,2) node{\small $\g_{I_7}$};
\draw (11.5,0.5) node{\small $\g_{I_8}$};
\end{tikzpicture}
\end{center}
\end{exem}
\subsection{Semi-invariants}
On pose maintenant $i \in \mathbf{I}$, $m=m_i$, $r=\rho_i$, $m'=m/2$ et $r'=\rho'_i=r/2$. D'après l'équation \eqref{expmochesemiinv}, on a $F_m^\bullet \propto F_{m,1} \ldots F_{m,r}$ avec
$$F_{m,t}= \sum_{\stackrel{J_k \in \mathcal{J}_{k} }{ \forall k \in K_t}} \; \Delta_{J^{(t)},J^{[t]}}^\bullet$$
où $K_t=\left\{ \begin{array}{ll}
\llbracket k_t+1, k_{t+1}-1 \rrbracket&\text{si } t<r\\\llbracket 1, k_1-1 \rrbracket \sqcup \llbracket k_r+1, s \rrbracket& \text{si } t=r
\end{array}\right.$. De $I_{s+1-k}=I_k^\gamma$ pour tout $k \in \llbracket 1,s \rrbracket$ et $k_{r+1-t}=s+1-k_{t}$ pour tout $t$, on tire $\left(I^{(t)}\right)^\gamma=I^{[r-t]}$ pour tout $t<r$ et $\left(I^{(r)}\right)^\gamma=I^{[r]}$ par définition des $J^{(t)}, J^{[t]}$ (notation \ref{nota46}).
\begin{propn}
Pour tout $t \in \llbracket 1, r' -1 \rrbracket$, on a $\pr(F_{m,r-t})=(-1)^{\deg F_{m,t}} \pr(F_{m,t})$.
\end{propn}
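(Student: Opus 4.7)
Le plan consiste à exploiter l'automorphisme d'algèbres $\gamma$ de $\Sym(\gotq)$ et son interaction avec la projection $\pr$. Je procéderais en deux étapes : d'abord établir l'identité $\gamma(F_{m,t}) = F_{m,r-t}$ (sans signe), puis contrôler le signe introduit par $\pr \circ \gamma$ lorsqu'on applique à $F_{m,t}$.

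Pour la première étape, je partirais de l'expression \eqref{expmochesemiinv}. Un calcul direct montre que $\gamma(\Delta_{A,B}) = \Delta_{B^\gamma, A^\gamma}$ pour $A, B \subset \llbracket 1, n \rrbracket$ de même cardinal : la correspondance $\sigma \mapsto \gamma \sigma^{-1} \gamma$ entre bijections $A \to B$ et $B^\gamma \to A^\gamma$ préserve la signature car la conjugaison par l'involution $\gamma$ (de signature $(-1)^{\lfloor m/2 \rfloor}$ avec $m = |A|$) et l'inversion se compensent. Appliquée à $\Delta^\bullet_{J^{(t)}, J^{[t]}}$, cette identité combinée aux relations de symétrie $I_k^\gamma = I_{s+1-k}$ et $k_{r+1-u} = s+1-k_u$ donne $(J^{(t)})^\gamma = (J^\gamma)^{[r-t]}$, $(J^{[t]})^\gamma = (J^\gamma)^{(r-t)}$ et $s+1-K_t = K_{r-t}$. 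La réindexation $(J_k)_{k \in K_t} \mapsto ((J^\gamma)_{k'})_{k' \in K_{r-t}}$ identifie alors terme à terme $\gamma(F_{m,t})$ à $F_{m,r-t}$.

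Pour la seconde étape, la formule $\pr(e_{u,v}) = \frac{1}{2}(e_{u,v} + \varepsilon \, e_{v^\gamma, u^\gamma})$ avec $\varepsilon = \pm 1$ selon que $e_{u,v} \in \gl_n^\pm$ entraîne directement $\pr(\gamma(e_{u,v})) = \pr(e_{v^\gamma,u^\gamma}) = \varepsilon \, \pr(e_{u,v})$. Le point technique crucial est alors de montrer que pour $t \leq r'-1$, tout $e_{u,v}$ apparaissant dans un monôme de $F_{m,t}$ vérifie $u, v \in \llbracket 1, n' \rrbracket$, et donc $e_{u,v} \in \gl_n^-$. Je l'obtiendrais en trois observations : l'hypothèse $\alpha_{n'}^C \notin (\pi^C)'$ combinée à la symétrie $\iota_{s-k} + \iota_k = n$ force $\iota_{s'} = n'$, d'où $I_k \subset \llbracket 1, n' \rrbracket$ pour tout $k \leq s'$ ; l'inégalité stricte $k_{r'} < k_{r'+1} = s+1-k_{r'}$ entraîne $k_{r'} \leq s'$ ; enfin $t \leq r'-1$ donne $k_{t+1} \leq k_{r'} \leq s'$, donc $I^{(t)} \cup I^{[t]} \subset \llbracket 1, n' \rrbracket$ et les monômes de $F_{m,t}$ sont concentrés dans $\g_{I^{(t)}, I^{[t]}}$. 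Puisque $F_{m,t}$ est homogène, tous ses monômes ont le même degré $\deg F_{m,t}$, ce qui donne $\pr(\gamma(F_{m,t})) = (-1)^{\deg F_{m,t}} \pr(F_{m,t})$ et, combiné à la première étape, établit la proposition. Le principal point délicat me semble être le contrôle rigoureux de la réindexation $(J_k) \mapsto ((J^\gamma)_{k'})$ dans la première étape, en particulier la vérification que les conditions $J \in \mathcal{J}(m)$ sont bien préservées par l'involution $J \mapsto J^\gamma$ et que la convention de signe sur $\varepsilon(\sigma)$ pour les bijections entre sous-ensembles ordonnés de $\N$ est compatible avec la conjugaison par $\gamma$.
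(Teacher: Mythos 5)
Your proposal is correct and follows essentially the same route as the paper: both establish $\gamma(F_{m,t}) = F_{m,r-t}$ via the bijection $J_k \in \mathcal{J}_k \leftrightarrow J_k^\gamma \in \mathcal{J}_{s+1-k}$ and the identity $\gamma(\Delta_{J^{(t)},J^{[t]}}^\bullet) = \Delta_{(J^\gamma)^{(r-t)},(J^\gamma)^{[r-t]}}^\bullet$, then extract the sign from the fact that $\pr \circ \gamma = -\pr$ on $\gl_n^-$. You merely make explicit two points the paper only asserts — the signature computation for $\sigma \mapsto \gamma\sigma^{-1}\gamma$, and the chain $k_{t+1} \le k_{r'} \le s'$, $\iota_{s'}=n'$ showing $I^{(t)}\cup I^{[t]}\subset\llbracket 1,n'\rrbracket$ and hence $F_{m,t}\in\Sym(\gl_n^-)$.
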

\begin{proof}
Comme $\alpha_{n'}^C \notin (\pi^C)'$, on a $F_{m,t} \in \gl_{n}^-$ pour tout $t \in \llbracket 1, r'-1\rrbracket$. Pour tout $k \in K_t=\llbracket k_t+1, k_{t+1}-1 \rrbracket$, pour tout choix de $J_k \in \mathcal{J}_k$, son symétrique $J_k^\gamma$ est un choix possible de $J'_l \in \mathcal{J}_l$ pour  $l=s+1-k \in K_{r-t}=\llbracket k_{r-t}+1, k_{r-t+1}-1 \rrbracket=s+1-K_t$, et la correspondance est bijective.
\begin{center}
\begin{tikzpicture}[scale=0.6]
\draw (0,0)--(18,0)--(18,18)--(0,18)--(0,0);
\draw[double, ultra thick] (0,9)--(9,9)--(9,0);
\draw[double, ultra thick] (9,18)--(9,9)--(18,9);
\draw(4.5,4.5) node{$\displaystyle \gotn^-$};
\draw(13.5,13.5) node{$\displaystyle \gotp$};
\draw[dotted] (0,18)--(1,17);
\draw[dotted] (3,15)--(4,14);
\draw[dotted] (6,12)--(6.5,11.5);
\draw[dotted] (11.5,6.5)--(12,6);
\draw[dotted] (14,4)--(15,3);
\draw[dotted] (17,1)--(18,0);
\draw (1,17)--(1,15)--(3,15);
\draw (4,14)--(4,12)--(6,12);
\draw (6.5,11.5)--(6.5,9)--(9,9)--(9,6.5)--(11.5,6.5);
\draw (12,6)--(12,4)--(14,4);
\draw (15,3)--(15,1)--(17,1);
\draw[dashed] (1,17)--(3,17)--(3,15);
\draw[dashed] (4,14)--(6,14)--(6,12);
\draw[dashed] (6.5,11.5)--(9,11.5)--(9,9)--(11.5,9)--(11.5,6.5);
\draw[dashed] (12,6)--(14,6)--(14,4);
\draw[dashed] (15,3)--(17,3)--(17,1);
\draw (2,16) node{$\g_{J_{k_t}}$};
\draw (5,13) node{$\g_{J_{k_{t+1}}}$};
\draw (7.75,10.25) node{$\g_{J_{s'}}$};
\draw (10.25,7.75) node{$\g_{J_{s'+1}}$};
\draw (13,5) node{$\g_{J_{k_{r-t}}}$};
\draw (16,2) node{\tiny $ \g_{J_{k_{r+1-t}}}$};
\draw[red] (1,15)--(1,12)--(4,12)--(4,15)--(1,15);
\draw[red] (12,4)--(12,1)--(15,1)--(15,4)--(12,4);
\draw[red] (15,4)--(16,6);
\draw (16,6.5) node[red]{$\g_{J^{(r-t)}, J^{[r-t]}}$};
\draw[red] (4,15)--(6,16);
\draw (6,16.5) node[red]{$\g_{J^{(t)}, J^{[t]}}$};
\end{tikzpicture}
\end{center}
Ainsi, comme l'illustre le diagramme ci-dessus, on a $\Delta_{J^{(r-t)},J^{[r-t]}}^\bullet =  (\Delta_{J^{(t)},J^{[t]}}^\bullet)^\gamma$ sur $\gl_n$, de sorte que
\[\pr(F_{m,r-t})=(-1)^{\deg F_{m,t}} \pr(F_{m,t})\qedhere\]
\end{proof}
\begin{exem}
Dans l'exemple \ref{exempleC}, on a $F_8^\bullet \propto F_{8,1} F_{8,2}F_{8,3} F_{8,4}$ et $F_{12}^\bullet \propto F_{12,1} F_{12,2} F_{12,3} F_{12,4}$. Pour tout $m$ pair différent de $8$ et $12$, on a $F_m^\bullet = F_{m,1}$. La projection donne alors $\pr(F_{8,3})=-\pr(F_{8,1})$ et $\pr(F_{12,3})=\pr(\Delta^\bullet_{\{10,11\}, \{8,9\}})=\pr(\Delta^\bullet_{\{4,5\}, \{2,3\}})=\pr(F_{12,1})$.
\end{exem}
\begin{nota}\mbox{}
\begin{itemize}
\item Soient $i \in \mathbf{I}$, $m=m_i \in \mathbf{M}$, $r=\rho_i$, $m'=m/2 \in \mathbf{M}'$ et $r'=r'_{m'}=\rho'_i=r/2$. On note $f_{m',t}=\pr(F_{2m',t})$\index{$f_{m',t}$} pour $t \in \llbracket 1, r' \rrbracket$ et $f_{m',r'+1}=\pr(F_{m,r})$. On pose également $c'_{m'}:=c_{m}\prod_{t=1}^{r'-1}(-1)^{\deg F_{m,t}}$\index{$c'_{m'}$} (équation \eqref{decomp}). On a alors
\begin{equation}
f_{m'}=c'_{m'} \left(\prod_{t=1}^{r'-1} f_{m',t}^2\right) \times f_{m',r'} \times f_{m',r'+1} \label{decompC}
\end{equation}
avec $f_{m',t} \in \Sym((\gotn^-)^C)$ pour tout $t \leq r'$ par le théorème \ref{semiinv}).
\item Pour tout $m' \in \llbracket 1,n' \rrbracket \setminus \mathbf{M}'$, on note $c'_{m'}=1$, $f_{m',1}:=f_{m'}$ et $r'_{m'}=0$. On note alors $\mathbf{f}$ la famille de semi-invariants de $\Sy(\gotq^C)$ composée de l'ensemble des $f_{m',t}$ pour $1 \leq m' \leq n'$ et $1 \leq t \leq r'_{m'}+1$.
\end{itemize}
\label{notag}
\end{nota}
On obtient $\Card(\mathbf{f})=\sum_{m'=1}^{n'} (r'_{m'}+1) = n'+s'$.
\subsection{Poids}
On reprend les notations de la sous-section \ref{ssec4.2}. Soit $w_k^C=\varpi^C_{\iota_{k-1}}-\varpi^C_{\iota_{k}}$ pour $1 \leq k \leq s'$.
\begin{propn}
À $m'=m'_i \in \mathbf{M}'$ fixé, pour tout $t \in \llbracket 1, r'_{m'} +1\rrbracket$, le semi-invariant $f_{m',t}$ est de poids
$$\lambda^C_{m',t}=\left\{ \begin{array}{ll} w^C_{k_t}-w^C_{k_{t+1}} &  \text{si } t<r'_{m'} \\ 2w^C_{k_t} & \text{si }t=r'_{m'} \\ -2w^C_{k_1} & \text{si }t=r'_{m'}+1 \end{array} \right. $$
Si $m' \notin \mathbf{M}'$, on a $\lambda^C_{m',1}=0$. À $m'$ fixé, la famille $(\lambda^C_{m',t})_{1 \leq t \leq r'_{m'}+1}$ est de rang $r'_{m'}$.\par
Les espaces vectoriels $\vect((\lambda^C_{m',t})_{1 \leq t \leq r'_{m'}+1})$  pour $m' \in \llbracket 1,n' \rrbracket$ sont en somme directe.
\end{propn}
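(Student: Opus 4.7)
The plan is to derive the formulas for $\lambda^C_{m',t}$ from the already-computed weights $\lambda_{m,t}$ of $\gl_n$ given in Section \ref{ssec4.2}, then verify the rank and direct-sum assertions by linear algebra in the basis $(w_k^C)_{1 \leq k \leq s'}$. By Proposition \ref{yes}, restriction sends a semi-invariant of $\Sym(\gotq)$ of weight $\lambda$ to a semi-invariant of $\Sym(\gotq^C)$ of weight $\lambda_{|\gotq^C}$. Hence, using Notation \ref{notag}, $\lambda^C_{m',t} = \lambda_{2m',t}|_{\syp_n}$ for $t \leq r'_{m'}$ and $\lambda^C_{m',r'_{m'}+1} = \lambda_{2m',r_{2m'}}|_{\syp_n}$.

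First I establish the restriction rule for the $w_k$. Since the hypothesis $\alpha^C_{n'} \notin (\pi^C)'$ forces $\iota_{s'} = n'$, the symmetry $\iota_{s-j} = n - \iota_j$ combined with $\varpi_i|_{\syp_n} = \varpi_{n-i}^C$ for $n' < i < n$ (and $\varpi_0 = \varpi_n = 0$) yields $w_k|_{\syp_n} = w_k^C$ when $k \leq s'$ and $w_k|_{\syp_n} = -w_{s+1-k}^C$ when $k \geq s'+1$. Plugging this into the formulas $\lambda_{m,t} = w_{k_t} - w_{k_{t+1}}$ and $\lambda_{m,r} = w_{k_r} - w_{k_1}$ established in the $\gl_n$ case, and using the symmetry $k_{r+1-t} = s+1-k_t$ of $\kappa_i$ (which forces $k_t \leq s'$ for $t \leq r'_{m'}$ and $k_t \geq s'+1$ for $t \geq r'_{m'}+1$), a case split produces the three announced formulas: $w^C_{k_t} - w^C_{k_{t+1}}$ when $t < r'_{m'}$, $2w^C_{k_t}$ when $t = r'_{m'}$ (the two summands collapse because $s+1-k_{r'_{m'}+1} = k_{r'_{m'}}$), and $-2w^C_{k_1}$ when $t = r'_{m'}+1$. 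The case $m' \notin \mathbf{M}'$ is immediate since then $f_{m',1} = f_{m'}$ is itself an invariant.

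For the rank assertion, one linear dependence comes from $f_{m'} \in \Y(\gotq^C)$: extracting the weight of the decomposition \eqref{decompC} gives
\[
2\sum_{t=1}^{r'_{m'}-1} \lambda^C_{m',t} + \lambda^C_{m',r'_{m'}} + \lambda^C_{m',r'_{m'}+1} = 0,
\]
which is checked by telescoping. Conversely, in the ordered basis $(w^C_{k_t})_{1 \leq t \leq r'_{m'}}$ of its span, the subfamily $(\lambda^C_{m',t})_{1 \leq t \leq r'_{m'}}$ is represented by an upper-triangular matrix with diagonal entries $1, 1, \ldots, 1, 2$; this shows linear independence of $r'_{m'}$ elements, giving the announced rank.

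Finally, for the direct-sum claim, the rank computation shows that each $\vect\bigl((\lambda^C_{m',t})_t\bigr)$ coincides with $\vect\bigl((w^C_{k_{j,t}})_{1 \leq t \leq r'_j}\bigr)$, where $j \in \mathbf{I}$ is the index with $m'_j = m'$. Because each $\kappa_j$ is symmetric and $\bigsqcup_{j \in \mathbf{I}} \kappa_j = \llbracket 1, s \rrbracket$, the sets $\{k_{j,t} : 1 \leq t \leq r'_j\}$ for $j \in \mathbf{I}$ form a partition of $\llbracket 1, s' \rrbracket$. Since $\iota_1 < \iota_2 < \ldots < \iota_{s'} = n'$ correspond to distinct fundamental weights of $\syp_n$, the family $(w^C_k)_{1 \leq k \leq s'}$ is linearly independent, so the spans indexed by distinct $j$ lie in direct sum. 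No substantive obstacle is expected: the whole proof is essentially a careful translation of the $\gl_n$ weight computation through $\pr^C$, the only subtle point being the identification of $\varpi_i|_{\syp_n}$ for $i > n'$, where the hypothesis $\alpha^C_{n'} \notin (\pi^C)'$ (i.e.\ the Levi factor of type $A$) is crucial.
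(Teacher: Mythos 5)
Your proof is correct and follows essentially the same approach as the paper: restricting the $\gl_n$ weights $\lambda_{2m',t}$ through $\pr^C$ and then doing linear algebra in the free family $(w^C_k)_{1 \leq k \leq s'}$. You supply more detail on the restriction rule $w_k|_{\syp_n}$ and use an upper-triangular matrix argument for independence where the paper telescopes, but these are cosmetic variations on the same computation.
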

\begin{proof}
L'expression des poids vient du fait que $\lambda^C_{m',t}={\lambda_{2m',t}}_{|\syp_n}$.
Les poids $\lambda^C_{m',t}$, $1 \leq t \leq r'_{m'}$ sont linéairement indépendants. En effet, posons $e^C_k=\varpi^C_{\iota_k}$, avec $e^C_0=0$. La famille des $e^C_k$, $1 \leq k \leq s'$ est libre, donc la famille des $w^C_k=e^C_{k-1} - e^C_{k}$, $1 \leq k \leq s'$ est libre.
\begin{exem}
On reprend l'exemple \ref{exempleC}.
\begin{center}
\begin{tikzpicture}[scale=0.5]
\foreach \k in {1,2,...,11}
	{\draw[color=gray!20]  (0,\k)--(12,\k);
	\draw[color=gray!20] (\k,0)--(\k,12);}
\draw (0,0)--(12,0);
\draw (0,0)--(0,12);
\draw (0,12)--(12,12);
\draw (12,0)--(12,12);
\draw[ultra thick] (0,11)--(1,11)--(1,9)--(3,9)--(3,7)--(5,7)--(5,6)--(6,6)--(6,5)--(7,5)--(7,3)--(9,3)--(9,1)--(11,1)--(11,0);
\draw (3,3) node{\LARGE $\gotn^-$};
\draw (9,9) node{\LARGE $\gotp$};
\draw[dashed] (1,12)--(1,11)--(3,11)--(3,9)--(5,9)--(5,7)--(6,7)--(6,6)--(7,6)--(7,5)--(9,5)--(9,3)--(11,3)--(11,1)--(12,1);
\draw[->] (0.5,11.5)--(1.5,10.5);
\draw (1.5,11.5) node{\small $\varpi_1^C$};
\draw[->] (2.5,9.5)--(3.5,8.5);
\draw (3.5,9.5) node{\small $\varpi_3^C$};
\draw[->] (4.5,7.5)--(5.5,6.5);
\draw (5.5,7.5) node{\small $\varpi_5^C$};
\draw[->] (5.5,6.5)--(6.5,5.5);
\draw (6.5,6.5) node{\small $\varpi_6^C$};
\draw[->] (6.5,5.5)--(7.5,4.5);
\draw (7.5,5.5) node{\small $\varpi_5^C$};
\draw[->] (8.5,3.5)--(9.5,2.5);
\draw (9.5,3.5) node{\small $\varpi_3^C$};
\draw[->] (10.5,1.5)--(11.5,0.5);
\draw (11.5,1.5) node{\small $\varpi_1^C$};
\end{tikzpicture}
\end{center}
On a alors
\begin{itemize}
\item $e^C_0=0$, $e^C_1=\varpi_1^C$, $e^C_2=\varpi_3^C$, $e^C_3=\varpi_5^C$ et $e^C_4=\varpi_6^C$,
\item $w^C_1=-\varpi_1^C$, $w^C_2=\varpi_1^C-\varpi_3^C$, $w^C_3=\varpi_3^C-\varpi_5^C$ et $w^C_4=\varpi_5^C-\varpi_6^C$,
\item $\lambda^C_{8,1}=w^C_1-w^C_4$, $\lambda^C_{8,2}=2w^C_4$, $\lambda^C_{12,1}=w^C_2-w^C_3$ et $\lambda^C_{12,2}=2w^C_3$.
\end{itemize}
\end{exem}
Montrons alors que quelque soit $\kappa=\{k_1 < \ldots < k_r \} \subset \llbracket 1,s \rrbracket$, la famille des $\lambda^C_{m',t}=w_{k_t}^C-w_{k_{t+1}}^C$, $1 \leq t \leq r'-1$ et $\lambda^C_{m',r'}=2w_{k_{r'}}$ est libre. Supposons que $\sum_{t=1}^{r'} x_t \lambda^C_{m',t}=0$ avec $x_t \in \C$ et posons $x_0=0$. On a donc $\sum_{t=1}^{r'-1} x_t (w_{k_t}^C-w_{k_{t+1}}^C) + 2 x_{r'} w_{k_{r'}}^C=0$, c'est-à-dire
$$\sum_{t=1}^{r'-1} (x_t-x_{t-1}) w_{k_t}^C+(2x_{r'}-x_{r'-1}) w_{k_{r'}}^C=0$$
Comme la famille des $w_k^C$ est libre on a donc $x_1-x_0 = x_2-x_1= \ldots = x_{r'-1}-x_{r'-2}=2x_{r'}-x_{r'-1}=0$, ce qui implique que tous les $x_t$ sont nuls. En particulier, les $\lambda^C_{m',t}$ sont linéairement indépendants.\par
Soient $m_i \neq m_{i'}$ dans $\mathbf{M}_1$. Alors l'ensemble des $w_k^C$ qui apparaissent dans l'écriture des $\lambda^C_{m'_{i'},t}$ et l'ensemble des $w_k^C$ qui apparaissent dans l'écriture des $\lambda_{m'_{i'},t}^C$ sont disjoints. Comme les $w^C_k$ forment une famille libre, les espaces vectoriels $\vect((\lambda^C_{m'_{i'},t})_t)$ pour $1 \leq i \leq \imax$ sont en somme directe.
\end{proof} 
\begin{coro}
Par le théorème \ref{alin}, les $f_{m',t}$, pour $m' \in \llbracket 1, n' \rrbracket$, et $t \in \llbracket 1, r'_{m'} +1 \rrbracket$ sont algébriquement indépendants, de sorte que $\ind \gotq^C_\Lambda \geq n'+ \sum_{i \in \mathbf{I}} \rho'_i=n'+s'$.
\end{coro}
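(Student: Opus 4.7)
The plan is to apply Théorème \ref{alin} to the family of invariants $(f_{m'})_{1 \leq m' \leq n'}$ together with the decompositions \eqref{decompC}, treating separately (or uniformly) the case $m' \notin \mathbf{M}'$ where the decomposition is trivial ($r'_{m'} = 0$ and $f_{m',1} = f_{m'}$).

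First I would check each hypothesis of Théorème \ref{alin} in turn. Algebraic independence of the $(f_{m'})_{1 \leq m' \leq n'}$ is precisely the content of Théorème \ref{pyC} of Panyushev--Yakimova: they freely generate $\Y(\gotq^C)$. The decomposition \eqref{decompC} has exponent sequence $(2,\ldots,2,1,1)$ (for $m' \in \mathbf{M}'$) or $(1)$ (for $m' \notin \mathbf{M}'$); in both cases the exponents are setwise coprime because a $1$ appears in the list. The rank condition and the direct sum condition on the weights $\lambda^C_{m',t}$ are exactly what the preceding proposition has just established: at $m'$ fixed the $r'_{m'}+1$ weights span a space of dimension $r'_{m'} = (r'_{m'}+1) - 1$, and the spans for different $m'$ are in direct sum. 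Applying Théorème \ref{alin} then yields the algebraic independence of the whole family $\mathbf{f} = (f_{m',t})_{m',t}$.

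For the index bound, I would then invoke Théorème \ref{ai}, which applies since $\gotq^C$ is a parabolic contraction of the reductive Lie algebra $\syp_n$ and is therefore ad-algebrique. This gives $\ind \gotq^C_\Lambda = \GK \Sy(\gotq^C)$. Since $\mathbf{f} \subset \Sy(\gotq^C)$ is an algebraically independent family, the subalgebra $\C[\mathbf{f}]$ it generates is polynomial of Gelfand--Kirillov dimension $\Card(\mathbf{f})$, so
\[
\ind \gotq^C_\Lambda = \GK \Sy(\gotq^C) \;\geq\; \GK \C[\mathbf{f}] \;=\; \Card(\mathbf{f}) \;=\; n' + \sum_{i \in \mathbf{I}} \rho'_i \;=\; n' + s',
\]
using the count $\Card(\mathbf{f}) = \sum_{m'=1}^{n'}(r'_{m'}+1) = n' + s'$ already noted in the text.

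There is no substantial obstacle at this step: once the weight computations of the preceding proposition are in hand, the corollary follows from two cited results (Théorème \ref{alin} and Théorème \ref{ai}) together with the Panyushev--Yakimova generation statement. The only minor point requiring care is the uniform formulation covering $m' \notin \mathbf{M}'$, where the decomposition is trivial and the weight has rank $0$; this is handled consistently with the convention $r'_{m'} = 0$ and $f_{m',1} = f_{m'}$ fixed in Notation \ref{notag}.
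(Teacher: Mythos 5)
Your proof is correct and follows exactly the argument the paper intends: apply Théorème \ref{alin} to the free generators $(f_{m'})_{m'}$ of $\Y(\gotq^C)$ together with the decomposition \eqref{decompC} (exponents $(2,\dots,2,1,1)$ or $(1)$, hence setwise coprime; rank and direct-sum conditions from the preceding proposition), then combine Théorème \ref{ai} with the cardinality count $\Card(\mathbf{f}) = n' + s'$. No discrepancy with the paper's approach.
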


\subsection{Troncation canonique}
L'algèbre dérivée $(\gotq^C)'$ de $\gotq^C$ est
$$(\gotq^C)'=\widehat{\gotq^C} \oplus \bigoplus_{\stackrel{l \in \llbracket 1,n'-1 \rrbracket}{l \notin I_{\gotp}}} \C \pr(h_l)$$
où $\widehat{\gotq^C}$ est le sous-espace vectoriel des matrices de $\syp_{n}$ de diagonale nulle. Ainsi $(\gotq^C)'$ est de dimension $\dim \gotq^C-s'$.\par
Or $(\gotq^C)' \subset \gotq^C_\Lambda$ (propriété \ref{inclusion}) donc $\dim \gotq^C_\Lambda \geq \dim (\gotq^C)'= \dim \gotq^C-s'$ et $\ind \gotq^C_\Lambda \geq n'+s'=\ind \gotq^C+s'$. Mais alors par \eqref{ovdb}, on a les égalités, de sorte que
\begin{propn}
On a $\gotq^C_\Lambda=(\gotq^C)'$. Ceci implique que la famille des $f_{m',t}$, $m' \in \llbracket 1,n' \rrbracket$, $t \in \llbracket 1, r'_{m'}+1 \rrbracket$ est une base de transcendance de $\Sy(\gotq^C)$.
\label{conparaC}
\end{propn}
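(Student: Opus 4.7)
L'idée est d'exploiter directement les inégalités déjà établies juste avant l'énoncé et l'égalité \eqref{ovdb} de Ooms et van den Bergh pour montrer simultanément les deux affirmations. Je partirais de la double inégalité déjà obtenue :
\begin{itemize}
\item d'une part $(\gotq^C)' \subset \gotq^C_\Lambda$ (propriété \ref{inclusion}) fournit $\dim \gotq^C_\Lambda \geq \dim (\gotq^C)' = \dim \gotq^C -s'$,
\item d'autre part, la famille $(f_{m',t})_{m',t}$ étant algébriquement indépendante dans $\Sy(\gotq^C)=\Y(\gotq^C_\Lambda)$ (corollaire précédent), de cardinal $n'+s'$, on a $\ind \gotq^C_\Lambda = \GK \Y(\gotq^C_\Lambda) \geq n'+s'$ (théorème \ref{ai}).
\end{itemize}

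Par le résultat de Panyushev-Yakimova, on a $\ind \gotq^C = \rg \syp_n = n'$, et comme $\gotq^C$ est ad-algébrique, l'égalité \eqref{ovdb} donne
$$\dim \gotq^C_\Lambda + \ind \gotq^C_\Lambda = \dim \gotq^C + \ind \gotq^C = \dim \gotq^C + n'.$$
Or la somme des minorations précédentes est exactement $(\dim \gotq^C - s') + (n'+s') = \dim \gotq^C + n'$. Ainsi les deux inégalités sont en réalité des égalités, ce qui force $\dim \gotq^C_\Lambda = \dim (\gotq^C)'$ et $\ind \gotq^C_\Lambda = n'+s'$. Combiné avec l'inclusion $(\gotq^C)' \subset \gotq^C_\Lambda$, on en déduit $\gotq^C_\Lambda = (\gotq^C)'$.

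Pour la deuxième affirmation, il suffit d'observer que par le théorème \ref{ai}, $\GK \Sy(\gotq^C) = \ind \gotq^C_\Lambda = n'+s' = \Card(\mathbf{f})$. La famille $\mathbf{f}$ étant algébriquement indépendante et son cardinal coïncidant avec le degré de transcendance de $\Sy(\gotq^C)$, c'est bien une base de transcendance de $\Sy(\gotq^C)$. Cette étape est immédiate ; l'essentiel de l'argument réside dans le jeu d'inégalités précédent, qui n'offre en soi aucun obstacle réel puisque toutes les pièces (égalité d'Ooms--van den Bergh, calcul de $\ind \gotq^C$, indépendance algébrique des $f_{m',t}$, inclusion $(\gotq^C)' \subset \gotq^C_\Lambda$) sont déjà en main.
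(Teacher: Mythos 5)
Votre démonstration est correcte et suit exactement le même chemin que celle du texte : inclusion $(\gotq^C)' \subset \gotq^C_\Lambda$ pour la minoration de $\dim \gotq^C_\Lambda$, indépendance algébrique des $f_{m',t}$ (de cardinal $n'+s'$) pour la minoration de $\ind \gotq^C_\Lambda$, puis égalité d'Ooms et van den Bergh combinée à $\ind \gotq^C = n'$ pour forcer les égalités. Rien à ajouter.
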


\subsection{Application du théorème \ref{inter} à $\gotk=\gotq^C$}
\label{sec434}
On sait que l'ensemble $\mathbf{f}$ des $f_{m',t}$ forme une base de transcendance de $\Sy(\gotq^C)$. On veut donc conclure à la polynomialité lorsque $\gotk=\gotq^C$ est une contraction parabolique de $\syp_n$ en appliquant le théorème \ref{inter} avec $\mathbf{f}$ l'ensemble des $f_{m',t}$. On vérifie d'abord les hypothèses (a), (b), (c) et (d) :
\begin{itemize}
\item[(a)] L'algèbre $\Y(\gotq^C)$ est polynomiale (théorème \ref{pyC}) donc en particulier factorielle.
\item[(b)] On a $\GK \Sy(\gotq^C)=\ind \gotq_\Lambda^C$ (théorème \ref{ai}). Comme l'ensemble $\mathbf{f}$ des $f_{m',t}$ est de cardinal $\ind \gotq^C_\Lambda$ (proposition \ref{conparaC}), on a $\GK \Sy(\gotq^C)= \GK \C[\mathbf{f}]$.
\item[(c,d)] Les $f_{m'}=\pr(F_{2m'}^\bullet)$ pour tout $m' \in \llbracket 1,n' \rrbracket$ engendrent librement l'algèbre $\Y(\gotq^C)$ donc sont irréductibles dans $\Y(\gotq^C)$.
\end{itemize}
Il reste donc à vérifier les hypothèses (I), (II) et (III) du théorème \ref{inter}.
\subsubsection{Hypothèse (I)}
\label{1051}
Pour montrer les hypothèses (I) et (III), on va comme précédemment montrer que les hypothèses (I') et (III') (de la proposition \ref{632}) sont vérifiées. On prendra ici $g_{m'}=f_{m'}=\pr(F_{2m'}^\bullet)$ et $f_{m',t}$ défini comme précédemment pour tous $m',t$.\par
\begin{nota}
On définit alors une partition $I= \bigsqcup_{i=1}^{\imax} I(i)$\index{$I(i)$, $I(i)^+$, $I(i)^-$} telle que pour tout $i \in \llbracket 1, \imax \rrbracket$ :
\begin{itemize}
\item $I(i)^\gamma=I(i)$,
\item pour tout $k \in \llbracket 1,s \rrbracket$, $|I(i) \cap I_k |=\left\{ \begin{array}{ll}
1&\text{si } k \in K(i) \\ 0& \text{sinon}
\end{array}\right.$.
\end{itemize}
On a $|I(i)|=|K(i)|=m_i-m_{i-1}$. Pour tout $i$, on note aussi $I(i)^-:=I(i) \cap \llbracket 1,n' \rrbracket$ et $I(i)^+:=I(i) \cap \llbracket n'+1,2n' \rrbracket$.
\label{KetI}
\end{nota}
On pose alors la suite $(v_l)_{1 \leq l \leq n'}$ définie par les deux points suivants :
\begin{itemize}
\item pour tout $i \in \llbracket 1, \imax \rrbracket$, l'ensemble des $v_l$ avec $l \in \llbracket m'_{i-1}+1,m'_i \rrbracket$ est l'ensemble $I(i)^-$ si $i$ est impair et $I(i)^+$ si $i$ est pair,
\item la suite $(v_l)_{ m'_{i-1} \, < \, l \, \leq \, m'_i}$ est strictement croissante.
\end{itemize}
On pose alors
$$q= \sum_{j=1}^{n'} X_{j} e_{v_{j}^\gamma, v_{j}}^* + e_{v_1, v_1^\gamma}^*+\sum_{j=2}^{n'} e_{v_{j},v_{j-1}}^*  $$
\begin{exem}
Dans l'exemple \ref{exempleC}, un choix possible pour les $I(i)$ est de prendre $I(1)=\{1,3,5,6,7,8,10,12\}$ et $I(2)=\{2,4,9,11\}$. La suite $(v_l)$ est alors $(v_l)_l=(1,3,5,6,9,11)$.\par \label{exCI}
En type $C$, on voit un élément $e_{u,v}^*$ dans $(\gotq^C)^*$ par restriction, il admet une décomposition dans la base canonique de $(\gotq^C)^*$ (la base duale de la base canonique de $\gotq^C$). Dans l'exemple, on représente alors $q$ par une matrice via l'isomorphisme $\gotq^C \rightarrow (\gotq^C)^*$ donné par la base canonique de $\gotq^C$.
\begin{center}
\begin{tikzpicture}[scale=0.7]
\foreach \k in {1,2,...,11}
	{\draw[color=gray!20]  (0,\k)--(12,\k);
	\draw[color=gray!20] (\k,0)--(\k,12);}
\draw (0,0)--(12,0);
\draw (0,0)--(0,12);
\draw (0,12)--(12,12);
\draw (12,0)--(12,12);
\draw[ultra thick] (0,11)--(1,11)--(1,9)--(3,9)--(3,7)--(5,7)--(5,6)--(6,6)--(6,5)--(7,5)--(7,3)--(9,3)--(9,1)--(11,1)--(11,0);
\draw (0.5,9.5) node{\Large $1$};
\draw (9.5,0.5) node{\Large $-1$};
\draw (2.5,7.5) node{\Large $1$};
\draw (7.5,2.5) node{\Large $-1$};
\draw (4.5,6.5) node{\Large $1$};
\draw (6.5,4.5) node{\Large $-1$};
\draw (5.5,3.5) node{\Large $1$};
\draw (3.5,5.5) node{\Large $1$};
\draw (8.5,1.5) node{\Large $1$};
\draw (1.5,8.5) node{\Large $-1$};
\draw (0.5,0.5) node{\Large $X_1$};
\draw (2.5,2.5) node{\Large $X_2$};
\draw (4.5,4.5) node{\Large $X_3$};
\draw (5.5,5.5) node{\Large $X_4$};
\draw (8.5,8.5) node{\Large $X_5$};
\draw (10.5,10.5) node{\Large $X_6$};
\draw (11.5,11.5) node{\Large $1$};
\end{tikzpicture}
\end{center}
\end{exem}
Dans le cas général, le graphe $\mathcal{G}(q)$ en type $C$ est alors de la forme (voir le corollaire \ref{graphC})
\begin{center}
\begin{tikzpicture}
  \tikzset{LabelStyle/.style = {fill=white}}
  \tikzset{VertexStyle/.style = {%
  shape = circle, minimum size = 33pt,draw}}
  \SetGraphUnit{2.5}
  \Vertex[L=$v_1^\gamma$]{1}
  \EA[L=$v_2^\gamma$](1){2}
  \EA[L=$v_3^\gamma$](2){3}
  \Edge[style= {->}](1)(2)
  \Edge[style= {->}](2)(3)
  \SetVertexNormal[Shape = circle, LineColor=white, MinSize=33pt]
  \tikzset{EdgeStyle/.style = {->}}
  \EA[L=$\ldots$](3){4}
  \NO[L=$\ldots$](4){4'}
  \Edge(3)(4)
  \SetVertexNormal[Shape = circle, LineColor=black, MinSize=33pt]
  \EA[L=$v_{n'-1}^\gamma$](4){m-1}
  \Edge[style= {->}](4)(m-1)
  \EA[L=$v_{n'}^\gamma$](m-1){m}
  \Edge[style= {->}](m-1)(m)
  \NO[L=$v_1$](1){1'}
  \NO[L=$v_2$](2){2'}
  \NO[L=$v_3$](3){3'}
  \NO[L=$v_{n'-1}$](m-1){m-1'}
  \NO[L=$v_{n'}$](m){m'}
  \Edge[style= {->}](2')(1')
  \Edge[style= {->}](3')(2')
  \Edge[style= {->}](4')(3')
  \Edge[style= {->}](m-1')(4')
  \Edge[style= {->}](m')(m-1')
  \Edge[style= {->,bend right=30}, label =$1$](1')(1)
  \Edge[style= {->,bend right=30}, label =$X_1$](1)(1')
  \Edge[style= {->,bend right=30}, label =$X_2$](2)(2')
  \Edge[style= {->,bend right=30}, label =$X_3$](3)(3')
  \Edge[style= {->,bend right=30}, label =$X_{n'-1}$](m-1)(m-1')
  \Edge[style= {->,bend right=30}, label =$X_{n'}$](m)(m')
\end{tikzpicture}
\end{center}
On se référera dans la suite aux définitions générales introduites dans la sous-partie \ref{graphe}. Les sous-graphes circuits de $\mathcal{G}(q)$ sont les graphes de la forme
\begin{center}
\begin{tikzpicture}[scale=0.9]
  \tikzset{LabelStyle/.style = {fill=white}}
  \tikzset{VertexStyle/.style = {%
  shape = circle, minimum size = 33pt,draw}}
  \SetGraphUnit{2.5}
  \Vertex[L=$v_1^\gamma$]{1}
  \EA[L=$v_2^\gamma$](1){2}
  \EA[L=$v_3^\gamma$](2){3}
  \Edge[style= {->}](1)(2)
  \Edge[style= {->}](2)(3)
  \SetVertexNormal[Shape = circle, LineColor=white, MinSize=33pt]
  \tikzset{EdgeStyle/.style = {->}}
  \EA[L=$\ldots$](3){4}
  \NO[L=$\ldots$](4){4'}
  \Edge(3)(4)
  \SetVertexNormal[Shape = circle, LineColor=black, MinSize=33pt]
  \EA[L=$v_{m-1}^\gamma$](4){m-1}
  \Edge[style= {->}](4)(m-1)
  \EA[L=$v_{m}^\gamma$](m-1){m}
  \Edge[style= {->}](m-1)(m)
  \NO[L=$v_1$](1){1'}
  \NO[L=$v_2$](2){2'}
  \NO[L=$v_3$](3){3'}
  \NO[L=$v_{m-1}$](m-1){m-1'}
  \NO[L=$v_{m}$](m){m'}
  \Edge[style= {->}](2')(1')
  \Edge[style= {->}](3')(2')
  \Edge[style= {->}](4')(3')
  \Edge[style= {->}](m-1')(4')
  \Edge[style= {->}](m')(m-1')
  \Edge[style= {->,bend right=30}, label =$1$](1')(1)
  \Edge[style= {->,bend right=30}, label =$X_{m}$](m)(m')
\end{tikzpicture}
\end{center}
pour $ m \in \llbracket 1,n' \rrbracket$. On note $\mathcal{H}_{2m}$ ce sous-graphe circuits, $\mathcal{S}_{2m}:=\mathcal{S}_{\mathcal{H}_{2m}}$ le $\mathcal{H}_{2m}$-monôme en type $\gl_n$ (voir définition \ref{neq}), et $\mathcal{S}^C_{2m}:=\pr(\mathcal{S}_{2m})$ le $\mathcal{H}_{2m}$-monôme en type $C$. On a $\mathcal{S}_{2m}(q) \propto X_{m}$.\par
Le sous-graphe $\mathcal{H}_{2m}$ est symétrique au sens où si l'on a une arête de $x$ vers $y$, alors on a une arête de $y^\gamma$ vers $x^\gamma$. Ceci implique que $\mathcal{S}_{2m}$ est symétrique. Ainsi par la proposition \ref{tivafor}, à une constante multiplicative non nulle près, $\mathcal{S}^C_{2m}$ est un monôme de $\pr(F_{2m})$ en les $\pr(e_{x,y})$ et, par la proposition \ref{elemgr}, le seul monôme $\mathcal{S}$ de $\pr(F_{2m})$ tel que $\mathcal{S}(q) \neq 0$. On a donc $\pr(F_{2m})(q) \propto \mathcal{S}^C_{2m}(q) \propto X_m$.\par
Pour conclure quant à l'hypothèse (I'), il faut maintenant vérifier que $\mathcal{S}^C_{2m}$ est (à une constante multiplicative non nulle près) un monôme de $\pr(F_{2m}^\bullet)$. Par la remarque \ref{preserv}, il suffit de montrer que $\deg_{\gotn^-} \mathcal{S}_{2m} = \deg_{\gotn^-} F_{2m}$.\par
Comme sur $\gl_n$, on va montrer que la suite $\deg_{\gotn^-} \mathcal{S}_{2m}$ vérifie la même relation de récurrence que $\deg_{\gotn^-} F_{2m}$ , sachant que (voir équation \eqref{degrec})
$$\deg_{\gotn^-} F_{2m+2}=\left\{ \begin{array}{ll}
\deg_{\gotn^-} F_{2m} +1 &\text{si } 2m=m_i, \, 1 \leq i \leq \imax-1\\ \deg_{\gotn^-} F_{2m} +2& \text{sinon}.
\end{array}\right.$$
Premièrement, on a $\deg_{\gotn^-} \mathcal{S}_{2}=1=\deg_{\gotn^-} F_{2}$. En effet, puisque $v_1 \in I(1)^-$, on a $v_1 \leq n'$, et donc comme on est dans le cas où $\alpha_{n'}^C \notin (\pi^C)'$, on a $v_1^\gamma \succ v_1$ (voir propriété \ref{pn}). Ainsi $e_{v_1,v_1^\gamma} \in \gotp$ et $e_{v_1^\gamma,v_1} \in \gotn^-$.\par
On a ensuite
\begin{align*}
\deg_{\gotn^-} \mathcal{S}_{2m+2}&=\deg_{\gotn^-} \mathcal{S}_{2m} + \deg_{\gotn^-} e_{v_{m+1},v_{m}} + \deg_{\gotn^-} e_{v_{m}^\gamma,v_{m+1}^\gamma} \\ &+ \deg_{\gotn^-} e_{v_{m+1}^\gamma,v_{m+1}}-\deg_{\gotn^-} e_{v_{m}^\gamma,v_{m}}
\end{align*}
Déjà, puisque $\gamma$ est décroissante pour le préordre $\prec$ sur $ \llbracket 1,n \rrbracket$, on a $ \deg_{\gotn^-} e_{v_{m}^\gamma,v_{m+1}^\gamma}=\deg_{\gotn^-} e_{v_{m+1},v_{m}} $, d'où 
$$
\deg_{\gotn^-} \mathcal{S}_{2m+2}=\deg_{\gotn^-} \mathcal{S}_{2m} + 2\deg_{\gotn^-} e_{v_{m+1},v_{m}}  + \deg_{\gotn^-} e_{v_{m+1}^\gamma,v_{m+1}}-\deg_{\gotn^-} e_{v_{m}^\gamma,v_{m}}
$$
Si $2m$ n'est pas égal à un $m_i$, $1 \leq i \leq \imax$, alors $ m$ et $m+1$ appartiennent à un même intervalle $\llbracket m'_{i-1} +1, m'_i \rrbracket$ pour un certain $i$ (propriété \ref{propelem} (2)). Ainsi par définition de la suite $(v_{k})_{m'_{i-1}+1 \leq k \leq m'_i}$, on a $v_{m} < v_{m+1}$ et $v_{m} \nsim v_{m+1}$ (car l'ensemble des $v_k$ avec $m'_{i-1}+1 \leq k \leq m'_i$ est inclus dans $I(i)$ et $|I(i) \cap I_k| \leq 1$ pour tout $k$). Ainsi $v_{m} \prec v_{m+1}$, d'où $\deg_{\gotn^-} e_{v_{m+1},v_{m}}=1$. De plus, $v_{m}$ et $v_{m+1}$ sont soit tous les deux dans $I(i)^-$, soit tous les deux dans $I(i)^+$, de sorte que ${v_{m}}^\gamma \succ v_{m}$ si et seulement si ${v_{m+1}}^\gamma \succ v_{m+1}$. Finalement, $\deg_{\gotn^-} \mathcal{S}_{2m+2}=\deg_{\gotn^-} \mathcal{S}_{2m} + 2$.\par
Si $2m$ est égal à un $m_i$, $1 \leq i < \imax$, alors $ m$ et $m+1$ n'appartiennent pas à un même intervalle $\llbracket m'_{j-1} +1, m'_j \rrbracket$, de sorte que soit $v_{m} \in I(i)^+$ et $v_{m+1} \in I(i+1)^-$, soit $v_{m} \in I(i)^-$ et $v_{m+1} \in I(i+1)^+$. Dans le premier cas, on a :
\begin{itemize}
\item $v_{m+1} \prec v_{m}$, donc $\deg_{\gotn^-} e_{v_{m+1},v_{m}}=0$,
\item $v_{m+1}^\gamma \succ v_{m+1}$, donc $\deg_{\gotn^-} e_{v_{m+1}^\gamma,v_{m+1}}=1$,
\item $v_{m}^\gamma \prec v_{m}$, donc $\deg_{\gotn^-} e_{v_{m}^\gamma,v_{m}}=0$.
\end{itemize}
Dans le second cas, on a :
\begin{itemize}
\item $v_{m+1} \succ v_{m}$, donc $\deg_{\gotn^-} e_{v_{m+1},v_{m}}=1$,
\item $v_{m+1}^\gamma \prec v_{m+1}$, donc $\deg_{\gotn^-} e_{v_{m+1}^\gamma,v_{m+1}}=0$,
\item $v_{m}^\gamma \succ v_{m}$, donc $\deg_{\gotn^-} e_{v_{m}^\gamma,v_{m}}=1$.
\end{itemize}
Dans ces deux cas, on obtient $\deg_{\gotn^-} \mathcal{S}_{2m+2}=\deg_{\gotn^-} \mathcal{S}_{2m} + 1$. Ainsi, $\mathcal{S}_{2m}$ et $F_{2m}$ ont même degré en $\gotn^-$, ce qui conclut pour l'hypothèse (I').
\subsubsection{Hypothèse (III)}
Contrairement au type $\gl_n$ où le $q$ que l'on a exhibé pour l'hypothèse (I') pouvait facilement être adapté pour l'hypothèse (III'), ici ce n'est pas le cas, puisque pour le $q$ ci-dessus, on ne contrôle pas quels $t$ vérifient $f_{m',t}(q) \notin \C^\times$.\par
On reprend les définitions et notations de la sous-section \ref{1051}. Pour tout $i \in \llbracket 1,\imax \rrbracket$, on définit
$$q'_i:=\left(\sum_{l=m'_{i-1}+1}^{m'_i-1} e_{v_{l+1},v_l}^* \right) + e_{v_{m'_{i-1}+1},v_{m'_{i-1}+1}^\gamma}^* + e_{v_{m'_i}^\gamma, v_{m'_i}}^*$$
Le graphe $\mathcal{G}(q'_i)$ en type $C$ est de la forme (voir le corollaire \ref{graphC})
\begin{center}
\begin{tikzpicture}
  \tikzset{LabelStyle/.style = {fill=white}}
  \tikzset{VertexStyle/.style = {%
  shape = circle, minimum size = 45pt,draw}}
  \SetGraphUnit{2.5}
  \Vertex[L=$\scriptscriptstyle v_{m'_{i-1}+1}^\gamma$]{1}
  \EA[L=$\scriptscriptstyle v_{m'_{i-1}+2}^\gamma$](1){2}
  \EA[L=$\scriptscriptstyle v_{m'_{i-1}+3}^\gamma$](2){3}
  \Edge[style= {->}](1)(2)
  \Edge[style= {->}](2)(3)
  \SetVertexNormal[Shape = circle, LineColor=white, MinSize=45pt]
  \tikzset{EdgeStyle/.style = {->}}
  \EA[L=$\ldots$](3){4}
  \NO[L=$\ldots$](4){4'}
  \Edge(3)(4)
  \SetVertexNormal[Shape = circle, LineColor=black, MinSize=45pt]
  \EA[L=$v_{m'_{i}-1}^\gamma$](4){m-1}
  \Edge[style= {->}](4)(m-1)
  \EA[L=$v_{m'_i}^\gamma$](m-1){m}
  \Edge[style= {->}](m-1)(m)
  \NO[L=$\scriptscriptstyle v_{m'_{i-1}+1}$](1){1'}
  \NO[L=$\scriptscriptstyle v_{m'_{i-1}+2}$](2){2'}
  \NO[L=$\scriptscriptstyle v_{m'_{i-1}+3}$](3){3'}
  \NO[L=$v_{m'_i-1}$](m-1){m-1'}
  \NO[L=$v_{m'_i}$](m){m'}
  \Edge[style= {->}](2')(1')
  \Edge[style= {->}](3')(2')
  \Edge[style= {->}](4')(3')
  \Edge[style= {->}](m-1')(4')
  \Edge[style= {->}](m')(m-1')
  \Edge[style= {->,bend right=30}](1')(1)
  \Edge[style= {->,bend right=30}](m)(m')
\end{tikzpicture}
\end{center}
%
et est donc un circuit (voir définition \ref{circuits}). Soit $\overline{\mathcal{T}}_i$ le $\mathcal{G}(q'_i)$-monôme\index{$\overline{\mathcal{T}}_i$, $\mathcal{T}_i$} en type $\gl_n$ et $\mathcal{T}_i=\pr(\overline{\mathcal{T}}_i)$ le $\mathcal{G}(q'_i)$-monôme en type $C$. Par la suite, on pourra noter $\mathcal{G}(q'_i)$ sous la forme
\begin{center}
\begin{tikzpicture}
\draw[dashed] (0,0) ellipse(1.5cm and 1cm);
\draw (0,1) node{$<$};
\draw (-2.5,0) node{$i$};
\end{tikzpicture}
\end{center}
où le $i$ signale qu'il s'agit du graphe $\mathcal{G}(q'_i)$.
\begin{prop}
Le monôme $\mathcal{T}_i$ est de la forme $\prod_{l \in J} \pr(e_{l,\sigma(l)})$ avec $J \subset I$ et $\sigma \in \mathfrak{S}(J)$, et vérifie $\mathcal{T}_i(q'_i) \in \C^\times$. Parmi les facteurs $\pr(e_{l,\sigma(l)})$ :
\begin{itemize}
\item exactement deux ($e_{v_{m'_{i-1}+1},v_{m'_{i-1}+1}^\gamma}$ et $e_{v_{m'_i}^\gamma, v_{m'_i}}$) sont tels que $\sigma(l)=l^\gamma$ (autrement dit sont sur l'antidiagonale). Suivant la parité de $i$, l'un est tel que $\sigma(l) > l$ (et est donc sur la partie supérieure de l'antidiagonale, donc dans $\gotp^C$) et l'autre est tel que $\sigma(l)< l$ (et est donc sur la partie inférieure de l'antidiagonale, donc dans $(\gotn^-)^C$),
\item tous les autres facteurs vont par paire de la forme $\pr(e_{l,\sigma(l)})$ et $\pr(e_{\sigma(l)^\gamma,l^\gamma})$ avec $l+\sigma(l) <n+1$ ; ils sont tous dans $(\gotn^-)^C$,
\item tous les facteurs $\pr(e_{l,\sigma(l)})$ dans $(\gotn^-)^C$ sont tels que $\sigma(l)$ et $l$ sont deux termes consécutifs (pour l'ordre croissant) de l'ensemble $I(i)$.
\end{itemize}
Si $\pr(e_{l,\sigma(l)})$ est un facteur de $\mathcal{T}_i$, alors l'un des deux termes $e_{l,\sigma(l)}^*$ ou $e_{\sigma(l)^\gamma,l^\gamma}^*$ est un terme de $q'_i$.
\label{elemSi}
\end{prop}
On définit alors $q'=\sum_{i=1}^{\imax}q'_i$ qui a donc pour graphe en type $C$ la somme des graphes compatibles $\mathcal{G}(q'_i)$, $1 \leq i \leq \imax$. Le graphe $\mathcal{G}(q')$ est un graphe circuits.
\begin{exem}
Dans l'exemple \ref{exempleC} avec la suite $(v_l)_l$ définie en \ref{exCI}, on obtient $q'$ sous la forme suivante :
\begin{center}
\begin{tikzpicture}[scale=0.5]
\foreach \k in {1,2,...,11}
	{\draw[color=gray!20]  (0,\k)--(12,\k);
	\draw[color=gray!20] (\k,0)--(\k,12);}
\draw (0,0)--(12,0);
\draw (0,0)--(0,12);
\draw (0,12)--(12,12);
\draw (12,0)--(12,12);
\draw[ultra thick] (0,11)--(1,11)--(1,9)--(3,9)--(3,7)--(5,7)--(5,6)--(6,6)--(6,5)--(7,5)--(7,3)--(9,3)--(9,1)--(11,1)--(11,0);
\draw (0.5,9.5) node{\large $\mathbf{1}$};
\draw (9.5,0.5) node{\large $\mathbf{-1}$};
\draw (2.5,7.5) node{\large $\mathbf{1}$};
\draw (7.5,2.5) node{\large $\mathbf{-1}$}; 
\draw (4.5,6.5) node{\large $\mathbf{1}$};
\draw (6.5,4.5) node{\large $\mathbf{-1}$};
\draw (5.5,5.5) node{\large $\mathbf{1}$};
\draw (11.5,11.5) node{\large $\mathbf{1}$};
\draw (8.5,1.5) node{\large $\mathit{1}$};
\draw (1.5,8.5) node{\large $\mathit{-1}$};
\draw (3.5,3.5) node{\large $\mathit{1}$};
\draw (10.5,10.5) node{\large $\mathit{1}$};

\end{tikzpicture}
\end{center}
Les coefficients en gras correspondent à $q'_1$ et les coefficients en italique correspondent à $q'_2$.
\end{exem}
\begin{propn}
Pour tout $m=m_i \in \mathbf{M}$, le monôme $\prod_{j=1}^i \mathcal{T}_j$ est à une constante multiplicative non nulle près un monôme de $\pr(F_m^\bullet)$ et pour tout monôme $\mathcal{S}$ de $\pr(F_m^\bullet)$ on a $\mathcal{S}(q') \neq 0 \Leftrightarrow \mathcal{S} \propto \prod_{j=1}^i \mathcal{T}_j$. En particulier, on a $f_{m'_i}(q') \propto \prod_{j=1}^i \mathcal{T}_j(q') \in \C^\times$.
\label{pedagogique}
\end{propn}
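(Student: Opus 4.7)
Le plan est d'analyser les monômes $\mathcal{S}$ de $\pr(F_m^\bullet)$ vérifiant $\mathcal{S}(q') \neq 0$ via les sous-graphes circuits de $\mathcal{G}(q')$ fournis par la proposition \ref{elemgr}, puis d'isoler l'unique candidat au moyen d'un contrôle du degré en $(\gotn^-)^C$.

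Puisque les supports des $\mathcal{G}(q'_j)$ sont inclus dans les $I(j)$ deux à deux disjoints (par définition des $q'_j$ et de la suite $(v_l)$), le graphe $\mathcal{G}(q') = \sum_{j=1}^{\imax} \mathcal{G}(q'_j)$ est l'union disjointe des circuits $\mathcal{G}(q'_j)$, chaque $\mathcal{G}(q'_j)$ étant effectivement un circuit par la proposition \ref{elemSi}. Ainsi tout sous-graphe circuits de $\mathcal{G}(q')$ s'écrit $\bigsqcup_{j \in J} \mathcal{G}(q'_j)$ pour un certain $J \subset \llbracket 1,\imax \rrbracket$, a pour longueur $\sum_{j \in J}(m_j - m_{j-1})$ et lui correspond le monôme $\prod_{j \in J} \mathcal{T}_j$.

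Soit alors $\mathcal{S}$ un monôme de $\pr(F_m^\bullet)$ avec $\mathcal{S}(q') \neq 0$ ; en tant que monôme de $\pr(F_m)$, il s'écrit sous la forme $\prod \pr(e_{l,\sigma(l)})$, et la proposition \ref{elemgr} donne $\mathcal{S} \propto \prod_{j \in J} \mathcal{T}_j$ pour un $J$ vérifiant $\sum_{j \in J}(m_j - m_{j-1}) = m_i$. Par la proposition \ref{elemSi}, chaque $\mathcal{T}_j$ a exactement un facteur hors $(\gotn^-)^C$, donc $\deg_{(\gotn^-)^C} \prod_{j \in J} \mathcal{T}_j = m_i - |J|$. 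Comme $\mathcal{S} \in \pr(F_m^\bullet)$, cette quantité vaut $\deg_{(\gotn^-)^C} \pr(F_m^\bullet) = \deg_{\gotn^-} F_{m_i} = m_i - i$ (en utilisant $\pr(F_m^\bullet) = \pr(F_m)^\bullet$ établi plus tôt dans la section et le corollaire \ref{coro}), d'où $|J| = i$. Posant $b_j := m_j - m_{j-1}$ et écrivant $J = \{j_1 < \cdots < j_i\}$, la décroissance de $(b_j)_j$ (propriété \ref{propelem} (4)) fournit $b_{j_l} \leq b_l$ pour tout $l$, avec égalité sommée forçant l'égalité en chaque terme ; la stricte inégalité $b_i > b_{i+1}$ (valide puisque $i \in \mathbf{I}$) exclut alors $j_i > i$, et une récurrence descendante donne $j_l = l$ pour tout $l$.

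Il reste enfin à vérifier que $\prod_{j=1}^i \mathcal{T}_j$ est effectivement un monôme de $\pr(F_m^\bullet)$ et à évaluer : $\prod_{j=1}^i \overline{\mathcal{T}}_j$ est, au signe près, un monôme symétrique de $F_{m_i}$ de bidegré $(i, m_i - i)$ par la proposition \ref{elemSi}, donc un monôme de $F_{m_i}^\bullet$ ; la proposition \ref{tivafor} combinée à la préservation des bidegrés \ref{preserv} entraîne que $\prod_{j=1}^i \mathcal{T}_j$ est un monôme de $\pr(F_{m_i}^\bullet)$. La disjonction des supports des $\mathcal{G}(q'_k)$ donne $\prod_{j=1}^i \mathcal{T}_j(q') = \prod_{j=1}^i \mathcal{T}_j(q'_j)$, qui est dans $\C^\times$ par la proposition \ref{elemSi}. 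L'obstacle principal est précisément le contrôle par le degré ci-dessus : sans la contrainte $|J| = i$, plusieurs partitions de $m_i$ en parts $b_j$ peuvent coexister (notamment lorsque $b_j = b_{j+1}$ pour un $j \notin \mathbf{I}$), et seule la filtration par le degré en $(\gotn^-)^C$ isole la décomposition souhaitée $J = \llbracket 1, i \rrbracket$.
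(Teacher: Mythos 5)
Ta démonstration est correcte et suit essentiellement la même stratégie que celle du papier : classifier les sous-graphes circuits de $\mathcal{G}(q')$ via la proposition \ref{elemgr} (union disjointe des $\mathcal{G}(q'_j)$), puis isoler $K=\llbracket 1,i\rrbracket$ au moyen d'une double contrainte de degré, et enfin justifier par \ref{tivafor} et \ref{preserv} que $\prod_{j=1}^i\mathcal{T}_j$ survit bien à la projection comme monôme de $\pr(F_m^\bullet)$. La seule différence de surface est le choix du couple de degrés : le papier utilise $(\deg, \deg_{\gotp})$ tandis que tu utilises $(\deg, \deg_{(\gotn^-)^C})$, mais ces deux couples déterminent le bidegré de manière équivalente, et ta version explicite de l'argument de réarrangement (la décroissance des $b_j$ et la chute stricte $b_i>b_{i+1}$ pour $i\in\mathbf{I}$) rend un peu plus transparente l'unicité $J=\llbracket 1,i\rrbracket$ que dans le papier.
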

Cette proposition n'est pas essentielle pour obtenir l'hypothèse (III') en tant que telle, mais sa démonstration servira de modèle dans les multiples cas que l'on exhibera plus tard.
\begin{proof}
On montre le sens direct en plusieurs étapes :
\begin{enumerate}[label=(\arabic*)]
\item \textbf{Déterminer l'ensemble des $\overline{\mathcal{S}} \in \Sym(\gl_n)$ qui sont des monômes de $F_m$ pour un certain $m$ et qui vérifient $\pr(\overline{\mathcal{S}})(q') \neq 0$.}\par
Par la proposition \ref{elemgr}, les monômes $\mathcal{S}$ de la forme $\varepsilon(\sigma) \prod_{l \in J} \pr(e_{l,\sigma(l)})$ avec $J \subset I$, $\sigma \in \mathfrak{S}(J)$ tels que $\mathcal{S}(q') \neq 0$ sont les monômes $\mathcal{S}$ tels que $\mathcal{S} \propto \mathcal{S}_{\mathcal{H}}$ avec $\mathcal{H}$ un sous-graphe circuits de $\mathcal{G}(q')$ (voir définition \ref{circuits}), c'est-à-dire, étant donné la forme de $\mathcal{G}(q')$, les monômes $\mathcal{S}$ tels que $\mathcal{S} \propto \prod_{j \in K} \mathcal{T}_j$ avec $K \subset \llbracket 1, \imax \rrbracket$.
\item \textbf{Parmi ces monômes, déterminer ceux qui sont des monômes de $F_m^\bullet$ à $m=m_i \in \mathbf{M}$ fixé.}\par
Pour cela, il s'agit de calculer les bidegrés de ces monômes. On calculera en fait leurs degrés et leurs degrés en $\gotp$. D'après la propriété \ref{elemSi}, on a
$$\deg \mathcal{T}_j=|I(j)|=m_j-m_{j-1} \quad \text{et} \quad \deg_{\gotp} \mathcal{T}_j = 1,$$
ce qui donne
$$\deg \prod_{j \in K} \mathcal{T}_j=\sum_{j \in K} (m_j-m_{j-1}) \quad \text{et} \quad \deg_{\gotp} \prod_{j \in K} \mathcal{T}_j = |K|.$$
Rappelons que $\deg \pr(F_m^\bullet)=m_i$ et $\deg_{\gotp} \pr(F_m^\bullet)=i$ (proposition \ref{oui}). On cherche donc $K \subset \llbracket 1, \imax \rrbracket$ tel que
$$\left\{ \begin{array}{l}
\sum_{j \in K} (m_j-m_{j-1}) = m_i
\\ |K|=i
\end{array}\right.
$$
Remarquons déjà que $K=\llbracket 1,i \rrbracket$ vérifie ces conditions. Ensuite, considérons un $K$ qui convient. Par la propriété \ref{propelem}, la suite $(m_{j}-m_{j-1})_{1 \leq j \leq \imax}$ est décroissante, et pour tout $j > i$, on a $m_j-m_{j-1} < m_i-m_{i-1}$. Or si l'on suppose que $K \neq \llbracket 1,i \rrbracket$, puisque $|K|=i$, on a $K \cap \llbracket i+1,\imax \rrbracket \nonvide$, et donc $\sum_{j \in K} (m_j-m_{j-1}) <\sum_{j=1}^i (m_j-m_{j-1})=m_i$ ce qui est absurde. On a donc montré que $K=\llbracket 1,i \rrbracket$ est le seul qui convient.
\end{enumerate}
Ainsi, si $\mathcal{S}$ est un monôme de $\pr(F_m^\bullet)$, alors $\mathcal{S}=\pr(\overline{\mathcal{S}})$ avec $\overline{\mathcal{S}}$ un monôme de $F_m^\bullet$. Avec les points (1) et (2), si $\mathcal{S}(q') \neq 0$, alors $\mathcal{S} \propto \prod_{j=1}^i \mathcal{T}_j$.\par
Réciproquement, si $\mathcal{S} \propto \prod_{j=1}^i \mathcal{T}_j$, comme $\overline{\mathcal{S}}:=\prod_{j=1}^i \overline{\mathcal{T}}_j$ est symétrique, par la proposition \ref{tivafor}, $\mathcal{S}$ est bien un monôme de $\pr(F_m)$, qui est de degré maximal en $(\gotn^-)^C$ par le point (2), donc est un monôme de $\pr(F_m^\bullet)$. De plus $\mathcal{S}(q') \neq 0$ par le point (1).
\end{proof}
Rappelons que pour tout $i \in \mathbf{I}$, on a $f_{m'_i}(q') \propto \prod_{j=1}^i \mathcal{T}_j(q') \in \C^\times$. Fixons $i \in \mathbf{I}$, $m'=m'_i \in \mathbf{M}'$, $m=2m'$, $r'=r'_{m'}$, $r=2r'$ et $t \in \llbracket 1, r'+1 \rrbracket$. Pour montrer (III'), une première idée est de multiplier un terme de $q'_i$ convenable par $X$, de sorte que si $q$ est la forme linéaire modifiée, on a $f_{m',t}(q) \propto X^p$ avec $p \in \{1,2\}$ et pour $\tau \neq t$, $f_{m',\tau}(q) \in \C^\times$. Par la proposition \ref{pedagogique}, pour $j < i$ on a également $f_{m'_j}(q) \in \C^\times$, donc $f_{m'_j, \tau}(q) \in \C^\times$ pour tout $\tau$. Avec cette seule modification, si $i<\imax$ on a $f_{m'_j}(q)\propto X^p$ pour $j > i$, ce qui n'est pas le résultat voulu, car on souhaiterait que $f_{m'_j}(q)\propto 1$ pour $j > i$. Si $i=\imax$, la condition $j>i$ est vide et ce problème ne se pose pas.\par
Dans le cas où $i < \imax$, on va donc introduire des modifications supplémentaires sur $q'$ et donc sur son graphe $\mathcal{G}(q')$ (en type $C$) pour faire en sorte que $f_{m'_j}(q) \in \C^\times$ pour $j > i$ (on ne gardera pas nécessairement l'hypothèse $f_{m',t}(q)\propto X^p$, on se contentera de $\deg_X f_{m',t}(q)>0$). Lorsque cette modification sera faite, comme dans la preuve de la proposition \ref{pedagogique}, on va 
\begin{itemize}
\item déterminer l'ensemble des monômes $\overline{\mathcal{S}}$ qui sont des monômes de $F_m$ pour un certain $m$ et qui vérifient $\pr(\overline{\mathcal{S}})(q) \neq 0$, c'est-à-dire \textbf{identifier les sous-graphes circuits $\mathcal{H}$ de $\mathcal{G}(q)$},
\item parmi ces monômes, déterminer ceux qui sont des monômes de $F_m^\bullet$ à $m=m_i$ fixé, c'est-à-dire \textbf{identifier quels sous-graphes circuits $\mathcal{H}$ vérifient $\deg \mathcal{S}_{\mathcal{H}}=m$ et $\deg_{\gotp} \mathcal{S}_{\mathcal{H}}=i$}, et pour ces sous-graphes circuits $\mathcal{H}$, calculer $\mathcal{S}_{\mathcal{H}}(q)$.
\end{itemize}
On rappelle que pour tout $u \in \llbracket 1,r' \rrbracket$, on a $F_{m,u} \in \Sym\left(\g_{I^{(u)},I^{[u]}}\right)$ (équation \eqref{somme}), d'où $f_{m',u}=\pr(F_{m,u}) \in \Sym\left(\pr(\g_{I^{(u)},I^{[u]}})\right)$ et $f_{m',r'+1}=\pr(F_{m,r}) \in \Sym\left(\pr(\g_{I^{(r)},I^{[r]}})\right)$. Or on a $\pr(\g_{I^{(u)},I^{[u]}}) \subset \g_{I^{(u)},I^{[u]}} \, + \, \g_{I^{(r-u)},I^{[r-u]}}$ pour $u \in \llbracket 1, r'-1 \rrbracket$ et $\pr(\g_{I^{(u)},I^{[u]}}) \subset \g_{I^{(u)},I^{[u]}}$ pour $u \in \{ r', r\}$. Les $\g_{I^{(u)},I^{[u]}} \, + \, \g_{I^{(r-u)},I^{[r-u]}}$ pour $u \in \llbracket 1, r' \rrbracket$ et $\g_{I^{(r)},I^{[r]}}$ sont en somme directe. Le produit $\prod_{j=1}^i \mathcal{T}_j$ est à une constante multiplicative près un monôme de $\pr(F_{m}^\bullet)$ en les $\pr(e_{x,y})$ (proposition \ref{pedagogique}). Ainsi de la même manière que dans le lemme \ref{monome}, à une constante multiplicative près il existe $y_u$\index{$y_u$} monôme de $f_{m',u}$ pour $1 \leq u \leq r'+1$, tel que
$$\prod_{j=1}^i \mathcal{T}_j=\prod_{u=1}^{r'-1} y_u^2 \times y_{r'} \times y_{r'+1}$$
(voir l'équation \ref{decompC}). À $m'$ fixé, les $y_u$ sont donc premiers entre eux deux à deux.
\begin{propn}\mbox{}
Pour tout $u \in \llbracket 1, r'+1 \rrbracket$, il existe un facteur $\pr(e_{v,w})$ du monôme $\mathcal{T}_i$ qui divise également le monôme $y_u$. \label{convenable}
\end{propn}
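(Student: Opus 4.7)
Le plan est de classifier chaque facteur de $\mathcal{T}_i$ (c'est-à-dire chaque arête du cycle $\mathcal{G}(q'_i)$) selon la région $\pr(\g_{I^{(u)},I^{[u]}})$ à laquelle il appartient, puis de vérifier que chacune des $r'+1$ régions indexées par $u \in \llbracket 1, r'+1 \rrbracket$ est atteinte par au moins un facteur de $\mathcal{T}_i$. Ceci suffit car, par l'équation \eqref{decompC} et l'inclusion $f_{m',u} \in \Sym(\pr(\g_{I^{(u)},I^{[u]}}))$ (avec l'identification $\pr(\g_{I^{(u)},I^{[u]}}) = \pr(\g_{I^{(r-u)},I^{[r-u]}})$ pour $u < r'$), le monôme $y_u$ coïncide à un scalaire près avec le produit des facteurs de $\prod_{j=1}^i \mathcal{T}_j$ appartenant à $\pr(\g_{I^{(u)},I^{[u]}})$, et aucun de ces facteurs ne s'annule par la propriété \ref{elemSi}.

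Je décomposerais alors les arêtes de $\mathcal{G}(q'_i)$ en quatre familles : les arêtes \emph{supérieures} $v_{l+1} \to v_l$ et les arêtes \emph{inférieures} $v_l^\gamma \to v_{l+1}^\gamma$ pour $l \in \llbracket m'_{i-1}+1, m'_i-1 \rrbracket$ (les secondes provenant de la $\gamma$-symétrie inhérente au type $C$, corollaire \ref{graphC}), ainsi que les deux arêtes \emph{antidiagonales} $v_{m'_{i-1}+1} \to v_{m'_{i-1}+1}^\gamma$ et $v_{m'_i}^\gamma \to v_{m'_i}$. Le lemme \ref{lemC} assure que chaque paire (supérieure, inférieure) donne après $\pr$ deux facteurs proportionnels dans $\syp_n$, ce qui rend compte des carrés dans \eqref{decompC}.

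Pour la classification effective, je m'appuierais sur la construction des $v_l$ via la partition $I = \bigsqcup_i I(i)$ (notation \ref{KetI}) : la suite $(k(v_l))_{l=m'_{i-1}+1}^{m'_i}$ est strictement croissante dans $K(i) \cap \llbracket 1, s' \rrbracket$ pour $i$ impair (et symétriquement dans $K(i) \cap \llbracket s'+1, s \rrbracket$ pour $i$ pair), et contient $\{k_1, \ldots, k_{r'}\}$ par la symétrie $k_{r+1-t} = s+1-k_t$ de $\kappa_i$. Pour chaque $u \in \llbracket 1, r'-1 \rrbracket$, la présence simultanée de $k_u$ et $k_{u+1}$ dans cette suite fournit un indice $l$ avec $k_u \leq k(v_l) < k(v_{l+1}) \leq k_{u+1}$, de sorte que l'arête correspondante tombe dans $\g_{I^{(u)}, I^{[u]}}$ (ou dans $\g_{I^{(r-u)}, I^{[r-u]}}$ selon la parité de $i$, ce qui est équivalent après $\pr$) par les définitions $I^{(u)} = I_{\llbracket k_u+1, k_{u+1}\rrbracket}$ et $I^{[u]} = I_{\llbracket k_u, k_{u+1}-1\rrbracket}$. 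Pour les arêtes antidiagonales, dans le cas $i$ impair, l'encadrement $k_{r'} \leq k(v_{m'_i}) \leq s' < k_{r'+1}$ entraîne $k(v_{m'_i}) \in \llbracket k_{r'}, k_{r'+1}-1 \rrbracket$ et $k(v_{m'_i}^\gamma) \in \llbracket k_{r'}+1, k_{r'+1} \rrbracket$, donc l'arête $v_{m'_i}^\gamma \to v_{m'_i}$ tombe dans $\g_{I^{(r')}, I^{[r']}}$ (couvrant $u = r'$) ; symétriquement, $k(v_{m'_{i-1}+1}) \leq k_1$ et $k(v_{m'_{i-1}+1}^\gamma) \geq k_r$ placent l'autre arête antidiagonale dans $\g_{I^{(r)}, I^{[r]}}$ (couvrant $u = r'+1$). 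Le cas $i$ pair se traite identiquement en échangeant les rôles des deux arêtes antidiagonales.

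Le principal obstacle me semble être la tenue de comptes de l'identification $\pr$ entre les régions $u$ et $r-u$ pour $u < r'$, qui impose de distinguer soigneusement dans la description des facteurs de $\mathcal{T}_i$ ce qui relève de la région avant projection et ce qui relève de la région après projection, combiné à la gestion du cas $K(i) \setminus \kappa_i \neq \emptyset$ (intercalation de blocs de taille strictement supérieure à $i$ entre les $k_u$) qui complique la localisation précise des arêtes supérieures et inférieures dans les $\g_{I^{(u)},I^{[u]}}$ sans pour autant remettre en cause l'existence d'un indice $l$ convenable.
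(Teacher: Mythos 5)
Your proof is correct and takes essentially the same approach as the paper's: both exhibit a concrete factor of $\mathcal{T}_i$ in $\pr(\g_{I^{(u)},I^{[u]}})$ by exploiting the fact that the blocks $I_{k_u}$ and $I_{k_{u+1}}$, being of size exactly $i$, are each met by $I(i)$, so that consecutive elements of $I(i)$ straddle the boundary between $I^{[u]}$ and $I^{(u)}$. The paper is more economical (it picks $v \in I(i)\cap I_{k_{u+1}}$ directly and invokes the abstract description of $\mathcal{T}_i$ in la propriété \ref{elemSi} to locate the target block, treating $u=r'+1$ as ``similaire''), whereas your version enumerates the arrows of $\mathcal{G}(q'_i)$ via the sequence $(k(v_l))_l$ and handles the two antidiagonal arrows explicitly, trading conciseness for having all cases — including $u=r'+1$ — spelled out on equal footing.
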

\begin{proof}
On suppose que $u \in \llbracket 1, r' \rrbracket$. Le raisonnement est similaire si $u = r'+1$. L'ensemble des facteurs irréductibles de $\mathcal{T}_i$ qui divisent $y_u$ est l'ensemble des facteurs irréductibles $\pr(e_{v,w})$ de $\mathcal{T}_i$ avec $e_{v,w} \in \g_{I^{(u)},I^{[u]}} \oplus \g_{I^{(r-u)},I^{[r-u]}}$. On rappelle que (notation \ref{nota46}) $I^{(u)}$ et $I^{[u]}$ sont de la forme $I^{(u)}=\bigsqcup_{k=a+1}^b I_k$ et $I^{[u]}=\bigsqcup_{k=a}^{b-1} I_k$ où $a<b$ vérifient $|I_a|=|I_b|=i$. Puisque $I(i)$ est défini de telle sorte que $I(i) \cap I_k \neq \emptyset$ si et seulement si $|I_k| \geq i$, on a $I(i) \cap I_b \neq \emptyset$. Soit alors $v \in I(i) \cap I_b$, et $\pr(e_{v,w})$ le facteur irréductible de $\mathcal{T}_i$ correspondant. Ce facteur appartient à $(\gotn^-)^C$ car $u \leq r'$. Alors par définition de $\mathcal{T}_i$, l'indice $w$ appartient à $I_c$ avec $c = \max\{k < b \, | \, I(i) \cap I_k \nonvide \}=\max\{k < b \, | \, |I_k| \geq i \}$ (propriété \ref{elemSi}). Comme $|I_a| =i$, on a $c \geq a$, et donc $I_c \subset I^{[u]}$. Ainsi $\pr(e_{v,w})$ divise nécessairement $y_u$.
\end{proof}
On rappelle que $i$ et $t$ sont fixés. On considère un facteur $\pr(e_{v,w})$ commun à $\mathcal{T}_i$ et $y_t$. Comme $\pr(e_{w^\gamma,v^\gamma})=\pm\pr(e_{v,w})$, on peut supposer que $v+w \leq n+1$. Par la propriété \ref{elemSi}, le terme $e_{v,w}^*$ est (au signe près) un terme de $q'$.\par
\begin{nota}
Les $e_{a,b}^*$ avec $a+b \leq n+1$ forment une base de $(\gotq^C)^*$. Si $a+b \leq n+1$, on note $p_{a,b}$\index{$p_{a,b}$} le coefficient de $e_{a,b}^*$ dans $q'$. Si $a+b > n+1$, il existe $\epsilon_{a,b} \in \{ \pm 1 \}$ tel que $e_{a,b}^*=\epsilon_{a,b} e_{b^\gamma,a^\gamma}^*$ avec $b^\gamma + a^\gamma \leq n+1$. On pose donc $p_{a,b}=\epsilon_{a,b} p_{b^\gamma, a^\gamma}$. Ainsi, pour tous $a,b$, on a $p_{a,b} \in \{-1,0,1\}$, et si $p_{a,b} \neq 0$, le terme $p_{a,b} e_{a,b}^*$ est un terme de $q'$.
\end{nota}
On modifie alors $q'_i$ et $q'_{i+1}$, et on note $q$ (respectivement $q_i$, $q_{i+1}$) le $q'$ (respectivement $q'_i$, $q'_{i+1}$) modifié.  Pour tous $j \notin \{i,i+1\}$, on note également $q_j=q'_j$.
\paragraph{Cas 1 : Si $i=\imax$,} comme on l'a évoqué précédemment, on multiplie simplement le terme $p_{v,w} e_{v,w}^*$ de $q'_i$ par $X$. En termes de graphes, on passe de $\mathcal{G}(q')$ à $\mathcal{G}(q)$ en multipliant par $X$ les poids des arêtes $v \rightarrow w$ et $w^\gamma \rightarrow v^\gamma$ (qui sont éventuellement une seule et même arête). Alors pour tout $i' \in \llbracket 1, \imax \rrbracket$, le seul monôme $\mathcal{S}$ de $\pr(F_{m_{i'}}^\bullet)$ tel que $\mathcal{S}(q) \neq 0$ vérifie $\mathcal{S} \propto \prod_{j=1}^{i'} \mathcal{S}_{\mathcal{G}(q_j)}$. Pour tout $m=m_{i'} \in \mathbf{M} \setminus \{n\}$, on a alors $\pr(F_m^\bullet)(q) \propto \prod_{j=1}^{i'} \mathcal{S}_{\mathcal{G}(q_j)}(q) \in \C^\times$ et $\pr(F_n^\bullet)(q) \propto \prod_{j=1}^{\imax} \mathcal{S}_{\mathcal{G}(q_j)}(q) \propto X^p$ avec $p \in \{1,2\}$. Puisque le seul terme modifié est $p_{v,w} e_{v,w}^*$ et que $\pr(e_{v,w})$ est un facteur de $y_t$, le seul facteur de $\pr(F_n^\bullet)$ qui appliqué en $q$ est de degré non nul en $X$ est le terme $f_{m'_{\imax},t}$. On a donc $f_{m'_{\imax},t}(q) \propto X^{p'}$ avec $p' \in \{1,2\}$ et $f_{m'_{\imax},u}(q)\propto 1$ pour $u \neq t$. De plus, pour tout $\mu' \in \llbracket 1, n'-1 \rrbracket$, $f_{\mu',\tau}(q)$ divise $\pr(F_{2\mu'}^\bullet)(q) \in \C^\times$. Ainsi l'hypothèse (III') est vérifiée.\par
Dans tous les cas suivants, on supposera que $i \neq \imax$ (dans l'exemple \ref{exempleC}, cela impose $i=1$). On modifiera $q'_i$ et $q'_{i+1}$ ; on pourra leur retirer et ajouter des termes. Les graphes $\mathcal{G}(q_i)$ et $\mathcal{G}(q_{i+1})$ qui en résultent ne seront pas équivalents à $\mathcal{G}(q'_i)$ et $\mathcal{G}(q'_{i+1})$. Il faudra alors vérifier que l'hypothèse (III') est satisfaite, donc calculer $\pr(F_m^\bullet)(q)$ pour tout $m \in \mathbf{M}$, et donc déterminer pour tout $m \in \mathbf{M}$ quels monômes $\mathcal{S}^C$ de $\pr(F_m^\bullet)$ vérifient $\mathcal{S}^C(q) \neq 0$ et calculer ces $\mathcal{S}^C(q)$, c'est-à-dire avoir un "analogue" de la proposition \ref{pedagogique} pour $q$.
\begin{defi}\mbox{}
\begin{itemize}
\item On utilise les notations de la définition \ref{repchem}. Dans les graphes qui suivront, on notera des arêtes sous la forme
\begin{center}
\begin{tikzpicture}
\node (v) at (3,0) {$x$};
\node (w) at (4.5,0) {$y$};
\draw[o->,, ultra thick, >=stealth] (v)--(w);
\node (et) at (6,0) {$\text{et}$};
\node (x) at (7.5,0) {$x$};
\node (y) at (9,0) {$y$};
\draw[o->,, ultra thick, dashed, >=stealth] (x)--(y);
\end{tikzpicture}
\end{center}
pour signifier que l'arête représente un élément $\pr(e_{x,y})$ dans $\gotp$ (respectivement qu'exactement une des arêtes impliquées dans la suite d'arêtes représente un élément de $\gotp$). Une arête (respectivement une arête pointillée) qui ne sera pas sous cette forme représentera un élément de $(\gotn^-)^C$  (respectivement une suite d'éléments de $(\gotn^-)^C$).
\item On parlera parfois de degré ou de bidegré d'un graphe $\mathcal{G}$ pour signifier le degré ou le bidegré de $\mathcal{S}_{\mathcal{G}}$.
\end{itemize}
\end{defi}
\paragraph{Cas 2 : Si $w=v^\gamma$,} alors par la propriété \ref{elemSi},
\begin{enumerate}[label=(2.\alph*)]
\item soit $w < v$ (c'est-à-dire $e_{v,w} \in (\gotn^-)^C$), dans ce cas, on définit $x,y$ de sorte que $\pr(e_{x,y})$ soit l'unique facteur de $\mathcal{T}_{i+1}$ tel que $y=x^\gamma$ et $y < x$,
\item soit $w > v$ (c'est-à-dire $e_{v,w} \in \gotp$), dans ce cas, on définit $x,y$ de sorte que $\pr(e_{x,y})$ soit l'unique facteur de $\mathcal{T}_{i+1}$ tel que $y=x^\gamma$ et $y > x$.
\end{enumerate}
Dans les deux cas, $q$ est obtenu en remplaçant dans $q'$ les termes $p_{v,v^\gamma} e_{v,v^\gamma}^*+p_{x,x^\gamma} e_{x,x^\gamma}^*$ par les termes $p_{v,v^\gamma}Xe_{v,v^\gamma}^*+ e_{v,x^\gamma}^*$.
\begin{exem}
Dans l'exemple \ref{exempleC},
\begin{itemize}
\item le sous-cas (2.a) correspond à $\pr(e_{v,w})=e_{7,6}$, et alors $\pr(e_{x,y})=e_{9,4}$,
\item le sous-cas (2.b) correspond à $\pr(e_{v,w})=e_{1,12}$, et alors $\pr(e_{x,y})=e_{2,11}$.
\end{itemize}
\end{exem}
\subparagraph{Cas (2.a)}\par
On passe alors du graphe $\mathcal{G}(q')$ au graphe $\mathcal{G}(q)$ en effectuant les modifications suivantes :\par
\begin{center}
\begin{tikzpicture}
\draw (0,0) node{\huge $\Longrightarrow$};
\draw (1.5,1) node{$i$};
\draw (1.5,-1) node{$i+1$};
\node (v) at (2.5,.5) {$v$};
\node (w) at (4.5,.5) {$v^\gamma$};
\draw[->, >=stealth, ultra thick] (v)--(w) node[midway,above] {$(X)$};
\draw[o->, dashed, >=stealth, ultra thick] (4.5,.7) arc (30:150:1.155);
\node (x) at (2.5,-.5) {$x$};
\node (y) at (4.5,-.5) {$x^\gamma$};
\draw[->, >=stealth, ultra thick] (v)--(y);
\draw[->, >=stealth, ultra thick] (x)--(w);
\draw[o->, dashed, >=stealth, ultra thick] (4.5,-.7) arc (-30:-150:1.155);
\draw (-4.5,1) node{$i$};
\draw (-4.5,-1) node{$i+1$};
\node (v) at (-3.5,.5) {$v$};
\node (w) at (-1.5,.5) {$v^\gamma$};
\draw[->, >=stealth, ultra thick] (v)--(w);
\draw[o->, dashed, >=stealth, ultra thick] (-1.5,.7) arc (30:150:1.155);
\node (x) at (-3.5,-.5) {$x$};
\node (y) at (-1.5,-.5) {$x^\gamma$};
\draw[->, >=stealth, ultra thick] (x)--(y);
\draw[o->, dashed, >=stealth, ultra thick] (-1.5,-.7) arc (-30:-150:1.155);
\end{tikzpicture}
\end{center}
\textbf{Identifions les sous-graphes circuits $\mathcal{H}$ de $\mathcal{G}(q)$.} Identifions d'abord les sous-circuits de $\mathcal{G}(q)$. En dehors des circuits $\mathcal{G}(q'_j)$ pour $j \notin \{i,i+1\}$, on a ces deux sous-circuits incompatibles :
\begin{center}
\begin{tikzpicture}
\draw (1.5,1) node{$i$};
\draw (1.5,-1) node{$i+1$};
\node (v) at (2.5,.5) {$v$};
\node (w) at (4.5,.5) {$v^\gamma$};
\draw[o->, dashed, >=stealth, ultra thick] (4.5,.7) arc (30:150:1.155);
\node (x) at (2.5,-.5) {$x$};
\node (y) at (4.5,-.5) {$x^\gamma$};
\draw[->, >=stealth, ultra thick] (v)--(y);
\draw[->, >=stealth, ultra thick] (x)--(w);
\draw[o->, dashed, >=stealth, ultra thick] (4.5,-.7) arc (-30:-150:1.155);
\draw (-4.5,1) node{$i$};
\draw (-4.5,-1) node{$i+1$};
\node (v) at (-3.5,.5) {$v$};
\node (w) at (-1.5,.5) {$v^\gamma$};
\draw[->, >=stealth] (v)--(w) node[midway,above] {$(X)$};
\draw[o->, dashed, >=stealth, ultra thick] (-1.5,.7) arc (30:150:1.155);
\end{tikzpicture}
\end{center}
Le premier est équivalent à $\mathcal{G}(q'_i)$, on le notera $\mathcal{G}'_1$. On note le second $\mathcal{G}'_2$. Ils sont de bidegré respectifs $(1, |I(i)|-1)$ et $(2, |I(i)|+|I(i+1)|-2)$.\par
Un sous-graphe circuits de $\mathcal{G}(q)$ est donc de la forme suivante
$$\mathcal{H}=\sum_{j \in D_1} \mathcal{G}(q'_j) + \sum_{j \in D_2} \mathcal{G}'_j $$
avec $D_1 \subset \llbracket 1,\imax \rrbracket \setminus \{i,i+1\}$ et $D_2 \varsubsetneq \{1,2\}$.\par
\textbf{Identifions quels sous-graphes circuits $\mathcal{H}$ vérifient $\deg \mathcal{S}_{\mathcal{H}}=m$ et $\deg_{\gotp} \mathcal{S}_{\mathcal{H}}=i$.}
On a $\bideg \mathcal{G}'_{1} = \bideg \mathcal{G}(q'_i)$ et $\bideg \mathcal{G}'_{2} = \bideg \mathcal{G}(q'_i) + \bideg \mathcal{G}(q'_{i+1})$. Définissons une fonction $\Upsilon$ qui associe à chaque sous-circuit de $\mathcal{H}$ un sous-graphe circuits de $\mathcal{G}(q')$ de même bidegré :
\begin{itemize}
\item pour $j \notin \{i,i+1\}$, on pose $\Upsilon(\mathcal{G}(q'_j))=\mathcal{G}(q'_j)$,
\item $\Upsilon(\mathcal{G}'_1)=\mathcal{G}(q'_i)$,
\item $\Upsilon(\mathcal{G}'_2)=\mathcal{G}(q'_i) + \mathcal{G}(q'_{i+1})$.
\end{itemize}
Puisque $\mathcal{H}$ ne peut contenir à la fois $\mathcal{G}'_{1}$ et $\mathcal{G}'_{2}$, tous les $\Upsilon(\mathcal{G})$ pour $\mathcal{G}$ un sous-circuit de $\mathcal{H}$ sont compatibles deux à deux, ainsi la somme $\sum_{j \in D_1} \Upsilon(\mathcal{G}(q'_j)) + \sum_{j \in D_2} \Upsilon(\mathcal{G}'_j)$ est bien un sous-graphe circuits de $\mathcal{G}(q')$, qui est de même bidegré que $\mathcal{H}$. Autrement dit, étudier le bidegré de $\mathcal{H}$ revient à étudier le bidegré d'un certain sous-graphe circuits de $\mathcal{G}(q')$, ce qui a déjà été fait précédemment (démonstration de la proposition \ref{pedagogique} (2)). Avec cette considération, on obtient alors à $m_{i'} \in \mathbf{M}$ fixé,
\begin{itemize}
\item si $i' < i$, alors le seul sous-graphe circuits $\mathcal{H}$ tel que $\deg \mathcal{S}_{\mathcal{H}}=m_{i'}$ et $\deg_{\gotp} \mathcal{S}_{\mathcal{H}}=i'$ est $\sum_{j \in \llbracket 1,i' \rrbracket} \mathcal{G}(q'_j)$,
\item si $i' = i$, alors le seul sous-graphe circuits $\mathcal{H}$ tel que $\deg \mathcal{S}_{\mathcal{H}}=m_{i'}$ et $\deg_{\gotp} \mathcal{S}_{\mathcal{H}}=i'$ est $\left(\sum_{j=1}^{i-1} \mathcal{G}(q'_j)\right) + \mathcal{G}'_1$,
\item si $i' \geq i+1$, alors le seul sous-graphe circuits $\mathcal{H}$ tel que $\deg \mathcal{S}_{\mathcal{H}}=m_{i'}$ et $\deg_{\gotp} \mathcal{S}_{\mathcal{H}}=i'$ est $\left(\sum_{j \in \llbracket 1,i-1 \rrbracket \sqcup \llbracket i+2, i' \rrbracket} \mathcal{G}(q'_j)\right) + \mathcal{G}'_{2}$.
\end{itemize}
On montre alors grâce aux propriétés \ref{neq} et \ref{elemgr} que pour tout $i' \neq i$, la quantité $\pr(F_{m_{i'}})(q)$ est un polynôme de degré nul en $X$, et $\pr(F_{m_{i}})(q) \propto X$. Comme dans le cas $i=\imax$, puisque seul le coefficient en $e_{v,v^\gamma}^*$ est non constant et que $\pr(e_{v,v^\gamma})$ est un facteur de $y_t$ lui-même un monôme de $f_{m'_i,t}$, le seul facteur de $\pr(F_{m_i}^\bullet)$ qui appliqué en $q$ est de degré $\geq 1$ en $X$ est le terme $f_{m'_i,t}$. Ainsi on a $\deg_X f_{m'_i,t}(q)=1$ et $\deg_X f_{m'_i,u}(q)=0$ pour $u \neq t$. Pour tout $\mu \neq m'_i$, on a également $\deg_X f_{\mu,\tau} = 0$.

\subparagraph{\textbf{Cas (2.b)}} La démonstration est identique au cas (2.a), seuls les vecteurs de $\gotp$ sont différents.
\begin{center}
\begin{tikzpicture}
\draw (0,0) node{\huge $\Longrightarrow$};
\draw (1.5,1) node{$i$};
\draw (1.5,-1) node{$i+1$};
\node (v) at (2.5,.5) {$v$};
\node (w) at (4.5,.5) {$v^\gamma$};
\draw[o->, >=stealth, ultra thick] (v)--(w) node[midway,above] {$(X)$};
\draw[->, dashed, >=stealth, ultra thick] (4.5,.7) arc (30:150:1.155);
\node (x) at (2.5,-.5) {$x$};
\node (y) at (4.5,-.5) {$x^\gamma$};
\draw[o->, >=stealth, ultra thick] (v)--(y);
\draw[o->, >=stealth, ultra thick] (x)--(w);
\draw[->, dashed, >=stealth, ultra thick] (4.5,-.7) arc (-30:-150:1.155);
\draw (-4.5,1) node{$i$};
\draw (-4.5,-1) node{$i+1$};
\node (v) at (-3.5,.5) {$v$};
\node (w) at (-1.5,.5) {$v^\gamma$};
\draw[o->, >=stealth, ultra thick] (v)--(w);
\draw[->, dashed, >=stealth, ultra thick] (-1.5,.7) arc (30:150:1.155);
\node (x) at (-3.5,-.5) {$x$};
\node (y) at (-1.5,-.5) {$x^\gamma$};
\draw[o->, >=stealth, ultra thick] (x)--(y);
\draw[->, dashed, >=stealth, ultra thick] (-1.5,-.7) arc (-30:-150:1.155);
\end{tikzpicture}
\end{center}

\paragraph{\textbf{Cas (3) : Si $w \neq v^\gamma$,}} alors par la propriété \ref{elemSi}, on a $\pr(e_{v,w}) \in (\gotn^-)^C$, et donc $v \succ w$ (propriété \ref{pn}). Comme par hypothèse $v+w \leq n+1$, on a $v,w \in I(i)^-$.\par
\subparagraph{\textbf{Cas (3.a) : Supposons qu'il existe un facteur $\pr(e_{x,y})$ de $\mathcal{T}_{i+1}$ tel que $y \preceq w \prec v \preceq x$.}} Alors $e_{x,y} \in (\gotn^-)^C$. On distingue encore deux cas.\par
\textbf{Cas (3.a.i) : Si $y = x^\gamma$,} alors on remplace $p_{v,w} e_{v,w}^*+p_{x,x^\gamma} e_{x,x^\gamma}^*$ par $p_{v,w} Xe_{v,w}^*+e_{v,x^\gamma}^*+e_{w^\gamma,w}^*$.
\begin{exem}
Un exemple de ce cas dans l'exemple \ref{exempleC} est le cas où $\pr(e_{v,w})=\pr(e_{6,5})$ et $\pr(e_{x,y})=e_{9,4}$.
\end{exem}
On passe alors du graphe $\mathcal{G}(q')$ au graphe $\mathcal{G}(q)$ en effectuant les modifications suivantes :
\begin{center}
\begin{tikzpicture}[scale=0.9]
\draw (-4.5,1) node{$i$};
\draw (-4.5,-1) node{$i+1$};
\node (v) at (-3.5,.5) {$v$};
\node (w) at (-1.5,.5) {$w$};
\node (wg) at (.5,.5) {$w^\gamma$};
\node (vg) at (2.5,.5) {$v^\gamma$};
\draw[o->, dashed, >=stealth, ultra thick] (w)--(wg);
\draw[->, >=stealth, ultra thick] (wg)--(vg);
\draw[->, >=stealth, ultra thick] (v)--(w);
\draw[->, dashed, >=stealth, ultra thick] (2.5,.7) arc (30:150:3.464);
\node (x) at (-1.5,-.5) {$x$};
\node (y) at (.5,-.5) {$x^\gamma$};
\draw[->, >=stealth, ultra thick] (x)--(y);
\draw[o->, dashed, >=stealth, ultra thick] (.5,-.7) arc (-30:-150:1.155);

\draw (3.5,0) node{\huge $\Rightarrow$};

\draw (4.5,1) node{$i$};
\draw (4.5,-1) node{$i+1$};
\node (v) at (5.5,.5) {$v$};
\node (w) at (7.5,.5) {$w$};
\node (wg) at (9.5,.5) {$w^\gamma$};
\node (vg) at (11.5,.5) {$v^\gamma$};
\draw[o->, dashed, >=stealth, ultra thick] (w)--(wg);
\draw[->, >=stealth, ultra thick] (wg)--(vg) node[midway,above] {$(X)$};
\draw[->, >=stealth, ultra thick] (v)--(w) node[midway,above] {$(X)$};
\draw[->, dashed, >=stealth, ultra thick] (11.5,.7) arc (30:150:3.464);
\node (x) at (7.5,-.5) {$x$};
\node (y) at (9.5,-.5) {$x^\gamma$};
\draw[->, >=stealth, ultra thick] (x)--(vg);
\draw[->, >=stealth, ultra thick] (v)--(y);
\draw[->, >=stealth, ultra thick] (9.5,.7) arc (30:150:1.155);
\draw[o->, dashed, >=stealth, ultra thick] (9.5,-0.7) arc (-30:-150:1.155);
\end{tikzpicture}
\end{center}
\textbf{Identifions les sous-graphes circuits $\mathcal{H}$ de $\mathcal{G}(q)$.}  Identifions d'abord les sous-circuits de $\mathcal{G}(q)$. En dehors des circuits $\mathcal{G}(q'_j)$ pour $j \notin \{i,i+1\}$, on a ces trois sous-circuits :
\begin{center}
\begin{tikzpicture}[scale=0.7]
\draw (-4.5,1) node{$i$};
\draw (-4.5,-1) node{$i+1$};
\node (v) at (-3.5,.5) {$v$};
\node (w) at (-1.5,.5) {$w$};
\node (wg) at (.5,.5) {$w^\gamma$};
\node (vg) at (2.5,.5) {$v^\gamma$};
\draw[o->, dashed, >=stealth, ultra thick] (w)--(wg);
\draw[->, >=stealth, ultra thick] (wg)--(vg) node[midway,above] {$(X)$};
\draw[->, >=stealth, ultra thick] (v)--(w) node[midway,above] {$(X)$};
\draw[->, dashed, >=stealth, ultra thick] (2.5,.7) arc (30:150:3.464);
\end{tikzpicture}
\begin{tikzpicture}[scale=0.7]
\draw (-9,-4) node{$i$};
\draw (-9,-6) node{$i+1$};
\node (v) at (-8,-4.5) {$v$};
\node (vg) at (-2,-4.5) {$v^\gamma$};
\draw[->, dashed, >=stealth, ultra thick] (-2,-4.3) arc (30:150:3.464);
\node (x) at (-6,-5.5) {$x$};
\node (y) at (-4,-5.5) {$x^\gamma$};
\draw[->, >=stealth, ultra thick] (x)--(vg);
\draw[->, >=stealth, ultra thick] (v)--(y);
\draw[o->, dashed, >=stealth, ultra thick] (-4,-5.7) arc (-30:-150:1.155);
\end{tikzpicture}
\begin{tikzpicture}[scale=0.7]
\draw (1,-4) node{$i$};
\draw (1,-6) node{$i+1$};
\node (w) at (2,-4.5) {$w$};
\node (wg) at (4,-4.5) {$w^\gamma$};
\draw[o->, dashed, >=stealth, ultra thick] (w)--(wg);
\draw[->, >=stealth, ultra thick] (4,-4.3) arc (30:150:1.155);
\end{tikzpicture}
\end{center}
Le premier circuit est équivalent à $\mathcal{G}(q'_i)$, on le notera $\mathcal{G}'_1$. On note les deux autres respectivement $\mathcal{G}'_2$ et $\mathcal{G}'_3$. Le graphe $\mathcal{G}'_1$ est incompatible avec les graphes $\mathcal{G}'_2$ et $\mathcal{G}'_3$ (les graphes $\mathcal{G}'_2$ et $\mathcal{G}'_3$ sont compatibles). Un sous-graphe circuits de $\mathcal{G}(q)$ est donc de la forme
$$\sum_{j \in D_1} \mathcal{G}(q'_j) + \sum_{j \in D_2} \mathcal{G}'_j $$
avec $D_1 \subset \llbracket 1,\imax \rrbracket \setminus \{i,i+1\}$ et $D_2 \subset \{1\}$ ou $D_2 \subset \{2,3\}$.\par
\textbf{Identifions quels sous-graphes circuits $\mathcal{H}$ vérifient $\deg \mathcal{S}_{\mathcal{H}}=m$ et $\deg_{\gotp} \mathcal{S}_{\mathcal{H}}=i$.}
On a $\bideg \mathcal{G}'_{1} = \bideg \mathcal{G}(q'_i)$ et $\bideg (\mathcal{G}'_{2} + \mathcal{G}'_{3}) = \bideg \mathcal{G}(q'_i) + \bideg \mathcal{G}(q'_{i+1})$. Alors comme dans le cas précédent, à $m_{i'} \in \mathbf{M}$ fixé,
\begin{itemize}
\item pour $i' < i$, le sous-graphe $\mathcal{H}_{i'}=\sum_{j \in \llbracket 1,i' \rrbracket} \mathcal{G}(q'_j)$ vérifie $\bideg \mathcal{S}_{\mathcal{H}_{i'}} = \bideg \pr(F_{m_{i'}})$,
\item pour $i' = i$, le sous-graphe $\mathcal{H}_{i'}=\left(\sum_{j=1}^{i-1} \mathcal{G}(q'_j)\right) + \mathcal{G}'_1$ vérifie $\bideg \mathcal{S}_{\mathcal{H}_{i'}} = \bideg \pr(F_{m_{i'}})$,
\item pour $i' \geq i+1$ le sous-graphe $\left(\sum_{j \in \llbracket 1,i-1 \rrbracket \sqcup \llbracket i+2, i' \rrbracket} \mathcal{G}(q'_j)\right) + \mathcal{G}'_{2} + \mathcal{G}'_{3}$ vérifie $\bideg \mathcal{S}_{\mathcal{H}_{i'}} = \bideg \pr(F_{m_{i'}})$.
\end{itemize}
mais contrairement au cas précédent, il n'y a pas unicité. Autrement dit, il peut exister d'autres sous-graphes circuits donnant une égalité de bidegré. Cela vient du fait que cette fois-ci, le sous-graphe circuits incompatible avec $\mathcal{G}'_1$ est une somme de deux circuits compatibles et non plus un seul circuit.\par
Remarquons que tous les circuits de $\mathcal{G}(q)$ ont chacun au moins une \textbf{arête propre}, c'est-à-dire une arête dont le poids dans ce circuit est non nul et le poids dans tout autre circuit de $\mathcal{G}(q)$ est nul. Plus précisément,
\begin{itemize}
\item toute arête de tout circuit $\mathcal{G}(q_j)$ avec $j \in \llbracket 1,i-1 \rrbracket \sqcup \llbracket i+2, \imax \rrbracket$ est une arête propre,
\item les arêtes $v \stackrel{(X)}{\rightarrow} w$ et $w^\gamma \stackrel{(X)}{\rightarrow} v^\gamma$ de $\mathcal{G}'_1$ sont propres,
\item les arêtes $v \rightarrow x^\gamma$, $x \rightarrow v^\gamma$, ainsi que la suite d'arêtes $x^\gamma \dashrightarrow x$ de $\mathcal{G}'_2$ sont propres,
\item l'arête $w^\gamma \rightarrow w$ de $\mathcal{G}'_3$ est propre.
\end{itemize}
Soit $\mathbf{z}:=\{z_1, \ldots, z_{i-1}, z'_2, z'_3, z_{i+2}, \ldots, z_{\imax}\}$ un ensemble d'indéterminées. On choisit $c_1 \stackrel{*}{\rightarrow} c_2$ une arête propre de $\mathcal{G}(q_j)$ pour $j \in \llbracket 1,i-1 \rrbracket \sqcup \llbracket i+2, \imax \rrbracket$ (respectivement de $\mathcal{G}'_j$ pour $j \in \{2,3\}$). On multiplie le coefficient (non nul) de $e_{c_1,c_2}^*$ dans $q$ par $z_j$ (respectivement par $z'_j$).\par
On a alors $\mathcal{S}_{\mathcal{G}(q_j)}(q) \propto z_j^{p_j}$ pour tout $j \in \llbracket 1,i-1 \rrbracket \sqcup \llbracket i+2, \imax \rrbracket$ et $\mathcal{S}_{\mathcal{G}'_j}(q) \propto (z'_j)^{p'_j}$pour $j \in \{2,3\}$, où $p_j,p'_j \in \{1,2\}$ (à $e_{c_1,c_2}^*$ peuvent correspondre deux arêtes). Les $\mathcal{S}_{C}(q)$ pour $C$ parcourant l'ensemble des sous-graphes circuits de $\mathcal{G}(q)$ sont linéairement indépendants comme polynômes en $X$ et en les éléments de $\mathbf{z}$. Pour $C$ un sous-graphe circuits de $\mathcal{G}(q)$, on a $\deg_X \mathcal{S}_{C}(q) > 0$ si et seulement si $\mathcal{G}'_1$ est un sous-circuit de $C$. On calcule alors les $\pr(F_{m_{i'}}^\bullet)(q)$.\par
Si $i' =i$, alors il existe un unique sous-graphe circuits dont $\mathcal{G}'_1$ est un sous-circuit qui vérifie les propriétés de bidegré : il s'agit de $\mathcal{G}=\sum_{j=1}^{i-1} \mathcal{G}(q_j) + \mathcal{G}'_1$. On a $\mathcal{S}_{\mathcal{G}}(q) \propto \left(\prod_{j=1}^{i-1} z_j^{p_j} \right) X^2$. Ainsi, si $\mathcal{H}$ est un autre sous-graphe circuits de $\mathcal{G}(q)$ vérifiant les égalités de bidegré, alors $\deg_X \mathcal{S}_{\mathcal{H}}(q)=0$. Par conséquent, $\pr(F_{m_i}^\bullet)(q)$ est de la forme $\left(\prod_{j=1}^{i-1} z_j\right) X^2 + P(\mathbf{z})$ avec $P(\mathbf{z})$ un polynôme en les éléments de $\mathbf{z}$. Comme précédemment, puisque $e_{v,w}^*$ est le seul terme dont le coefficient est de degré $\geq 1$ en $X$ et que $\pr(e_{v,w}) \propto \pr(e_{w^\gamma,v^\gamma})$ est un terme de $y_t$ lui-même un monôme de $f_{m'_i,t}$, le seul facteur de $\pr(F_{m_i}^\bullet)$ qui appliqué en $q$ est de degré $\geq 1$ en $X$ est le facteur $f_{m'_i,t}$. Ainsi on a bien $\deg_X f_{m'_{i},t}(q) > 0$ et $f_{m'_i,u}(q)\in \C^\times$ pour $u \neq t$.\par
Si $i' \neq i$, alors il n'existe pas de sous-graphe circuits contenant $\mathcal{G}'_1$ vérifiant les propriétés de bidegré. En effet, si un tel sous-graphe circuits $C$ existe, $C$ ne peut pas contenir les graphes $\mathcal{G}'_2$ et $\mathcal{G}'_3$ (par incompatibilité avec $\mathcal{G}'_1$). Comme $\mathcal{G}(q_j)$ est équivalent à $\mathcal{G}(q'_j)$ pour $j \in \llbracket 1, \imax \rrbracket \setminus \{i,i+1\}$ et $\mathcal{G}'_1$ est équivalent à $\mathcal{G}(q'_i)$, le cheminement $C$ est donc équivalent à un sous-graphe circuits $C'$ de $\mathcal{G}(q')$, qui vérifie l'égalité de bidegré. D'après la preuve de la proposition \ref{pedagogique}, on a alors $C'= \sum_{j=1}^{i'} \mathcal{G}(q'_j)$. On distingue alors deux cas :
\begin{itemize}
\item si $i'<i$, alors $\mathcal{G}(q'_i)$ n'est pas un sous-cheminement de $C'$, donc $\mathcal{G}'_1$ n'est pas un sous-cheminement de $C$, ce qui est absurde,
\item si $i' > i$, alors $\mathcal{G}(q'_{i+1})$ est un sous-cheminement de $C'$, ce qui est absurde car il n'y a pas de circuit équivalent à $\mathcal{G}(q'_{i+1})$ dans $\mathcal{G}(q)$.
\end{itemize}
Ainsi, si $\mathcal{H}$ est un sous-graphe circuits de $\mathcal{G}(q)$ vérifiant les égalités de bidegré, alors $\mathcal{S}_{\mathcal{H}}(q)$ est un monôme uniquement en les éléments de $\mathbf{z}$. Comme les $\mathcal{S}_{C}(q)$ pour $C$ un sous-graphe circuits de $\mathcal{G}(q)$ sont linéairement indépendants par construction, la quantité $\pr(F_{m_{i'}}^\bullet)(q)$ est un polynôme $P_{i'}(\mathbf{z})$ non nul.\par

Pour conclure quant à l'hypothèse (III'), on évalue alors $(z_1, \ldots, z_{i-1}, z'_2, z'_3, z_{i+2}, \ldots, z_{\imax})$ en $(b_1, \ldots, b_{i-1}, b'_{2}, b'_{3}, b_{i+2}, \ldots, b_{\imax}) \in \C^{\imax}$ qui n'appartient
\begin{itemize}
\item ni à $\V(z_j)$ pour tout $1 \leq j \leq i-1$ (pour que $\pr(F_{m_i}^\bullet)(q) \neq 0$), 
\item ni à $\V(P_{j})$ pour tout $j \in \llbracket 1, \imax \rrbracket \setminus \{i\}$ (pour que $\pr(F_{m_j}^\bullet)(q) \neq 0$ pour $j \neq i$),
\end{itemize}
où $\V(Q) \subset \C^{\imax}$ est le lieu d'annulation de $Q \in \C[\mathbf{z}]$.\par
\begin{rema}
Récapitulons les points cruciaux de la modification : on modifie $q'_i$ et $q'_{i+1}$ dans $q'$ de sorte que dans le graphe $\mathcal{G}(q)$, on a trois nouveaux sous-circuits $\mathcal{G}'_1$, $\mathcal{G}'_2$, $\mathcal{G}'_3$ tels que
\begin{itemize}
\item $\supp \mathcal{G}'_1 \cup \supp \mathcal{G}'_2 \cup \supp \mathcal{G}'_3 = \supp \mathcal{G}(q'_i) \cup \supp \mathcal{G}(q'_{i+1})$,
\item $\mathcal{G}'_1$ est équivalent à $\mathcal{G}(q'_i)$ et tous les poids des arêtes de $\mathcal{G}'_1$ sont égaux à ceux des arêtes de $\mathcal{G}(q'_i)$, excepté ceux des arêtes $v \rightarrow w$ et $w^\gamma \rightarrow v^\gamma$ (qui peuvent être la même arête) qui sont multipliés par $X$,
\item les circuits $\mathcal{G}'_2, \mathcal{G}'_3$
\begin{itemize}
\item ne sont pas équivalents à $\mathcal{G}(q'_{i+1})$ dans $\mathcal{G}(q)$,
\item sont incompatibles avec $\mathcal{G}'_1$,
\item ont leur somme ayant le même bidegré que $\mathcal{G}(q'_{i}) + \mathcal{G}(q'_{i+1})$,
\end{itemize}
\item chacun des trois circuits $\mathcal{G}'_1$, $\mathcal{G}'_2$, $\mathcal{G}'_3$ a des arêtes propres, et les arêtes $v \rightarrow w$ et $w^\gamma \rightarrow v^\gamma$ sont propres pour $\mathcal{G}'_1$.
\end{itemize}
et ces propriétés suffisent à conclure pour l'hypothèse (III'). Il suffira de montrer ces propriétés dans les autres cas. \label{recap}
\end{rema}
\textbf{Cas (3.a.ii) : Si $y \neq x^\gamma$,} alors on remplace $p_{v,w} e_{v,w}^*+p_{x,y} e_{x,y}^*$ par $p_{v,w} Xe_{v,w}^*+ e_{v,y}^*+ e_{x,w}^*$.
\begin{exem}
Un exemple de ce cas dans l'exemple \ref{exempleC} est le cas où $\pr(e_{v,w})=\pr(e_{5,3})$ et $\pr(e_{x,y})=\pr(e_{4,2})$.
\end{exem}
On passe alors du graphe $\mathcal{G}(q')$ au graphe $\mathcal{G}(q)$ en effectuant les modifications suivantes :
\begin{center}
\begin{tikzpicture}[scale=0.9]
\draw (-4.5,1) node{$i$};
\draw (-4.5,-1) node{$i+1$};
\node (v) at (-3.5,.5) {$v$};
\node (w) at (-1.5,.5) {$w$};
\node (wg) at (.5,.5) {$w^\gamma$};
\node (vg) at (2.5,.5) {$v^\gamma$};
\draw[o->, dashed, >=stealth, ultra thick] (w)--(wg);
\draw[->, >=stealth, ultra thick] (wg)--(vg);
\draw[->, >=stealth, ultra thick] (v)--(w);
\draw[->, dashed, >=stealth, ultra thick] (2.5,.7) arc (30:150:3.464);
\node (x) at (-3.5,-.5) {$x$};
\node (y) at (-1.5,-.5) {$y$};
\node (yg) at (.5,-.5) {$y^\gamma$};
\node (xg) at (2.5,-.5) {$x^\gamma$};
\draw[->, >=stealth, ultra thick] (x)--(y);
\draw[o->, dashed, >=stealth, ultra thick] (y)--(yg);
\draw[->, >=stealth, ultra thick] (yg)--(xg);
\draw[->, dashed, >=stealth, ultra thick] (2.5,-.7) arc (-30:-150:3.464);

\draw (3.5,0) node{\huge $\Rightarrow$};

\draw (4.5,1) node{$i$};
\draw (4.5,-1) node{$i+1$};
\node (v) at (5.5,.5) {$v$};
\node (w) at (7.5,.5) {$w$};
\node (wg) at (9.5,.5) {$w^\gamma$};
\node (vg) at (11.5,.5) {$v^\gamma$};
\draw[o->, dashed, >=stealth, ultra thick] (w)--(wg);
\draw[->, >=stealth, ultra thick] (wg)--(vg) node[midway,above] {$(X)$};
\draw[->, >=stealth, ultra thick] (v)--(w) node[midway,above] {$(X)$};
\draw[->, dashed, >=stealth, ultra thick] (11.5,.7) arc (30:150:3.464);
\node (x) at (5.5,-.5) {$x$};
\node (y) at (7.5,-.5) {$y$};
\node (yg) at (9.5,-.5) {$y^\gamma$};
\node (xg) at (11.5,-.5) {$x^\gamma$};
\draw[o->, dashed, >=stealth, ultra thick] (y)--(yg);
\draw[->, >=stealth, ultra thick] (x)--(w);
\draw[->, >=stealth, ultra thick] (wg)--(xg);
\draw[->, >=stealth, ultra thick] (v)--(y);
\draw[->, >=stealth, ultra thick] (yg)--(vg);
\draw[->, dashed, >=stealth, ultra thick] (11.5,-.7) arc (-30:-150:3.464); 
\end{tikzpicture}
\end{center}
On peut vérifier que les circuits $\mathcal{G}'_1, \mathcal{G}'_2, \mathcal{G}'_3$ respectivement définis par
\begin{center}
\begin{tikzpicture}[scale=0.6]
%

\draw (-13.5,-5) node{$i$};
\draw (-13.5,-7) node{$i+1$};
\node (v) at (-12.5,-5.5) {$v$};
\node (w) at (-10.5,-5.5) {$w$};
\node (wg) at (-8.5,-5.5) {$w^\gamma$};
\node (vg) at (-6.5,-5.5) {$v^\gamma$};
\draw[o->, dashed, >=stealth, ultra thick] (w)--(wg);
\draw[->, >=stealth, ultra thick] (wg)--(vg) node[midway,above] {$(X)$};
\draw[->, >=stealth, ultra thick] (v)--(w) node[midway,above] {$(X)$};
\draw[->, dashed, >=stealth, ultra thick] (-6.5,-5.3) arc (30:150:3.464);
\draw (-13.5,-9) node[white]{0};

\draw (-4.5,-5) node{$i$};
\draw (-4.5,-7) node{$i+1$};
\node (v) at (-3.5,-5.5) {$v$};
\node (vg) at (2.5,-5.5) {$v^\gamma$};
\draw[->, dashed, >=stealth, ultra thick] (2.5,-5.3) arc (30:150:3.464);
\node (y) at (-1.5,-6.5) {$y$};
\node (yg) at (.5,-6.5) {$y^\gamma$};
\draw[o->, dashed, >=stealth, ultra thick] (y)--(yg);
\draw[->, >=stealth, ultra thick] (v)--(y);
\draw[->, >=stealth, ultra thick] (yg)--(vg);
\draw (-4.5,-9) node[white]{0};

\draw (4.5,-5) node{$i$};
\draw (4.5,-7) node{$i+1$};
\node (w) at (7.5,-5.5) {$w$};
\node (wg) at (9.5,-5.5) {$w^\gamma$};
\draw[o->, dashed, >=stealth, ultra thick] (w)--(wg);
\node (x) at (5.5,-6.5) {$x$};
\node (xg) at (11.5,-6.5) {$x^\gamma$};
\draw[->, >=stealth, ultra thick] (x)--(w);
\draw[->, >=stealth, ultra thick] (wg)--(xg);
\draw[->, dashed, >=stealth, ultra thick] (11.5,-6.7) arc (-30:-150:3.464);
\end{tikzpicture}
\end{center}
vérifient les propriétés de la remarque \ref{recap}.
\subparagraph{\textbf{Cas (3.b) : Dans le cas où on n'a $y \preceq w \prec v \preceq x$ pour aucun facteur $\pr(e_{x,y})$ de $\mathcal{T}_{i+1}$,}} montrons que pour tout $y \in I(i+1)$, on a $v \preceq y$. On rappelle que $v,w \in I(i)^-$ \par
Soit $y \in I(i+1)$. On reprend les notations \ref{nota}.
\begin{itemize}
\item Premièrement, on ne peut pas avoir $w \prec y \prec v$. En effet, si l'on reprend la notation \ref{KetI}, on a $K(i+1) \subset K(i)$ et, par la propriété \ref{elemSi}, $K(i) \cap \llbracket k(w) +1, k(v) -1 \rrbracket = \emptyset$. 
\item Il reste donc à vérifier que l'on ne peut pas avoir $y \preceq w$. S'il existe $y \in I(i+1)$ tel que $y \preceq w$, alors il existe $z \in I(i+1)$ tel que $y \preceq w \prec v \preceq z$ (il suffit de prendre $z=y^\gamma$, on a $z \geq n'+1$ et $y \leq n'$). Quitte alors à augmenter $y$ et diminuer $z$, grâce au premier point, on peut supposer que $y$ et $z$ sont deux termes consécutifs pour l'ordre croissant de $I(i+1)$, et donc que $\pr(e_{z,y})$ est un facteur de $\mathcal{T}_{i+1}$, ce qui est absurde par hypothèse.
\end{itemize}
Considérons donc, d'après la propriété \ref{elemSi}, le facteur de la forme $e_{x,x^\gamma}$ de $\mathcal{T}_{i+1}$ avec $x^\gamma > x$ (c'est-à-dire le terme sur la partie haute de l'antidiagonale). On a donc $w \preceq v \preceq x \preceq x^\gamma$. On remplace alors $p_{v,w} e_{v,w}^*+p_{x,x^\gamma} e_{x,x^\gamma}^*$ par $p_{v,w} X e_{v,w}^*+e_{x,w}^*+e_{v,v^\gamma}^*$.
\begin{exem}
Un exemple de ce cas dans l'exemple \ref{exempleC} est le cas où $\pr(e_{v,w})=\pr(e_{3,1})$ et $\pr(e_{x,x^\gamma})=e_{2,11}$.
\end{exem}
On passe de $\mathcal{G}(q')$ à $\mathcal{G}(q)$ en effectuant les modifications suivantes :
\begin{center}
\begin{tikzpicture}[scale=0.9]
\draw (-4.5,1) node{$i$};
\draw (-4.5,-1) node{$i+1$};
\node (v) at (-3.5,.5) {$v$};
\node (w) at (-1.5,.5) {$w$};
\node (wg) at (.5,.5) {$w^\gamma$};
\node (vg) at (2.5,.5) {$v^\gamma$};
\draw[o->, dashed, >=stealth, ultra thick] (w)--(wg);
\draw[->, >=stealth, ultra thick] (wg)--(vg);
\draw[->, >=stealth, ultra thick] (v)--(w);
\draw[->, dashed, >=stealth, ultra thick] (2.5,.7) arc (30:150:3.464);
\node (x) at (-1.5,-.5) {$x$};
\node (y) at (.5,-.5) {$x^\gamma$};
\draw[o->, >=stealth, ultra thick] (x)--(y);
\draw[->, dashed, >=stealth, ultra thick] (.5,-.7) arc (-30:-150:1.155);

\draw (3.5,0) node{\huge $\Rightarrow$};

\draw (4.5,1) node{$i$};
\draw (4.5,-1) node{$i+1$};
\node (v) at (5.5,.5) {$v$};
\node (w) at (7.5,.5) {$w$};
\node (wg) at (9.5,.5) {$w^\gamma$};
\node (vg) at (11.5,.5) {$v^\gamma$};
\draw[o->, dashed, >=stealth, ultra thick] (w)--(wg);
\draw[->, >=stealth, ultra thick] (wg)--(vg) node[midway,above] {$(X)$};
\draw[->, >=stealth, ultra thick] (v)--(w) node[midway,above] {$(X)$};
\draw[->, dashed, >=stealth, ultra thick] (11.5,.7) arc (30:150:3.464);
\node (x) at (7.5,-.5) {$x$};
\node (y) at (9.5,-.5) {$x^\gamma$};
\draw[->, >=stealth, ultra thick] (x)--(w);
\draw[->, >=stealth, ultra thick] (wg)--(y);
\draw[o->, >=stealth, ultra thick] (5.5,.3) arc (-150:-30:3.464);
\draw[->, dashed, >=stealth, ultra thick] (9.5,-.7) arc (-30:-150:1.155);
\end{tikzpicture}
\end{center}
On peut encore vérifier que les circuits $\mathcal{G}'_1, \mathcal{G}'_2, \mathcal{G}'_3$ respectivement définis par
\begin{center}
\begin{tikzpicture}[scale=0.7]
%

\draw (4.5,1) node{$i$};
\draw (4.5,-1) node{$i+1$};
\node (v) at (5.5,.5) {$v$};
\node (w) at (7.5,.5) {$w$};
\node (wg) at (9.5,.5) {$w^\gamma$};
\node (vg) at (11.5,.5) {$v^\gamma$};
\draw[o->, dashed, >=stealth, ultra thick] (w)--(wg);
\draw[->, >=stealth, ultra thick] (wg)--(vg) node[midway,above] {$(X)$};
\draw[->, >=stealth, ultra thick] (v)--(w) node[midway,above] {$(X)$};
\draw[->, dashed, >=stealth, ultra thick] (11.5,.7) arc (30:150:3.464);
%

\draw (13.5,1) node{$i$};
\draw (13.5,-1) node{$i+1$};
\node (v) at (14.5,.5) {$v$};
\node (vg) at (20.5,.5) {$v^\gamma$};
\draw[->, dashed, >=stealth, ultra thick] (20.5,.7) arc (30:150:3.464);
\draw[o->, >=stealth, ultra thick] (14.5,.3) arc (-150:-30:3.464);
%

\draw (22.5,1) node{$i$};
\draw (22.5,-1) node{$i+1$};
\node (w) at (23.5,.5) {$w$};
\node (wg) at (25.5,.5) {$w^\gamma$};
\draw[o->, dashed, >=stealth, ultra thick] (w)--(wg);
\node (x) at (23.5,-.5) {$x$};
\node (y) at (25.5,-.5) {$x^\gamma$};
\draw[->, >=stealth, ultra thick] (x)--(w);
\draw[->, >=stealth, ultra thick] (wg)--(y);
\draw[->, dashed, >=stealth, ultra thick] (25.5,-.7) arc (-30:-150:1.155);
\end{tikzpicture}
\end{center}
vérifient les propriétés de la remarque \ref{recap}.
\subsubsection{Hypothèse (II)}
Soit $m'=m'_i \in \mathbf{M}'$. On pose $r'=r'_{m'}$, $m=2m' \in \mathbf{M}$ et $r=2r'$. On rappelle que pour tout $t \in \llbracket 1,r'+1 \rrbracket$, on a
$$\lambda^C_{m',t}=\left\{ \begin{array}{ll} w^C_{k_t}-w^C_{k_{t+1}} &  \text{si } t<r' \\ 2w^C_{k_{r'}} & \text{si } t=r' \\ -2w^C_{k_1} & \text{si } t=r'+1 \end{array} \right. $$
avec $w_k^C=\varpi^C_{\iota_{k-1}}-\varpi^C_{\iota_{k}}$ et $\varpi_l^C \in (\syp_n)^*$ les poids fondamentaux. Comme en type $\gl_n$, si $\Lambda^C$ est l'ensemble des poids de $\Sy(\gotq^C)$, alors $\Lambda^C \subset \bigoplus_{\ell=1}^{n'} \rat \varpi_\ell^C$. ainsi tous les $\lambda^C_{m',t}$, $t<r'$ sont indivisibles dans $\Lambda^C$, de sorte que leurs semi-invariants associés $f_{m',t}$ sont bien indivisibles dans $\Sy(\gotq^C)$. Restent les cas où $t=r'$ et $t=r'+1$, qui correspondent à $f_{m',r'}=\pr(F_{m,r'})$ et $f_{m',r'+1}=\pr(F_{m,r})$.\par
Notons $\overline{D}$ le sous-espace vectoriel de $\gl_n$ des matrices antidiagonales, engendré par les $e_{v,v^\gamma}$. On a en fait $\overline{D} \subset \syp_n$. Pour tout $e_{v,v^\gamma} \in \overline{D}$, l'invariant $F_m$ est de degré au plus $1$ en $e_{v,v^\gamma}$, donc $F_m^\bullet$ aussi (c'est une somme de certains monômes de $F_m$). Or pour tout $v,w$, on a $e_{v,w} \in \overline{D} \Leftrightarrow \pr(e_{v,w}) \in \overline{D}$. Ainsi pour tout $e_{v,v^\gamma} \in \overline{D}$, la projection $\pr(F_m^\bullet)$ est de degré au plus $1$ en $e_{v,v^\gamma}$, et donc les $f_{m',t}$ aussi (ce sont des facteurs de $\pr(F_m^\bullet)$).
\begin{prop}
Il existe $e_{v,v^\gamma} \in \overline{D}$ tel que le semi-invariant $f_{m',t}$ est de degré $1$ en $e_{v,v^\gamma}$ si et seulement si $t \in \{r',r'+1\}$.
\end{prop}
\begin{proof}
On rappelle que $F_{m,t} \in \Sym(\g_{I^{(t)},I^{[t]}})$. Si $F_{m,t}$ est de degré $1$ en des éléments de $\overline{D}$, alors $\overline{D} \cap \g_{I^{(t)},I^{[t]}} \neq \emptyset$. Mais alors $\left(I^{(t)}\right)^\gamma \cap I^{[t]} \neq \emptyset$. Or si $t < r$, on a $\left(I^{(t)}\right)^\gamma = I^{[r-t]}$, donc $\left(I^{(t)}\right)^\gamma \cap I^{[t]} = I^{[r-t]}\cap I^{[t]}$.  Comme les $I^{[t]}$ sont disjoints, l'ensemble $I^{[r-t]}\cap I^{[t]}$ est non vide si et seulement si $t=r/2=r'$. Si $t=r$, on a $\left(I^{(r)}\right)^\gamma = I^{[r]}$, donc $\left(I^{(t)}\right)^\gamma \cap I^{[t]} \neq \emptyset$. Ainsi $t \in \{r',r\}$. Autrement dit, avec la notation \ref{notag}, si $f_{m',t}$ est de degré $1$ en des éléments de $\overline{D}$, alors $t \in \{r',r'+1\}$.\par
Prenons $\mathcal{T}_i$, qui est un facteur d'un monôme de $\pr(F_m^\bullet)$ si $m=m_i$ (proposition \ref{pedagogique}). D'après la propriété \ref{elemSi}, il existe exactement deux facteurs irréductibles de $\mathcal{T}_i$ qui sont sur l'antidiagonale, dont un dans $\gotp^C$ et l'un dans $(\gotn^-)^C$. D'après ce qui précède et le lemme \ref{monome}, ils apparaissent donc dans un monôme de $f_{m',r'}$ ou $f_{m',r'+1}$. Pour conclure, on va montrer que l'un des facteurs apparaît dans $f_{m',r'}$ et l'autre dans $f_{m',r'+1}$. Le facteur dans $\gotp^C$ apparaît nécessairement dans $f_{m',r'+1}$ car $f_{m',r'}=\pr(F_{m,r'}) \in \Sym((\gotn^-)^C)$. Par la propriété \ref{elemSi}, le facteur dans $(\gotn^-)^C$ est du type $\pr(e_{v,w})$ (avec $w=v^\gamma$) où $w,v$ sont deux termes consécutifs pour l'ordre croissant de $I(i)$. Autrement dit, on a $k(w)<k(v)$ avec $k(w), k(v) \in K(i)$ et $\llbracket k(w)+1,k(v)-1 \rrbracket \cap K(i) = \emptyset$, c'est-à-dire $|I_k| <i$ pour tout $k \in \llbracket k(w)+1,k(v)-1 \rrbracket$. Or, comme $i \in \mathbf{I}$, il existe $c \in \kappa_i$, c'est-à-dire tel que $|I_c|=i$. Comme $k(w) \leq s' <s'+1 \leq k(v)$, par symétrie, il existe donc $d,d'$ tels que $d\leq k(w) < k(v) \leq d'$ et $d,d' \in \kappa_i$. En particulier, on a $k_1 < k(v) \leq k_r$ (où l'on rappelle que $\kappa_i=\{k_1 <  \ldots < k_r\}$ en omettant les indices $i$).\par
Si $e_{v,w}$ apparaissait dans $f_{m',r'+1}$, puisque $f_{m',r'+1} \in \Sym(\g_{I^{(r)},I^{[r]}})$, on aurait $v \in I^{(r)}=I_{\llbracket 1,k_1 \rrbracket \sqcup \llbracket k_r+1,s \rrbracket}$, donc $k(v) \in \llbracket 1,k_1 \rrbracket \sqcup \llbracket k_r+1,s \rrbracket$, ce qui est contradiction avec les inégalités précédentes.
\end{proof}
Ainsi les semi-invariants $f_{m,r'}$ et $f_{m,r'+1}$ sont de degré $1$ en des éléments de l'antidiagonale (qui sont des vecteurs de la base canonique de $\syp_n$ définie dans la partie \ref{defC}), donc en particulier sont indivisibles par additivité des degrés partiels.\par
Les hypothèses (I), (II) et (III) du théorème \ref{inter} sont donc vérifiées. On obtient :
\begin{theo}
Soit $\gotq^C$ une contraction parabolique standard de $\syp_{2n'}=\syp_{n}$ telle que $\alpha_{n'}^C \notin (\pi^C)'$. Soit $\gotq$ une contraction parabolique standard symétrique de $\gl_n$ au-dessus de $\gotq^C$, associée à un facteur de Levi $\gotl= \g_{I_1} \times \ldots \times \g_{I_s}$. Alors
\begin{itemize}
\item la troncation canonique $\gotq_\Lambda^C$ de $\gotq^C$ est $\gotq_\Lambda^C=(\gotq^C)'$,
\item l'indice de $\gotq_\Lambda^C$ est $\ind \gotq_\Lambda^C=(n+s)/2$,
\item l'algèbre des semi-invariants $\Sy(\gotq^C)=\Y(\gotq^C_\Lambda)$ est polynomiale et librement engendrée par les semi-invariants $\pr(F_{2m',t})$, $m' \in \llbracket 1,n/2 \rrbracket$ et $t \in \llbracket 1, r'_{m'} +1 \rrbracket$ où les semi-invariants $F_{m,t}$ en type $\gl_n$ sont ceux de la contraction parabolique standard symétrique $\gotq$ au-dessus de $\gotq^C$.  En particulier, l'ensemble des poids $\Lambda(\gotq^C)$ de $\Sy(\gotq^C)$ est un groupe.
\end{itemize}
\end{theo}
\section{Un cas de non-polynomialité lorsque le facteur de Levi n'est pas de type $A$}
\label{sec10}
On considère la CPSS $\gotq$ (voir définition \ref{cpcs}) de $\gl_8$ définie par $\pi'=\{\alpha_2, \alpha_4, \alpha_6\}$, autrement dit, la contraction parabolique associée au diagramme suivant :
\begin{center}
\begin{tikzpicture}[scale=0.7]
\foreach \k in {1,2,...,7}
	{\draw[color=gray!20]  (0,\k)--(8,\k);
	\draw[color=gray!20] (\k,0)--(\k,8);}
\draw (0,0)--(8,0);
\draw (0,0)--(0,8);
\draw (0,8)--(8,8);
\draw (8,0)--(8,8);
\draw[ultra thick] (0,7)--(1,7)--(1,5)--(3,5)--(3,3)--(5,3)--(5,1)--(7,1)--(7,0);
\draw (2,2) node{\LARGE $\gotn^-$};
\draw (6,6) node{\LARGE $\gotp$};
\draw[dashed] (1,8)--(1,7)--(3,7)--(3,5)--(5,5)--(5,3)--(7,3)--(7,1)--(8,1);
\draw (0.5,7.5) node{\Large $\g_{I_1}$};
\draw (2,6) node{\Large $\g_{I_2}$};
\draw (4,4) node{\Large $\g_{I_3}$};
\draw (6,2) node{\Large $\g_{I_4}$};
\draw (7.5,0.5) node{\Large $\g_{I_5}$};
\end{tikzpicture}
\end{center}
et $\gotq^C=\pr(\gotq)$ la contraction de $\syp_8$ associée. On a alors $(\pi^C)'=\{\alpha_2^C, \alpha_4^C\}$. On va tenter de suivre un cheminement analogue au type $A$ et au type $C$ où $\alpha_{n'}^C \notin (\pi^C)'$. On verra qu'il faudra l'adapter au vu de quelques problèmes non rencontrés jusqu'ici.
\subsection{Semi-invariants}
Pour trouver les semi-invariants de $\gotq^C$ en type $C$, on a regardé la contraction parabolique $\gotq$ au-dessus de $\gotq^C$, factorisé les invariants en semi-invariants, puis projeté ces semi-invariants, en retirant les termes qui se télescopent du fait de la projection.\par
Si l'on s'intéresse à notre cas, on a $F_5^\bullet=F_{5,1}F_{5,2}$ et $F_8^\bullet=F_{8,1}F_{8,2}F_{8,3}$. Lorsque l'on veut projeter la première égalité, on obtient $\pr(F_5^\bullet)=0$ (d'après \cite[chapitre 8]{bou06}). On trouve bien $\pr(F_{5,1})=0$, mais $\pr(F_{5,2})=e_{1,8} \neq 0$. Autrement dit, en ne considérant que les projections des $F_m^\bullet$, on perd la donnée du semi-invariant $\pr(F_{5,2})$.\par
On a $\deg_{\gotn^-} F_8 =6$ et $\deg_{\gotn^-} F_4=3$ (corollaire \ref{coro}), de sorte que $F_8^\bullet$ et $(F_4^{\bullet})^2$ ont les mêmes bidegrés. Par un calcul, on trouve alors que
$ \pr(F_{5,2})$ divise $\pr(F_8^\bullet)-\frac{1}{4} \pr(F_4^\bullet)^2$. On rappelle que $f_{m'}=\pr(F_{2m'}^\bullet)$. On note alors $f_5:=\pr(F_8^\bullet)-\frac{1}{4} \pr(F_4^\bullet)^2 = f_4-\frac{1}{4} f_2^2$ et $f_5=f_{5,1}f_{5,2}$ où $f_{5,2}:=\pr(F_{5,2})$. Pour $f_4$, on a $\pr(F_8^\bullet) \propto \pr(F_{8,1}) \pr(F_{8,2}) \pr(F_{8,3})$. Posant $f_{4,1}:=\pr(F_{8,2})=\pr(F_{8,1})$ et $f_{4,2}:=\pr(F_{8,3})$, on a $f_4 \propto f_{4,1}^2 f_{4,2}$. Quitte à multiplier $f_{4,2}$, on peut et on va supposer que $f_4 = f_{4,1}^2 f_{4,2}$.
\begin{prop}
On a $f_4 = f_{4,1}^2f_{4,2}$, avec
$$f_{4,1}=\pr(\Delta_{\{4,5\},\{2,3\}}) \qquad \text{et} \qquad f_{4,2}=\pr(\Delta^\bullet_{\{1,2,3,8\},\{1,6,7,8\}})$$
On a également $f_5:=f_4-\frac{1}{4} f_2^2=f_{5,1}f_{5,2}$, de sorte que 
\begin{equation}
f_{5,1}f_{5,2}+\frac{1}{4} f_2^2 = f_{4,1}^2f_{4,2} \label{eqbiz}
\end{equation}
\label{calculceC}
\end{prop}

\subsection{Poids et troncation canonique}
\label{sec102}
\begin{prop}
Les poids de $f_{4,1}$ et $f_{4,2}$ sont les restrictions des poids de $F_{8,1}$ et $F_{8,3}$ à $\syp_8$ (propriété \ref{yes}) c'est-à-dire respectivement $\lambda_{4,1}^C= \varpi_1^C-\varpi_3^C$ et $\lambda_{4,2}^C=2(\varpi_3^C-\varpi_1^C)$. Le poids de $f_{5,2}=e_{1,8}$ est $2\varpi_1^C$ et donc le poids de $f_{5,1}$ est $-2\varpi_1^C$.
\label{pdsceC}
\end{prop}
Comme la famille $(f_1,f_2,f_3,f_4)$ est algébriquement indépendante \cite{py13}, la famille $(f_1, f_3, f_4, f_4-\frac{1}{4}f_2^2)$ est algébriquement indépendante, donc par le théorème \ref{alin} appliqué aux facteurs $f_1$, $f_3$, $f_{4,1}$, $f_{4,2}$, $f_{5,1}$, $f_{5,2}$ de $(f_1, f_3, f_4, f_4-\frac{1}{4}f_2^2)$, on obtient :
\begin{prop}
La famille $(f_1, f_3, f_{4,1},f_{4,2},f_{5,1},f_{5,2})$ est algébriquement indépendante.
\label{aiceC}
\end{prop}
Ainsi $\GK \Sy(\gotq^C) = \ind \gotq^C_\Lambda \geq 6 = \ind \gotq^C + 2$. Or par la propriété \ref{inclusion}, on a $\gotq^C_\Lambda \supset (\gotq^C)'$, avec $\dim (\gotq^C)'=\dim \gotq^C-2$. Alors
$$\ind \gotq^C_\Lambda + \dim \gotq^C_\Lambda \geq \ind \gotq^C+2 + \dim \gotq^C-2= \ind \gotq^C + \dim \gotq^C$$
et on a les égalités par le résultat de Ooms et van den Bergh (équation \eqref{ovdb})
\begin{prop}
On a $\gotq_\Lambda^C=(\gotq^C)'$ et $\ind \gotq_\Lambda^C=6$. Ainsi $\GK \Sy(\gotq^C)=6$ et donc par exemple, la famille $\{f_1, f_3, f_{4,1},f_{4,2},f_{5,1},f_{5,2}\}$ est une base de transcendance de $\Sy(\gotq^C)$.
\label{indceC}
\end{prop}
Par \eqref{eqbiz}, la famille $f_1, f_2, f_3, f_{4,1}, f_{4,2}, f_{5,1}, f_{5,2}$ n'est pas algébriquement indépendante et la famille $f_1, f_3, f_{4,1}, f_{4,2}, f_{5,1}, f_{5,2}$ est bien algébriquement indépendante, mais a priori ne semble pas engendrer l'algèbre (par exemple, elle ne semble pas engendrer $f_2$).

\subsection{L'algèbre $\C[f_1, f_2, f_3, f_{4,1}, f_{4,2}, f_{5,1}, f_{5,2}]$ n'est pas polynomiale}
\begin{propn}
On a un isomorphisme de $\C$-algèbres
$$\C[f_1, f_2, f_3, f_{4,1}, f_{4,2}, f_{5,1}, f_{5,2}] \simeq \dfrac{\C[X_1, X_2, X_3, X_{4,1}, X_{4,2}, X_{5,1}, X_{5,2}]}{\left(X_{5,1}X_{5,2}+\frac{1}{4} X_2^2-X_{4,1}^2X_{4,2}\right)}$$
(où $X_1, X_2, X_3, X_{4,1}, X_{4,2}, X_{5,1}, X_{5,2}$ sont des indéterminées).
\end{propn}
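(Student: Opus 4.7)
The plan is to realise the desired isomorphism as the natural surjection from the polynomial ring modulo the explicit relation \eqref{eqbiz}, and then to prove that the kernel is exactly the principal ideal generated by that relation, by comparing Krull dimensions on both sides.

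First I would consider the $\C$-algebra homomorphism
\[
\phi : \C[X_1, X_2, X_3, X_{4,1}, X_{4,2}, X_{5,1}, X_{5,2}] \longrightarrow \C[f_1, f_2, f_3, f_{4,1}, f_{4,2}, f_{5,1}, f_{5,2}]
\]
sending each indeterminate to its obvious image. This map is surjective by construction. Setting $P := X_{5,1}X_{5,2}+\tfrac{1}{4} X_2^2-X_{4,1}^2X_{4,2}$, the relation \eqref{eqbiz} shows that $P \in \ker \phi$, so $\phi$ descends to a surjection
\[
\bar{\phi} : \A := \C[X_1, X_2, X_3, X_{4,1}, X_{4,2}, X_{5,1}, X_{5,2}]/(P) \longrightarrow \B := \C[f_1, f_2, f_3, f_{4,1}, f_{4,2}, f_{5,1}, f_{5,2}].
\]
The goal is to prove $\bar{\phi}$ is injective.

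Next I would show that $\A$ is an integral domain of Krull dimension $6$. For this it suffices to check that $P$ is irreducible in the polynomial ring $\C[X_1, X_2, X_3, X_{4,1}, X_{4,2}, X_{5,1}, X_{5,2}]$. Regarded as a polynomial in the single variable $X_{5,1}$ over the UFD $\C[X_1, X_2, X_3, X_{4,1}, X_{4,2}, X_{5,2}]$, one has $P = X_{5,2}\, X_{5,1} + \bigl(\tfrac{1}{4}X_2^2 - X_{4,1}^2 X_{4,2}\bigr)$, which is of degree $1$ with coprime coefficients (the linear coefficient $X_{5,2}$ is irreducible and does not divide $\tfrac{1}{4}X_2^2 - X_{4,1}^2 X_{4,2}$), so $P$ is a primitive polynomial of degree $1$ and hence irreducible by Gauss's lemma. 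Consequently $(P)$ is prime, $\A$ is an integral domain, and $\dim \A = 7-1 = 6$.

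Then I would show that the subalgebra $\B$ also has Krull (equivalently Gelfand--Kirillov) dimension $6$. The lower bound $\dim \B \geq 6$ follows from Proposition \ref{aiceC}, which asserts that $(f_1,f_3,f_{4,1},f_{4,2},f_{5,1},f_{5,2})$ is algebraically independent. For the upper bound, the relation \eqref{eqbiz} gives $f_2^2 = 4\bigl(f_{4,1}^2f_{4,2} - f_{5,1}f_{5,2}\bigr)$, so $f_2$ is algebraic over $\C[f_1, f_3, f_{4,1}, f_{4,2}, f_{5,1}, f_{5,2}]$; therefore $\Frac \B$ has transcendence degree at most $6$ over $\C$, whence $\dim \B \leq 6$.

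Finally I would conclude by a standard dimension argument: $\ker \bar{\phi}$ is a prime ideal of the integral $\C$-algebra of finite type $\A$, and for such rings any nonzero prime ideal $\mathfrak{p}$ satisfies $\dim \A/\mathfrak{p} < \dim \A$ (as the dimension equals the length of any maximal chain of primes). Since $\dim \A/\ker\bar{\phi} = \dim \B = 6 = \dim \A$, we must have $\ker \bar{\phi} = 0$, so $\bar{\phi}$ is an isomorphism. No step here should present serious difficulty; the only substantial point is the irreducibility of $P$, which is settled directly by viewing $P$ as a linear polynomial in $X_{5,1}$.
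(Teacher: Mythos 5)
Your proof is correct, and it takes a genuinely different route from the paper's. The paper argues directly: given a polynomial relation $P(f_1,\dots,f_{5,2})=0$, it performs Euclidean division of $P$ by $\tfrac14X_2^2+X_{5,1}X_{5,2}-X_{4,1}^2X_{4,2}$ with respect to $X_2$, reduces to showing that the remainder $R_1 X_2 + R_0$ vanishes, and for that invokes the algebraic independence of $(f_1,f_3,f_{4,1},f_{4,2},f_{5,1},f_{5,2})$ (proposition \ref{aiceC}) combined with a weight argument showing that $\mathcal{R}_1$ and $\mathcal{R}_0$ are, modulo weight considerations, polynomials in $f_1,f_2,f_3,f_4$ of even degree in $f_2$, which contradicts $\mathcal{R}_1 f_2 + \mathcal{R}_0=0$ via the algebraic independence of $f_1,f_2,f_3,f_4$. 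You instead compare Krull dimensions: you show $(P)$ is prime (irreducibility of $P$ via Gauss), so $\A := \C[X]/(P)$ is a domain of dimension $6$; you show $\B$ is a domain of dimension $6$ (lower bound from proposition \ref{aiceC}, upper bound since $f_2$ is algebraic over the other six via \eqref{eqbiz}); and you conclude by the dimension formula for affine domains that the surjection $\A\twoheadrightarrow\B$ has trivial kernel. Both proofs rest on the same two ingredients (proposition \ref{aiceC} and relation \eqref{eqbiz}), but yours replaces the explicit Euclidean-division-plus-weight computation with the more structural observation that a surjection of finitely generated domains over $\C$ of equal dimension is an isomorphism. Your argument is shorter and more transparent; the paper's has the modest advantage of being self-contained at the level of semi-invariant weights and avoids invoking the dimension formula for affine rings.
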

\begin{proof}
Puisque l'on a $f_{4,1}^2f_{4,2}-\frac{1}{4}f_2^2=f_{5,1}f_{5,2}$, on a bien une application surjective 
$$\dfrac{\C[X_1, X_2, X_3, X_{4,1}, X_{4,2}, X_{5,1}, X_{5,2}]}{\left(X_{5,1}X_{5,2}+\frac{1}{4} X_2^2-X_{4,1}^2X_{4,2}\right)} \longtwoheadrightarrow \C[f_1, f_2, f_3, f_{4,1}, f_{4,2}, f_{5,1}, f_{5,2}]$$
(qui est l'évaluation en $f_1, f_2, f_3, f_{4,1}, f_{4,2}, f_{5,1}, f_{5,2}$). Il reste à montrer que l'application est bien bijective.\par
Soit $P \in \C[X_1, X_2, X_3, X_{4,1}, X_{4,2}, X_{5,1}, X_{5,2}]$ tel que $P(f_1, f_2, f_3, f_{4,1}, f_{4,2}, f_{5,1}, f_{5,2})=0$. Soit $P=\left(\frac{1}{4} X_2^2+X_{5,1}X_{5,2}-X_{4,1}^2X_{4,2}\right)Q+R$ la division euclidienne de $P$ par $\frac{1}{4} X_2^2+X_{5,1}X_{5,2}-X_{4,1}^2X_{4,2}$ par rapport à l'indéterminée $X_2$ (le polynôme $\frac{1}{4} X_2^2+X_{5,1}X_{5,2}-X_{4,1}^2X_{4,2}$ est de coefficient dominant inversible comme polynôme en $X_2$, ce qui justifie la division euclidienne). Ainsi $R$ est un polynôme de degré au plus $1$ en $X_2$, que l'on note
$$R=R_1(X_1, X_3, X_{4,1}, X_{4,2}, X_{5,1}, X_{5,2})X_2+R_0(X_1, X_3, X_{4,1}, X_{4,2}, X_{5,1}, X_{5,2})$$
et il s'agit de montrer que $R_1=R_0=0$. Évaluons $P$ en $f_1, f_2, f_3, f_{4,1}, f_{4,2}, f_{5,1}, f_{5,2}$, on obtient
\begin{equation}
R_1(f_1, f_3, f_{4,1}, f_{4,2}, f_{5,1}, f_{5,2})f_2+R_0(f_1, f_3, f_{4,1}, f_{4,2}, f_{5,1}, f_{5,2})=0
\label{jsp}
\end{equation}
Comme $f_1, f_3, f_{4,1}, f_{4,2}, f_{5,1}, f_{5,2}$ est une famille algébriquement indépendante (propriété \ref{aiceC}), il suffit de montrer que $\mathcal{R}_1:=R_1(f_1, f_3, f_{4,1}, f_{4,2}, f_{5,1}, f_{5,2})$ et $\mathcal{R}_0:=R_0(f_1, f_3, f_{4,1}, f_{4,2}, f_{5,1}, f_{5,2})$ sont nuls. On raisonne alors de la même manière que dans la démonstration du théorème \ref{alin}. Supposons que $\mathcal{R}_1$ et $\mathcal{R}_1$ sont non nuls. On peut supposer que tous les monômes dans l'égalité \eqref{jsp}, qui sont des semi-invariants, sont de même poids. Puisque le semi-groupe engendré par les poids de $f_{4,1}, f_{4,2}, f_{5,1}, f_{5,2}$ est un groupe (on le vérifie à la main), on peut supposer que ce poids est nul. Ainsi on peut supposer que $\mathcal{R}_1$ et $\mathcal{R}_0$ sont des invariants. Or, puisque les poids de $f_{4,1}$ et de $f_{5,1}$ ne sont pas liés, un monôme en $f_1, f_3, f_{4,1}, f_{4,2}, f_{5,1}, f_{5,2}$ de poids nul est nécessairement de la forme
$$ f_1^{s_1} f_3^{s_3} (f_{4,1}^2 f_{4,2})^{s_4} (f_{5,1} f_{5,2})^{s_5}=f_1^{s_1} f_3^{s_3} f_4^{s_4} \left(f_4-\frac{1}{4}f_2^2\right)^{s_5}.$$
avec les $s_i \in \N$. Alors $\mathcal{R}_1$ et $\mathcal{R}_0$ sont des polynômes en $f_1, f_2, f_3, f_4$, de degré pair en $f_2$. Comme $f_1, f_2, f_3, f_4$ est une famille algébriquement indépendante (théorème \ref{pyC}), cela contredit l'égalité \eqref{jsp} qui se réécrit $\mathcal{R}_1f_2 +\mathcal{R}_0=0$. Ainsi $\mathcal{R}_1=0$ ou $\mathcal{R}_0=0$, d'où $\mathcal{R}_1=\mathcal{R}_0=0$, et donc $R_1=R_0=0$.
\end{proof}
\begin{propn}
L'algèbre $\dfrac{\C[X_1, X_2, X_3, X_{4,1}, X_{4,2}, X_{5,1}, X_{5,2}]}{\left(X_{5,1}X_{5,2}+\frac{1}{4} X_2^2-X_{4,1}^2X_{4,2}\right)}$ n'est pas polynomiale.
\end{propn}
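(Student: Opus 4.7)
L'idée est d'utiliser un argument géométrique : si une $\C$-algèbre commutative intègre et de type fini $A$ est polynomiale, alors $\Spec A$ est un espace affine, donc lisse en tout point. Il suffit donc d'exhiber un point singulier de la variété affine $V(g) \subset \C^7$ associée à l'algèbre quotient, où $g := X_{5,1}X_{5,2}+\tfrac{1}{4} X_2^2-X_{4,1}^2X_{4,2}$.

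Première étape : montrer que $g$ est irréductible dans $\C[X_1, X_2, X_3, X_{4,1}, X_{4,2}, X_{5,1}, X_{5,2}]$, de sorte que $V(g)$ soit irréductible réduite et l'algèbre quotient intègre. Vu comme polynôme en $X_{5,1}$, le polynôme $g$ est de degré $1$, avec pour coefficient dominant $X_{5,2}$ et pour terme constant $\tfrac{1}{4}X_2^2-X_{4,1}^2X_{4,2}$. Ces deux polynômes sont premiers entre eux dans $\C[X_1, X_2, X_3, X_{4,1}, X_{4,2}, X_{5,2}]$, car $X_{5,2}$ est irréductible et n'apparaît pas dans le terme constant ; le lemme de Gauss assure alors l'irréductibilité de $g$.

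Deuxième étape : appliquer le critère jacobien pour déterminer le lieu singulier de $V(g)$. Les dérivées partielles non identiquement nulles de $g$ sont
$$\partial_{X_2} g = \tfrac{1}{2}X_2,\quad \partial_{X_{4,1}} g = -2 X_{4,1} X_{4,2},\quad \partial_{X_{4,2}} g = -X_{4,1}^2,\quad \partial_{X_{5,1}} g = X_{5,2},\quad \partial_{X_{5,2}} g = X_{5,1}.$$
Elles s'annulent simultanément exactement lorsque $X_2=X_{4,1}=X_{5,1}=X_{5,2}=0$, conditions sous lesquelles l'équation $g=0$ est automatiquement satisfaite. Le lieu singulier de $V(g)$ contient donc l'origine de $\C^7$, et est en particulier non vide.

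La variété $V(g)$ n'étant pas lisse, elle ne peut être isomorphe à un espace affine, donc l'algèbre quotient ne saurait être polynomiale. L'unique subtilité est l'irréductibilité de $g$, mais elle découle immédiatement de sa forme de polynôme de degré $1$ en $X_{5,1}$ à coefficients premiers entre eux ; le reste est un calcul direct de dérivées partielles.
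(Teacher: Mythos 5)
Your proof is correct and follows essentially the same approach as the paper: the paper also argues that the origin is a singular point of the hypersurface by computing the tangent space, which is precisely the Jacobian computation you carry out. You are slightly more careful in that you explicitly verify the irreducibility of $g$ (so that the quotient is an integral domain and the Jacobian criterion applies to a reduced hypersurface), a point the paper leaves implicit.
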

\begin{proof}
On vérifie que sur la variété $\Spec\left(\frac{\C[X_1, X_2, X_3, X_{4,1}, X_{4,2}, X'_{4,1}, X'_{4,2}]}{\left(X'_{4,1}X'_{4,2}+\frac{1}{4} X_2^2-X_{4,1}^2X_{4,2}\right)}\right)$, le point $0$ est singulier en calculant son espace tangent.
\end{proof}

\subsection{Les semi-invariants $f_1, f_2, f_3, f_{4,1}, f_{4,2}, f_{5,1}, f_{5,2}$ engendrent $\Sy(\gotq^C)$}
Pour montrer que $\Sy(\gotq^C)=\C[f_1, f_2, f_3, f_{4,1}, f_{4,2}, f_{5,1}, f_{5,2}]$, on applique encore une fois le théorème \ref{inter}, cette fois-ci à $\mathbf{f}:=\{f_1, f_2, f_3, f_{4,1}, f_{4,2}, f_{5,1}, f_{5,2}\}$. On pose $f_{m,1}:=f_m$ pour $m \in \{1,2,3\}$. On vérifie d'abord les hypothèses (a), (b), (c) et (d) :
\begin{itemize}
\item[(a)] L'algèbre $\Y(\gotq^C)$ est polynomiale \cite{py13} donc en particulier factorielle.
\item[(b)] L'équation \eqref{eqbiz} montre que $\GK \C[\mathbf{f}] \leq 6$. Comme l'ensemble $\{f_1, f_3, f_{4,1}, f_{4,2}, f_{5,1}, f_{5,2}\}$ forme une base de transcendance de $\Sy(\gotq^C)$ (propriété \ref{indceC}), on a $\GK \Sy(\gotq^C)= \GK \C[\mathbf{f}]=6$.
\item[(c,d)] Comme $f_1, f_2, f_3, f_4$ engendrent librement $\Y(\gotq^C)$ (théorème \ref{pyC}), les invariants $f_1$, $f_2$, $f_3$, $f_4$, $f_5:=f_4-\frac{1}{4}f_2^2$ engendrent l'algèbre $\Y(\gotq^C)$, et $f_1, f_2, f_3, f_4$ sont irréductibles dans $\Y(\gotq^C)$. L'invariant $f_5$ est un polynôme irréductible en $f_1, f_2, f_3, f_4$ donc est également irréductible dans $\Y(\gotq^C)$.
\end{itemize}
Il reste donc à vérifier les hypothèses (I), (II) et (III).
\subsubsection{Hypothèse (II)}
On rappelle que $\lambda^C_{4,1}=\varpi_1^C-\varpi_3^C$, $\lambda_{4,2}^C=2(\varpi_3^C-\varpi_1^C)$, $\lambda_{5,1}^C=-2\varpi_1^C$ et $\lambda_{5,2}^C=2\varpi_1^C$ (sous-section \ref{sec102}). Or comme dans le cas $\alpha_{n'}^C \notin (\pi^C)'$, on a $\Lambda^C \subset \bigoplus_{\ell=1}^{4} \rat \varpi_\ell^C$. Ainsi $\lambda_{4,1}^C$ est donc bien indivisible dans $\Lambda^C$, donc $f_{4,1}$ est indivisible dans $\Sy(\gotq^C)$.\par
Pour $f_{4,2}$, $f_{5,1}$ et $f_{5,2}$, on raisonne comme dans le cas $\alpha_{n'}^C \notin (\pi^C)'$, c'est-à-dire que l'on montre que ces semi-invariants sont de degré $1$ en certains vecteurs $e_{v,v^\gamma}$ appartenant à l'antidiagonale. Le monôme $-\pr(e_{2,1}) \pr(e_{8,7}) \pr(e_{1,8})  \pr(e_{3,6})$ est un monôme de $f_{4,2}$ (par la propriété \ref{calculceC}) qui est de degré $1$ en $\pr(e_{1,8})=e_{1,8}$, où $e_{1,8}$ est un vecteur de la base canonique appartenant à l'antidiagonale. De même, $\pr(e_{1,8})=e_{1,8}$ est un terme de $f_{5,2}$ (en fait, $f_{5,2}=e_{1,8}$). Enfin
$$\mathcal{S}=-\pr(e_{1,8})\pr(e_{8,6})\pr(e_{6,4})\pr(e_{4,2})\pr(e_{2,7})\pr(e_{7,5})\pr(e_{5,3})\pr(e_{3,1})$$
est un monôme de $f_4=\pr(F_8^\bullet)$ qui n'est pas un monôme de $\frac{1}{4}f_2^2$. En effet, dans le cas contraire, $\mathcal{S}$ serait un produit de deux monômes de $f_2$, et on vérifie manuellement qu'il n'existe pas de facteur de degré $4$ de $\mathcal{S}$ de la forme $\prod_{l \in J} \pr(e_{l,\sigma(l)})$ avec $J \subset \llbracket 1,8 \rrbracket$ et $\sigma \in \mathfrak{S}(J)$.\par
Ainsi $\mathcal{S}$ est un monôme de $f_4-\frac{1}{4}f_2^2$ d'où
$$\pr(e_{8,6})\pr(e_{6,4})\pr(e_{4,2})\pr(e_{2,7})\pr(e_{7,5})\pr(e_{5,3})\pr(e_{3,1})$$
est un facteur de $f_{5,1}$, qui est bien de degré $1$ en $\pr(e_{2,7})=e_{2,7}$, où $e_{2,7}$ est un vecteur de la base canonique appartenant à l'antidiagonale.

\subsubsection{Hypothèse (I)}
Comme en type $A$ et dans le cas $\alpha_{n'}^C \notin (\pi^C)'$ du type $C$, pour montrer l'hypothèse (I), on montre en fait l'hypothèse (I') de la proposition \ref{632}. Ici l'ensemble $\{g_1, g_2, g_3, g_4\}$ de la proposition \ref{inter} correspond aux invariants $f_1,f_2,f_3,f_4$ introduits dans cette section, et on construit $q$ de manière similaire au cas $\alpha_{n'}^C \notin ({\pi^C})'$ du type $C$ comme suit :
\begin{center}
\begin{tikzpicture}[scale=0.7]
\foreach \k in {1,2,...,7}
	{\draw[color=gray!20]  (0,\k)--(8,\k);
	\draw[color=gray!20] (\k,0)--(\k,8);}
\draw (0,0)--(8,0);
\draw (0,0)--(0,8);
\draw (0,8)--(8,8);
\draw (8,0)--(8,8);
\draw[ultra thick] (0,7)--(1,7)--(1,5)--(3,5)--(3,3)--(5,3)--(5,1)--(7,1)--(7,0);
\draw (7.5,7.5) node{\Large $1$};
\draw (0.5,6.5) node{\Large $1$};
\draw (6.5,0.5) node{\Large $-1$};
\draw (1.5,4.5) node{\Large $1$};
\draw (4.5,1.5) node{\Large $-1$};
\draw (2.5,3.5) node{\Large $1$};
\draw (3.5,2.5) node{\Large $1$};
\draw (0.5,0.5) node{\Large $X_1$};
\draw (1.5,1.5) node{\Large $X_2$};
\draw (3.5,3.5) node{\Large $X_3$};
\draw (5.5,5.5) node{\Large $X_4$};
\end{tikzpicture}
\end{center}
de sorte que son graphe $\mathcal{G}(q)$ (voir sous-section \ref{graphe}) est
\begin{center}
\begin{tikzpicture}
  \tikzset{LabelStyle/.style = {fill=white}}
  \tikzset{VertexStyle/.style = {%
  shape = circle, minimum size = 20pt,draw}}
  \SetGraphUnit{2.5}
  \Vertex[L=$8$]{1}
  \EA[L=$7$](1){2}
  \EA[L=$5$](2){3}
  \EA[L=$3$](3){4}
  \Edge[style= {->}](1)(2)
  \Edge[style= {->}](2)(3)
  \Edge[style= {->}](3)(4)
  \tikzset{EdgeStyle/.style = {->}}
  \NO[L=$1$](1){1'}
  \NO[L=$2$](2){2'}
  \NO[L=$4$](3){3'}
  \NO[L=$6$](4){4'}
  \Edge[style= {->}](2')(1')
  \Edge[style= {->}](3')(2')
  \Edge[style= {->}](4')(3')
  \Edge[style= {->,bend right=30}, label =$1$](1')(1)
  \Edge[style= {->,bend right=30}, label =$X_1$](1)(1')
  \Edge[style= {->,bend right=30}, label =$X_2$](2)(2')
  \Edge[style= {->,bend right=30}, label =$X_3$](3)(3')
  \Edge[style= {->,bend right=30}, label =$X_4$](4)(4')
\end{tikzpicture}
\end{center}
\subsubsection{Hypothèse (III)}
Dans notre cas, on a $\mathbf{f}^\times=\{f_{4,1}, f_{4,2}, f_{5,1}, f_{5,2}\}$. Comme dans les cas précédents, on va exhiber les morphismes $\vartheta_f : \Sy(\gotq^C) \rightarrow \C[X]$ sous la forme $g \mapsto g(q_f)$ avec $q_f \in (\gotq^C)^*_{\C[X]}$, mais contrairement aux cas précédents, on ne demandera pas de condition restrictive sur les degrés des $\vartheta_f(h)$ pour $h \in \mathbf{f}^\times$. On se contentera de vérifier que $\vartheta_f(f)$ est bien non constant et premier avec les $\vartheta_f(h)$, $h \neq f$. On définit $q'$ par
\begin{center}
\begin{tikzpicture}[scale=0.7]
\foreach \k in {1,2,...,7}
	{\draw[color=gray!20]  (0,\k)--(8,\k);
	\draw[color=gray!20] (\k,0)--(\k,8);}
\draw (0,0)--(8,0);
\draw (0,0)--(0,8);
\draw (0,8)--(8,8);
\draw (8,0)--(8,8);
\draw[ultra thick] (0,7)--(1,7)--(1,5)--(3,5)--(3,3)--(5,3)--(5,1)--(7,1)--(7,0);
\draw (7.5,7.5) node{\Large $1$};
\draw (1.5,1.5) node{\Large $1$};
\draw (6.5,7.5) node{\Large $1$};
\draw (7.5,6.5) node{\Large $1$};
\draw (0.5,6.5) node{\Large $1$};
\draw (6.5,0.5) node{\Large $-1$};
\draw (0.5,5.5) node{\Large $1$};
\draw (5.5,0.5) node{\Large $-1$};
\draw (1.5,4.5) node{\Large $1$};
\draw (4.5,1.5) node{\Large $-1$};
\draw (2.5,3.5) node{\Large $1$};
\draw (3.5,2.5) node{\Large $1$};
\end{tikzpicture}
\end{center}
(on rappelle que l'on identifie les espaces vectoriels $(\gotq^C)^*$ et $\gotq^C$ par l'isomorphisme $\gotq^C \simeq (\gotq^C)^*$ donné par la base canonique de $\gotq^C$). Son graphe $\mathcal{G}(q')$ est de la forme :
\begin{center}
\begin{tikzpicture}[scale=0.75]
\node[draw, circle] (4) at (0,0){$4$};
\node[draw, circle] (2) at (2,0){$2$};
\node[draw, circle] (1) at (4,0){$1$};
\node[draw, circle] (3) at (6,0){$3$};
\node[draw, circle] (6) at (0,2){$6$};
\node[draw, circle] (8) at (2,2){$8$};
\node[draw, circle] (7) at (4,2){$7$};
\node[draw, circle] (5) at (6,2){$5$};
\draw[->,>=stealth, ultra thick] (4)--(2);
\draw[->,>=stealth, ultra thick] (2)--(1);
\draw[->,>=stealth, ultra thick] (3)--(1);
\draw[->,>=stealth, ultra thick] (6)--(4);
\draw[o->,>=stealth, ultra thick] (2)--(8);
\draw[o->,>=stealth, ultra thick] (1)--(8);
\draw[->,>=stealth, ultra thick] (7)--(2);
\draw[o->,>=stealth, ultra thick] (1)--(7);
\draw[->,>=stealth, ultra thick] (5)--(3);
\draw[->,>=stealth, ultra thick] (8)--(6);
\draw[->,>=stealth, ultra thick] (8)--(7);
\draw[->,>=stealth, ultra thick] (7)--(5);
\end{tikzpicture}
\end{center}
Puisque nos semi-invariants sont des facteurs de $\pr(F_8^\bullet)$ et $\pr(F_8^\bullet)-\frac{1}{4} \pr(F_4^\bullet)^2$, on s'intéresse aux sous-graphes circuits de $\mathcal{G}(q')$ de longueur $4$ et $8$. On vérifie que les seuls sous-graphes circuits de longueur $4$ sont :
\begin{center}
\begin{tikzpicture}[scale=0.75]
\node[draw, circle] (1) at (4,0){$1$};
\node[draw, circle] (3) at (6,0){$3$};
\node[draw, circle] (7) at (4,2){$7$};
\node[draw, circle] (5) at (6,2){$5$};
\draw[->,>=stealth, ultra thick] (3)--(1);
\draw[o->,>=stealth, ultra thick] (1)--(7);
\draw[->,>=stealth, ultra thick] (5)--(3);
\draw[->,>=stealth, ultra thick] (7)--(5);
\end{tikzpicture}
\qquad
\begin{tikzpicture}[scale=0.75]
\node[draw, circle] (4) at (0,0){$4$};
\node[draw, circle] (2) at (2,0){$2$};
\node[draw, circle] (6) at (0,2){$6$};
\node[draw, circle] (8) at (2,2){$8$};
\draw[->,>=stealth, ultra thick] (4)--(2);
\draw[->,>=stealth, ultra thick] (6)--(4);
\draw[o->,>=stealth, ultra thick] (2)--(8);
\draw[->,>=stealth, ultra thick] (8)--(6);
\end{tikzpicture}
\qquad
\begin{tikzpicture}[scale=0.75]
\node[draw, circle] (2) at (2,0){$2$};
\node[draw, circle] (1) at (4,0){$1$};
\node[draw, circle] (8) at (2,2){$8$};
\node[draw, circle] (7) at (4,2){$7$};
\draw[->,>=stealth, ultra thick] (2)--(1);
\draw[o->,>=stealth, ultra thick] (1)--(8);
\draw[->,>=stealth, ultra thick] (7)--(2);
\draw[->,>=stealth, ultra thick] (8)--(7);
\end{tikzpicture}
\end{center}
que l'on note $\mathcal{G}'_1$, $\mathcal{G}'_2$ et $\mathcal{G}'_3$. Les deux premiers sont compatibles, le troisième est incompatible avec les deux premiers. On a $\mathcal{S}_{\mathcal{G}'_1}=\mathcal{S}_{\mathcal{G}'_2}$. On vérifie également que le seul sous-graphe circuits de longueur $8$ est $\mathcal{G}'_1 + \mathcal{G}'_2$.
\begin{center}
\begin{tikzpicture}[scale=0.75]
\node[draw, circle] (4) at (0,0){$4$};
\node[draw, circle] (2) at (2,0){$2$};
\node[draw, circle] (1) at (4,0){$1$};
\node[draw, circle] (3) at (6,0){$3$};
\node[draw, circle] (6) at (0,2){$6$};
\node[draw, circle] (8) at (2,2){$8$};
\node[draw, circle] (7) at (4,2){$7$};
\node[draw, circle] (5) at (6,2){$5$};
\draw[->,>=stealth, ultra thick] (4)--(2);
\draw[->,>=stealth, ultra thick] (3)--(1);
\draw[->,>=stealth, ultra thick] (6)--(4);
\draw[o->,>=stealth, ultra thick] (2)--(8);
\draw[o->,>=stealth, ultra thick] (1)--(7);
\draw[->,>=stealth, ultra thick] (5)--(3);
\draw[->,>=stealth, ultra thick] (8)--(6);
\draw[->,>=stealth, ultra thick] (7)--(5);
\end{tikzpicture}
\end{center}
Les monômes correspondants ont bien les bons bidegrés. Posons $S:=\mathcal{S}_{\mathcal{G}'_1}(q')=\mathcal{S}_{\mathcal{G}'_2}(q') \in \C^\times$ et $T=\mathcal{S}_{\mathcal{G}'_3}(q') \in \C^\times$, par la proposition \ref{elemgr}, on a :
\begin{enumerate}[label=(\roman*)]
\item $f_2(q')=-(2 \mathcal{S}_{\mathcal{G}'_1}+\mathcal{S}_{\mathcal{G}'_3})(q')=-(2S+T)$,
\item $f_4(q')=(\mathcal{S}_{\mathcal{G}'_1 + \mathcal{G}'_2})(q')=S^2$.
\end{enumerate}
d'où $(f_4-\frac{1}{4}f_2^2)(q')=S^2-\frac{1}{4}(2S+T)^2=T\left(S-\frac{T}{4}\right)$
Comme dans le cas $\alpha_{n'}^C \notin (\pi^C)'$, pour obtenir $q$, on multiplie un terme \emph{convenable} de $q'$ par $X$.
\paragraph{(1) : Pour $(m,t)=(4,1)$,} on transforme $q'$ en $q$ en multipliant le coefficient en $(e_{4,2}^*)_{|\syp_8}=(e_{4,2}-e_{7,5})^*$ de $q'$ par $X$. Pour passer du graphe $\mathcal{G}(q')$ au graphe $\mathcal{G}(q)$, on multiplie alors les poids des arêtes $4 \rightarrow 2$ et $7 \rightarrow 5$ par $X$.
\begin{center}
\begin{tikzpicture}[scale=0.9]
\node[draw, circle] (4) at (0,0){$4$};
\node[draw, circle] (2) at (2,0){$2$};
\node[draw, circle] (1) at (4,0){$1$};
\node[draw, circle] (3) at (6,0){$3$};
\node[draw, circle] (6) at (0,2){$6$};
\node[draw, circle] (8) at (2,2){$8$};
\node[draw, circle] (7) at (4,2){$7$};
\node[draw, circle] (5) at (6,2){$5$};
\draw[->,>=stealth, ultra thick] (4)--(2) node[midway, above] {$(X)$};
\draw[->,>=stealth, ultra thick] (2)--(1);
\draw[->,>=stealth, ultra thick] (3)--(1);
\draw[->,>=stealth, ultra thick] (6)--(4);
\draw[o->,>=stealth, ultra thick] (2)--(8);
\draw[o->,>=stealth, ultra thick] (1)--(8);
\draw[->,>=stealth, ultra thick] (7)--(2);
\draw[o->,>=stealth, ultra thick] (1)--(7);
\draw[->,>=stealth, ultra thick] (5)--(3);
\draw[->,>=stealth, ultra thick] (8)--(6);
\draw[->,>=stealth, ultra thick] (8)--(7);
\draw[->,>=stealth, ultra thick] (7)--(5) node[midway, below] {$(X)$};
\end{tikzpicture}
\end{center}
De la même manière que dans les calculs (i) et (ii), on obtient alors $f_4(q)=(SX)^2$ et $(f_4-\frac{1}{4}f_2^2)(q)=T\left(SX-\frac{T}{4}\right)$. Les polynômes $(SX)^2$ et $T\left(SX-\frac{T}{4}\right)$ sont premiers entre eux. Puisque seul le coefficient de $(e_{4,2}-e_{7,5})^*$ de $q$ est de degré $>0$ en $X$, on a nécessairement $f_{4,2}(q) \in \C^\times$ et $f_{4,1}^2(q) \propto X^2$ par la propriété \ref{calculceC}, c'est-à-dire $f_{4,1}(q) \propto X$, et donc nécessairement premier avec $f_{5,1}(q)$ et $f_{5,2}(q)$ qui divisent $T\left(SX-\frac{T}{4}\right)$.\par
\paragraph{(2) : Pour $(m,t)=(4,2)$,} on effectue le même type de transformation mais en multipliant par exemple le coefficient en $(e_{3,1}-e_{8,6})^*$ de $q'$ par $X$ (propriété \ref{calculceC}). Le raisonnement est similaire.\par
\paragraph{(3) : Pour $(m,t)=(5,1)$,} on peut en fait reprendre un des $q$ construits au-dessus. En effet, puisque $f_{5,2}=e_{1,8}$, dans les deux cas ci-dessus, on a $f_{5,2}(q) \in \C^\times$, donc $f_{5,1}(q)$ est associé à $T\left(SX-\frac{T}{4}\right)$, qui est premier avec $(SX)^2$.
\paragraph{(4) : Pour $(m,t)=(5,2)$,} il faut multiplier par $X$ le coefficient en $e_{1,8}^*$ de $q'$, et on a alors $f_4(q)=S^2$ et $(f_4-\frac{1}{4}f_2^2)(q)=TX\left(S-\frac{TX}{4}\right)$. Puisque l'on a $f_{5,2}(q)=X$ par construction, $f_{5,2}(q)$ est bien premier avec $f_{4,1}(q)$ et $f_{4,2}(q)$ qui sont constants et on a $(f_{5,1})(q)=-\frac{1}{4}T^2X+ST$ qui est bien premier avec $X$.

\subsection{Conclusion}
On obtient donc le théorème suivant :
\begin{theo}
On considère la contraction parabolique standard $\gotq$ de $\gl_8$ définie par $\pi'=\{\alpha_2, \alpha_4, \alpha_6\}$, autrement dit, la contraction parabolique associée au diagramme suivant :
\begin{center}
\begin{tikzpicture}[scale=0.7]
\foreach \k in {1,2,...,7}
	{\draw[color=gray!20]  (0,\k)--(8,\k);
	\draw[color=gray!20] (\k,0)--(\k,8);}
\draw (0,0)--(8,0);
\draw (0,0)--(0,8);
\draw (0,8)--(8,8);
\draw (8,0)--(8,8);
\draw[ultra thick] (0,7)--(1,7)--(1,5)--(3,5)--(3,3)--(5,3)--(5,1)--(7,1)--(7,0);
\draw (2,2) node{\LARGE $\gotn^-$};
\draw (6,6) node{\LARGE $\gotp$};
\draw[dashed] (1,8)--(1,7)--(3,7)--(3,5)--(5,5)--(5,3)--(7,3)--(7,1)--(8,1);
\draw (0.5,7.5) node{\Large $\g_{I_1}$};
\draw (2,6) node{\Large $\g_{I_2}$};
\draw (4,4) node{\Large $\g_{I_3}$};
\draw (6,2) node{\Large $\g_{I_4}$};
\draw (7.5,0.5) node{\Large $\g_{I_5}$};
\end{tikzpicture}
\end{center}
et $\gotq^C=\pr^C(\gotq)$ la contraction de $\syp_8$ associée.\par
L'algèbre des semi-invariants $\Sy(\gotq^C)$ n'est pas polynomiale et engendrée par $f_1$, $f_2$, $f_3$, $f_{4,1}$, $f_{4,2}$, $f_{5,1}$, $f_{5,2}$. En particulier, $\Sy(\gotq^C)$ est isomorphe à l'algèbre
$$\dfrac{\C[X_1, X_2, X_3, X_{4,1}, X_{4,2}, X'_{4,1}, X'_{4,2}]}{\left(X'_{4,1}X'_{4,2}+\frac{1}{4} X_2^2-X_{4,1}^2X_{4,2}\right)}$$
\end{theo}

\chapter{Considérations diverses}
\label{sec11}
\section{Conjectures en type $C$}
Tout au long de l'étude en type $C$, plusieurs résultats récurrents sont apparus. Avec certains calculs dans d'autres cas que ceux étudiés, ces considérations amènent quelques conjectures.
\subsection{Troncation canonique}
\begin{conj}
Soit $\gotq^C$ une contraction parabolique en type $C$. Alors $\gotq^C_\Lambda=(\gotq^C)'$. En particulier, dans le cas d'une contraction parabolique standard, on a $\ind \gotq^C_\Lambda = \ind \gotq^C + \Card\left((\pi^C) \setminus (\pi^C)'\right)$.
\end{conj}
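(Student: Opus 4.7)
The plan is to combine the Ooms--van den Bergh equality \eqref{ovdb} with the always-true inclusion $(\gotq^C)' \subseteq \gotq^C_\Lambda$ from propriété \ref{inclusion}. Since $\gotq^C / (\gotq^C)'$ is identified with the Cartan part of a Levi of $\gotq^C$, we have $\dim \gotq^C - \dim (\gotq^C)' = \Card(\pi^C \setminus (\pi^C)')$. Hence the desired equality $\gotq^C_\Lambda = (\gotq^C)'$ is equivalent, via \eqref{ovdb}, to the lower bound
\begin{equation*}
\ind \gotq^C_\Lambda = \GK \Sy(\gotq^C) \geq \ind \gotq^C + \Card(\pi^C \setminus (\pi^C)') = n' + \Card(\pi^C \setminus (\pi^C)'),
\end{equation*}
where the identification with $\GK \Sy(\gotq^C)$ comes from theorem \ref{ai}. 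By theorem \ref{alin}, it therefore suffices to exhibit an algebraically independent family of semi-invariants of $\Sy(\gotq^C)$ of that cardinality.

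First, I would take the CPSS $\gotq$ of $\gl_n$ lying above $\gotq^C$ (definition \ref{dessus}) and import the full factorization $F_m^\bullet \propto \prod_t F_{m,t}$ from theorem \ref{semiinv}. Projecting via $\pr^C$ produces a first batch of semi-invariants: the non-zero $\pr(F_{2m',t})$ for $m' \in \llbracket 1, n' \rrbracket$, grouped up to colinearity as in formula \eqref{decompC}. When $\alpha_{n'}^C \notin (\pi^C)'$ the results of Section \ref{sec9} already yield exactly $n' + s'$ such independent semi-invariants, matching the target. Otherwise, as the example of Section \ref{sec10} shows, this batch is strictly too small, and the missing semi-invariants arise from odd-$m$ factorizations $F_m^\bullet = \prod_t F_{m,t}$ which satisfy $\pr(F_m^\bullet) = 0$ while some $\pr(F_{m,t})$ remain non-zero. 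To recover them, I would, for each such odd $m$, search for a polynomial $P_{m}$ in the invariants $\pr(F_{2\mu'}^\bullet)$ with $2\mu' \leq m+1$ whose bidegree equals that of $\pr(F_{m+1}^\bullet)$ and such that $\pr(F_{m+1}^\bullet) - P_m$ is divisible by the \emph{would-have-been} projected factor $\pr(F_{m,t_0})$ for some distinguished $t_0$; the resulting quotient $\widetilde F_{m}$ together with $\pr(F_{m,t_0})$ would then provide the extra semi-invariants, exactly as $f_4 - \tfrac{1}{4} f_2^2 = \widetilde F \pr(F_{5,2})$ furnishes $f_{5,2}$ and $\widetilde F$ in the counterexample. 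Algebraic independence of the assembled family would be checked via theorem \ref{alin} by analysing the ranks of the weight spaces, exactly as in the sub-section \ref{ssec4.2} and in Section \ref{sec102}.

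The main obstacle will be to prove that such corrections $P_m$ (and their factorized remainders $\widetilde F_m$) can always be constructed, and in sufficient number, to reach the cardinality $n' + \Card(\pi^C \setminus (\pi^C)')$. This requires a systematic combinatorial description in type $\gl_n$ of the odd-$m$ nontrivial factorizations of $F_m^\bullet$ and of the image of the corresponding factors under $\pr^C$, together with a bidegree-matching argument controlling how a monomial in $\pr(F_{2}^\bullet), \ldots, \pr(F_{m-1}^\bullet)$ can cancel the "unwanted" part of $\pr(F_{m+1}^\bullet)$. The case where several such parity mismatches stack (which does not appear in the example of Section \ref{sec10}, where only $m = 5$ is involved) is where I expect the genuine difficulty: one must run the correction procedure iteratively and ensure, at each step, both the existence of $P_m$ and the non-triviality of the factorization of $\pr(F_{m+1}^\bullet) - P_m$, so that the produced semi-invariants remain algebraically independent from those produced earlier. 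Once these combinatorial and bidegree obstructions are resolved, the algebraic-independence verification via theorem \ref{alin} should be routine, by the same direct-sum argument on weight spaces used throughout Chapter \ref{chap4}.
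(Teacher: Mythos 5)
The statement you are trying to prove is a \emph{Conjecture} in the paper, not a theorem: the authors state explicitly, in the paragraph immediately following it, that they have established $\gotq^C_\Lambda = (\gotq^C)'$ only when $\alpha_{n'}^C \notin (\pi^C)'$ (Section \ref{sec9}) and in the single counterexample of Section \ref{sec10}, and that ``dans le cas général en type $C$, faute d'étude concluante, on ne dispose plus d'une telle famille, de sorte que l'on ne peut plus conclure.'' There is therefore no proof in the paper to compare against, and your proposal does not supply one either.

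Your strategy — reduce the equality to the lower bound $\ind \gotq^C_\Lambda \geq \ind \gotq^C + \Card(\pi^C \setminus (\pi^C)')$ via \eqref{ovdb} and Propriété \ref{inclusion}, then exhibit a large enough algebraically independent family of semi-invariants and feed it to Theorem \ref{alin} — is exactly the strategy the paper uses in the cases it can handle, so there is nothing wrong with it in principle. But the actual content of a proof would be the construction, for an arbitrary standard parabolic contraction $\gotq^C$ of $\syp_n$, of enough semi-invariants to reach cardinality $n' + \Card(\pi^C \setminus (\pi^C)')$, together with a verification of the rank and direct-sum conditions required by Theorem \ref{alin}. You propose to do this, when $\alpha_{n'}^C \in (\pi^C)'$, by iteratively producing corrections $P_m$ and factoring $\pr(F_{m+1}^\bullet) - P_m$; yet you yourself acknowledge that ``the main obstacle will be to prove that such corrections $P_m$ \ldots\ can always be constructed, and in sufficient number.'' That obstacle is the whole substance of the conjecture. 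In particular you do not address: the existence and uniqueness (or canonical choice) of $P_m$ beyond the single case $m=5$, $P_5 = \tfrac14 f_2^2$ treated in Section \ref{sec10}; the non-triviality and degree control of the resulting factor; whether the new weights land in the right spaces for the hypotheses of Theorem \ref{alin} (the rank-$(r-1)$ and direct-sum conditions) when several corrected families coexist; and the ``stacking'' case you flag, where several odd $m \in \mathbf{M}_2$ with $2m-2 \in \mathbf{M}_2$ interact, which the paper never resolves and which motivates the third conjecture of Section \ref{sec11}. As written, your text is a plausible research program — essentially the one the authors outline themselves — but it leaves open exactly the gap that makes the statement a conjecture rather than a theorem, and should not be presented as a proof.
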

On a montré ceci dans le cas $\alpha_{n'}^C \notin (\pi^C)'$ ainsi que dans le contre-exemple de la section \ref{sec10}. Il a fallu pour cela minorer $\ind \gotq^C_\Lambda$ en exhibant une famille d'éléments algébriquement indépendants de $\Y(\gotq^C_\Lambda)=\Sy(\gotq^C)$ de cardinal suffisamment grand. Dans le cas général en type $C$, faute d'étude concluante, on ne dispose plus d'une telle famille, de sorte que l'on ne peut plus conclure que $\gotq^C_\Lambda=(\gotq^C)'$.

\subsection{Condition nécessaire et suffisante à la polynomialité}
Empiriquement, sur tous les cas en type $C$ que l'on a pu étudier, le seul obstacle à la polynomialité est celui du type du contre-exemple que l'on a exhibé en section \ref{sec10}. On pose $\gotq^C$ une contraction parabolique standard en type $C$ et $\gotq$ la CPSS au-dessus de $\gotq^C$ (voir la définition \ref{cpcs}).\par
Dans la CPSS $\gotq$ en type $\gl_n$, lorsque $\alpha_{n'}^C \in (\pi^C)'$, il peut exister $m \in \mathbf{M}_2$ impair, et donc une décomposition non triviale du type $F_m^\bullet=F_{m,1} \ldots, F_{m,r_m}$. Puisque pour tout $J$ de cardinal impair, on a $\Delta_{J^\gamma}=-\Delta_J$, on obtient $\pr(F_m^\bullet)=0$. Comme dans le contre-exemple, il n'y a toutefois aucune raison que $\pr(F_{m,t})=0$ pour tout $t$.\par
Il existe des cas où ce genre de considération n'empêche pas la polynomialité. Par exemple, prenons le facteur de Levi $\gotl$ de la forme $\gotl=\gl_{\{1\}} \times \gl_{\llbracket 2,n-1 \rrbracket} \times \gl_{\{n\}}$ avec $n$ pair $\geq 6$. On a $F_3^\bullet=F_{3,1}F_{3,2}$ avec $F_{3,2}=e_{1,n}$. Dans ce cas, on a $\pr(F_4^\bullet)-\frac{1}{4}\pr(F_2^\bullet)^2 = g \times e_{1,n}$ pour un certain semi-invariant $g$. Avec le même schéma de preuve que précédemment, on montre que la famille $(\pr(F_2^\bullet), g,e_{1,n},\pr(F_6^\bullet), \ldots, \pr(F_n^\bullet))$ engendre librement $\Sy(\gotq^C)$.\par
Le point crucial qui a fait échouer la polynomialité dans le contre-exemple est le fait que l'on ait simultanément $m=5$ impair dans $\mathbf{M}_2$ et $2m-2=8$ dans $\mathbf{M}_2$. Ainsi, dans le cas général, on pense obtenir deux décompositions du type
$$\pr(F_{2m-2}^\bullet)=f_{2m-2,1}^2 \, \ldots \, f_{2m-2,r'_{m-1}-1}^2 \, f_{2m-2,r'_{m-1}} \, f_{2m-2,r'_{m-1}+1}$$ 
$$\pr(F_{2m-2}^\bullet)-\frac{1}{4}\pr(F_{m-1}^\bullet)^2=g_{2m-2,1}^{s_1} \ldots g_{2m-2,r'_m+1}^{s_m}$$
où $s_k \in \N^*$ et les espaces de poids sont en somme directe. De manière générale, dans tous les cas que l'on a considérés, on a constaté le fait suivant :
\begin{conj}
Si $m \in \mathbf{M}_2$ est impair, alors il existe une décomposition $f=\prod_{k=1}^{r'_m+1} g_k^{s_k}$ avec les $g_k$ non constants et non liés deux à deux, où $f=\left\{ \begin{array}{ll} \pr(F_{2m-2}^\bullet)-\frac{1}{4}\pr(F_{m-1}^\bullet)^2 &  \text{si } 2m-2 \leq n \\ \pr(F_{m-1}^\bullet) & \text{si } 2m-2 > n \end{array} \right.$.
\end{conj}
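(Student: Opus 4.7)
Le plan consisterait à traiter séparément les deux cas et à exploiter la factorisation du théorème \ref{semiinv} dans la CPSS $\gotq$ au-dessus de $\gotq^C$. On remarquerait d'abord que la situation $m \in \mathbf{M}_2$ impair n'intervient que lorsque le nombre de blocs de Levi $s$ est impair (sinon, par symétrie $\gamma$ de la CPSS, $m_i = 2\sum_{k \leq s/2} \min(i,i_k)$ est pair pour tout $i$), donc il existe un bloc central $\g_{I_{(s+1)/2}}$. Dans $\gl_n$, on peut alors écrire $F_m^\bullet = c_m \prod_{t=1}^{r_m} F_{m,t}$ et constater que les facteurs se regroupent en paires $\gamma$-conjuguées, plus un facteur central auto-conjugué associé à l'indice $k_{i,(r_m+1)/2} = (s+1)/2$ ; sous la projection $\pr^C$, la somme $\pr(F_m^\bullet)$ s'annule par antisymétrie, mais certains $\pr(F_{m,t})$ restent non nuls, au nombre de $r'_m$ (en convenant ici $r'_m = \lfloor r_m/2 \rfloor$).

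Pour le cas $2m-2 \leq n$, on vérifierait d'abord à l'aide du corollaire \ref{coro} et de la propriété \ref{propelem} l'égalité de bidegré $\deg_{\gotn^-} F_{2m-2} = 2\deg_{\gotn^-} F_{m-1}$, de sorte que $\pr(F_{2m-2}^\bullet)$ et $\pr(F_{m-1}^\bullet)^2$ ont même bidegré dans $\Sym(\gotq^C)$. L'étape centrale serait d'établir une identité combinatoire de la forme
\[
\pr(F_{2m-2}^\bullet) - \tfrac{1}{4}\pr(F_{m-1}^\bullet)^2 \; = \; \widetilde{F} \prod_{t \in T} \pr(F_{m,t}),
\]
où $T$ indexe les facteurs survivants et $\widetilde{F}$ est un semi-invariant auxiliaire. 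Le coefficient $\tfrac{1}{4}$ proviendrait du fait que les produits $\Delta_J \Delta_{J^\gamma}$ apparaissant dans $(F_{m-1}^\bullet)^2$ interviennent aussi, à multiplicité près, dans la décomposition de $F_{2m-2}^\bullet$ via le mineur $\Delta_{J \sqcup J^\gamma}$, et la différence $\tfrac{1}{4}$-pondérée annulerait précisément les contributions $\gamma$-symétriques, ne laissant que les contributions antisymétriques qui se regroupent en produit de semi-invariants. Le calcul explicite du contre-exemple de la section \ref{sec10}, où l'on trouve $\pr(F_8^\bullet) - \tfrac{1}{4}\pr(F_4^\bullet)^2 = \widetilde{F}\,\pr(F_{5,2})$, serait le prototype à généraliser.

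Le dénombrement à $r'_m+1$ facteurs non constants et non liés deux à deux suivrait ensuite en étudiant les poids via la propriété \ref{yes} et en appliquant le théorème \ref{alin} de manière analogue à la section \ref{sec9} : les $r'_m$ facteurs $\pr(F_{m,t})$ survivants auraient des poids linéairement indépendants (par restriction à $\syp_n$ des poids donnés dans le théorème \ref{semiinv} (3)), et $\widetilde{F}$ fournirait le facteur supplémentaire, distingué soit par son bidegré, soit par son poids. Pour le cas $2m-2 > n$, on n'aurait plus la contribution de $F_{2m-2}^\bullet$ et il faudrait directement montrer que $\pr(F_{m-1}^\bullet)$ admet la factorisation voulue en $r'_m+1$ facteurs, ce qui devrait découler d'un argument analogue à celui de la section \ref{sec9} appliqué à la sous-algèbre de Levi restreinte.

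Le point le plus délicat sera l'établissement de l'identité algébrique avec le coefficient $\tfrac{1}{4}$, qui sort du cadre des techniques de cheminements développées en sous-section \ref{graphe}. Une piste serait d'étendre la notion de sous-graphe circuits afin d'encoder simultanément les paires de sous-graphes de longueur $m-1$ et les sous-graphes de longueur $2m-2$ qui sont $\gamma$-symétriques, puis de montrer que la différence $\tfrac{1}{4}$-pondérée correspond exactement à l'ensemble des sous-graphes circuits $\gamma$-antisymétriques de longueur $2m-2$. Il serait également utile de tester la conjecture sur une famille plus large d'exemples (par exemple avec plusieurs blocs non triviaux de chaque côté du bloc central) afin de confirmer le caractère universel de la constante $\tfrac{1}{4}$ et la forme précise de $\widetilde{F}$, et éventuellement d'ajuster la définition de $r'_m$ pour $m$ impair en fonction de la parité des degrés $\deg F_{m,t}$.
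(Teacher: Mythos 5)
Cet énoncé figure dans le mémoire sous la forme d'une \emph{conjecture} et non d'un théorème : aucune démonstration n'y est proposée, seulement une vérification sur le contre-exemple de la section \ref{sec10} et sur le cas $\gotl = \gl_{\{1\}} \times \gl_{\llbracket 2,n-1 \rrbracket} \times \gl_{\{n\}}$. Il n'y a donc pas de preuve de référence à laquelle confronter votre proposition. En revanche, deux des étapes intermédiaires que vous décrivez sont erronées en général, ce qui invaliderait votre stratégie telle quelle.

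D'abord, l'égalité de bidegré $\deg_{\gotn^-} F_{2m-2} = 2\deg_{\gotn^-} F_{m-1}$ n'est pas toujours vérifiée. Prenez la CPSS $\gotq$ de $\gl_8$ de facteur de Levi $\gotl = \gl_1 \times \gl_1 \times \gl_4 \times \gl_1 \times \gl_1$ (c'est-à-dire $\pi' = \{\alpha_3, \alpha_4, \alpha_5\}$), au-dessus de la contraction $\gotq^C$ de $\syp_8$ pour laquelle $(\pi^C)' = \{\alpha_3^C, \alpha_4^C\}$. On a $m_1 = 5$, $m_2 = 6$, $m_3 = 7$, $m_4 = 8$ et $\rho_1 = 4$, $\rho_4 = 1$, donc $m = 5 \in \mathbf{M}_2$ est impair et $2m-2 = 8 \leq n$. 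Le corollaire \ref{coro} donne $\deg_{\gotn^-} F_4 = 4 - 1 = 3$ mais $\deg_{\gotn^-} F_8 = 8 - 4 = 4$, alors que $2 \times 3 = 6$. La relation $m_{2i} = 2 m_i - 2$ que vous utilisez implicitement équivaut ici à $\rho_1 = 2$, ce qui est faux dans cet exemple ; elle n'est vérifiée que dans les deux cas traités par le mémoire. Par conséquent $\pr(F_{2m-2}^\bullet)$ et $\pr(F_{m-1}^\bullet)^2$ n'ont pas même bidegré et $f$ n'est pas bihomogène en $(\gotn^-)^C$, ce qui rend inopérant tout argument d'identité combinatoire reposant sur cette homogénéité.

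Ensuite, l'affirmation que les facteurs « se regroupent en paires $\gamma$-conjuguées, plus un facteur central auto-conjugué associé à l'indice $k_{i,(r_m+1)/2} = (s+1)/2$ » suppose que le bloc central $I_{(s+1)/2}$ appartienne à $\kappa_i$, donc que $i_{(s+1)/2} = i$ et que $r_m = \rho_i$ soit impair. Or $m_i$ est impair dès que $\min(i, i_{(s+1)/2})$ l'est, ce qui n'entraîne nullement $i_{(s+1)/2} = i$. Dans le contre-exemple de la section \ref{sec10}, $i = 1$ et $i_3 = 2$, donc $\kappa_1 = \{1,5\}$, $r_5 = 2$ est pair et il n'y a pas de facteur central : les deux facteurs $F_{5,1}$ et $F_{5,2}$ jouent des rôles asymétriques sous projection ($\pr(F_{5,1}) = 0$ mais $\pr(F_{5,2}) = e_{1,8} \neq 0$). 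Votre description structurelle de la décomposition des $F_{m,t}$ pour $m$ impair devra donc être revue avant de pouvoir dénombrer correctement les facteurs survivants.

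Le nœud que vous identifiez vous-même — établir l'identité avec le coefficient $\tfrac{1}{4}$ hors du cadre des techniques de cheminements — reste ouvert, et le mémoire ne donne pas non plus de définition explicite de $r'_m$ pour $m$ impair. Votre proposition de multiplier les exemples pour cerner la forme de $\widetilde{F}$ et la constante, puis d'ajuster la formulation, est donc en pratique le seul point d'appui solide ; le cas ci-dessus, où l'égalité de bidegré échoue, serait un bon candidat à examiner.
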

Cette conjecture peut faire apparaître un problème du type du contre-exemple de la sous-section \ref{sec10} dans le cas où $m \in \mathbf{M}_2$ est impair et $2m-2 \in \mathbf{M}_2$. En revanche, dans l'autre sens, si $m \in \mathbf{M}_2$ est impair, alors $m-1 \notin \mathbf{M}_1$ (propriété \ref{propelem} (5)), ce qui ne posera donc pas le même type de problème. On conjecture alors :
\begin{conj}
En type $C$, l'algèbre $\Sy(\gotq^C)$ des semi-invariants n'est pas polynomiale si et seulement si il existe $m \in \mathbf{M}_2$ impair tel que $2m-2 \in \mathbf{M}_2$.
\end{conj}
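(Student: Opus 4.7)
\section*{Proposition de preuve pour la conjecture finale}

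Le plan est d'établir les deux directions de la conjecture en reprenant le schéma mis en place respectivement à la section \ref{sec9} (pour la direction $(\Leftarrow)$, c'est-à-dire la polynomialité en l'absence de $m$ "problématique") et à la section \ref{sec10} (pour la direction $(\Rightarrow)$, c'est-à-dire la non-polynomialité en présence d'un tel $m$), en généralisant les arguments combinatoires. Je noterai \emph{impair problématique} un entier $m \in \mathbf{M}_2$ impair tel que $2m-2 \in \mathbf{M}_2$.

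Pour la direction $(\Rightarrow)$, soit $m \in \mathbf{M}_2$ impair avec $2m-2 \in \mathbf{M}_2$. D'abord, dans la CPSS $\gotq$ au-dessus de $\gotq^C$ en type $\gl_n$, on utilise le théorème \ref{semiinv} pour obtenir une factorisation $F_m^\bullet = c_m \prod_{t=1}^{r_m} F_{m,t}$. Puisque $\Delta_{J^\gamma}=-\Delta_J$ pour $|J|$ impair, on a $\pr(F_m^\bullet)=0$, mais comme dans le contre-exemple de la section \ref{sec10}, on peut s'attendre à ce qu'au moins un facteur $\pr(F_{m,t})$ soit non nul \,; soit $g := \pr(F_{m,t})$ un tel facteur de poids non nul (qui devra être identifié explicitement via la combinatoire de $J^{(t)}, J^{[t]}$ de la notation \ref{nota46}). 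Ensuite, il s'agit de montrer que $\pr(F_{2m-2}^\bullet) - \tfrac{1}{4}\pr(F_{m-1}^\bullet)^2$ est divisible par $g$ dans $\Sym(\gotq^C)$, en comparant bidegrés (les deux termes ont même bidegré en $(\gotn^-)^C$, par le corollaire \ref{coro}) et en exploitant la structure des mineurs symétriques. On aurait alors, en notant $\widetilde{F}$ le quotient, une relation du type
\begin{equation}
\widetilde{F}\, g + \tfrac{1}{4}\pr(F_{m-1}^\bullet)^2 \;=\; \prod_{u} f_{(m-1)/\cdot, u}^{s_u}, \notag
\end{equation}
analogue à l'équation \eqref{eqbiz}. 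On adapte alors les propositions \ref{pdsceC}, \ref{aiceC} et \ref{indceC} pour montrer que la famille naturelle obtenue est algébriquement indépendante et est une base de transcendance de $\Sy(\gotq^C)$, puis on applique le théorème \ref{inter} pour conclure que cette famille, complétée par $g$ et $\widetilde{F}$, engendre $\Sy(\gotq^C)$. La relation ci-dessus fait alors apparaître $\Sy(\gotq^C)$ comme un quotient d'une algèbre polynomiale par un polynôme de la forme $X Y + c Z^2 - T^2 U$, dont on montre la non-polynomialité en examinant l'espace tangent à l'origine de $\Spec$.

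Pour la direction $(\Leftarrow)$, on suppose qu'aucun $m \in \mathbf{M}_2$ impair problématique n'existe. L'idée est de suivre pas à pas le schéma de preuve de la section \ref{sec9}. Pour chaque $m \in \mathbf{M}_2$, on distingue :
\begin{itemize}
\item si $m$ est pair, on projette la décomposition $F_m^\bullet = c_m \prod_t F_{m,t}$ comme en \eqref{decompC} pour obtenir des $f_{m/2, t}$ ;
\item si $m$ est impair avec $2m-2 \notin \mathbf{M}_2$ (cas autorisé par l'hypothèse), on s'attend (par la seconde conjecture de la section \ref{sec11}) à obtenir une décomposition de $\pr(F_{m-1}^\bullet)$ (ou de $\pr(F_{2m-2}^\bullet)$ si $2m-2 \leq n$, quotient par un terme approprié dépendant d'autres invariants) fournissant les facteurs manquants, exactement comme dans l'exemple intermédiaire (facteur de Levi $\gl_1 \times \gl_{n-2} \times \gl_1$) évoqué juste avant cette conjecture. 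La clé est alors l'absence d'interférence : comme $2m-2 \notin \mathbf{M}_2$, la décomposition de $\pr(F_{2m-2}^\bullet)$ elle-même est triviale, donc aucun "conflit" du type \eqref{eqbiz} ne survient.
\end{itemize}
On construit ainsi une famille $\mathbf{f}$ de semi-invariants algébriquement indépendants via le théorème \ref{alin} (en vérifiant que les poids sont en somme directe), qui forme une base de transcendance de $\Sy(\gotq^C)$ grâce à l'égalité de Ooms--van den Bergh \eqref{ovdb} appliquée à $\gotq^C_\Lambda = (\gotq^C)'$ (première conjecture de la section, qu'il faudra aussi établir dans ce cadre, en exhibant $\dim \mathbf{f} = \ind \gotq^C + \Card(\pi^C \setminus (\pi^C)')$).

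Le principal obstacle se situe à deux endroits. Premièrement, établir la divisibilité générale $g \mid \pr(F_{2m-2}^\bullet) - \tfrac{1}{4}\pr(F_{m-1}^\bullet)^2$ dans le cadre conjectural : cela demande une étude combinatoire fine des monômes de ces invariants et des facteurs irréductibles $\pr(F_{m,t})$, bien au-delà du calcul ponctuel de la proposition \ref{calculceC}. Deuxièmement, pour la direction $(\Leftarrow)$, vérifier les hypothèses (I'), (II), (III) du théorème \ref{inter} de manière uniforme : l'hypothèse (II) (indivisibilité) est délicate pour les facteurs associés aux antidiagonaux lorsque l'on mélange des $m$ pairs et impairs, et l'hypothèse (III') demande une construction fine de $q$ généralisant simultanément les sections \ref{sec434} et \ref{sec10}, où les graphes circuits admissibles devront être classifiés exhaustivement. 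Je m'attends à ce que la combinatoire du cas où les $I_k$ ont de nombreuses tailles distinctes constitue la difficulté majeure, et qu'il soit nécessaire d'introduire des cheminements plus généraux que ceux des sections précédentes pour traiter les termes correctifs $\widetilde{F}$.
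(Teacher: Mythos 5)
Attention : l'énoncé que vous tentez de démontrer est une \emph{conjecture} dans l'article, et non un théorème. L'article ne fournit aucune démonstration ; il ne donne que des observations heuristiques dans les paragraphes qui précèdent l'énoncé (section~\ref{sec11}), appuyées sur le contre-exemple de la section~\ref{sec10} et sur la conjecture auxiliaire concernant la factorisation de $\pr(F_{2m-2}^\bullet)-\tfrac14\pr(F_{m-1}^\bullet)^2$. Il n'y a donc pas de preuve de l'article à laquelle comparer votre proposition.

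Cela étant, votre proposition reproduit fidèlement le programme informel de l'article : la direction $(\Rightarrow)$ via la généralisation de la relation~\eqref{eqbiz} et la singularité du $\Spec$ correspondant ; la direction $(\Leftarrow)$ en combinant la décomposition~\eqref{decompC} pour les $m$ pairs et la conjecture auxiliaire pour les $m$ impairs non problématiques, en invoquant la propriété~\ref{propelem}~(5) pour l'absence d'interférence. Vous identifiez correctement les verrous (divisibilité générale de $g$, vérification uniforme des hypothèses~(II) et~(III) du théorème~\ref{inter}) qui sont précisément ceux que l'article laisse ouverts en formulant l'énoncé comme une conjecture. Un point que vous pourriez expliciter davantage pour la direction $(\Leftarrow)$ : l'absence de tout $m$ impair problématique n'exclut pas \emph{a priori} des relations algébriques plus subtiles faisant intervenir simultanément plusieurs $m$ impairs distincts ou des produits croisés des quotients $\widetilde{F}$ ; la propriété~\ref{propelem}~(5) écarte le mécanisme du contre-exemple, mais il faudrait un argument supplémentaire (par exemple, que les espaces de poids restent en somme directe au sens du théorème~\ref{alin} même après introduction de tous les termes correctifs) pour garantir que la famille finale engendre librement $\Sy(\gotq^C)$. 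Telle quelle, votre proposition est un plan raisonnable, cohérent avec l'article, mais — tout comme l'article lui-même — elle ne constitue pas une preuve.
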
 

\section{Limites de l'approche en type différent de $A$ et $C$}
\label{sec12}
En type autre que $A$ et $C$, il n'existe plus de théorème général donnant la polynomialité de $\Y(\gotq)$. On pourrait toutefois espérer comprendre les cas de non-polynomialité en s'intéressant aux semi-invariants obtenus comme facteurs d'invariants connus dans ces deux types.\par
Aussi, on ne peut plus simplement étudier les projections sur $\so_n$ des semi-invariants en type $\gl_n$. Cela vient du fait que l'on n'a plus nécessairement $\pr(F_m)^\bullet=\pr(F_m^\bullet)$ pour tout $m$. Par exemple, en type $D$, il arrive que
$$\deg_{(\gotn^-)^D} \pr(F_m) < \deg_{\gotn^-} F_m$$
\subsection*{Le cas inconclusif de Panyushev et Yakimova}
\label{inconclusif}
Panyushev et Yakimova montrent la polynomialité de $\Y(\gotq)$ lorsque $\gotq$ est une contraction parabolique en type $A$ ou $C$. Dans leur article, ils exhibent également un cas où leur approche ne permet pas de conclure \cite[remarque 4.6]{py13}. Reprenons leur cas dans la perspective d'y chercher des semi-invariants.\par
On reprend les conventions de \cite[chapitre 8]{bou06}. Pour tout $n$, on définit $\so_n$ comme l'ensemble des matrices $M$ de taille $n \times n$ antisymétriques par rapport à l'antidiagonale, autrement dit, telles que $ M^\gamma = -M$. Le type $D$ correspond à $\so_n$ avec $n$ pair. Comme en type $C$, on définit en type $D$ une projection $\pr^D : \gl_n \rightarrow \so_n$ qui est la projection par rapport à $\C \id \oplus \so_n^\perp$ où $\so_n^\perp$ est l'orthogonal de $\so_n$ pour la forme de Killing sur $\spl_n$. Dans cette sous-section, on notera $\pr$ pour $\pr^D$.\par
Dans l'exemple de Panyushev et Yakimova, on se place en type $D_6$, et on regarde la contraction parabolique $\gotq^D$ de $\so_{12}$ image par $\pr$ de la contraction parabolique $\gotq$ de $\gl_{12}$ donnée par le diagramme suivant :
\begin{center}
\begin{tikzpicture}[scale=0.7]
\foreach \k in {1,2,...,11}
	{\draw[color=gray!20]  (0,\k)--(12,\k);
	\draw[color=gray!20] (\k,0)--(\k,12);}
\draw (0,0)--(12,0);
\draw (0,0)--(0,12);
\draw (0,12)--(12,12);
\draw (12,0)--(12,12);
\draw[ultra thick] (0,11)--(1,11)--(1,10)--(2,10)--(2,6)--(6,6)--(6,2)--(10,2)--(10,1)--(11,1)--(11,0);
\draw (3,3) node{\LARGE $\gotn^-$};
\draw (9,9) node{\LARGE $\gotp$};
\draw[dashed] (1,12)--(1,11)--(2,11)--(2,10)--(6,10)--(6,6)--(10,6)--(10,2)--(11,2)--(11,1)--(12,1);
\draw (0.5,11.5) node{\Large $\g_{I_1}$};
\draw (1.5,10.5) node{\Large $\g_{I_2}$};
\draw (4,8) node{\Large $\g_{I_3}$};
\draw (8,4) node{\Large $\g_{I_4}$};
\draw (10.5,1.5) node{\Large $\g_{I_5}$};
\draw (11.5,0.5) node{\Large $\g_{I_6}$};
\end{tikzpicture}
\end{center}
Dans ce cas, on a $\pr(F_6^\bullet)=0$ (car $e_{1,12}$ divise $F_6^\bullet$ et la projection des éléments antidiagonaux est nulle) et $\deg_{(\gotn^-)^D} \pr(F_6)=4$ (alors que $\deg_{\gotn^-} F_6=5$). En revanche, pour $m \in \{2,4,8,10,12\}$, on a bien $\deg_{(\gotn^-)^D} \pr(F_m) = \deg_{\gotn^-} F_m$.\par
Par \cite[Chap. VIII]{bou06}, l'algèbre des invariants $\Y(\so_{12})$ est polynomiale et engendrée par $\pr(F_2)$, $\pr(F_4)$, $\pr(F_6)$, $\pr(F_8)$, $\pr(F_{10})$ et $f$, où $f$ est une racine carrée de $\pr(F_{12})$. Panyushev et Yakimova ont étudié la famille $\pr(F_2)^\bullet$, $\pr(F_4)^\bullet$, $\pr(F_6)^\bullet$, $\pr(F_8)^\bullet$, $\pr(F_{10})^\bullet$, $f^\bullet$ et ont conclu dans ce cas que la famille était algébriquement liée dans $\Y(\gotq^D)$. Comme ils ont montré que $\ind \gotq^D = \rg \so_{12} = 6$, cette famille ne peut pas non plus engendrer $\Y(\gotq^D)$.\par 
On peut tout de même essayer de tirer des semi-invariants de cette famille. En type $A$, on a une décomposition $F_{12}^\bullet =F_{12,1} F_{12,2}$ ce qui en projetant, donne une projection de la forme $\pr(F_{12}^\bullet) =x_3^2 y_3^2$ où $\pr(F_{12,1})=x_3^2$ et $\pr(F_{12,2})=y_3^2$. Comme $\ind \gotq^D_\Lambda-\ind \gotq^D = \dim \gotq^D-\dim \gotq^D_\Lambda \leq \dim \gotq^D - \dim (\gotq^D)' = 3$, il manque soit des éléments dans $\gotq^D_\Lambda$, soit des semi-invariants. On peut obtenir des semi-invariants indépendamment de l'étude des $\pr(F_2)^\bullet$, $\pr(F_4)^\bullet$, $\pr(F_6)^\bullet$, $\pr(F_8)^\bullet$, $\pr(F_{10})^\bullet$, $f^\bullet$.
\begin{propn}
Soit $\gotq=\gotp \ltimes \gotn^-$ une contraction parabolique d'une algèbre simple $\g$ associé à une certaine sous-algèbre de Cartan, une certaine base $\pi$ du système de racines $R$ associé et un certain sous-ensemble $\pi' \varsubsetneq \pi$. On note $\theta$ la plus grande racine de $R$. Soit $e \in \gotq \setminus \{0\}$. Alors $e$ est un semi-invariant (de degré $1$) de $\Sym(\gotq)$ si et seulement si $e \in \g_{-\beta}$ où $\beta \in (\pi \setminus \pi') \cup \{-\theta\}$ est telle que le sommet associé à $\beta$ dans le diagramme de Dynkin étendu de $R$ n'est relié à aucun sommet associé à un élément de $\pi'$. \label{degre1}
\end{propn}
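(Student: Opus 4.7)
La stratégie envisagée consiste à traduire la condition d'être semi-invariant en une condition combinatoire sur le système de racines $R$. On exploitera l'isomorphisme $\gotq \simeq \g$ en tant que $\h$-modules, qui décompose $\gotq$ en espaces de poids pour $\h$ : $\gotq = \h \oplus \bigoplus_{\mu \in R} \g_\mu$. Un semi-invariant $e$ de degré $1$, étant vecteur propre pour l'action de $\ad(\h)$, sera nécessairement un vecteur de racine $e \in \g_\mu$ pour un certain $\mu \in R$ non nul (les éléments non nuls de $\h$ s'excluent rapidement, car ils ne sont ni invariants, ni semi-invariants de poids non nul). On se ramène donc à déterminer, pour chaque racine $\mu$, si $e_\mu \in \g_\mu$ est un semi-invariant de $\Sym(\gotq)$.

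Pour un tel $e = e_\mu$, la condition d'être semi-invariant se reformulera ainsi : pour toute racine $\beta$ non nulle, on doit avoir $[e_\beta, e_\mu]_\gotq = 0$. En effet, si le poids de $e_\mu$ est $\lambda$, alors $[e_\beta, e_\mu]_\gotq$ vit dans un espace de poids pour $\h$ de poids $\beta + \mu$, distinct de $\mu$, alors que le membre de droite $\lambda(e_\beta) e_\mu$ est de poids $\mu$. Selon l'appartenance de $e_\beta$ et $e_\mu$ à $\gotp$ ou à $\gotn^-$, la définition \ref{conpara} du crochet dans $\gotq$ ramènera cette condition soit à $[e_\beta, e_\mu]_\g = 0$ (si les deux sont dans $\gotp$), soit à $[e_\beta, e_\mu]_\g \in \gotp$ (lorsque l'un des deux est dans $\gotn^-$), soit encore à une condition automatique (lorsque les deux sont dans $\gotn^-$, abélienne). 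La vérification se ramène alors à un contrôle sur la localisation de $\beta + \mu$ dans $R \cup \{0\}$.

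La conclusion se fera par une analyse en quatre cas selon la position de $\mu$. Si $\mu \in R_{\pi'}$ est une racine de Levi, en prenant $\beta = -\mu$ on aura $[e_{-\mu}, e_\mu]_\g = h_\mu \in \h \subset \gotp$ non nul, les deux vecteurs étant dans $\gotp$ : $e_\mu$ n'est pas semi-invariant. Si $\mu \in R^+ \setminus R^+_{\pi'}$, prendre $\beta$ racine simple positive forcera $\mu$ à être maximale, donc $\mu = \theta$ ; puis, pour $\beta = -\gamma$ avec $\gamma \in R^+_{\pi'}$, la condition deviendra $\theta - \gamma \notin R$, équivalente par descente à $\theta - \alpha' \notin R$ pour tout $\alpha' \in \pi'$, soit la non-adjacence de $-\theta$ aux sommets de $\pi'$ dans le diagramme de Dynkin étendu (les cas $\beta \in R^- \setminus R^-_{\pi'}$ étant automatiques puisque $\theta$ est la plus haute racine). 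Si $\mu = -\alpha$ avec $\alpha \in \pi \setminus \pi'$, la seule contrainte non triviale viendra de $\beta = -\gamma \in R^-_{\pi'}$ : $\gamma + \alpha \in R$ impliquerait $\gamma + \alpha \in R^+ \setminus R^+_{\pi'}$ (car $\alpha \notin \pi'$), donc $-(\gamma + \alpha) \in \gotn^-$ et le crochet $[e_{-\gamma}, e_{-\alpha}]_\gotq$ serait non nul ; par descente, la condition équivaudra à la non-adjacence de $\alpha$ aux sommets de $\pi'$ dans le diagramme de Dynkin.

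Le dernier cas, $\mu = -\nu$ avec $\nu \in R^+ \setminus R^+_{\pi'}$ non simple, constituera la principale difficulté technique : il faudra montrer qu'aucune telle $\mu$ ne fournit de semi-invariant. L'idée sera d'exhiber une racine simple $\alpha$ telle que $[e_\alpha, e_{-\nu}]_\gotq \neq 0$. Si $\nu - \alpha \in R^+$ pour un certain $\alpha \in \pi'$, alors puisque $\alpha \in \pi'$ n'affecte pas le support non-Levi de $\nu$, $\nu - \alpha$ sera encore non-Levi, et $[e_\alpha, e_{-\nu}]_\g \in \g_{-(\nu-\alpha)} \subset \gotn^-$ sera non nul. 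Le sous-cas résistant sera celui où toutes les descentes simples de $\nu$ passent uniquement par $\pi \setminus \pi'$ et atterrissent dans $R^+_{\pi'}$, ce qui forcera la structure $\nu = \alpha_0 + \gamma$ avec $\alpha_0 \in \pi \setminus \pi'$ de coefficient $1$ dans $\nu$ et $\gamma \in R^+_{\pi'}$ non nul. Il faudra alors un argument fin de chaîne de racines, utilisant la connexité du support d'une racine dans le diagramme de Dynkin, pour exhiber $\alpha' \in \pi' \cap \supp(\gamma)$ tel que $\nu - \alpha' \in R^+$, ce qui ramènera au sous-cas précédent et achèvera la preuve.
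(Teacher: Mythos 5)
Your proposal follows the same general line as the paper (reduction to root vectors, then a case analysis on the root $\mu$), and the easy cases are handled correctly. The paper organizes the ``if'' direction more tightly by observing that it suffices to check $[\g_\beta,\g_{\beta'}]_\gotq=0$ for $\beta'$ ranging over the roots of a Lie-algebra generating set $\h\oplus\bigoplus_{\alpha\in\pi\sqcup-\pi'}\g_\alpha\oplus\g_{-\theta}$, which avoids your ``descent'' reductions for $\mu=\theta$ and $\mu=-\alpha$ (those reductions are nonetheless true and provable, e.g.\ via dominance of $\theta$ and connectedness of root supports).

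The genuine gap is in your case~4, where $\mu=-\nu$ with $\nu\in R^+\setminus R^+_{\pi'}$ non-simple. You insist on finding a \emph{simple} root $\alpha$ with $[e_\alpha,e_{-\nu}]_{\gotq}\neq 0$, and in the resistant sub-case $\nu=\alpha_0+\gamma$ (with $\alpha_0\in\pi\setminus\pi'$ of coefficient $1$ and $\gamma\in R^+_{\pi'}$) you appeal to a root-chain / connectedness argument to produce $\alpha'\in\pi'\cap\supp(\gamma)$ with $\nu-\alpha'\in R^+$. This is not carried out, and it is not an obvious fact: the naive criterion ``$\nu-\alpha'\in R$ iff $(\nu,\alpha')>0$'' fails for simple $\alpha'$ when the $\alpha'$-string is symmetric (e.g.\ in $B_2$ one has $e_1-\alpha_2=\alpha_1\in R$ with $(e_1,\alpha_2)=0$), so the inner-product bound you would use to force such an $\alpha'$ to exist does not close in non-simply-laced types. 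The paper's proof sidesteps this entirely by allowing a \emph{non-simple} test root $\beta'$: taking any simple descent $\alpha_0$ of $\nu$, either $\nu-\alpha_0\in R^+\setminus R^+_{\pi'}$ (take $\beta'=\alpha_0$), or $\nu-\alpha_0=\gamma\in R^+_{\pi'}$, in which case $\alpha_0\in\pi\setminus\pi'$ and one takes $\beta'=\gamma\in R^+_{\pi'}\subset\gotp$, giving $[\g_{-\nu},\g_\gamma]_\gotq=\g_{-\alpha_0}\subset\gotn^-$, nonzero. You should replace your case-4 argument with this one; the restriction to simple $\alpha$ is what created the difficulty.
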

\begin{proof}
On rappelle que par définition du crochet de Lie sur $\gotq$, on a pour tous $p_1, p_2 \in \gotp, n_1, n_2 \in \gotn^-$ :
$$[p_1,p_2]_{\gotq}=[p_1,p_2]_{\g} \qquad [p_1,n_1]_{\gotq}=\pr_{\gotn^-,\gotp} \left([p_1,n_1]_{\g}\right) \qquad [n_1,n_2]_{\gotq}=0$$
Le sous-espace vectoriel $\left(\bigoplus_{\alpha \in \pi \sqcup -\pi'} \g_\alpha\right) \oplus \g_{-\theta} \oplus \h$ engendre l'algèbre de Lie $\gotq$ : en effet, $\gotn^-$ appartient à l'algèbre de Lie engendrée par $\left(\bigoplus_{\alpha \in \pi} \g_\alpha\right) \oplus \g_{-\theta}$ et $\gotp$ appartient à l'algèbre de Lie engendrée par $\left(\bigoplus_{\alpha \in \pi \sqcup -\pi'}\g_\alpha\right) \oplus \h$.\par
Soit $\beta \in (-\pi \setminus -\pi') \cup \{\theta\}$. Alors $e \in \g_{\beta}$ est un semi-invariant et seulement si $[\g_{\beta}, \g_{\beta'}]_{\gotq}=0$ pour $\beta' \in \pi \sqcup -\pi' \sqcup \{-\theta\}$. Si $\beta = \theta$, alors 
\begin{itemize}
\item pour tout $\beta' \in \pi$, on a $[\g_{\beta}, \g_{\beta'}]_{\g} =0$, d'où $[\g_{\beta}, \g_{\beta'}]_{\gotq}=0$,
\item pour $\beta'=-\theta$, on a $[\g_{\beta}, \g_{\beta'}]_{\g} \subset \g_{0}=\h \subset \gotp$, ainsi $[\g_{\beta}, \g_{\beta'}]_{\gotq} =0$,
\item pour tout $\beta' \in -\pi'$, on a $\g_{\beta}, \g_{\beta'} \in \gotp$ donc $[\g_{\beta}, \g_{\beta'}]_{\gotq}=[\g_{\beta}, \g_{\beta'}]_{\g}$. Par conséquent, $[\g_{\beta}, \g_{\beta'}]_{\gotq}=0$ si et seulement si les sommets du diagramme de Dynkin étendu associés à $-\beta$ et $-\beta'$ ne sont pas reliés.
\end{itemize}
Si $\beta \in -\pi \setminus -\pi'$, on raisonne de même. On a donc montré que dans le cas où $\beta \in (-\pi \setminus -\pi') \cup \{\theta\}$, l'élément $e \in \g_\beta$ est un semi-invariant si et seulement si tous les sommets reliés au sommet du diagramme de Dynkin étendu associé à $- \beta$ sont associés à des éléments de $\pi \setminus \pi'$.
Il reste à montrer que si $x \in \gotq$ est un semi-invariant, alors $x \in \g_\beta$ avec $\beta \in (-\pi \setminus -\pi') \cup \{\theta\}$.\par
On peut déjà remarquer que si $x \in \gotq$ est un semi-invariant, alors $x \in \g_\beta$ pour un certain $\beta \in R \sqcup \{0\}$  (l'action de $\h$ sur $\gotq$ est diagonalisable et les sous-espaces propres sont les $\g_\beta$). On peut supposer que $x \neq 0$. Soit $R=R^+ \sqcup R^-$ la décomposition de $R$ en racines positives et négatives, $R_{\pi'}$ le système de racines engendré par $\pi'$ et $R_{\pi'}=R_{\pi'}^+ \sqcup R_{\pi'}^-$ la décomposition de $R_{\pi'}$ en racines positives et négatives.\par
On a $\beta \neq 0$ puisque pour tout $h \in \h$, on a $[\gotq,h]_{\gotq}=[\g,h]_{\g} \nsubseteq \C h$.\par
Si $\beta \in (R^+ \sqcup R_{\pi'}^-) \setminus \{\theta\}$, alors $\g_\beta \subset \gotp$ et il existe $\beta' \in R^+ \sqcup R_{\pi'}^-$ tel que $\beta+\beta' \in R^+ \sqcup R_{\pi'}^-$, d'où $[\g_\beta,\g_{\beta'}]_{\gotq} = [\g_\beta,\g_{\beta'}]_{\g}=\g_{\beta + \beta'} \neq 0$.\par
Si $\beta \in (R^- \setminus R_{\pi'}^-) \setminus (-\pi \setminus -\pi')$, alors $\g_{\beta} \subset \gotn^-$ et il existe $\beta' \in R^+ \sqcup R_{\pi'}^-$ tel que $\beta + \beta' \in R^- \setminus R_{\pi'}^-$, d'où $[\g_\beta,\g_{\beta'}]_{\gotq} = [\g_\beta,\g_{\beta'}]_{\g}=\g_{\beta + \beta'} \neq 0$.\par
Ainsi $\beta \in (-\pi \setminus -\pi') \cup \{\theta\}$, ce qui conclut la démonstration.
\end{proof}
Dans notre cas, le diagramme de Dynkin étendu est
\begin{center}
\begin{tikzpicture}
\node[draw,circle,minimum width=15pt,inner sep=0pt] (1) at (0,0.5) {$\alpha_1$};
\node[draw,circle,minimum width=15pt,inner sep=0pt] (m) at (0,-0.5) {$- \theta$};
\node[draw,circle,minimum width=15pt,inner sep=0pt] (2) at (1.5,0) {$\alpha_2$};
\node[draw,circle,minimum width=15pt,inner sep=0pt, fill=gray!30] (3) at (3,0) {$\alpha_3$};
\node[draw,circle,minimum width=15pt,inner sep=0pt, fill=gray!30] (4) at (5,0) {$\alpha_4$};
\node[draw,circle,minimum width=15pt,inner sep=0pt, fill=gray!30] (5) at (6.5,0.5) {$\alpha_5$};
\node[draw,circle,minimum width=15pt,inner sep=0pt] (6) at (6.5,-0.5) {$\alpha_6$};
\draw (1)--(2)--(3)--(4)--(5);
\draw (4)--(6);
\draw (m)--(2);
\end{tikzpicture}
\end{center}
(les sommets grisés sont ceux représentant les éléments de $\pi'$) donc, par la proposition \ref{degre1}, les semi-invariants de degré $1$ de $\Sym(\gotq^D)$ sont associés soit à $x_1:=\pr(e_{2,1}) \in \g_{- \alpha_1}$ soit à $x_2:=\pr(e_{1,11}) \in \g_{\theta}$. On calcule alors les poids $\lambda_1$, $\lambda_2$ et $\lambda_3$ de $x_1$, $x_2$ et $x_3$, qui sont linéairement indépendants. Si $g_1, \ldots, g_d$ est une famille algébriquement indépendante de $\Y(\gotq^D)$, alors la famille $g_1, \ldots, g_d, x_1, x_2, x_3$ est une famille algébriquement indépendante de $\Sy(\gotq^D)$. On montre cela de manière similaire au théorème \ref{alin} : à un poids $\lambda$ fixé, tous les monômes $\mathbf{m}$ en $g_1, \ldots, g_d, x_1, x_2, x_3$ de poids $\lambda$ auront leur partie $x_1^{b_1} x_2^{b_2} x_3^{b_3}$ en commun (autrement dit, $b_1$, $b_2$, $b_3$ ne dépendent pas du monôme de poids $\lambda$ considéré) ; cela vient du fait que le poids de $g_1^{a_1} \ldots g_d^{a_d} x_1^{b_1} x_2^{b_2} x_3^{b_3}$ est le poids de $x_1^{b_1} x_2^{b_2} x_3^{b_3}$ et que les poids de $x_1$, $x_2$ et $x_3$ sont linéairement indépendants.\par 
Ainsi $\ind \gotq^D_\Lambda \geq \ind \gotq^D + 3$. Puisque $\dim \gotq^D_\Lambda \geq \dim \gotq^D -3$, on peut conclure par \eqref{ovdb} que $\ind \gotq^D_\Lambda = \ind \gotq^D + 3=9$ et $\gotq^D_\Lambda=(\gotq^D)'$.\par
On peut calculer que $x_1$ et $x_2$ ne divisent pas $\pr(F_2)^\bullet$, $\pr(F_4)^\bullet$, $\pr(F_6)^\bullet$, $\pr(F_8)^\bullet$, $\pr(F_{10})^\bullet$, $f^\bullet$. En revanche, on constate que $\pr(F_4)^\bullet$ appartient à l'idéal engendré par $x_1$ et $x_2$. En fait, un calcul montre que\footnote{on détaille ce calcul en donnant les expressions exactes de $x_1$, $x_2$, $y_1$, $y_2$, $y_{1,2}$ en annexe}
$$\pr(F_4)^\bullet=x_1y_1+x_2y_2+x_1x_2y_{1,2}$$
avec $y_1$, $y_2$, $y_{1,2}$ des semi-invariants irréductibles bihomogènes en $(\gotn^-)^D$ de bidegrés respectifs $(1,2)$, $(0,3)$, $(0,2)$ (voir définition \ref{invbas}) et de poids respectifs $-\lambda_1$, $-\lambda_2$, $-\lambda_1-\lambda_2$. En particulier, $f_4^{(1)}:=x_1y_1$, $f_4^{(2)}:=x_2y_2$ et $f_4^{(3)}:=x_1x_2y_{1,2}$ sont des invariants bihomogènes en $(\gotn^-)^D$ de même bidegré que $\pr(F_4)^\bullet$.\par
Aussi, contrairement à tous les cas précédents, il existe $\lambda$ tel que $\Sym(\gotq^D)_\lambda$ n'est pas un $\Y(\gotq^D)$-module libre de rang $1$ (on prend $\lambda=-\lambda_1-\lambda_2$, on a les semi-invariants $y_1y_2$ et $y_{1,2}$).
\begin{rema}
On ne peut pas appliquer la proposition \ref{restrictif} pour conclure à la non-factorialité (donc à la non-polynomialité) de $\Y(\gotq^D)$ puisque les seuls semi-invariants de poids $\lambda_1+\lambda_2$ que l'on connaisse sont dans $\Y(\gotq^D)x_1x_2$.
\end{rema}
On soupçonne que la famille $\pr(F_2^\bullet)$, $f_4^{(1)}$, $f_4^{(2)}$, $f_4^{(3)}$, $\pr(F_6)^\bullet$, $f^\bullet$ est candidate à engendrer librement $\Y(\gotq^D)$, et que la famille $\pr(F_2^\bullet)$, $x_1$, $x_2$, $y_1$, $y_2$, $y_{1,2}$, $\pr(F_6)^\bullet$, $x_3$, $y_3$ est une famille candidate à engendrer librement $\Sy(\gotq^D)$. On a déjà calculé\footnote{calcul en annexe} que $\pr(F_8^\bullet)=-\left(f_4^{(1)}-f_4^{(2)}\right)^2$.\par
Il se trouve également que dans toutes les contractions paraboliques que l'on a étudiées jusqu'ici, l'ensemble des poids est un groupe, ce qui amène la conjecture suivante :
\begin{conj}
Soit $\gotq$ une contraction parabolique et $\Lambda$ le semi-groupe des poids de $\Sy(\gotq)$. Alors $\Lambda$ est un groupe.
\end{conj}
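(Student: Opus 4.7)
The natural plan is to reduce to Corollaire \ref{groupe} by constructing a family $\mathbf{f}$ of semi-invariants of $\Sym(\gotq)$ such that each element of $\mathbf{f}$ arises as a factor in a decomposition of the type \eqref{propto} (with $f_m \in \Y(\gotq)$) and such that $\GK \C[\mathbf{f}] = \GK \Sy(\gotq)$. Once such a family is produced, the lemme \ref{lemcentral} applied to an arbitrary semi-invariant $x$ shows that some power $x^a$ is divisible (in $\Sym(\gotq)$) by a monomial in the elements of $\mathbf{f}^\times$, whose weights sum to zero by construction; this immediately produces a semi-invariant of weight $-\text{weight}(x)$, so $\Lambda$ is a group.

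The first step is thus to exhibit a large enough family of invariants. The candidate starting point is the Panyushev-Yakimova construction: starting from homogeneous algebraically independent generators $\mathcal{F}_1, \ldots, \mathcal{F}_n$ of $\Y(\g)$, one obtains an algebraically independent family $\mathcal{F}_1^\bullet, \ldots, \mathcal{F}_n^\bullet$ in $\Y(\gotq)$ with $n = \ind \gotq$. Factoring each $\mathcal{F}_m^\bullet$ into irreducibles in $\Sym(\gotq)$ yields a family $\mathbf{f}_0$ of semi-invariants (by la proposition \ref{dixmier}). The weights of factors of any single $\mathcal{F}_m^\bullet$ automatically sum to zero, so the $\rat$-span of the weights of $\mathbf{f}_0$ is already a subgroup of $\gotq^*$. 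In the types covered by the main theorem, this $\mathbf{f}_0$ has the right Gelfand-Kirillov dimension and we conclude. In general, one will have $\GK \C[\mathbf{f}_0] \leq \ind \gotq_\Lambda$ with possibly strict inequality, so the family must be enlarged.

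The main step, then, is to enlarge $\mathbf{f}_0$ into a family $\mathbf{f}$ with the correct $\GK$-dimension, by producing auxiliary invariants and factoring them. The counter-example in type $D_6$ from section \ref{inconclusif} illustrates this: there, one produces new invariants $x_1 y_1,\; x_2 y_2,\; x_1 x_2 y_{1,2}$ from a decomposition $\pr(F_4)^\bullet = x_1 y_1 + x_2 y_2 + x_1 x_2 y_{1,2}$ inside an ideal generated by degree-$1$ semi-invariants, and factoring these recovers the missing semi-invariants $y_1, y_2, y_{1,2}$. A uniform procedure would be to use the classification of degree-$1$ semi-invariants (proposition \ref{degre1}) to systematically locate ideals of $\Sym(\gotq)$ generated by semi-invariants containing a Panyushev-Yakimova invariant, and iterate; more generally, one may add as generators of $\Y(\gotq)$ any invariant polynomial relation among the projections of the $F_m^\bullet$, as was done via the relation $\pr(F_8^\bullet) - \tfrac{1}{4}\pr(F_4^\bullet)^2 = \widetilde{F}\,\pr(F_{5,2})$ in section \ref{sec10}.

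The hard part will be step three: showing that such an enlargement always suffices to reach $\GK\C[\mathbf{f}] = \ind\gotq_\Lambda$. This amounts to controlling $\gotq_\Lambda$ and $\Y(\gotq)$ in types $B$, $D$ and exceptional, where no explicit minimal generating set is known and where even $\pr(F_m)^\bullet = \pr(F_m^\bullet)$ may fail. An alternative fallback strategy, avoiding explicit generators, would be to prove that every character $\lambda \in \Lambda$ lifts to a character of the connected algebraic group $Q$ with Lie algebra $\gotq$, and to exploit the fact that the character group $X(Q)$ is automatically a free abelian group; this would require a direct analysis of $Q$ and of the extension of weights of $\Sy(\gotq)$ to regular characters of $Q$ via Rosenlicht-type arguments. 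Either route seems to require a case analysis tied to the structure of the parabolic $\gotp$.
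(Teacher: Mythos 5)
This statement is a \emph{conjecture} in the paper; the author gives no proof, explicitly leaving it open (the paper's remark at the end of section \ref{sec11} treats the property as unproved and notes only that, if true, it would imply that every semi-invariant is a factor of an invariant). There is therefore nothing to compare against: your proposal must be assessed on its own.

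What you propose is not a proof but a plan, and you yourself flag the unresolved core: to apply Corollaire \ref{groupe} one must produce a family $\mathbf{f}$ of semi-invariant factors of invariants satisfying $\GK \C[\mathbf{f}] = \GK \Sy(\gotq) = \ind \gotq_\Lambda$, and you acknowledge that establishing this equality in types $B$, $D$ and exceptional is precisely the open problem. In fact the difficulty is worse than locating enough invariants to factor: in those types one no longer has $\pr(F_m)^\bullet = \pr(F_m^\bullet)$, one lacks a description of $\gotq_\Lambda$, and the Panyushev--Yakimova generators of $\Y(\gotq)$ themselves are unknown in general (their approach fails already for $D_6$ with $\pi' = \{\alpha_3,\alpha_4,\alpha_5\}$, as the paper recalls). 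So your ``step three'' is genuinely the whole conjecture, not a finishing detail.

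Your fallback route has an additional logical flaw. Knowing that the weights of $\Sy(\gotq)$ lift to characters of $Q$ would only place $\Lambda$ inside the free abelian group $X(Q)$; a sub-\emph{semi-group} of a free abelian group is in no way automatically a group (take $\N \subset \rat$). The conclusion requires showing that for each $\lambda \in \Lambda$ there actually \emph{exists} a semi-invariant of weight $-\lambda$, which is not supplied by any ambient group structure. Indeed the paper points out that for a proper parabolic subalgebra $\gotp$ of a semisimple $\g$, $\Lambda(\gotp)$ is a semi-group but not a group (\cite[lemme 4.2.4]{FMJ05}), even though its weights are restrictions of honest characters. Also a minor imprecision: the $\rat$-span of any subset of $\gotq^*$ is trivially a group; what Lemme \ref{premier} actually furnishes is that the \emph{semi-group} generated by the weights of $\mathbf{f}$ is a group, thanks to $\sum_t \nu_{m,t}\lambda_{m,t} = 0$ for each $m$.
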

Si cette conjecture est vérifiée, alors tout semi-invariant est bien un facteur d'un invariant (et la réciproque provient de la proposition \ref{dixmier}).

\chapter{Annexe}
\section*{Calculs dans le cas inconclusif de Panyushev et Yakimova}
On reprend les notations utilisées dans la sous-section \ref{inconclusif} dans le cas inconclusif de Panyushev et Yakimova en type $D_6$. On détaille ici les calculs montrant les résultats suivants annoncés dans la sous-section \ref{inconclusif} :
\begin{propn0}\mbox{}
\begin{enumerate}
\item On a
$$\pr(F_4)^\bullet=x_1y_1+x_2y_2+x_1x_2y_{1,2}$$
avec $y_1$, $y_2$, $y_{1,2}$ des semi-invariants irréductibles bihomogènes en $(\gotn^-)^D$ de bidegrés respectifs $(1,2)$, $(0,3)$, $(0,2)$ (voir définition \ref{invbas}) et de poids respectifs $-\lambda_1$, $-\lambda_2$, $-\lambda_1-\lambda_2$.
\item On a $\pr(F_8^\bullet)=-\left(f_4^{(1)}-f_4^{(2)}\right)^2$.
\end{enumerate}
\end{propn0}
\subsection*{Calcul de $\pr(F_4)^\bullet$}
On montre ici le point (1) de la proposition. On rappelle que
$$F_4=\sum_{\Card(J)=4} \Delta_J$$
On commence par calculer $\pr(F_4^\bullet)$. Si $\pr(F_4^\bullet) \neq 0$, alors $\pr(F_4)^\bullet=\pr(F_4^\bullet)$. On va donc regarder les $\Delta_J$ avec $J$ de cardinal $4$ de degré maximal en $\gotn^-$ et calculer leurs projections. D'après le corollaire \ref{coro} et le lemme \ref{clé}, les $\Delta_J$ de degré maximal en $\gotn^-$ sont ceux qui vérifient $\max_k j_k=1$. On a alors $15$ types différents de mineurs à étudier. 
$$ \Delta_{1,2,a,b}, \Delta_{1,2,a,11}, \Delta_{1,2,a,12}, \Delta_{1,2,b,11}, \Delta_{1,2,b,12}, \Delta_{1,2,11,12}, \Delta_{1,a,b,11}, \Delta_{1,a,b,12}, \Delta_{1,a,11,12}, \Delta_{1,b,11,12},$$
$$\Delta_{2,a,b,11}, \Delta_{2,a,b,12}, \Delta_{2,a,11,12}, \Delta_{2,b,11,12}, \Delta_{a,b,11,12} $$
Ici, $a$ désigne un élément de $\llbracket 3,6 \rrbracket$ et $b$ un élément de $\llbracket 7,10 \rrbracket$. Par exemple, la notation $\Delta_{1,2,a,b}$ désigne les mineurs de la forme $\Delta_{\{1,2,a,b\}}$ avec $a \in \llbracket 3,6 \rrbracket$ et $b \in \llbracket 7,10 \rrbracket$. Comme $ \Delta_{1,2,a,b}$ est inclus dans $\Sym(\g_{\{1,2,a,b\}})$ avec $\g_{\{1,2,a,b\}}$ représenté comme suit :
\begin{center}
\begin{tikzpicture}[scale=0.7]
\foreach \k in {1,2,...,3}
	{\draw[color=gray!20]  (0,\k)--(4,\k);
	\draw[color=gray!20] (\k,0)--(\k,4);}
\draw (0,0)--(4,0);
\draw (0,0)--(0,4);
\draw (0,4)--(4,4);
\draw (4,0)--(4,4);
\draw[ultra thick] (0,3)--(1,3)--(1,2)--(2,2)--(2,1)--(3,1)--(3,0);
\draw (1,1) node{\LARGE $\gotn^-$};
\draw (3,3) node{\LARGE $\gotp$};
\draw[dashed] (1,4)--(1,3)--(2,3)--(2,2)--(3,2)--(3,1)--(4,1);
\draw (0.5,4.5) node{\Large $1$};
\draw (1.5,4.5) node{\Large $2$};
\draw (2.5,4.4) node{\Large $a$};
\draw (3.5,4.5) node{\Large $b$};
\draw (-0.5,3.5) node{\Large $1$};
\draw (-0.5,2.5) node{\Large $2$};
\draw (-0.5,1.5) node{\Large $a$};
\draw (-0.5,0.5) node{\Large $b$};
\end{tikzpicture}
\end{center}
on a $\Delta_{1,2,a,b}^\bullet=e_{2,1} \, e_{a,2} \, e_{b,a} \, e_{1,b}$. Le même raisonnement se généralise à tous les cas. En particulier, $e_{1,12}$ divise $\Delta_{1,2,a,12}^\bullet$, $\Delta_{1,2,b,12}^\bullet$, $\Delta_{1,2,11,12}^\bullet$, $\Delta_{1,a,b,12}^\bullet$, $\Delta_{1,a,11,12}^\bullet$ et $\Delta_{1,b,11,12}^\bullet$. De même, $e_{2,11}$ divise $\Delta_{2,a,b,11}^\bullet$. Puisque $\pr(e_{1,12})=\pr(e_{2,11})=0$, cela implique que les projections de tous ces termes sont nulles. Il reste donc à étudier les mineurs des types suivants :
$$ \Delta_{1,2,a,b}, \Delta_{1,2,a,11}, \Delta_{1,2,b,11}, \Delta_{1,a,b,11}, \Delta_{2,a,b,12}, \Delta_{2,a,11,12}, \Delta_{2,b,11,12}, \Delta_{a,b,11,12} $$
Remarquons que ces types peuvent être associés par paires via l'application d'anti-transposition $s \in \Sym(\so_n) \mapsto s^\gamma \in \Sym(\so_n)$. Par exemple, un mineur du type $\Delta_{1,2,a,b}$ sera associé par anti-transposition à un mineur du type $\Delta_{a,b,11,12}$. On obtient ainsi une bijection entre les mineurs du type $\Delta_{1,2,a,b}$ et les mineurs du type $\Delta_{a,b,11,12}$. Ainsi, après projection, ces mineurs seront égaux. En procédant ainsi pour tous les types, il reste donc 4 types de mineurs :
$$ \Delta_{1,2,a,b}, \Delta_{1,a,b,11}, \Delta_{1,2,a,11}, \Delta_{1,2,b,11}$$
et on a
\begin{align*}
\pr(F_4^\bullet)&=2 \times \sum_{(a,b) \in \llbracket 3,6 \rrbracket \times \llbracket 7,10 \rrbracket} \pr(e_{2,1})\pr(e_{a,2})\pr(e_{b,a})\pr(e_{1,b})\\ &+ 2 \times \sum_{(a,b) \in \llbracket 3,6 \rrbracket \times \llbracket 7,10 \rrbracket} \pr(e_{a,1})\pr(e_{b,a})\pr(e_{11,b})\pr(e_{1,11})\\ &+ 2 \times \sum_{x \in \llbracket 3,10 \rrbracket} \pr(e_{2,1})\pr(e_{x,2})\pr(e_{11,x})\pr(e_{1,11})
\end{align*}
Tous les monômes sont bien linéairement indépendants, ce qui montre que $\pr(F_4^\bullet) \neq 0$ et donc que $\pr(F_4)^\bullet = \pr(F_4^\bullet)$. Comme $x_1=\pr(e_{2,1})$ et $x_2=\pr(e_{1,11})$, on obtient
$$\pr(F_4)^\bullet=x_1y_1+x_2y_2+x_1x_2y_{1,2}$$
avec
$$y_1:= 2 \times \sum_{(a,b) \in \llbracket 3,6 \rrbracket \times \llbracket 7,10 \rrbracket} \pr(e_{a,2})\pr(e_{b,a})\pr(e_{1,b})$$
$$y_2:= 2 \times \sum_{(a,b) \in \llbracket 3,6 \rrbracket \times \llbracket 7,10 \rrbracket} \pr(e_{a,1})\pr(e_{b,a})\pr(e_{11,b})$$
$$y_{1,2}:=2 \times \sum_{x \in \llbracket 3,10 \rrbracket} \pr(e_{x,2})\pr(e_{11,x})$$
et on vérifie que $y_1$, $y_2$ et $y_{1,2}$ satisfont les propriétés annoncées.

\subsection*{Calcul de $\pr(F_8)^\bullet$}
Pour le point (2) de la proposition, on raisonne de même que précédemment. On calcule donc $\pr(F_8^\bullet)$, et on regarde les $\Delta_J$ avec $J$ de cardinal $8$ de degré maximal en $\gotn^-$, égal à $6$ par le corollaire \ref{coro}, dont on calcule leurs projections. Ces $\Delta_J$ sont ceux de la forme $\Delta_{\{1,2,a,b,c,d,11,12\}}$ avec $\{a,b\} \subset \llbracket 3,6 \rrbracket$ et $\{c,d\} \subset \llbracket 7,10 \rrbracket$. Représentons $\g_{\{1,2,a,b,c,d,11,12\}}$ :
\begin{center}
\begin{tikzpicture}[scale=0.7]
\foreach \k in {1,2,...,7}
	{\draw[color=gray!20]  (0,\k)--(8,\k);
	\draw[color=gray!20] (\k,0)--(\k,8);}
\draw (0,0)--(8,0);
\draw (0,0)--(0,8);
\draw (0,8)--(8,8);
\draw (8,0)--(8,8);
\draw[ultra thick] (0,7)--(1,7)--(1,6)--(2,6)--(2,4)--(4,4)--(4,2)--(6,2)--(6,1)--(7,1)--(7,0);
\draw (2,2) node{\LARGE $\gotn^-$};
\draw (6,6) node{\LARGE $\gotp$};
\draw[dashed] (1,8)--(1,7)--(2,7)--(2,6)--(4,6)--(4,4)--(6,4)--(6,2)--(7,2)--(7,1)--(8,1);
\draw (0.5,8.5) node{\LARGE $1$};
\draw (1.5,8.5) node{\LARGE $2$};
\draw (2.5,8.4) node{\LARGE $a$};
\draw (3.5,8.5) node{\LARGE $b$};
\draw (4.5,8.4) node{\LARGE $c$};
\draw (5.5,8.5) node{\LARGE $d$};
\draw (6.5,8.5) node{\LARGE $11$};
\draw (7.5,8.5) node{\LARGE $12$};
\draw (-0.5,7.5) node{\LARGE $1$};
\draw (-0.5,6.5) node{\LARGE $2$};
\draw (-0.5,5.5) node{\LARGE $a$};
\draw (-0.5,4.5) node{\LARGE $b$};
\draw (-0.5,3.5) node{\LARGE $c$};
\draw (-0.5,2.5) node{\LARGE $d$};
\draw (-0.5,1.5) node{\LARGE $11$};
\draw (-0.5,0.5) node{\LARGE $12$};
\end{tikzpicture}
\end{center}
Par \eqref{deltajpt}, on a donc
$$F_8^\bullet = \Delta_{\{1,2,a,b,11,12\},\{1,2,c,d,11,12\}}^\bullet \Delta_{\{c,d\}, \{a,b\}}$$
Calculons $\Delta_{\{1,2,a,b,11,12\},\{1,2,c,d,11,12\}}^\bullet$, qui est de degré $4$ en $\gotn^-$. On remarque que la sous-algèbre de Lie $\g_{\{1,2,a,b,11,12\},\{1,2,c,d,11,12\}}$ est de la forme
\begin{center}
\begin{tikzpicture}[scale=0.7]
\foreach \k in {1,2,...,5}
	{\draw[color=gray!20]  (0,\k)--(6,\k);
	\draw[color=gray!20] (\k,0)--(\k,6);}
\draw (0,0)--(6,0);
\draw (0,0)--(0,6);
\draw (0,6)--(6,6);
\draw (6,0)--(6,6);
\draw[ultra thick] (0,5)--(1,5)--(1,4)--(2,4)--(2,2)--(4,2)--(4,1)--(5,1)--(5,0);
\draw (1.5,1.5) node{\LARGE $\gotn^-$};
\draw (4.5,4.5) node{\LARGE $\gotp$};
\draw[dashed] (1,6)--(1,5)--(2,5)--(2,4)--(4,4)--(4,2)--(5,2)--(5,1)--(6,1);
\draw (0.5,6.5) node{\LARGE $1$};
\draw (1.5,6.5) node{\LARGE $2$};
\draw (2.5,6.4) node{\LARGE $c$};
\draw (3.5,6.5) node{\LARGE $d$};
\draw (4.5,6.5) node{\LARGE $11$};
\draw (5.5,6.5) node{\LARGE $12$};
\draw (-0.5,5.5) node{\LARGE $1$};
\draw (-0.5,4.5) node{\LARGE $2$};
\draw (-0.5,3.5) node{\LARGE $a$};
\draw (-0.5,2.5) node{\LARGE $b$};
\draw (-0.5,1.5) node{\LARGE $11$};
\draw (-0.5,0.5) node{\LARGE $12$};
\end{tikzpicture}
\end{center}
On développe $\Delta_{\{1,2,a,b,11,12\},\{1,2,c,d,11,12\}}$ selon la deuxième ligne. Certains des termes que l'on obtient ne sont pas de degré $4$ en $\gotn^-$. Par exemple, $e_{2,2} \Delta_{\{1,a,b,11,12\},\{1,c,d,11,12\}}$ n'est pas de degré $4$ en $\gotn^-$ : pour s'en convaincre, on peut visualiser $\g_{\{1,a,b,11,12\},\{1,c,d,11,12\}}$
\begin{center}
\begin{tikzpicture}[scale=0.7]
\foreach \k in {1,2,...,4}
	{\draw[color=gray!20]  (0,\k)--(5,\k);
	\draw[color=gray!20] (\k,0)--(\k,5);}
\draw (0,0)--(5,0);
\draw (0,0)--(0,5);
\draw (0,5)--(5,5);
\draw (5,0)--(5,5);
\draw[ultra thick] (0,4)--(1,4)--(1,2)--(3,2)--(3,1)--(4,1)--(4,0);
\draw (1,1) node{\LARGE $\gotn^-$};
\draw (4,4) node{\LARGE $\gotp$};
\draw[dashed] (1,5)--(1,4)--(3,4)--(3,2)--(4,2)--(4,1)--(5,1);
\draw (0.5,5.5) node{\LARGE $1$};
\draw (1.5,5.4) node{\LARGE $c$};
\draw (2.5,5.5) node{\LARGE $d$};
\draw (3.5,5.5) node{\LARGE $11$};
\draw (4.5,5.5) node{\LARGE $12$};
\draw (-0.5,4.5) node{\LARGE $1$};
\draw (-0.5,3.5) node{\LARGE $a$};
\draw (-0.5,2.5) node{\LARGE $b$};
\draw (-0.5,1.5) node{\LARGE $11$};
\draw (-0.5,0.5) node{\LARGE $12$};
\end{tikzpicture}
\end{center}
Aussi, puisque $\pr(e_{2,11})=0$, le terme $e_{2,11} \Delta_{\{1,a,b,11,12\},\{1,2,c,d,12\}}$ a sa projection nulle et on peut donc l'écarter. Il reste deux termes à considérer, qui sont $-e_{2,1} \Delta_{\{1,a,b,11,12\},\{2,c,d,11,12\}}$ et $e_{2,12} \Delta_{\{1,a,b,11,12\},\{1,2,c,d,11\}}$. Il reste donc à calculer $\Delta_{\{1,a,b,11,12\},\{2,c,d,11,12\}}^\bullet$ (qui est de degré $3$ en $\gotn^-$) et $\Delta_{\{1,a,b,11,12\},\{1,2,c,d,11\}}^\bullet$ (qui est de degré $4$ en $\gotn^-$). On trouve (en développant selon la colonne numérotée $11$)
$$\Delta_{\{1,a,b,11,12\},\{2,c,d,11,12\}}^\bullet=-e_{12,11} \Delta_{\{a,b\},\{2,12\}}\Delta_{\{1,11\},\{c,d\}}-e_{1,11} \Delta_{\{a,b\},\{2,12\}} \Delta_{\{11,12\},\{c,d\}}$$
$$\Delta_{\{1,a,b,11,12\},\{1,2,c,d,11\}}^\bullet=e_{12,11} \Delta_{\{a,b\},\{1,2\}}\Delta_{\{1,11\},\{c,d\}}+e_{1,11} \Delta_{\{a,b\},\{1,2\}} \Delta_{\{11,12\},\{c,d\}}$$
d'où, en notant $\sum_{(a,b),(c,d)}$ la somme sur l'ensemble des $(a,b) \in \llbracket 3,6 \rrbracket^2$ et $(c,d) \in \llbracket 7,10 \rrbracket^2$, la formule
\begin{align*}
\pr(F_8^\bullet)=&-x_1^2\pr\left(\sum_{(a,b),(c,d)} \Delta_{\{a,b\},\{2,12\}}\Delta_{\{1,11\},\{c,d\}} \Delta_{\{c,d\},\{a,b\}}\right) \\
&-x_2^2 \pr\left(\sum_{(a,b),(c,d)} \Delta_{\{a,b\},\{1,2\}} \Delta_{\{11,12\},\{c,d\}} \Delta_{\{c,d\},\{a,b\}}\right) \\
&+x_1x_2 \pr\left(\sum_{(a,b),(c,d)} \Delta_{\{a,b\},\{2,12\}} \Delta_{\{11,12\},\{c,d\}} \Delta_{\{c,d\},\{a,b\}}\right) \\
&+x_1x_2 \pr\left(\sum_{(a,b),(c,d)} \Delta_{\{a,b\},\{1,2\}}\Delta_{\{1,11\},\{c,d\}} \Delta_{\{c,d\},\{a,b\}}\right)
\end{align*}
Or par anti-transposition, on a
$$ \pr\left(\sum_{(a,b),(c,d)} \Delta_{\{a,b\},\{2,12\}} \Delta_{\{11,12\},\{c,d\}} \Delta_{\{c,d\},\{a,b\}}\right) = \pr\left(\sum_{(a,b),(c,d)} \Delta_{\{a,b\},\{1,2\}}\Delta_{\{1,11\},\{c,d\}} \Delta_{\{c,d\},\{a,b\}}\right)$$
d'où
\begin{align*}
\pr(F_8^\bullet)=&-x_1^2\pr\left(\sum_{(a,b),(c,d)} \Delta_{\{a,b\},\{2,12\}}\Delta_{\{1,11\},\{c,d\}} \Delta_{\{c,d\},\{a,b\}}\right) \\
&-x_2^2 \pr\left(\sum_{(a,b),(c,d)} \Delta_{\{a,b\},\{1,2\}} \Delta_{\{11,12\},\{c,d\}} \Delta_{\{c,d\},\{a,b\}}\right) \\
&+2x_1x_2 \pr\left(\sum_{(a,b),(c,d)} \Delta_{\{a,b\},\{2,12\}} \Delta_{\{11,12\},\{c,d\}} \Delta_{\{c,d\},\{a,b\}}\right)
\end{align*}
Pour conclure, il nous suffit de montrer que
\begin{equation}
\pr\left(\sum_{(a,b),(c,d)} \Delta_{\{a,b\},\{2,12\}}\Delta_{\{1,11\},\{c,d\}} \Delta_{\{c,d\},\{a,b\}}\right) = y_1^2 \label{y12}
\end{equation}
\begin{equation}
\pr\left(\sum_{(a,b),(c,d)} \Delta_{\{a,b\},\{1,2\}} \Delta_{\{11,12\},\{c,d\}} \Delta_{\{c,d\},\{a,b\}}\right) = y_2^2 \label{y22}
\end{equation}
\begin{equation}
\pr\left(\sum_{(a,b),(c,d)} \Delta_{\{a,b\},\{2,12\}} \Delta_{\{11,12\},\{c,d\}} \Delta_{\{c,d\},\{a,b\}}\right)= y_1y_2 \label{y1y2}
\end{equation}
Pour l'égalité \eqref{y12}, si l'on développe le terme
$$\Delta_{\{a,b\},\{2,12\}}\Delta_{\{1,11\},\{c,d\}} \Delta_{\{c,d\},\{a,b\}}$$
on a $8$ monômes qui apparaissent, et modulo un échange des variables $a$ et $b$ ou\footnote{Le "ou" n'est pas exclusif, on peut échanger les variables $a$ et $b$ en même temps qu'échanger les variables $c$ et $d$.} des variables $c$ et $d$, on a deux classes de monômes, dont des représentants sont
$$e_{1,c}e_{c,a}e_{a,2}e_{11,d}e_{d,b}e_{b,12} \qquad \text{et} \qquad -e_{1,c}e_{c,a}e_{a,12}e_{11,d}e_{d,b}e_{b,2}$$
Quatre de ces monômes sont de la première classe et quatre des monômes sont de la deuxième classe. Après sommation, on obtient alors
\begin{align*}
&\pr\left(\sum_{(a,b),(c,d)} \Delta_{\{a,b\},\{2,12\}}\Delta_{\{1,11\},\{c,d\}} \Delta_{\{c,d\},\{a,b\}}\right) = \\ &4 \sum_{(a,b),(c,d)} \pr(e_{1,c})\pr(e_{c,a})\pr(e_{a,2})\pr(e_{11,d})\pr(e_{d,b})\pr(e_{b,12}) \\ - &4 \sum_{(a,b),(c,d)} \pr(e_{1,c})\pr(e_{c,a})\pr(e_{a,12})\pr(e_{11,d})\pr(e_{d,b})\pr(e_{b,2})
\end{align*}
On a
\begin{align*}
&4 \sum_{(a,b),(c,d)} \pr(e_{1,c})\pr(e_{c,a})\pr(e_{a,2})\pr(e_{11,d})\pr(e_{d,b})\pr(e_{b,12}) \\
&=\left(2\sum_{(a,c) \in \llbracket 3,6 \rrbracket \times \llbracket 7,10 \rrbracket} \pr(e_{1,c})\pr(e_{c,a})\pr(e_{a,2})\right)\left(2\sum_{(b,d) \in \llbracket 3,6 \rrbracket \times \llbracket 7,10 \rrbracket} \pr(e_{11,d})\pr(e_{d,b})\pr(e_{b,12})\right)
\end{align*}
et
\begin{align*}
&\sum_{(a,b),(c,d)} \pr(e_{1,c})\pr(e_{c,a})\pr(e_{a,12})\pr(e_{11,d})\pr(e_{d,b})\pr(e_{b,2}) \\
&=\left(\sum_{(a,c) \in \llbracket 3,6 \rrbracket \times \llbracket 7,10 \rrbracket} \pr(e_{1,c})\pr(e_{c,a})\pr(e_{a,12})\right)\left(\sum_{(b,d) \in \llbracket 3,6 \rrbracket \times \llbracket 7,10 \rrbracket} \pr(e_{11,d})\pr(e_{d,b})\pr(e_{b,2})\right)
\end{align*}
Or
$$2\sum_{(a,c) \in \llbracket 3,6 \rrbracket \times \llbracket 7,10 \rrbracket} \pr(e_{1,c})\pr(e_{c,a})\pr(e_{a,2})=2\sum_{(b,d) \in \llbracket 3,6 \rrbracket \times \llbracket 7,10 \rrbracket} \pr(e_{11,d})\pr(e_{d,b})\pr(e_{b,12})=y_1$$
et
$$\sum_{(a,c) \in \llbracket 3,6 \rrbracket \times \llbracket 7,10 \rrbracket} \pr(e_{1,c})\pr(e_{c,a})\pr(e_{a,12})=\sum_{(b,d) \in \llbracket 3,6 \rrbracket \times \llbracket 7,10 \rrbracket} \pr(e_{11,d})\pr(e_{d,b})\pr(e_{b,2})=0$$
Pour la deuxième égalité, montrons par exemple que $\sum_{(a,c) \in \llbracket 3,6 \rrbracket \times \llbracket 7,10 \rrbracket} \pr(e_{1,c})\pr(e_{c,a})\pr(e_{a,12})=0$. Il suffit de remarquer que :
\begin{itemize}
\item si $c=a^\gamma$, on a $\pr(e_{c,a})=0$ de sorte que $\pr(e_{1,c})\pr(e_{c,a})\pr(e_{a,12})=0$,
\item les autres termes $\pr(e_{1,c})\pr(e_{c,a})\pr(e_{a,12})$ pour $c \neq a^\gamma$, s'associent par paires de la forme $\pr(e_{1,c})\pr(e_{c,a})\pr(e_{a,12})$ et $\pr(e_{1,a^\gamma})\pr(e_{a^\gamma,c^\gamma})\pr(e_{c^\gamma,12})=-\pr(e_{1,c})\pr(e_{c,a})\pr(e_{a,12})$, avec $c < a^\gamma$.
\end{itemize}
Les calculs pour les deux autres équations \eqref{y22} et \eqref{y1y2} sont similaires.

\newpage
\addcontentsline{toc}{chapter}{Bibliographie}
\emergencystretch=10em
\bibliographystyle{plainurl}
\bibliography{bibliotheque}
\renewcommand{\indexname}{Index des notations}
\printindex
\end{document}